\renewcommand\eqref[1]{(\ref{#1})} %Need with hyperref
\newtheorem{theorem}{Theorem}[section]
\newtheorem{proposition}[theorem]{Proposition}
\newtheorem{lemma}[theorem]{Lemma}
\newtheorem{corollary}[theorem]{Corollary}
\theoremstyle{definition}
\newtheorem{definition}[theorem]{Definition}
\newtheorem{remark}[theorem]{Remark}
\numberwithin{equation}{section}
\newcommand{\supp}{\operatorname{supp}}
\newcommand{\f}{\frac}
\newcommand{\les}{\lesssim}
\newcommand{\ZZ}{\mathbb Z}
\newcommand{\vc}{\infty}
\newcommand{\FZ}{\mathcal F_{\mathbb Z}}
\newcommand{\mm}{{\mathfrak m}}
\newcommand{\B}{\dot{B}}
\newcommand{\F}{\dot{F}}
\newcommand{\Dd}{\Delta_d}
\begin{document}

\title[Function spaces  associated with a discrete Laplacian]{Hardy spaces, Besov spaces and Triebel--Lizorkin spaces  associated with a discrete Laplacian and applications}

\author[T. A. Bui]{The Anh Bui}
\address{The Anh Bui
\endgraf
School of Mathematical and Physical Sciences, Macquarie University
\endgraf
NSW 2109, Australia}
\email{the.bui@mq.edu.au}
\author[X. T. Duong]{Xuan Thinh Duong}
\address{Xuan Thinh Duong
	\endgraf
	School of Mathematical and Physical Sciences, Macquarie University
	\endgraf
	NSW 2109, Australia}
\email{xuan.duong@mq.edu.au}
\subjclass[2010]{39A12, 35K08, 42A38, 42B35, 42B25, 42B15.}

%\date{\today}

%\dedicatory{This paper is dedicated to our advisors.}

\keywords{Discrete Laplacian, heat semigroup, Hardy spaces, Besov spaces, Triebel--Lizorkin spaces, spectral multipliers, Riesz transforms.}

%    \thanks will become a 1st page footnote.
%\thanks{This project is supported by the NNSF of China (Grant Nos. 11901256 and 12001251)
%and the NSF of Jiangxi Province (Grant Nos. 20192BAB211001 and 20202BAB211001)}

%    General info

\begin{abstract}
Consider	the discrete Laplacian $\Dd$ defined on the set of integers $\ZZ$ by
	\[
	\Delta_df(n) = -f(n+1) + 2f(n) -f(n-1), \ \ \ \ n\in \ZZ,
	\]
	where $f$ is a function defined on $\ZZ$.
In this paper, we define Hardy spaces, Besov spaces and Triebel--Lizorkin spaces associated with $\Dd$ and then show that these function spaces coincide with the classical function spaces defined on $\ZZ$. As applications, we prove the boundedness of the spectral multipliers and the Riesz transforms  associated with $\Dd$ on these function spaces. 
%Our approach give new thoughts on harmonic analysis on $\ZZ$.
\end{abstract}

\maketitle

\tableofcontents

\section{Introduction and the statement of main results}
\allowdisplaybreaks

Let $\ZZ$ be the set of integers equipped with the counting measure. In this case, $\ZZ$ can be viewed as a space of homogeneous type in the sense of Coifman and Weiss \cite{CW}.  For every $0<p\le \vc$, we denote by $\ell^p(\ZZ)$ the Lebesgue space on $\ZZ$ with respect to the counting measure, i.e., if $f\in \ell^p(\ZZ)$, then
\[
\|f\|_{\ell^p(\ZZ)} = \Big(\sum_{n\in \ZZ} |f(n)|^p\Big)^{1/p}.
\] 
The harmonic analysis on the set of all integers $\ZZ$ is an interesting topic in harmonic analysis. See for example \cite{FT, BZ, Boza,Torres, CGRTV, ABM, KS} and the references therein. We would like to refer to \cite{FT, Torres, Sun} for the  theory of Besov and Triebel--Lizorkin spaces and \cite{BZ, Boza} for  the theory of Hardy spaces. Moreover, the  boundedness of the Fourier multipliers on $\ZZ$ was investigated in \cite{Torres, KS}. 

\medskip

In this paper, we consider the discrete Laplacian $\Delta_d$ on $\ZZ$  defined by
\[
\Delta_df(n) = -f(n+1) + 2f(n) -f(n-1), \ \ \ \ n\in \ZZ,
\]
where $f$ is a function defined on $\ZZ$.  

Let us emphasize that the study of the discrete Laplacian is an interesting topic in harmonic analysis and PDEs. See for example \cite{ABM, BG, Boza, BZ, JW, K,KS, Sch} and the references therein. Recently, in \cite{CGRTV}, Ciaurri et. al. studied the boundedness of some operators including the maximal functions, the square functions and the Riesz transforms associated to the discrete Laplacian $\Dd$ on $\ell^p(\ZZ)$ with $1<p<\vc$. Later, the boundedness of  the maximal functions, the square functions and the spectral multipliers of Laplace transform type associated with $\Dd$ on Hardy spaces $H^p(\ZZ), 0<p\le 1,$ was obtained in \cite{ABM}. Although harmonic analysis on $\ZZ$ has been studied intensively, there are a number of open problems such as 
\begin{itemize}
	\item the boundedness of general spectral multiplier of $\Dd$ and the Riesz transforms on some function spaces such as Hardy spaces, Besov spaces and Triebel--Lizorkin spaces;
	\item new characterizations of the Hardy spaces, Besov spaces and Triebel--Lizorkin spaces via square functions associated with the discrete Laplacian $\Dd$.
\end{itemize}
The main aim of this paper is to develop the harmonic analysis on integers by using a new approach. Our approach was inspired by theory of function spaces adapted to differential operators which has been developed recently. See for example \cite{ADM, AMR, DY, DY1, HLMMY, HM, JY, BBD, AA}. We would like to summarize the main results in this paper.

\begin{enumerate}
	\item \textit{Theory of Hardy spaces associated with $\Dd$:} Motivated by the works \cite{ADM, DY, DY1, HM, HLMMY} we define the Hardy space associated with $\Dd$ via the area square function   
	\[
	S_{\Delta_d}f(n)=\Big(\int_0^\vc \sum_{m:|m-n|<t}\big|t^2\Delta_de^{-t^2\Delta_d}f(m)\big|^2\f{dt}{t(t+1)}\Big)^{1/2},
	\]
	and show that the Hardy spaces also admit molecular decomposition which is analogous to the decomposition of the classical Hardy spaces. Then we will show that the new Hardy spaces coincide with the classical Hardy spaces $H^p(\ZZ)$ for $0<p\le 1$. It is important to note that in our setting the kernel of $e^{-t\Dd}$ satisfies neither Davies--Gaffney estimates nor Gaussian upper bounds. Besides that, it is not clear if the kernel of $e^{-t\Dd}$ satisfies the H\"older continuity. Hence,  new ideas and significant modifications would be required to handle these challenges. More precisely, in this case, the   kernel of $e^{-t\Dd}$ enjoys the polynomial decay. However, this is not enough to develop the theory of Hardy space for $p$ close to $0$. Fortunately, by rigorous computation we will prove that higher order derivatives in time of the kernel of $e^{-t\Dd}$ will present more decays. This plays a key role in the paper.  We also emphasize that in comparison with the classical atoms/molecules, our new molecules of the Hardy spaces have advantages in proving the boundedness of certain singular integrals such as the spectral multipliers of $\Dd$. See Section 3 and Section 5.1.

	\item \textit{Theory of Besov and Triebel--Lizorkin spaces associated with $\Dd$:} let us remind that the Besov and Triebel--Lizorkin spaces on $\ZZ$ were initiated by \cite{FT, Torres, Sun}. Their approaches based on the connection between the function spaces on $\mathbb R$ and the function spaces on $\ZZ$. Once the connection has been set up, properties of Besov and Triebel--Lizorkin spaces on $\mathbb R$ will be transferred into the function spaces on $\ZZ$. However, it is not clear how to adapt this method to prove the boundedness of the spectral multipliers of $\Dd$ and the Riesz transforms of $\Dd$. In this paper, we approach the problem by introducing the new Besov and Triebel--Lizorkin spaces adapted to $\Dd$. We also obtain new molecular decomposition of these function spaces. More importantly, we can show that the new function spaces and the function spaces introduced in  \cite{FT, Torres, Sun} coincide. Moreover, similarly to the case of the Hardy spaces, we are able to prove the boundedness of the spectral multipliers and the Riesz transforms by making use of molecular decomposition with new molecules. See Sections 4 and 5.    
	
	\item \textit{Boundedness of the spectral multipliers of $\Dd$:} Note that $\Delta_d$ is a non-negative and self-adjoint operator on $\ell^2(\ZZ)$. Moreover, it is straightforward to see that $\|\Delta_df\|_{\ell^p(\ZZ)}\le 4\|f\|_{\ell^p(\ZZ)}$. It follows that the spectral of $\Delta_d$ is contained in $[0,4]$. In fact, it is well-known that for every $\theta\in [-\pi,\pi]$, we have
	\[
	\Delta_d e_\theta = 2(1-\cos\theta)e_\theta, 
	\]
	where $e_\theta(n)=e^{in\theta}$.

	By the spectral theory, for a bounded Borel measurable function $m$ on $[0,\vc)$ we can define
	\[
	\begin{aligned}
		F(\Dd) = \int_{0}^{\vc} F(\lambda)dE(\lambda),
	\end{aligned}
	\]
	which is bounded on $\ell^2(\ZZ)$, where  $\{E(\lambda): \lambda \geq 0\}$ is the spectral resolution of $\Delta_d$. Since the spectrum of $\Dd$ is contained in $[0,4]$, the integral above is vanishing over $(4,\infty]$.

	For $f\in \ell^1(\mathbb Z)$, the Fourier transform $\mathcal F_{\mathbb Z}(f)$ of $f$ is defined by
	\[
	\FZ(f)(\theta) =\sum_{n\in \mathbb Z} f(n)e^{in\theta}, \ \ \ \ \theta\in [-\pi,\pi].
	\] 
	The inverse operator $\FZ^{-1}$ of $\FZ$ is determined by
	\[
	\FZ^{-1}(\varphi)(n) = \f{1}{2\pi} \int_{-\pi}^\pi \varphi(\theta) e^{-in\theta} d\theta, \ \ \ \ \varphi\in L^2[-\pi,\pi].
	\]

	Let ${\mathfrak m}\in L^\infty(0,\infty)$. We define
	\[
	T_{\mm} f = \FZ^{-1}\big[\mm(\sqrt{2(1-\cos\theta)})\FZ(f)\big].
	\]
	It can be verified that 
	\[
	T_{\mm} f =\mm (\sqrt \Delta_d)f.
	\]
	In \cite{ABM}, the spectral multiplier of Laplace transform type was considered corresponding to the case $\mm(\lambda) = \lambda^2\int_0^\vc e^{-\lambda^2 t}\Psi(t)dt$, $\lambda>0$ and $\Psi \in L^\vc(0,\vc)$. It was proved that 
	\[
	\|\mm (\sqrt \Delta_d)\|_{H^p(\ZZ)\to H^p(\ZZ)}\le C_\Psi, \ \ \ \ 0<p\le 1.
	\] 
 In the particular case when $\psi(t) = -\f{1}{\Gamma(is)}t^{is}$, the constant $C_\Psi$ is an exponential function of $s$. It is clear that in comparison with the bound obtained by the general spectral multiplier theorem (see for example \cite{DOS}), we should expect that $C_\Psi={\rm constant}\times s^{\f{1}{p}-\f{1}{2}+\epsilon}$ for any $\epsilon>0$. In Section 5 (Theorem \ref{mainthm-spectralmultipliers}), by using our theory of new function spaces, we are able to prove the sharp estimate for the spectral multipliers $\mm (\sqrt \Delta_d)$ on Hardy spaces, Besov spaces and Triebel--Lizorkin spaces.  %We also note that the Fourier multipliers on $\ZZ$ were also investigated in \cite{KS} (see also \cite{Torres}). However, the number of derivatives is not optimal in these papers. More importantly, the spectral multiplier theorem in Theorem \ref{mainthm-spectralmultipliers}  implies the sharp estimates of the Fourier multipliers in \cite{KS, Torres}.
	
	\item \textit{Boundedness of the Riesz transforms:} Let $D$ and $D^*$ be operators defined by
	\[
	Df(n) = f(n+1) -f (n), \ \ \ \  D^*f(n) = f (n)-f(n-1).
	\]
	These operators can be viewed as the first ``discrete derivatives'' on $\ZZ$. It is clear that 
	$-\Delta_d =D^*D=DD^*$ and
	\[
	\displaystyle \langle \Delta_d f, f\rangle_{\ell^2(\ZZ)}=\langle Df, Df\rangle =\langle D^*f, D^*f\rangle_{\ell^2(\ZZ)}, 
	\] 
	where $\displaystyle \langle f, g\rangle_{\ell^2(\ZZ)} =\sum_{n\in \ZZ}f(n)g(n)$.
	
	In \cite{CGRTV}, the Riesz transforms are defined via the limits
	\[
	\mathcal R = \lim_{\alpha\to (1/2)^-}D\Dd^{-\alpha} \ \ \ \ \text{and} \ \ \ \ \widetilde{\mathcal R} = \lim_{\alpha\to (1/2)^-}D^*\Dd^{-\alpha}. 
	\]
	This is because the subordination formula
	\[
	\Delta_d^{-1/2}f = c\int_0^\vc \sqrt t e^{-t\Delta_d}f\f{dt}{t}.
	\]
	is not absolutely convergent in $\ell^2(\ZZ)$. However, by using new characterizations of the distributions in  Section 2.3, we are able to prove that the subordination formula is well-defined in the sense of the distributions. This allows us to use this formula to prove the boundedness of the Riesz transforms $D\Delta_d^{-1/2}$ and $D^*\Delta_d^{-1/2}$ on Besov and Triebel--Lizorkin spaces which include the Hardy spaces. Note that even the boundedness of the Riesz transforms on the Hardy spaces $H^p(\ZZ)$ is new.
\end{enumerate}

The organization of the paper is as follows. In Section 2, we first prove some kernel estimates for the semigroup $e^{-t\Dd}$ and a functional calculus of $\Dd$. Then some properties of distributions will be established. Section 3 will set up the theory of Hardy spaces associated with $\Dd$. The theory of Besov and Triebel--Lizorkin spaces will be addressed in Section 4. Finally, Section 5 will be devoted in proving the boundedness of the spectral multipliers and the Riesz transforms.

\medskip

Throughout the paper, we usually use $C$ and $c$ to denote positive constants that are independent of the main parameters involved but whose values may differ from line to line. We will write $A\les B$ if there is a universal constant $C$ so that $A\leq CB$ and $A\sim B$ if $A\les B$ and $B\les A$.

\section{Kernel estimates for heat semigroup and a functional calculus of $\Delta_d$ }
We begin with some notations which will be used frequently.
\begin{itemize}
	\item $a\vee b = \max\{a, b\}$;
	\item $a\wedge b = \min\{a, b\}$;
	\item $\mathbb N= \{0,1,2,\ldots\}$;
	\item $\mathbb N^*= \{1,2,\ldots\}$;
	\item $\mathbb Z= \{\ldots, -2, -1, 0,1,2,\ldots\}$;
	\item $\mathbb Z^-= \{-1,-2\ldots\}$ and $\mathbb Z^+ =\mathbb Z\backslash \mathbb Z^-$;
\item Let $m, n\in \ZZ$ with $m\le n$. Without any confusion, we use $[m,n]$ to indicate the interval in $\ZZ$, i.e., $[m,n]= \{m, m+1, \ldots, n\}$ and use $[m,n]_{\mathbb R}$ to indicate the usual interval in $\mathbb R$. For an interval  $I\subset \ZZ$ and  $\lambda>0$, we define $\lambda I = \lambda I_{\mathbb R}\cap \ZZ$. Let $I\subset \ZZ$ be an interval in $\ZZ$. We denote by $|I|$ the measure of $I$, which is equal to cardinality of $I$ and also is comparable with $\ell(I)+1$, where $\ell(I)$ denotes  the length  of $I$.

\item Let $I$ be an interval in $\ZZ$, for $j\in \mathbb N$ we define $S_j(I)=2^{j}I\backslash 2^{j-1}I$ if $j\ge 1$ and $S_0(I)=I$.

\item For $n\in \ZZ$ and $t>0$, denote $I_n(t) =[n-t, n+t]:= [n-t, n+t]_{\mathbb R}\cap \ZZ$.

\item For each $(\nu,k)\in \ZZ^-\times \ZZ$, denote $I_{\nu,k}=[k.2^{-\nu},k.2^{-\nu+1})$ which is a dyadic interval in $\ZZ$. We then set, for $\nu\in \ZZ^-$, 
\[
\mathcal I_\nu = \{I_{\nu,k}: k\in \ZZ\}
\]
and
\[
\mathcal I = \{\mathcal I_{\nu}: \nu\in \ZZ^-\}.
\]
\end{itemize}

For each $r>0$ the Hardy--Littlewood maximal function $\mathcal M_r$ is defined by setting
\[
\mathcal M_r f(n) = \sup_{I \ni n}\Big(\f{1}{|I|}\sum_{m\in I} |f(m)|^r\Big)^{1/r},
\]
where the supremum is taken over all intervals $I$ containing $n$.
It is well-known that for $p>r$, 
\begin{equation}
	\label{boundedness maximal function}
	\|\mathcal{M}_{r} f\|_{\ell^p(\ZZ)}\les  \|f\|_{\ell^p(\ZZ)}
\end{equation}
for all $f\in \ell^p(\ZZ)$.

Moreover, let $0<r<\vc$ and $p>r$. Then we have
\begin{equation}
	\label{boundedness maximal function 2}
	\|\mathcal{M}_{r} f\|_{\ell^p(\ZZ)}\les  \|f\|_{\ell^p(\ZZ)}
\end{equation}

We recall the Fefferman-Stein vector-valued maximal inequality in \cite{GLY}. For $0<p<\vc$, $0<q\leq \vc$, $0<r<\min \{p,q\}$, we then have for any sequence of measurable functions $\{f_\nu\}$,
\begin{equation}\label{FSIn}
	\Big\|\Big(\sum_{\nu}|\mathcal{M}_rf_\nu|^q\Big)^{1/q}\Big\|_{\ell^p(\ZZ)}\les  \Big\|\Big(\sum_{\nu}|f_\nu|^q\Big)^{1/q}\Big\|_{\ell^p(\ZZ)}.
\end{equation}
%For any measurable function $F: X\times \mathbb{R}_+ \to \mathbb{C}$ with respect to the product measure $d\mu\times du$ one has, for $0<p<\vc$, $0<q\leq \vc$, $w\in A_{p/r}$ and $0<r<\min \{p,q\}$,
%\begin{equation}\label{FSIn1}
%	\Big\|\Big(\int_0^\vc \Big[\mathcal{M}_r(F(\cdot,u))(\cdot)\Big]^q\f{du}{u}\Big)^{1/q}\Big\|_{\ell^p(\ZZ)}\les  \Big\|\Big(\int_0^\vc |F(\cdot,u)|^q\f{du}{u}\Big)^{1/q}\Big\|_{\ell^p(\ZZ)}.
%\end{equation}
The Young's inequality and \eqref{FSIn} imply the following  inequality: If $\{a_\nu\} \in \ell^{q}\cap \ell^{1}$, then 
\begin{equation}\label{YFSIn}
	\Big\|\sum_{j}\Big(\sum_\nu|a_{j-\nu}\mathcal{M}_r f_\nu|^q\Big)^{1/q}\Big\|_{\ell^p(\ZZ)}\les  \Big\|\Big(\sum_{\nu}|f_\nu|^q\Big)^{1/q}\Big\|_{\ell^p(\ZZ)}.
\end{equation}

The following elementary estimates will be used frequently.
\begin{lemma}\label{lem-elementary}
	Let $\epsilon >0$. Then we have, for all function $f$, $n\in \ZZ$ and $s>0$,
	$$
	 \sum_{m\in \ZZ} \f{1}{s}\Big(1+\f{|m-n|}{s}\Big)^{-1-\epsilon} \les  1,
		$$
and
	$$
		\sum_{m\in \ZZ}\f{1}{s}\Big(1+\f{|m-n|}{s}\Big)^{-1-\epsilon}|f(m)| \les  \mathcal{M}f(n).$$
\end{lemma}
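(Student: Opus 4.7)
The plan is to prove both inequalities by a single dyadic decomposition of $\ZZ$ around the point $n$. I set $A_0 = \{m \in \ZZ : |m-n| < s\}$ and, for $k \ge 1$, $A_k = \{m \in \ZZ : 2^{k-1} s \le |m-n| < 2^k s\}$, so that $\ZZ$ is the disjoint union of the $A_k$. Two observations drive the whole argument: on $A_k$ the weight obeys $\big(1+|m-n|/s\big)^{-1-\epsilon} \lesssim 2^{-k(1+\epsilon)}$, and $A_k \subset I_n(2^k s)$ with $|I_n(2^k s)|$ of order $2^k s$ (up to a harmless additive constant inherited from the discrete setting).

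For the first inequality, I multiply the weight bound on $A_k$ by $|A_k|/s \lesssim 2^k$ to get a contribution of order $2^{-k\epsilon}$ from each annulus, and then sum the resulting convergent geometric series in $k \ge 0$ to obtain the constant bound.

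For the second inequality, I combine the same weight estimate with a sum-of-$|f|$ estimate on each annulus:
\[
\sum_{m \in A_k} \f{1}{s}\Big(1+\f{|m-n|}{s}\Big)^{-1-\epsilon}|f(m)| \lesssim \f{2^{-k(1+\epsilon)}}{s} \sum_{m \in I_n(2^k s)} |f(m)|.
\]
Applying the definition of the Hardy--Littlewood maximal function (with $r=1$), the inner sum is controlled by $|I_n(2^k s)| \cdot \mathcal{M} f(n) \lesssim 2^k s \cdot \mathcal{M} f(n)$. Substituting back gives a contribution of order $2^{-k\epsilon} \mathcal{M} f(n)$ on each $A_k$, and summing in $k$ closes the argument.

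The statement is genuinely elementary, so there is no real obstacle; the only mildly technical point is keeping careful track of the discrete cardinality $|I_n(2^k s)| \lesssim 2^k s + 1$ when $2^k s$ is small, but the extra constant contribution is absorbed cleanly into the geometric summation and does not affect either bound.
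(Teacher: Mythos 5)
Your proof is correct in the regime $s\ge 1$ and reaches the result by direct dyadic decomposition into annuli of radius $\sim 2^k s$, balancing the weight decay $\lesssim 2^{-k(1+\epsilon)}/s$ against the annulus cardinality $\lesssim 2^k s$. The paper's route is slightly different: it first peels off the $m=n$ term, then for $m\neq n$ replaces the weight by $|m-n|^{-1-\epsilon}$, and only afterwards sums over unit-scale annuli $I_n(2^{k+1})\setminus I_n(2^k)$. Your version is arguably cleaner, since it keeps the $s$-scaling explicit throughout; the paper's intermediate pointwise replacement in fact does not hold uniformly in $s$ (for $s>1$ the weight equals $s^{\epsilon}/(s+|m-n|)^{1+\epsilon}$, which exceeds $|m-n|^{-1-\epsilon}$ for large $|m-n|$), whereas your decomposition sidesteps that issue entirely.

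Be careful, however, with the closing remark that the $+1$ in $|I_n(2^k s)|\lesssim 2^k s+1$ is ``absorbed cleanly'' and does not affect the bounds. Carried through, it contributes $\sum_{k\ge 0} s^{-1}2^{-k(1+\epsilon)}\sim s^{-1}$, so what your argument actually yields is $\lesssim 1+s^{-1}$ and $\lesssim (1+s^{-1})\,\mathcal Mf(n)$, which is uniform in $s$ only when $s$ is bounded below. This is not a flaw peculiar to your write-up: the single term $m=n$ already contributes $s^{-1}$, so neither inequality can hold with an $s$-independent constant for $s<1$, and the same restriction is implicit (and unacknowledged) in the paper's own proof. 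Still, the honest statement is that the lemma holds for $s\ge 1$ (the regime in which it is actually used), rather than that the extra term is harmless.
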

\begin{proof}
	(a) The proof of (a) is similar to (b) and hence we omit the details.

	(b) We have
	\[
	\begin{aligned}
		\sum_{m\in \ZZ}\f{1}{s}\Big(1+\f{|m-n|}{s}\Big)^{-1-\epsilon}|f(m)| &\le |f(n)| + \sum_{m\neq n}\f{1}{s}\Big(1+\f{|m-n|}{s}\Big)^{-1-\epsilon}|f(m)|\\
		&\le \mathcal M f(n) + \sum_{m\neq n}\f{1}{|m-n|^{1+\epsilon}}|f(m)|.
	\end{aligned}
	\]
	It is easy to see that 
	\[
	\begin{aligned}
		\sum_{m\neq n}\f{1}{|m-n|^{1+\epsilon}}|f(m)|&\le \sum_{k\ge 0} \sum_{m\in I_n(2^{k+1})\backslash I_n(2^{k})} \f{1}{|m-n|^{1+\epsilon}}|f(m)|\\
		&\les \sum_{k\ge 0} 2^{-k\epsilon}\mathcal M f(n)\\
		&\les \mathcal M f(n).
	\end{aligned}
	\]
	This completes our proof.
\end{proof}

\subsection{Heat kernel estimates}
Denote by $e^{-t\Delta_d}$ the semigroup generated by $\Delta_d$. Then we have
\begin{equation}\label{eq-heat kernel formula}
e^{-t\Delta_d}f(n) =\sum_{m\in \mathbb Z} h_t(n-m)f(m), \ \ \ \ n\in \ZZ,
\end{equation}
where $h_t(n) = e^{-2t}\mathfrak{I}_n(2t), t>0$ and $m\in \ZZ$ and $\mathfrak{I}_n$ is the modified Bessel function of first kind defined by
\begin{equation}
	\label{eq-1st from of I_n}
	\mathfrak{I}_n(z) = \f{z^n}{\sqrt{\pi} 2^n\Gamma(n+1/2)}\int_{-1}^1 e^{-zs}(1-s^2)^{n-1/2}ds, \ \ \ \ \ |\arg z|<\pi, \ \ \ n>-1/2.
\end{equation} 
The modified Bessel function $\mathfrak{I}_n$  is also represented in the following integral form
\begin{equation}
	\label{eq-2nd from of I_n}
	\mathfrak{I}_n(z) = \f{1}{\pi}\int_{0}^\pi e^{z\cos\theta}\cos(n\theta)d\theta, \ \ \ \ \ \ |\arg z|<\pi, \ \ \ n\in \ZZ.
\end{equation}

See for example \cite{Me, PBM}.

From \eqref{eq-heat kernel formula}, \eqref{eq-1st from of I_n} and \eqref{eq-2nd from of I_n}, we have, for $|\arg z|<\pi$ and $n\in \ZZ$,
\begin{align}
h_z(n)&=\f{z^n}{\sqrt{\pi} \Gamma(n+1/2)}\int_{-1}^1 e^{-2z(s+1)}(1-s^2)^{n-1/2}ds \label{eq1-heat kernel}\\
\text{and} \ \ h_z(n)&=\f{1}{\pi}\int_{0}^\pi e^{-2z(1- \cos\theta)}\cos(n\theta)d\theta. \label{eq2-heat kernel}
\end{align}
Moreover, since $\mathfrak{I}_{-n}(z)=\mathfrak{I}_n(z)$, we have $h_z(n)=h_z(-n)$ for all $n\in \ZZ$.

We now prove some estimates regrading the kernel $h_t(n)$.
\begin{lemma}\label{lem1-ht}
	For each $N \ge 0$, there exists $C_N>0$ such that 
	\[
	|h_t(n)|\le \f{C_N}{ \sqrt{t}}\Big(1+ \f{n}{\sqrt{t}}\Big)^{-2N-1}
	\]
	for $n=0$ and $n\in \ZZ$ with $|n|>N$, and $t>0$.
\end{lemma}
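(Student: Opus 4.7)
The plan is to split the argument by the relative sizes of $|n|$ and $\sqrt{t}$. The pointwise bound $|h_t(n)|\le h_t(0)$, immediate from $|\cos(n\theta)|\le 1$ applied to $h_t(n)=\frac{1}{\pi}\int_0^\pi e^{-2t(1-\cos\theta)}\cos(n\theta)\,d\theta$, together with the elementary estimate $1-\cos\theta\ge 2\theta^2/\pi^2$ on $[0,\pi]$, yields $h_t(0)\le C/\sqrt{t}$ for $t\ge 1$; combined with the trivial bound $h_t(0)\le 1$ this gives $|h_t(n)|\le C/\sqrt{t}$ for all $n$ and all $t>0$. This takes care of $n=0$ and of the sub-regime $|n|\le\sqrt{t}$, in which the factor $(1+|n|/\sqrt{t})^{-2N-1}$ is bounded below by $2^{-2N-1}$.

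For the decay regime $|n|>\sqrt{t}$ with $|n|>N$, I will use the symmetry $h_t(n)=h_t(-n)$ to reduce to $n>0$ and split on $t$. When $t\le 1$, the series expansion $h_t(n)=e^{-2t}\sum_{k\ge 0} t^{n+2k}/(k!(n+k)!)$ gives $h_t(n)\le e\cdot t^n/n!$. Since $n\ge N+1$ and $t\le 1$, one has $t^n\le t^N$, while $\sup_{n>N}n^{2N+1}/n!<\infty$, which produces the target bound $|h_t(n)|\le C_N t^N/n^{2N+1}$.

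When $t\ge 1$, I will work with the Fourier representation $h_t(n)=\frac{1}{2\pi}\int_{-\pi}^\pi e^{-2tf(\theta)}e^{-in\theta}\,d\theta$, where $f(\theta)=1-\cos\theta$, and integrate by parts $2N+1$ times in $\theta$ (boundary terms vanish by $2\pi$-periodicity) to obtain
\[
|n|^{2N+1}|h_t(n)|\le\frac{1}{2\pi}\int_{-\pi}^\pi\Big|\frac{d^{2N+1}}{d\theta^{2N+1}}e^{-2tf(\theta)}\Big|\,d\theta.
\]
Writing $\frac{d^k}{d\theta^k}e^{-2tf(\theta)}=P_k(\theta,t)e^{-2tf(\theta)}$ with $P_k(\theta,t)=\sum_{j=0}^k t^j Q_{k,j}(\theta)$, the recursion $P_{k+1}=P_k'-2t\sin\theta\cdot P_k$ yields $Q_{k+1,j}=Q_{k,j}'-2\sin\theta\cdot Q_{k,j-1}$. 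An induction on $k$ will then show that each $Q_{k,j}$ is a bounded trigonometric polynomial vanishing at $\theta=0$ to order at least $\alpha_{k,j}:=\max(0,2j-k)$. Combined with $1-\cos\theta\ge 2\theta^2/\pi^2$, this gives $\int_{-\pi}^\pi|Q_{k,j}(\theta)|e^{-2tf(\theta)}\,d\theta\lesssim t^{-(\alpha_{k,j}+1)/2}$ for $t\ge 1$. A short arithmetic check shows $j-(\alpha_{2N+1,j}+1)/2\le N$ for all $j\in\{0,1,\ldots,2N+1\}$, so summing over $j$ will produce $\int|P_{2N+1}(\theta,t)|e^{-2tf(\theta)}\,d\theta\lesssim t^N$, and hence $|h_t(n)|\lesssim t^N/|n|^{2N+1}$, as required.

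The main obstacle I anticipate is establishing the sharp order-of-vanishing statement for $Q_{k,j}$ at $\theta=0$: a naive bound $|P_k(\theta,t)|\lesssim 1+t^k$ would give only $\int|P_k|e^{-2tf}\,d\theta\lesssim t^{k-1/2}$, too large by a factor $t^{N+1/2}$ when $k=2N+1$. The inductive refinement, showing that the coefficient of $t^j$ vanishes to order at least $2j-k$ at the origin, is the crucial cancellation that, combined with the Gaussian profile of $e^{-2tf}$ near $\theta=0$, absorbs the extra powers of $t$ and produces the sharp $t^N$ bound.
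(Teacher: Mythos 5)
Your proof is correct, and it reaches the decay bound $|h_t(n)|\lesssim t^N/|n|^{2N+1}$ (for $|n|>N$) by a genuinely different route than the paper.

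The paper works uniformly in $t$ from the Bessel-function integral representation $h_t(n)=\frac{t^n}{\sqrt\pi\,\Gamma(n+1/2)}\int_{-1}^1 e^{-2t(s+1)}(1-s^2)^{n-1/2}\,ds$: after the substitution $w=\frac{t}{2}(1+s)$ it pulls a factor $(w/t)^{N+1/2}(1-w/t)^{n-1/2}\le C_N/n^{N+1/2}$ out of the integrand and evaluates the remaining Gamma integral, closing with the asymptotic $\Gamma(n-N)/\Gamma(n+1/2)\lesssim n^{-N-1/2}$. You instead split on $t$: for $t\le 1$ you read the decay directly from the power series of $\mathfrak I_n$ (which actually gives the sharper $t^n/n!$ before relaxing to $t^N/n^{2N+1}$); for $t\ge 1$ you integrate by parts $2N+1$ times on the circle in the Fourier representation and exploit the order-of-vanishing of the $Q_{k,j}$ at $\theta=0$ to offset the $t^j$ growth. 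The arithmetic check $j-(\alpha_{2N+1,j}+1)/2\le N$ is correct, and the induction $Q_{k+1,j}=Q_{k,j}'-2\sin\theta\,Q_{k,j-1}$ does propagate the claimed vanishing order $\max(0,2j-k)$, so the $t\ge 1$ estimate is sound. Both ingredients are needed because the series argument degrades for large $t$ and the Gaussian-absorption argument needs $t\gtrsim 1$ to beat the $\pi$-periodicity cutoff. The final patching of the two regimes, including the $n=0$ and $|n|\le\sqrt t$ sub-cases, is done correctly.

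It is worth noting that your $t\ge 1$ argument is, in substance, the same cancellation mechanism the paper invokes later, in Lemma~\ref{lem1-htk}: there the Fa\`a di Bruno expansion $\partial_\theta^k\varphi_t=\varphi_t\sum a_{\vec m}t^{|\vec m|}(\cos\theta)^{|\vec m_{\rm even}|}(\sin\theta)^{|\vec m_{\rm odd}|}$ and the inequality $t^{|\vec m|}\le t^{k/2}t^{|\vec m_{\rm odd}|/2}$ for $t\ge 1$ encode exactly the order-of-vanishing bookkeeping you carry out with the $Q_{k,j}$. So while the paper avoids this machinery in the proof of Lemma~\ref{lem1-ht} by relying on the Bessel integral, your approach unifies the $\ell=0$ case with the higher time-derivative estimates, at the cost of an extra case split in $t$. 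Neither route is obviously superior: the paper's is shorter and self-contained, while yours avoids the Bessel integral representation and the Gamma-ratio asymptotics entirely and makes the connection to the later kernel estimates more transparent.
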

\begin{proof}
	By \eqref{eq1-heat kernel}, we have
	\[
	h_t(n)=\f{t^n}{\sqrt{\pi} \Gamma(n+1/2)} \int_{-1}^{1}e^{-2t(s+1)}(1-s^2)^{n-1/2}ds.
	\]
	
	Using the change of variable $w=\f{1}{2}t(1+s)$, we further obtain
	\[
	\begin{aligned}
		|h_t(n)|&\les  \f{t^{n}}{   \Gamma(n+1/2)}  \int_{0}^{t}\Big(\f{2w}{t}\Big)^{n-1/2} e^{-4w} \Big[2\Big(1- \dfrac{w}{t}\Big)\Big]^{n-1/2}\f{dw}{t}\\
		&\les   \f{4^nt^{n}}{   \Gamma(n+1/2)}  \int_{0}^{t}\Big(\f{w}{t}\Big)^{n-1/2} e^{-4w} \Big(1- \dfrac{w}{t}\Big)^{n-1/2}\f{dw}{t}\\
	\end{aligned}
	\]
	Applying the following inequality
	\[
	\Big(\f{w}{t}\Big)^{N+1/2} \Big(1-
	\dfrac{w}{t}\Big)^{n-1/2}\le \Big(\f{N+1/2}{N+n}\Big)^{N+1/2} \le \f{C_N}{n^{N+1/2}},
	\]
	we have
	\[
	\begin{aligned}
		|h_t(n)|&\le  \f{C_N}{n^{N+1/2}}  \f{4^nt^{n}}{   \Gamma(n+1/2)}  \int_{0}^{t}\Big(\f{w}{t}\Big)^{n-N-1} e^{-4w}  \f{dw}{t}\\
		&\le  \f{4^{N+1}C_N}{n^{N+1/2}}  \f{t^{N}}{   \Gamma(n+1/2)}  \int_{0}^{t}(4w)^{n-N-1} e^{-4w}  d(4w)\\
		&\le  \f{C_N t^{N}}{n^{N+1/2}}  \f{\Gamma(n-N)}{   \Gamma(n+1/2)}  \\
		&\le  \f{C_N t^{N}}{n^{N+1/2}}  \f{1}{  n^{N+1/2}} = \f{C_N}{\sqrt t}\Big(\f{\sqrt {t}}{n}\Big)^{2N+1},
	\end{aligned}
	\]
	which implies that 
	\begin{equation}\label{eq1-proof lem 1}
		|h_t(n)| \le  \f{C_N}{ \sqrt{t}}\Big(\f{\sqrt{t}}{n}\Big)^{2N+1}
	\end{equation}
	for $|n|>N$.

	On the other hand, from \eqref{eq2-heat kernel},
	\[
	h_t(n) =\f{1}{\pi}\int_0^\pi e^{-2t(1-\cos\theta)}\cos(n\theta) d\theta
	\]
	for all  $n\in \mathbb Z$.

	Using the fact that $(1-\cos\theta)\gtrsim \theta^2$  for $\theta\in (0,\pi)$ we have
	\[
	\begin{aligned}
		|h_t(n)| &\le\f{1}{\pi}\int_0^\pi e^{-2ct\theta^2} d\theta\\
		&\les \f{1}{\sqrt t}
	\end{aligned}
	\]
	for all $n\in \mathbb Z$.
	
	This, along with \eqref{eq1-proof lem 1}, implies the desired estimates, i.e.,
	\[
	|h_t(n)|\le \f{C_N}{ \sqrt{t}}\Big(1+ \f{n}{\sqrt{t}}\Big)^{-2N-1}
	\]
for 	$n=0$ and $n\in \ZZ$ with $|n|>N$, and $t>0$,

\end{proof}

\begin{remark}
	In Lemma \ref{lem1-ht}, if we choose $2N = \delta \in (0,1)$, then we have
	\[
	|h_t(n)|\le \f{C_N}{ \sqrt{t}}\Big(1+ \f{n}{\sqrt{t}}\Big)^{-1-\delta}
	\]
	for all $n\in \ZZ$ and $t>0$. 
	
	It follows, by using Lemma \ref{lem-elementary}, that 
	\[
	\sup_{t>0} |e^{-t\Delta_d}f|\les\mathcal M f. 
	\]
	
	This inequality implies the weak-type $(1,1)$ and the boundedness of the operator $f\mapsto \sup_{t>0} |e^{-t\Delta_d}f|$ on $\ell^p(\ZZ)$ for $1<p\le \vc$. Hence, this gives  not only a simpler proof than that in \cite[Theorem 2]{CGRTV} for $1<p<\vc$, but also the boundedness on $\ell^\vc(\ZZ)$.
\end{remark}

\bigskip

We now set
\[
\Phi_k(\theta) = \begin{cases}
	\sin \theta, \ \ \ \ \ &\text{if $k$ is odd},\\
	\cos \theta, \ \ \ \ \ &\text{if $k$ is even},\\
\end{cases} \ \ \ \ \ \ \  \theta\in (0,\pi).
\]
Then we have the following estimate regarding the time derivatives of the kernel $h_z(n)$.
\begin{lemma}\label{lem1-htk}
For any $\ell\in \mathbb N$ and $N>0$, there exists $C=C(\ell,N)$ such that 
	\begin{equation}\label{eq1-htk}
	|\partial^\ell _zh_{z}(n)|\le \f{C}{(|z|\cos\alpha)^{\ell+1/2} }\Big(1+\f{n\cos\alpha}{\sqrt{|z|\cos\alpha }}\Big)^{-N} 
	\end{equation}
for all $n\in \mathbb Z$ and all $z\in \mathbb C_+$ with $|z|\ge 1$, where $\alpha =\arg z$.
\end{lemma}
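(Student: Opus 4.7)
The plan is to start from the integral representation \eqref{eq2-heat kernel}, differentiate under the integral sign to obtain
\[
\partial_z^\ell h_z(n) = \f{(-2)^\ell}{\pi}\int_0^\pi H_\ell(\theta)\cos(n\theta)\,d\theta,\qquad H_\ell(\theta):=(1-\cos\theta)^\ell e^{-2z(1-\cos\theta)},
\]
and estimate this integral by splitting on the size of $|n|$. Writing $L:=|z|\cos\alpha=\Ree z$, we have the pointwise bound $|H_\ell(\theta)|\le (1-\cos\theta)^\ell e^{-2L(1-\cos\theta)}$, and the factor in the conclusion reads $(1+|n|\sqrt{\cos\alpha/|z|})^{-N}=(1+|n|\cos\alpha/\sqrt L)^{-N}$.

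When $|n|\cos\alpha\le \sqrt L$, the factor $(1+|n|\cos\alpha/\sqrt L)^{-N}$ is comparable to $1$, so it suffices to prove the crude bound $|\partial_z^\ell h_z(n)|\les L^{-\ell-1/2}$; this is immediate from $(1-\cos\theta)^\ell\les\theta^{2\ell}$, $1-\cos\theta\ge c\theta^2$ on $[0,\pi]$, and a Gaussian integral estimate.

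When $|n|\cos\alpha>\sqrt L$, we integrate by parts $N$ times in $\theta$ to exploit the oscillation of $\cos(n\theta)$. Because $\cos\theta$ is invariant under $\theta\mapsto -\theta$ and $\theta\mapsto 2\pi-\theta$, $H_\ell$ is even about both endpoints $0$ and $\pi$, so every odd-order derivative $H_\ell^{(2k+1)}$ vanishes there; combined with $\sin(0)=\sin(n\pi)=0$, this kills every boundary term and leaves
\[
|\partial_z^\ell h_z(n)|\le \f{2^\ell}{\pi|n|^N}\int_0^\pi |H_\ell^{(N)}(\theta)|\,d\theta.
\]
Hence the proof reduces to the single estimate
\[
\int_0^\pi |H_\ell^{(N)}(\theta)|\,d\theta\les L^{N/2-\ell-1/2}(\cos\alpha)^{-N},
\]
since, using $\cos\alpha=L/|z|$, its right-hand side times $|n|^{-N}$ equals $L^{-\ell-1/2}(|n|\cos\alpha/\sqrt L)^{-N}$, which in the present range of $n$ absorbs $(1+|n|\cos\alpha/\sqrt L)^{-N}$.

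The key inequality is obtained by expanding $H_\ell^{(N)}$ through Leibniz together with Faà di Bruno applied to $f(u)=u^\ell e^{-2zu}$ and $u(\theta)=1-\cos\theta$. The dominant contribution for $\theta$ in the inner region $[0,1/2]$ comes from the term in which all $N$ derivatives fall on the exponential, namely $(-2z\sin\theta)^N(1-\cos\theta)^\ell e^{-2z(1-\cos\theta)}$; its modulus is bounded by $|z|^N\theta^{2\ell+N}e^{-cL\theta^2}$, and a Gaussian integration yields $|z|^N L^{-\ell-(N+1)/2}=L^{N/2-\ell-1/2}(\cos\alpha)^{-N}$, which exactly matches the target. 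The subdominant terms (those in which some derivatives land on $(1-\cos\theta)^\ell$, or Faà di Bruno terms with fewer factors of $z$) pick up only extra powers of $\cos\alpha\le 1$ and so are no larger; and the outer region $[1/2,\pi]$ contributes at most $|z|^N e^{-cL}$, which is bounded by $C_{\ell,N}|z|^N L^{-N/2-\ell-1/2}$ uniformly in $L>0$. The main obstacle lies in this bookkeeping: one must verify that in the Faà di Bruno expansion every derivative of $e^{-2z(1-\cos\theta)}$ contributes $|z|\cdot|\sin\theta|\sim|z|\theta$ rather than a bare $|z|$, for it is precisely this hidden $\theta$ factor that, after integration against the Gaussian $e^{-cL\theta^2}$, produces the correct weight $|z|/L=1/\cos\alpha$ per derivative and hence the decay scale $\sqrt{|z|/\cos\alpha}$ advertised in the statement.
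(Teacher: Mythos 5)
Your plan is essentially the same one the paper uses: write $\partial_z^\ell h_z(n)$ as $\frac{(-2)^\ell}{\pi}\int_0^\pi(1-\cos\theta)^\ell e^{-2z(1-\cos\theta)}\cos(n\theta)\,d\theta$, integrate by parts $N$ times in $\theta$ (with the boundary terms dying because $H_\ell$ is even about $0$ and $\pi$ and $\sin(n\theta)$ vanishes there), expand the resulting $\theta$-derivatives by Leibniz and Fa\`a di Bruno, and estimate the Gaussian-type integral. The paper differentiates in $\theta$ first and then in $t$ (via Lemma~2.1 of \cite{ABM}) whereas you go in the opposite order, but since the derivatives commute this is the same computation, and the dichotomy $|n|\cos\alpha\le\sqrt L$ versus $|n|\cos\alpha>\sqrt L$ matches the paper's choice of $k=0$ or $k=N$.

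The only place where you should be more careful is the final bookkeeping of the subdominant terms. You state that ``one must verify that in the Fa\`a di Bruno expansion every derivative of $e^{-2z(1-\cos\theta)}$ contributes $|z|\cdot|\sin\theta|$ rather than a bare $|z|$'' --- but that is not literally true: derivatives that fall on $u'(\theta)=\sin\theta$ produce $u''=\cos\theta$ etc., so you do get terms with fewer factors of $\sin\theta$ than factors of $z$. The correct structural fact, which is exactly the one the paper uses (it reads $t^{|\vec m|}\le t^{k/2}t^{|\vec m_{\rm odd}|/2}$ for $t\ge 1$ there), is that for every Fa\`a di Bruno term with multi-index $\vec m$ satisfying $\sum_i im_i=M$ one has $2|\vec m|-|\vec m_{\rm odd}|\le M$, i.e.\ $|\vec m|\le(M+|\vec m_{\rm odd}|)/2$. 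Combining this with $|z|\ge 1$, $\cos\alpha\le 1$, and the fact that each remaining $\theta$-power contributes $L^{-1/2}$ after the Gaussian integration, the ratio of any subdominant term to the dominant one becomes $|z|^{a}(\cos\alpha)^{b}$ with $a\le 0$ and $b\ge 0$, hence at most $1$. So your argument does go through, but the check you flag as ``the main obstacle'' is stated as a pointwise claim that fails; it should be replaced by the combinatorial inequality above, which is what actually powers the estimate and is precisely where the hypothesis $|z|\ge 1$ enters.
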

\begin{proof}
	We first prove \eqref{eq1-htk} for $z = t \ge 1$. To do this, we recall that 
	\[
	h_t(n) =\f{1}{\pi}\int_0^\pi e^{-2t(1-\cos\theta)}\cos(n\theta) d\theta
	\]
	for all  $n\in \mathbb Z$ and $t>0$.
	
	For $\vec m = (m_1,\ldots,m_k)\in \mathbb N^k$, we denote 
	\begin{align*}
		|\vec m|&=m_1+\ldots+m_k,\\
		|\vec m_{\rm odd}|&= m_1 + m_3 + \ldots + m_{2\lfloor (k+1)/2\rfloor-1},\\
		|\vec m_{\rm even}|&= m_2 + m_4 + \ldots + m_{2\lfloor k/2\rfloor}.
	\end{align*}
	Set $\varphi_t(\theta)= e^{-2t(1-\cos\theta)}$. It was proved in \cite[Lemma 2.1]{ABM} that 
	\[
		\partial^k_\theta \varphi_t(\theta)=\varphi_t(\theta)\sum_{(m_1,\ldots,m_k)\in \mathbb N^k\atop m_1+2m_2+\ldots+km_k=k}a_{\vec m} t^{|\vec m|}(\cos\theta)^{|\vec m_{\rm even}|}(\sin\theta)^{|\vec m_{\rm odd}|}
	\]
	for some constants $a_{\vec m}$.
	
	By Leibniz's rule we have,  for each $k\in \mathbb N$,
	\begin{equation}\label{eq-chainrule}
	\begin{aligned}
		\partial^k_\theta\partial^\ell_t \varphi_t(\theta)=\partial^\ell_t\partial^k_\theta \varphi(t,\theta)=&\sum_{j=0}^\ell [-2(1-\cos\theta)]^je^{-2t(1-\cos\theta)}\\
		&\times \sum_{(m_1,\ldots,m_k)\in \mathbb N^k\atop m_1+2m_2+\ldots+km_k=k}a_{\vec m,\ell} t^{|\vec m| -\ell+ j}(\cos\theta)^{|\vec m_{\rm even}|}(\sin\theta)^{|\vec m_{\rm odd}|}.
	\end{aligned}
	\end{equation}
	Note that if $k$ is odd, then
	\[
	\partial^k_\theta\partial^\ell_t \varphi(t,0)=\partial^k_\theta\partial^\ell_t \varphi(t,\pi)=0 
	\]
	for all $t>0$. 
	
	Therefore, by using the integration by parts we have
	\begin{equation}\label{eq-integration by parts fomular for the time derivative of heat kernel}
	\partial_t^\ell h_t(n) =\f{(-1)^{\lfloor (k+1)/2\rfloor}}{\pi n^k}\int_0^\pi \partial_\theta^k\partial_t^\ell  \varphi_t(\theta)\Phi_k(n\theta) d\theta.
	\end{equation}
	This, along with \eqref{eq-chainrule} and the facts that  $(1-\cos\theta)\gtrsim \theta^2$, $|\sin\theta|\le \theta$ and $|\cos\theta|\le 1$ for $\theta\in (0,\pi)$, we have
	\[
	\begin{aligned}
		|\partial_t^\ell h_t(n)|\les \sum_{j=0}^\ell\sum_{(m_1,\ldots,m_k)\in \mathbb N^k\atop m_1+2m_2+\ldots+km_k=k} \f{1}{t^\ell n^k}\int_0^\pi(t\theta^2)^je^{-ct\theta^2} t^{|\vec m|}\theta^{|\vec m_{\rm odd}|}d\theta.
	\end{aligned}
	\]
	Since $t\ge 1$ and $m_1+2m_2+\ldots+km_k=k$, it is easy to see that 
	\begin{equation}\label{eq-compare 1s - proof of lem1}
	t^{|\vec m|}\le t^{k/2}t^{|\vec m_{\rm odd}|/2}.
	\end{equation}
	Therefore,
	\[
	\begin{aligned}
		|\partial_t^\ell h_t(n)|&\les \sum_{j=0}^\ell\sum_{(m_1,\ldots,m_k)\in \mathbb N^k\atop m_1+2m_2+\ldots+km_k=k} \f{t^{k/2}}{t^\ell n^k}\int_0^\pi(t\theta^2)^je^{-ct\theta^2} t^{|\vec m_{\rm odd}|/2}\theta^{|\vec m_{\rm odd}|}d\theta\\
		&\les\f{t^{k/2}}{t^{\ell+1/2} n^{k}}
	\end{aligned}
	\]
	for all $k\in \mathbb N$ and $t\ge 1$.
	
	This follows \eqref{eq1-htk} for the case $z=t\ge 1$.
	
	For the general case $z\in \mathbb C$ with $\Re z\ge 1$, we have
	\[
	h_z(n) =\f{1}{\pi}\int_0^\pi e^{-2z(1-\cos\theta)}\cos(n\theta) d\theta.
	\]
Similarly to \eqref{eq-chainrule}, we have
\begin{equation}\label{eq-chainrule-complex}
	\begin{aligned}
		\partial^k_\theta\partial^\ell_z \varphi_z(\theta) =&\sum_{j=0}^\ell [-2(1-\cos\theta)]^je^{-2z(1-\cos\theta)}\\
		& \sum_{(m_1,\ldots,m_k)\in \mathbb N^k\atop m_1+2m_2+\ldots+km_k=k}a_{\vec m,\ell} z^{|\vec m| -\ell+ j}(\cos\theta)^{|\vec m_{\rm even}|}(\sin\theta)^{|\vec m_{\rm odd}|}.
	\end{aligned}
\end{equation}	
	At this stage, arguing similarly to the real case we have
	\[
	\begin{aligned}
		|\partial_z^\ell h_z(n)|&\les \sum_{j=0}^\ell\sum_{(m_1,\ldots,m_k)\in \mathbb N^k\atop m_1+2m_2+\ldots+km_k=k} \f{|z|^{k/2}}{|z|^\ell n^k}\int_0^\pi(|z|\theta^2)^j |e^{-cz\theta^2}| |z|^{|\vec m_{\rm odd}|/2}\theta^{|\vec m_{\rm odd}|}d\theta\\
		&\les \sum_{j=0}^\ell\sum_{(m_1,\ldots,m_k)\in \mathbb N^k\atop m_1+2m_2+\ldots+km_k=k} \f{|z|^{k/2}}{|z|^\ell n^k}\int_0^\pi(|z|\theta^2)^j e^{-c(|z|\cos\alpha)\theta^2} |z|^{|\vec m_{\rm odd}|/2}\theta^{|\vec m_{\rm odd}|}d\theta\\
		&\les \sum_{j=0}^\ell\sum_{(m_1,\ldots,m_k)\in \mathbb N^k\atop m_1+2m_2+\ldots+km_k=k} \f{|z|^{k/2}}{|z|^\ell n^k}\f{1}{(\cos\alpha)^{j+|\vec m_{\rm odd}|/2}}\\
		& \ \ \ \ \ \ \ \times\int_0^\pi\big[(|z|\cos\alpha)\theta^2\big]^j e^{-c(|z|\cos\alpha)\theta^2} (|z|\cos\alpha)^{|\vec m_{\rm odd}|/2}\theta^{|\vec m_{\rm odd}|}d\theta.
	\end{aligned}
	\]
	We now use the following inequalities
	\[
	(\cos\alpha)^{j+|\vec m_{\rm odd}|/2}\ge (\cos\alpha)^{\ell+k/2}
	\]
	and
	\[
	\int_0^\pi\big[(|z|\cos\alpha)\theta^2\big]^j e^{-c(|z|\cos\alpha)\theta^2} (|z|\cos\alpha)^{|\vec m_{\rm odd}|/2}\theta^{|\vec m_{\rm odd}|}d\theta\les (|z|\cos\alpha)^{-1/2}
	\]
	to obtain
		\[
	\begin{aligned}
		|\partial_z^\ell h_z(n)| &\les  \f{|z|^{k/2}}{|z|^\ell n^k}\f{1}{(\cos\alpha)^{\ell+k/2}}\f{1}{(|z|\cos\alpha)^{1/2}}\\
		&\les  \f{1}{(|z|\cos\alpha)^{\ell+1/2} }\Big(\f{\sqrt{|z|\cos\alpha}}{n\cos\alpha}\Big)^k.
	\end{aligned}
	\]
	This completes our proof.
\end{proof}

\begin{lemma}\label{lem2-htk}
	For any $\ell\in \mathbb N$, there exists $C=C(\ell)$ such that 
	\begin{equation}\label{eq1-htk}
		| \partial^\ell _th_{t}(n)|\le \f{C}{t^{\ell+1/2}}\Big(1+\frac{|n|}{\sqrt{t}}\Big)^{-\ell}
	\end{equation}
	for all $n\in \mathbb Z$ and all $t>0$.	
\end{lemma}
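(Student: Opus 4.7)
The bound for $t \ge 1$ follows immediately from Lemma \ref{lem1-htk} by specializing to $z = t$ (so $\alpha = 0$ and $\cos\alpha = 1$), taking $N = \ell$, and invoking the symmetry $h_t(n) = h_t(-n)$ to replace $n$ by $|n|$. Thus the only new content is the range $0 < t < 1$, and the plan is to redo the integration-by-parts argument of Lemma \ref{lem1-htk}, but restricted to the specific target decay order $\ell$ rather than an arbitrary $N$.

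For $n = 0$, the formula
$$\partial_t^\ell h_t(0) = \frac{(-2)^\ell}{\pi} \int_0^\pi (1 - \cos\theta)^\ell e^{-2t(1-\cos\theta)}\,d\theta,$$
combined with $1-\cos\theta \lesssim \theta^2$ and the substitution $u = \theta\sqrt{t}$, gives $|\partial_t^\ell h_t(0)| \lesssim t^{-\ell-1/2}$, which matches the claim since $(1+0)^{-\ell} = 1$. For $n \neq 0$ and $0 < t < 1$, I would apply identity \eqref{eq-integration by parts fomular for the time derivative of heat kernel} with $k = \ell$ and expand $\partial_\theta^\ell \partial_t^\ell \varphi_t(\theta)$ using \eqref{eq-chainrule}. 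The crucial observation is that in the sum the exponent $|\vec m| + j - \ell$ on $t$ is always nonnegative, since $\partial_t^\ell \varphi_t = (-2)^\ell (1-\cos\theta)^\ell e^{-2t(1-\cos\theta)}$ carries $(1-\cos\theta)^\ell$ as a factor and terms with $|\vec m| + j < \ell$ do not actually appear; consequently, for $t \le 1$ every factor $t^{|\vec m| + j - \ell}$ is bounded by $1$. Using $1-\cos\theta \lesssim \theta^2$, $|\sin\theta| \le \theta$, and $\int_0^\pi \theta^{2j + |\vec m_{\rm odd}|}\,d\theta \lesssim 1$, this then yields $|\partial_t^\ell h_t(n)| \lesssim |n|^{-\ell}$ uniformly in $0 < t \le 1$ and $n \neq 0$.

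It remains to check that this estimate implies the claim. For $0 < t < 1$ and $|n| \ge 1$ we have $|n|/\sqrt{t} \ge 1$, so $(1 + |n|/\sqrt{t})^{-\ell} \sim (|n|/\sqrt{t})^{-\ell}$, and therefore $t^{-\ell-1/2}(1+|n|/\sqrt{t})^{-\ell} \sim t^{-(\ell+1)/2}|n|^{-\ell} \ge |n|^{-\ell}$ since $t < 1$; thus $|\partial_t^\ell h_t(n)| \lesssim |n|^{-\ell}$ is strong enough. The main (mild) subtlety is to recognize why this proof succeeds where Lemma \ref{lem1-htk} required $t \ge 1$: inequality \eqref{eq-compare 1s - proof of lem1} there was used to trade $t^{|\vec m|}$ for $t^{k/2}t^{|\vec m_{\rm odd}|/2}$ (needed to get arbitrary decay $n^{-N}$), and this trade only works for large $t$; by fixing $k = \ell$ and accepting only the decay $n^{-\ell}$, one bypasses that inequality entirely and obtains a bound that extends uniformly to small $t$.
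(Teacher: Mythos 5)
Your proof is correct, and for the range $0<t<1$ it takes a genuinely different route than the paper's. The paper's argument produces two separate bounds — the trivial one $|\partial_t^\ell h_t(n)|\lesssim t^{-1/2}$ from $\partial_t^\ell h_t=\Delta_d^\ell h_t$ together with $\sup_n|h_t(n)|\lesssim t^{-1/2}$, and the decaying one $|\partial_t^\ell h_t(n)|\lesssim t^{-\ell-1/2}|n|^{-2\ell}$ obtained by rerunning the Lemma~\ref{lem1-htk} machinery with $k=2\ell$ and the small-$t$ substitute $t^{|\vec m|}\le t^{|\vec m_{\rm odd}|/2}$ in place of \eqref{eq-compare 1s - proof of lem1} — and then concludes by taking their geometric mean. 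You instead apply \eqref{eq-integration by parts fomular for the time derivative of heat kernel} once with $k=\ell$, observe that every $t$-exponent $|\vec m|-\ell+j$ appearing in \eqref{eq-chainrule} is nonnegative (so each power of $t$ is $\le 1$ when $t\le1$), discard the Gaussian factor, and get the $t$-uniform bound $|n|^{-\ell}$ in one shot, which suffices because the right side of \eqref{eq1-htk} is comparable to $t^{-(\ell+1)/2}|n|^{-\ell}\ge|n|^{-\ell}$ for $|n|\ge1$ and $t\le1$. Your route buys a shorter argument that bypasses the interpolation step and the detour through $k=2\ell$. The only soft spot is the justification you offer for nonnegativity: the presence of the $(1-\cos\theta)^\ell$ factor in $\partial_t^\ell\varphi_t$ explains where the sum over $j$ comes from, but the constraint on the $t$-exponent actually follows from the Leibniz expansion — the coefficient $a_{\vec m,\ell}$ arises from $\partial_t^{\ell-j}\big[t^{|\vec m|}\big]$, which vanishes unless $\ell-j\le|\vec m|$, i.e.\ $|\vec m|-\ell+j\ge0$. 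With that small repair to the explanation, the proof stands.
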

\begin{proof}
	
If $t\ge 1$, then the estimate follows directly from Lemma \ref{lem1-htk}.

Hence, it remains to show the estimate for $0<t<1$. To do this, recall from \eqref{eq2-heat kernel} that 	\[
h_t(n) =\f{1}{\pi}\int_0^\pi e^{-2t(1-\cos\theta)}\cos(n\theta) d\theta
\]
for all  $n\in \mathbb Z$.

Using the fact that $(1-\cos\theta)\gtrsim  \theta^2$ for $\theta\in (0,\pi)$, we have
\begin{equation}\label{eq-bound for ht}
\begin{aligned}
	|h_t(n)| &\le\f{1}{\pi}\int_0^\pi e^{-2ct\theta^2} d\theta\\
	&\les \f{1}{\sqrt{t}}.
\end{aligned}
\end{equation}	
On the other hand, since
\[
\partial^\ell _th_{t}(n) = \Delta_d^\ell h_{t}(n) =\sum_{j=-\ell}^\ell c_jh_{t}(n+j), 
\]
we have
\begin{equation}\label{eq1-lem2 hkz}
	\begin{aligned}
		|\partial^\ell _th_{t}(n)|&\le \f{C}{t^{1/2}}= C\f{t^{\ell}}{t^{\ell+1/2}}
	\end{aligned}
\end{equation}
for $t\in (0,1)$.

On the other hand, arguing similarly to the proof of  Lemma \ref{lem1-htk} in which instead of \eqref{eq-compare 1s - proof of lem1} we use the following inequality
\begin{equation*}\label{eq-compare - proof of lem1}
	t^{|\vec m|}\le t^{|\vec m_{\rm odd}|/2} \ \ \ \text{for $0<t<1$},
\end{equation*}
we can show that for every $a\in (0,1)$,
\begin{equation}\label{eq-partial l ht for  t small}
	\begin{aligned}
		|\partial_t^\ell h_t(n)| 
		&\les  \f{1}{t^{\ell+1/2} }\Big(\f{1}{|n|}\Big)^{ 2\ell}.
	\end{aligned}
\end{equation}
This, along with \eqref{eq1-lem2 hkz}, implies that 
\[
		|\partial_t^\ell h_t(n)| 
\les  \f{1}{t^{\ell+1/2} }\Big(\f{\sqrt{t}}{n}\Big)^{\ell }\sim \f{1}{t^{\ell+1/2} }\Big(1+\f{n}{\sqrt{t}}\Big)^{-\ell }.
\] 
This completes our proof.
\end{proof}

%As a direct consequence of Lemma \ref{lem1-htk} and Lemma \ref{lem2-htk}, we have
%\begin{corollary}\label{cor1-htk}
%	For any $\ell\in \mathbb N$ and $\epsilon>0$, there exists $C=C(\ell,\epsilon)$ such that 
%	\begin{equation}\label{eq1-htk}
%		| \partial^\ell _th_{t}(n)|\le \f{C}{t^{\ell+1/2}}\Big(1+\frac{|n|\cos\alpha}{\sqrt{t}}\Big)^{-2\ell+\epsilon}
%	\end{equation}
%	for all $n\in \mathbb Z$ and all $t>0$.	
%\end{corollary}

\begin{lemma}\label{lemma- derivative estimate - Taylor}
	For $k,\ell \in \mathbb N$ and $t\ge 1$, we define
	\[
	g_{t,k}(x) =\f{(-1)^{\lfloor (k+1)/2\rfloor}}{\pi x^k}\int_0^\pi \partial_\theta^k\partial_t^\ell  \varphi_t(\theta)\Phi_k(x\theta) d\theta,  \ \ \ x\in \mathbb R\backslash \{0\},
	\]
	and
	\[
	g_{t,0}(x) =\f{1}{\pi}\int_0^\pi e^{-2t(1-\cos\theta)}\cos(x\theta) d\theta,  \ \ \ x\in \mathbb R.
	\]
	
	Then for each $j\in \mathbb N$, $k\in \mathbb N^*$, there exists $C>0$ such that
	\[
	|\partial_x^j g_{t,k}(x)|\le \f{C}{|x|^{j}t^{\ell+1/2} }\Big(\f{|x|}{\sqrt{t}}\Big)^{-k} +  \f{C}{t^{\ell+j/2+1/2} }\Big(\f{|x|}{\sqrt{t}}\Big)^{-k}
	\]
	for all $x\in \mathbb R\backslash\{0\}$.
	
	In particular, if  $k=0$ and $j\in \mathbb N$ then we have
	\[
	|\partial_x^j g_{t,0}(x)|\le  \f{C}{t^{\ell+j/2+1/2} }
	\]
	for all $x\in \mathbb R$.
\end{lemma}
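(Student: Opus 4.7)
The plan is to differentiate under the integral sign and reuse the $\theta$-expansion of $\partial_\theta^k\partial_t^\ell \varphi_t(\theta)$ from the proof of Lemma \ref{lem1-htk}, now with $x$ (rather than $n$) playing the role of the frequency variable. For $k\ge 1$, applying Leibniz's rule to $x^{-k}\Phi_k(x\theta)$ gives
\[
\partial_x^j\bigl[x^{-k}\Phi_k(x\theta)\bigr] = \sum_{i=0}^j \binom{j}{i}\partial_x^i(x^{-k})\,\theta^{j-i}\,\Phi_k^{(j-i)}(x\theta),
\]
and since $|\partial_x^i(x^{-k})|\lesssim |x|^{-k-i}$ while all derivatives of $\Phi_k$ are uniformly bounded, I would be left with
\[
|\partial_x^j g_{t,k}(x)| \lesssim \sum_{i=0}^j |x|^{-k-i}\int_0^\pi |\partial_\theta^k\partial_t^\ell \varphi_t(\theta)|\,\theta^{j-i}\,d\theta.
\]

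Second, I would substitute the expansion \eqref{eq-chainrule} and, just as in the proof of Lemma \ref{lem1-htk}, use $(1-\cos\theta)^{j'}\le \theta^{2j'}$, $|\sin\theta|^{|\vec m_{\rm odd}|}\le \theta^{|\vec m_{\rm odd}|}$, $|\cos\theta|\le 1$ to obtain
\[
|\partial_\theta^k\partial_t^\ell \varphi_t(\theta)|\lesssim \sum_{j'=0}^\ell\,\sum_{\vec m:\,m_1+2m_2+\ldots+km_k=k} t^{|\vec m|-\ell+j'}\theta^{2j'+|\vec m_{\rm odd}|}e^{-ct\theta^2}.
\]
Multiplying by $\theta^{j-i}$, the standard Gaussian moment bound $\int_0^\infty \theta^\alpha e^{-ct\theta^2}\,d\theta \lesssim t^{-(\alpha+1)/2}$ combined with the key inequality $t^{|\vec m|}\le t^{k/2}t^{|\vec m_{\rm odd}|/2}$ from \eqref{eq-compare 1s - proof of lem1} (which crucially requires $t\ge 1$) would collapse each inner integral to at most $t^{k/2-\ell-(j-i+1)/2}$.

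Finally, factoring out $(|x|/\sqrt t)^{-k}\,t^{-\ell-1/2}$ reduces the remaining estimate to controlling
\[
\sum_{i=0}^j \frac{1}{|x|^i\,t^{(j-i)/2}},
\]
and invoking the elementary inequality $a^i b^{j-i}\le a^j+b^j$ with $a=1/|x|$, $b=1/\sqrt t$ majorises this by $|x|^{-j}+t^{-j/2}$, which yields exactly the two terms in the statement. The $k=0$ case follows the same template but more directly: differentiating $\cos(x\theta)$ produces a factor $\theta^j$, which combined with the factor $\theta^{2\ell}$ coming from $\partial_t^\ell e^{-2t(1-\cos\theta)}$ gives $\int_0^\pi\theta^{2\ell+j}e^{-ct\theta^2}d\theta\lesssim t^{-\ell-j/2-1/2}$. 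The only delicate point in the whole argument is the matching of exponents in the second step, where the hypothesis $t\ge 1$ is essential for the conversion $|\vec m|\mapsto k/2+|\vec m_{\rm odd}|/2$; after that, the remainder is bookkeeping, and the characteristic two-term structure of the estimate simply reflects the regimes $|x|\gtrsim \sqrt t$ and $|x|\lesssim \sqrt t$.
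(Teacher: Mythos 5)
Your proposal is correct and follows essentially the same route as the paper: differentiate under the integral via Leibniz, bound $|\partial_x^i(x^{-k})|\lesssim |x|^{-k-i}$, bound the remaining $\theta$-integral by re-running the argument from the proof of Lemma~\ref{lem1-htk} (the expansion \eqref{eq-chainrule}, the Gaussian moment bound, and the $t\ge 1$ exponent conversion $t^{|\vec m|}\le t^{k/2}t^{|\vec m_{\rm odd}|/2}$) now with the extra factor $\theta^{j-i}$, and finally collapse the resulting $(j+1)$-term sum into its two extremal terms. You also implicitly repair two typos in the paper's text: you correctly carry $\theta^{j-i}$ where the paper's displayed formula writes $\theta^j$, and you correctly treat $g_{t,0}$ as carrying the $\partial_t^\ell$ derivative (the paper's displayed definition of $g_{t,0}$ omits it, but the claimed bound $C/t^{\ell+j/2+1/2}$ and the way $g_{t,0}$ is used in the mean-value argument of Lemma~\ref{lem-htk and difference derivatives} make the intended definition clear).
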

\begin{proof}
	The case $k=0$ is similar and even easier. Therefore, we need only to prove the first estimate for $k\ge 1$.
	
	Indeed, we have
	\[
	\begin{aligned}
		|\partial_x^j g_{t,k}(x)|&\les  \sum_{i=0}^j\Big|\partial_x^i\Big(\f{1}{x^k}\Big)\Big|\int_0^\pi |\partial_\theta^k\partial_t^\ell  \varphi_t(\theta)\theta^j \Phi^{(j-i)}_k(x\theta)| d\theta.
	\end{aligned}
	\]
	It is clear that 
	\[
	\Big|\partial_x^i\Big(\f{1}{x^k}\Big)\Big|\les \f{1}{|x|^{k+i}}, \ \ x\ne 0.
	\]
	On the other hand, arguing similarly to the proof of Lemm \ref{lem2-htk}, we come up with
	\[
	\int_0^\pi |\partial_\theta^k\partial_t^\ell  \varphi_t(\theta)\theta^j \Phi^{(j-i)}_k(x\theta)| d\theta\les \f{t^{k/2}}{t^{\ell+(j-i)/2+1/2}}.
	\]
	Therefore,
	\[
	\begin{aligned}
		|\partial_x^j g_{t,k}(x)|&\les  \sum_{i=0}^j \f{1}{|x|^{k+i}}\f{t^{k/2}}{t^{\ell+(j-i)/2+1/2}} =\sum_{i=0}^j \f{C}{t^{\ell+j/2+1/2} }\Big(\f{|x|}{\sqrt{t}}\Big)^{-k-i}\\
		&\les \f{1}{|x|^{j}t^{\ell+1/2} }\Big(\f{|x|}{\sqrt{t}}\Big)^{-k} +  \f{C}{t^{\ell+j/2+1/2} }\Big(\f{|x|}{\sqrt{t}}\Big)^{-k}
	\end{aligned}
	\]
	for  all $x\in \mathbb R\backslash\{0\}$.
	
	This completes our proof.
\end{proof}
%\begin{corollary}\label{cor1-htk}
%	For any $\ell\in \mathbb N$ and $\epsilon>0$, there exists $C=C(\ell,\epsilon)$ such that 
%	\begin{equation}\label{eq1-htk}
%		| \partial^\ell _th_{t}(n)|\le \f{C}{t^{\ell+1/2}}\Big(1+\frac{|n|\cos\alpha}{\sqrt{t}}\Big)^{-2\ell+\epsilon}
%	\end{equation}
%	for all $n\in \mathbb Z$ and all $t>0$.	
%\end{corollary}

The following estimate is about the difference derivative estimates of the heat kernel. We have:
\begin{lemma}\label{lem-htk and difference derivatives}
	For any $\ell\in \mathbb N$, there exists $C=C(\ell)$ such that 
	\begin{equation}\label{eq1-htk difference derivatives}
		| D\partial^\ell _th_{t}(n)|+| D^*\partial^\ell _th_{t}(n)|\le \f{C}{t^{\ell+1}}\Big(1+\frac{|n|}{\sqrt{t}}\Big)^{-\ell}
	\end{equation}
	for all $n\in \mathbb Z$ and all $t>0$.	
\end{lemma}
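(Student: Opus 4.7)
The plan is to split into the cases $0<t<1$ and $t\ge 1$. For $0<t<1$ the estimate follows immediately from the triangle inequality and Lemma \ref{lem2-htk}:
\[
|D\partial_t^\ell h_t(n)|\le |\partial_t^\ell h_t(n+1)|+|\partial_t^\ell h_t(n)|\les \f{1}{t^{\ell+1/2}}\Big(1+\f{|n|}{\sqrt t}\Big)^{-\ell}\le \f{C}{t^{\ell+1}}\Big(1+\f{|n|}{\sqrt t}\Big)^{-\ell},
\]
the last inequality being $t^{-\ell-1/2}\le t^{-\ell-1}$ for $t<1$; the case of $D^*$ is identical in view of $D^*\partial_t^\ell h_t(n)=D\partial_t^\ell h_t(n-1)$. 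All of the content is therefore in the regime $t\ge 1$.

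For $t\ge 1$, I would start from \eqref{eq2-heat kernel} combined with the identity $\cos((n+1)\theta)-\cos(n\theta)=-2\sin(\theta/2)\sin((n+1/2)\theta)$ to obtain
\[
D\partial_t^\ell h_t(n)=-\f{2}{\pi}\int_0^\pi G_\ell(\theta)\sin((n+1/2)\theta)\,d\theta,\quad G_\ell(\theta):=(-1)^\ell 4^\ell\sin^{2\ell+1}(\theta/2)\varphi_t(\theta).
\]
The extra factor $\sin(\theta/2)\sim\theta$ near $0$ is precisely where the $t^{-1/2}$ improvement over Lemma \ref{lem2-htk} originates: the crude bound with $|\sin((n+1/2)\theta)|\le 1$, $(1-\cos\theta)\gtrsim\theta^2$ and $\int_0^\pi\theta^{2\ell+1}e^{-ct\theta^2}d\theta\les t^{-\ell-1}$ immediately gives $|D\partial_t^\ell h_t(n)|\les t^{-\ell-1}$, which settles the range $|n|\le\sqrt t$. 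For $|n|>\sqrt t$ I would integrate by parts $\ell$ times against $\sin((n+1/2)\theta)$. Two structural features of $G_\ell$ make every boundary term vanish: \emph{(i)} $G_\ell$ is an odd function of $\theta$ around $0$ with a zero of exact order $2\ell+1$, so $G_\ell^{(j)}(0)=0$ for $j\le 2\ell$; \emph{(ii)} $G_\ell(2\pi-\theta)=G_\ell(\theta)$ (both $\sin^{2\ell+1}(\theta/2)$ and $\varphi_t$ are symmetric about $\pi$), so $G_\ell^{(j)}(\pi)=0$ for odd $j$. The remaining even-order boundary contributions at $\pi$ are annihilated by $\cos((n+1/2)\pi)=0$, and the odd-order contributions at $0$ by $\sin(0)=0$, so one arrives at
\[
|D\partial_t^\ell h_t(n)|\les \f{1}{|n|^\ell}\int_0^\pi|G_\ell^{(\ell)}(\theta)|\,d\theta.
\]

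The remaining task is to establish $\int_0^\pi|G_\ell^{(\ell)}(\theta)|\,d\theta\les t^{-\ell/2-1}$, which would yield $|D\partial_t^\ell h_t(n)|\les t^{-\ell-1}(\sqrt t/|n|)^\ell$ and hence the lemma after combining with the $n$-independent bound above. Expanding $G_\ell^{(\ell)}$ by the Leibniz rule applied to $\sin^{2\ell+1}(\theta/2)\varphi_t(\theta)$, I would use $|[\sin^{2\ell+1}(\theta/2)]^{(j)}(\theta)|\les \theta^{(2\ell+1-j)_+}$, coming from the order-$(2\ell+1)$ zero at $0$, together with $|\varphi_t^{(m)}(\theta)|\les t^{m/2}e^{-ct\theta^2}\sum(\sqrt t\,\theta)^{|\vec m_{\rm odd}|}$, derived from the chain-rule expansion \eqref{eq-chainrule} and the inequality $t^{|\vec m|}\le t^{m/2}t^{|\vec m_{\rm odd}|/2}$ valid for $t\ge 1$; the resulting Gaussian integrals collapse to exactly $t^{-\ell/2-1}$. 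The $D^*$ case follows at once from $D^*\partial_t^\ell h_t(n)=D\partial_t^\ell h_t(n-1)$. The main obstacle is the rigorous verification that every boundary term genuinely vanishes at each of the $\ell$ integrations by parts, together with the delicate bookkeeping needed to extract the sharp exponent of $t$ from the Leibniz expansion — in particular, correctly exploiting the small-$\theta$ smallness of $[\sin^{2\ell+1}(\theta/2)]^{(j)}$ for $j\le 2\ell$, without which one loses a factor of $\sqrt t$.
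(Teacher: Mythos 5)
Your analysis of the regime $t\ge 1$ is a correct and genuinely different route from the paper's. Rather than passing through the auxiliary functions $g_{t,k}$ of Lemma~\ref{lemma- derivative estimate - Taylor} and invoking the mean value theorem (as the paper does), you exploit the product-to-sum identity $\cos((n+1)\theta)-\cos(n\theta)=-2\sin(\theta/2)\sin((n+1/2)\theta)$ to write $D\partial_t^\ell h_t(n)$ directly as an oscillatory integral against $\sin((n+1/2)\theta)$, with an extra $\sin(\theta/2)$ absorbed into the amplitude $G_\ell$. The extra factor $\sin(\theta/2)\sim\theta$ is exactly where the gain of $t^{-1/2}$ over Lemma~\ref{lem2-htk} comes from, and your observations that the boundary terms at $0$ vanish because $G_\ell$ has an order-$(2\ell+1)$ zero, and that those at $\pi$ vanish by the interplay of the symmetry $G_\ell(2\pi-\theta)=G_\ell(\theta)$ with $\cos((n+1/2)\pi)=0$ and $\sin(0)=0$, are both correct; your Leibniz bookkeeping with $t^{|\vec m|}\le t^{k/2}t^{|\vec m_{\rm odd}|/2}$ then delivers $\int_0^\pi|G_\ell^{(\ell)}|\,d\theta\lesssim t^{-\ell/2-1}$ as claimed. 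This is arguably cleaner than the paper's route because it makes the role of the discrete difference manifest in Fourier space and is self-contained.

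The case $0<t<1$, however, has a genuine gap. Your chain
$$|D\partial_t^\ell h_t(n)|\le |\partial_t^\ell h_t(n+1)|+|\partial_t^\ell h_t(n)|\les \f{1}{t^{\ell+1/2}}\Big(1+\f{|n|}{\sqrt t}\Big)^{-\ell}$$
is false as stated: Lemma~\ref{lem2-htk} gives $|\partial_t^\ell h_t(n+1)|\les t^{-\ell-1/2}(1+|n+1|/\sqrt t)^{-\ell}$, and $(1+|n+1|/\sqrt t)^{-\ell}\lesssim(1+|n|/\sqrt t)^{-\ell}$ fails uniformly in $t$. The bad point is $n=-1$: there $(1+|n+1|/\sqrt t)^{-\ell}=1$ whereas $(1+|n|/\sqrt t)^{-\ell}\sim t^{\ell/2}\to 0$. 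The best the triangle inequality gives at $n=-1$ is $|D\partial_t^\ell h_t(-1)|\lesssim t^{-\ell-1/2}$, coming from $|\partial_t^\ell h_t(0)|$; but the target of the lemma at $n=-1$ is $t^{-\ell-1}(1+1/\sqrt t)^{-\ell}\sim t^{-\ell/2-1}$, and $t^{-\ell-1/2}\gg t^{-\ell/2-1}$ as $t\to 0$ for every $\ell\ge 2$. So for $\ell\ge 2$ the claimed estimate does not follow from the triangle inequality. The underlying issue is that for $t$ small and $n=\pm 1$, the points $n$ and $n+1$ straddle the peak of $h_t$ at $0$, and the two terms in $Dh_t$ are large and nearly equal; the estimate genuinely requires the cancellation in the difference, which the triangle inequality discards. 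The paper recovers this cancellation by extending $\partial_t^\ell h_t$ to a smooth function $g_{t,k}$ of a real variable and applying the mean value theorem $g_{t,k}(n+1)-g_{t,k}(n)=g_{t,k}'(\xi)$, then bounding $g_{t,k}'$ via the integral formula and the small-$t$ substitution $t^{|\vec m|}\le t^{|\vec m_{\rm odd}|/2}$. Note also that your Leibniz estimate from the $t\ge 1$ case relies on $t^{|\vec m|}\le t^{k/2}t^{|\vec m_{\rm odd}|/2}$, which is simply false for $t<1$, so the $t\ge 1$ argument cannot be transplanted wholesale either; some version of the paper's small-$t$ substitution is needed.
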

\begin{proof}
From \eqref{eq-integration by parts fomular for the time derivative of heat kernel}, we have, for any $k\in \mathbb N$,
	$$
		D\partial_t^\ell h_t(n) =g_{t,k}(n+1)-g_{t,k}(n).
	$$
	where $g_{t,k}$ is the function defined in Lemma \ref{lemma- derivative estimate - Taylor}.
	
	For $t\ge 1$ and $|n|\ge \sqrt t$, applying Lemma  \ref{lemma- derivative estimate - Taylor} and  the mean value theorem  for    $k=\ell$, we have
	\[
	\begin{aligned}
		|D\partial_t^\ell h_t(n)|&\les \f{1}{|n|t^{\ell+1/2} }\Big(\f{|n|}{\sqrt{t}}\Big)^{-\ell} +  \f{1}{t^{\ell+1} }\Big(\f{|n|}{\sqrt{t}}\Big)^{-\ell}\\
		&\les    \f{1}{t^{\ell+1} }\Big(\f{|n|}{\sqrt{t}}\Big)^{-\ell}.
	\end{aligned}
	\] 
		For $t\ge 1$ and $|n|< \sqrt t$, applying Lemma  \ref{lemma- derivative estimate - Taylor} and  the mean value theorem again  but for $k=0$, we have
	\[
	\begin{aligned}
		|D\partial_t^\ell h_t(n)|&\les \f{1}{t^{\ell+1} }.
	\end{aligned}
	\] 
		These two estimates imply that for $t\ge 1$,
	\[
	\begin{aligned}
		|D\partial_t^\ell h_t(n)|&\les \f{1}{t^{\ell+1} }\Big(1+\f{n}{\sqrt{t}}\Big)^{-\ell}.
	\end{aligned}
	\]

	For $t\in (0,1)$, from \eqref{eq-bound for ht}, we have
	\[
	\begin{aligned}
		|D\partial_t^\ell h_t(n)|=|D\Delta_d^\ell h_t(n)| & \les \f{1}{\sqrt t},
	\end{aligned}
	\]
	which implies the desired estimate for $t\in (0,1)$ and $|n|<\sqrt t$.
	
	For $t\in (0,1)$ and $|n|\ge \sqrt t$, from above inequality,
	\begin{equation}\label{eq1-D}
	|D\partial_t^\ell h_t(n)|=|D\Delta_d^\ell h_t(n)|\les \f{t^\ell}{t^{\ell+1}}.
	\end{equation}

	On the other hand, similarly to the proof of Lemma \ref{lemma- derivative estimate - Taylor}, we have, for every $k\in \mathbb N$ and $x\ne 0$,
		\[
	\begin{aligned}
		|\partial_x g_{t,k}(x)|&\les  \f{1}{|x|^{k+1}}\int_0^\pi |\partial_\theta^k\partial_t^\ell  \varphi_t(\theta)\Phi_k(x\theta)| d\theta+\f{1}{|x|^{k}}\int_0^\pi |\partial_\theta^k\partial_t^\ell  \varphi_t(\theta)\theta \Phi'_k(x\theta)| d\theta.
	\end{aligned}
	\]
	Then arguing similarly to \eqref{eq-partial l ht for  t small}, 
	\[
	\begin{aligned}
		|\partial_x g_{t,k}(x)|\les  \f{1}{|x|t^{\ell+1/2} }\Big(\f{1}{|x|}\Big)^{ k}+\f{1}{t^{\ell+1} }\Big(\f{1}{|x|}\Big)^{ k}
	\end{aligned}
	\]
	On the other hand, from \eqref{eq-integration by parts fomular for the time derivative of heat kernel},
	\[
	D\partial^\ell _th_{t}(n)= g_t(n+1) - g_t(n).
	\]
	These two estimates above and the mean value theorem imply that for each $k=2\ell$, 
	\[
	\begin{aligned}
			|D\partial^\ell _th_{t}(n)|&\les  \f{1}{|n|t^{\ell+1/2} }\Big(\f{1}{|n|}\Big)^{ 2\ell}+\f{1}{t^{\ell+1} }\Big(\f{1}{|n|}\Big)^{ 2\ell}\\
	&\les \f{1}{t^{\ell+1} }\Big(\f{1}{|n|}\Big)^{ 2\ell}
	\end{aligned}
		\]
		for $t\in (0,1)$ and $|n|\ge \sqrt t$.
		
		At this stage, interpolating this and \eqref{eq1-D}, we come up with
		\[
				| D\partial^\ell _th_{t}(n)|\le \f{C}{t^{\ell+1}}\Big(1+\frac{|n|}{\sqrt{t}}\Big)^{-\ell}
	\]
for all $t\in (0,1)$ and $|n|\ge \sqrt t$.

Similarly,
		\[
| D^*\partial^\ell _th_{t}(n)|\le \f{C}{t^{\ell+1}}\Big(1+\frac{|n|}{\sqrt{t}}\Big)^{-\ell}
\]
for all $n\in \mathbb Z$ and all $t>0$.

This completes our proof.
\end{proof}

For the higher difference derivative of the heat kernel, denote by $\mathfrak{D}$ either $D$ or $D^*$ and by $\mathfrak{D}^*$ the conjugate of $\mathfrak{D}$. Then we have:
\begin{lemma}\label{lem-htk and HIGHER difference derivatives}
	For any $\ell, k\in \mathbb N$, there exists $C=C(\ell, k)$ such that 
	\begin{equation}\label{eq1-htk HIGHER difference derivatives}
		| \mathfrak{D}^k \partial^{k\ell} _th_{t}(n)|\le \f{C}{t^{k\ell+(k+1)/2}}\Big(1+\frac{|n|}{\sqrt{t}}\Big)^{- \ell}
	\end{equation}
	for all $n\in \mathbb Z$ and all $t>0$.	
\end{lemma}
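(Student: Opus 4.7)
The plan rests on the algebraic identity
\[
\mathfrak{D}^2 = T_{\pm 1}(-\Delta_d),
\]
where $T_j f(n) = f(n+j)$ is translation and the sign is $+$ for $\mathfrak D=D$, $-$ for $\mathfrak D=D^*$. This is verified by direct expansion: $D^2f(n)=f(n+2)-2f(n+1)+f(n)=-\Delta_d f(n+1)$, and $(D^*)^2 f(n)=f(n)-2f(n-1)+f(n-2)=-\Delta_d f(n-1)$. Since $T_{\pm 1}$ and $\Delta_d$ commute (both are shift-invariant), iteration yields $\mathfrak{D}^{2j}=T_{\pm j}(-\Delta_d)^j$; and because $(-\Delta_d)^j h_t = \partial_t^j h_t$ on the semigroup, every two discrete derivatives can be traded for one time derivative modulo a unit shift. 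I would use this to reduce the claim directly to Lemmas~\ref{lem-htk and difference derivatives} and~\ref{lem2-htk}, with no need for induction or interpolation. The case $k=0$ is just Lemma~\ref{lem1-ht}.

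For the even case $k=2j$ with $j\ge 1$, the identity gives
\[
\mathfrak{D}^{k}\partial_t^{k\ell} h_t(n) \;=\; T_{\pm j}\,\partial_t^{\,j+2j\ell}h_t(n) \;=\; \partial_t^{\,j(2\ell+1)}h_t(n\pm j),
\]
so Lemma~\ref{lem2-htk} applied with time-derivative order $\ell'=j(2\ell+1)$ immediately yields
\[
|\mathfrak{D}^{k}\partial_t^{k\ell}h_t(n)| \;\le\; \frac{C}{t^{\,j(2\ell+1)+1/2}}\Big(1+\frac{|n\pm j|}{\sqrt t}\Big)^{-j(2\ell+1)}.
\]
A direct check shows $j(2\ell+1)+1/2=k\ell+(k+1)/2$ and $j(2\ell+1)\ge \ell$; the unit shift is absorbed by $|n\pm j|\asymp|n|$ when $|n|\ge 2j$, and by boundedness of $(1+|n|/\sqrt t)^{-\ell}$ otherwise. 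For the odd case $k=2j+1$, peeling one $\mathfrak D$ off gives $\mathfrak{D}^{k}\partial_t^{k\ell}h_t = T_{\pm j}\,\mathfrak D\,\partial_t^{\,\ell'}h_t$ with $\ell'=j+(2j+1)\ell$; then Lemma~\ref{lem-htk and difference derivatives} supplies the bound
\[
|\mathfrak{D}^{k}\partial_t^{k\ell}h_t(n)| \;\le\; \frac{C}{t^{\,\ell'+1}}\Big(1+\frac{|n\pm j|}{\sqrt t}\Big)^{-\ell'},
\]
and the arithmetic checks $\ell'+1=k\ell+(k+1)/2$ together with $\ell'\ge \ell$.

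The main point is simply to spot the identity $\mathfrak{D}^2 = T_{\pm 1}(-\Delta_d)$; once it is noticed, the rest is routine bookkeeping in the exponents. Without this observation, the natural alternatives — induction on $k$ via the semigroup splitting $h_t = h_{t/2}\ast h_{t/2}$, or direct integration by parts in the Fourier representation — would force one to handle delicate discrete convolution estimates on $\mathbb Z$ and would incur logarithmic losses at the borderline polynomial-decay rate $\ell=1$, so the algebraic shortcut above is really what makes the clean bound with exponent $k\ell + (k+1)/2$ and pure $(1+|n|/\sqrt t)^{-\ell}$ decay possible.
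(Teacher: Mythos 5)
Your algebraic reduction is genuinely different from the paper's argument (the paper splits the semigroup as $\mathfrak{D}^2\partial_t^{2\ell}e^{-t\Delta_d}=\mathfrak{D}\partial_t^{\ell}e^{-(t/2)\Delta_d}\circ\mathfrak{D}^*\partial_t^{\ell}e^{-(t/2)\Delta_d}$ and composes the kernel bounds of Lemma \ref{lem-htk and difference derivatives}), and the identity $\mathfrak D^2=T_{\pm 1}(-\Delta_d)$ together with the exponent bookkeeping are all correct. However, the step ``the unit shift is absorbed \dots by boundedness of $(1+|n|/\sqrt t)^{-\ell}$ otherwise'' is a genuine gap. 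The factor $(1+|n|/\sqrt t)^{-\ell}$ is \emph{not} bounded below uniformly in $t$: it tends to $0$ as $t\to 0$ for any fixed $n\neq 0$. Concretely, take $k=2$, $\ell=1$, $n=\mp 1$, so that $n\pm 1=0$ and $\ell'=3$. Lemma \ref{lem2-htk} then gives
\[
|\mathfrak{D}^2\partial_t^2 h_t(\mp 1)| = |\partial_t^3 h_t(0)| \le \frac{C}{t^{7/2}},
\]
with no decay factor, while the target bound is
\[
\frac{C}{t^{7/2}}\Big(1+\frac{1}{\sqrt t}\Big)^{-1} \asymp \frac{C}{t^{3}} \quad \text{for } t<1,
\]
which is strictly smaller. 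So the estimate you obtain from Lemma \ref{lem2-htk} at the shifted origin is insufficient for small $t$ whenever $|n\pm j|<|n|$, and there is no constant $C$ that closes the gap.

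The conclusion is still true, but proving it along your lines requires one more ingredient: for $0<t<1$ you must invoke the crude bound $|\partial_t^{\ell'}h_t(\cdot)| \lesssim t^{-1/2}$ (exactly as in \eqref{eq1-lem2 hkz} in the proof of Lemma \ref{lem2-htk}), and then verify that $t^{-1/2}\lesssim t^{-(\ell'+1/2)}(1+|n|/\sqrt t)^{-\ell}$ holds in the regime $|n|<2j$, $t<1$, which reduces to $\ell'\ge \ell/2$ and is satisfied since $\ell'=j(2\ell+1)\ge 2\ell+1$ for $j\ge 1$ (respectively $\ell'=j+(2j+1)\ell\ge \ell$ in the odd case). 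In other words, the ``interpolation for small $t$'' that already appears inside the proof of Lemma \ref{lem2-htk} cannot be skipped here; without stating it, your argument does not yield the claimed inequality. Once that step is added, the even case reduces cleanly to Lemma \ref{lem2-htk} and the odd case to Lemma \ref{lem-htk and difference derivatives}, which is indeed tidier than the paper's composition-plus-induction scheme and avoids the borderline issue with discrete convolution of two $(1+|m|)^{-\ell}$ factors when $\ell\le 1$.
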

\begin{proof}
	We need only to give the proof for the case $k=2$. The case $k\ge 3$ can be done by induction. Note that 
	\[
	\mathfrak{D}e^{-t\Delta_d} f= e^{-t\Delta_d} \mathfrak{D}^*f.
	\]
	It follows that 
	\[
	\mathfrak{D}^2\partial^{2\ell}e^{-t\Delta_d}=\mathfrak{D}\partial^{\ell}e^{-\f{t}{2}\Delta_d} \circ \mathfrak{D}^*\partial^{\ell}e^{-\f{t}{2}\Delta_d}.
	\]
	This, along with Lemma \ref{lem-htk and difference derivatives}, implies the desired estimates.
	
	This completes our proof.
\end{proof}
\subsection{A functional calculus of $\Delta_d$}

In this section, we will investigate some kernel estimates of the functional calculus of $\Delta_d$. Firstly, we have:
\begin{lemma}
	\label{lem1-spectral multiplier}
	For any $q\in (4,\vc]$,	 there exists $C=C(q)>0$ such that 
	\[
	\sum_{m\in \mathbb Z} | F(\sqrt{\Delta_d})(m,n)|^2\le CR\|\delta_RF\|_q^2
	\]
	for any Borel function $F$ such that $\supp F\subset [0,R]$ with $R\in (0,2]$, where $\delta_RF=F(R\cdot)$.
\end{lemma}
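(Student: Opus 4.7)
The plan is to reduce the estimate to a one-dimensional Plancherel computation on the circle and then an application of H\"older's inequality, exploiting the condition $q>4$ at the spectral edge $\lambda=2$.

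First, using the fact that $e_\theta(n)=e^{in\theta}$ are generalized eigenfunctions with $\Delta_d e_\theta = 2(1-\cos\theta)e_\theta$, the spectral theorem expresses
\[
F(\sqrt{\Delta_d})(m,n) = \frac{1}{2\pi}\int_{-\pi}^{\pi} F\!\left(\sqrt{2(1-\cos\theta)}\right) e^{i(n-m)\theta}\,d\theta,
\]
i.e.\ the kernel in the variable $m$ is, up to translation by $n$, the inverse Fourier coefficient sequence of $\theta\mapsto F(\sqrt{2(1-\cos\theta)})$. Applying Parseval on $[-\pi,\pi]$ then gives
\[
\sum_{m\in\mathbb Z}|F(\sqrt{\Delta_d})(m,n)|^2 = \frac{1}{2\pi}\int_{-\pi}^{\pi}\bigl|F(\sqrt{2(1-\cos\theta)})\bigr|^2 d\theta,
\]
which is independent of $n$ (as it should be, by translation invariance of $\Delta_d$).

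Second, I perform the change of variable $\lambda=\sqrt{2(1-\cos\theta)}$ on $\theta\in[0,\pi]$, so that $\lambda$ runs over $[0,2]$ and $d\theta = 2(4-\lambda^2)^{-1/2}\,d\lambda$. This yields
\[
\sum_{m\in\mathbb Z}|F(\sqrt{\Delta_d})(m,n)|^2 \;\le\; C\int_0^R \frac{|F(\lambda)|^2}{\sqrt{4-\lambda^2}}\,d\lambda,
\]
since $\supp F\subset[0,R]$. Now I apply H\"older's inequality with exponents $q/2$ and $(q/2)'=q/(q-2)$ (legitimate because $q>4>2$), obtaining
\[
\int_0^R \frac{|F(\lambda)|^2}{\sqrt{4-\lambda^2}}\,d\lambda \;\le\; \|F\|_{L^q([0,R])}^2 \left(\int_0^R (4-\lambda^2)^{-\frac{q}{2(q-2)}}d\lambda\right)^{\!(q-2)/q}.
\]

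Third, I check that the weight integral is $O(R^{(q-2)/q})$ uniformly in $R\in(0,2]$. Set $\alpha=q/(2(q-2))$; the condition $q>4$ is precisely $\alpha<1$, so $(2-\lambda)^{-\alpha}$ is integrable near $\lambda=2$. For $R\le 1$ the weight is bounded and the integral is $O(R)$; for $R\in[1,2]$ the integral is bounded by a finite constant, hence $O(R)$ as well. Raising to the power $(q-2)/q$ gives the bound $C R^{(q-2)/q}$. Finally, the scaling identity $\|F\|_{L^q([0,R])}=R^{1/q}\|\delta_R F\|_{L^q([0,1])}$ yields $\|F\|_{L^q}^2=R^{2/q}\|\delta_R F\|_q^2$, and multiplying the two factors produces the claimed $CR\|\delta_R F\|_q^2$.

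The only nontrivial point is verifying that $q>4$ gives integrability of $(4-\lambda^2)^{-\alpha}$ up to the spectral edge; everything else is Plancherel, change of variables, and H\"older. In particular, the sharpness of the threshold $q=4$ is built into the geometry of the Plancherel measure $d\theta/d\lambda$ having an inverse square-root singularity at $\lambda=2$.
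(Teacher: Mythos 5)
Your proof is correct and follows essentially the same route as the paper's: Plancherel on the circle reduces the sum to a one-dimensional integral of $|F(\sqrt{2(1-\cos\theta)})|^2$, and a H\"older split isolates the inverse-square-root singularity of the spectral density at $\lambda=2$, which is integrable to the power $q/(2(q-2))$ precisely when $q>4$. The only cosmetic difference is that the paper stays in the variable $\theta$ and inserts $\cos\theta$ as an artificial Jacobian factor inside the H\"older application (substituting $u=2\sin\theta$ at the end), whereas you change to the spectral variable $\lambda$ upfront and display the density $(4-\lambda^2)^{-1/2}$ explicitly; your version tracks the $R$-dependence of both H\"older factors a bit more transparently, but the underlying mechanism and the threshold $q=4$ are identified the same way.
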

\begin{proof}
We have
\[
F(\sqrt{\Delta_d})(m,n) = \mathcal F_{\ZZ}^{-1}\big[F(\sqrt{2(1-\cos\theta)})\big](m-n)
\]
Therefore,
\[
\begin{aligned}
\sum_{m\in \mathbb Z} | F(\sqrt{\Delta_d})(m,n)|^2 &=\big\|\mathcal F_{\ZZ}^{-1}\big[F(\sqrt{2(1-\cos\theta)})\big]\big\|^2_{\ell^2(\ZZ)}\\
&=\big\| F(\sqrt{2(1-\cos\theta)}) \big\|^2_{L^2[-\pi,\pi]}\\
&=\int_{-\pi}^\pi |F(\sqrt{2(1-\cos\theta)})|^2 d\theta\\
&=\int_{-\pi}^\pi |F(2\sin(|\theta|/2))|^2 d\theta \sim \int_{0}^{\pi/2} |F(2\sin\theta)|^2 d\theta.
\end{aligned}
\]	
By H\"older's inequality, for $p\in (2,\vc]$,
\[
\begin{aligned}
	\int_{0}^{\pi/2} |F(2\sin\theta)|^2 d\theta& \le \Big[\int_{0}^{\pi/2} |F(2\sin\theta)|^{2p} \cos\theta d\theta\Big]^{1/p}\Big[\int_{0}^{\pi/2}  (\cos\theta)^{1-p'} d\theta\Big]^{1/p'}.
\end{aligned}
\]
Since $\displaystyle \int_{0}^{\pi/2}  (\cos\theta)^{1-p'} d\theta<\vc$ for $p\in (2,\vc]$,
\[
\begin{aligned}
	\int_{0}^{\pi/2} |F(2\sin\theta)|^2 d\theta& \les \Big[\int_{0}^{\pi/2} |F(2\sin\theta)|^{2p} \cos\theta d\theta\Big]^{1/p}\\
	& \sim \Big[\int_{0}^{2} |F(u)|^{2p} du\Big]^{1/p}\\
&\les R\|\delta_R F\|_{2p}^2	
\end{aligned}
\]
for every $p\in (2,\vc]$.

This completes our proof.
	%Set $F_1(\lambda)=e^{-\lambda/R^2}$ and $F_2(\lambda) = F(\lambda)e^{-\lambda/R^2}$. Then we have
	%\[
	%F(\sqrt{\Delta_d})=F_2(\sqrt{\Delta_d})F_1(\sqrt{\Delta_d}),
	%\]
	%which implies
	%\[
	%F(\sqrt{\Delta_d})(m,n) = F_2(\sqrt{\Delta_d})\big[h_{R^{-2}}(\cdot-n)\big](m).
	%\]
	%Therefore,
	%\[
	%\begin{aligned}
	%	\|F(\sqrt{\Delta_d})(\cdot,n)\|_{\ell^2(\mathbb Z)}^2&\le \|F_2(\sqrt{\Delta_d})\|^2_{\ell^2(\mathbb Z) \to \ell^2(\mathbb Z)} \|h_{R^{-2}}(\cdot-n)\|^2_{\ell^2(\mathbb Z)}\\
	%	&\les \|F_2 \|^2_{\infty} \f{1}{\ceil{1/R}}\\
	%	&\les \|F\|^2_{\infty} \f{1}{\ceil{1/R}}.
	%\end{aligned}
	%\]
	%This completes our proof.
\end{proof}

\begin{remark}
	In the continuous case of the Laplacian on $\mathbb R$, Lemma \ref{lem1-spectral multiplier} holds true for $q=2$. In our discrete setting, it is not clear to us if Lemma \ref{lem1-spectral multiplier} 
	holds true for some $q\in [2,4]$.
\end{remark}

\begin{proposition}\label{prop1-spectral multiplier}
	Let $q\in (4,\vc]$, $R\in (0,2]$ and $s>0$. Then for any $\epsilon>0$ there exists a constant $C=C(q, s,\epsilon)$ such that
	\begin{equation}
		\label{eq-prop1-spectral multiplier}
			\sum_{m\in\mathbb Z} |F(\sqrt{\Delta_d})(m,n)|^2(1+R|m-n|)^{2s}\le CR\|\delta_RF\|_{W^{s+\epsilon}_q}^2
	\end{equation}
for all  functions $F\in W^{s+\epsilon}_q(\mathbb R)$ such that $\supp\, F\subset [R/2,R]$, where $\delta_RF=F(R\cdot)$.
\end{proposition}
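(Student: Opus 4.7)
The plan is to reduce the weighted $\ell^2$ sum to a Sobolev-type norm on the torus of $\varphi(\theta)=F(\sqrt{2(1-\cos\theta)})$, and then to estimate this norm in terms of $\|\delta_RF\|_{W^{s+\epsilon}_q}$ via the chain rule combined with Lemma \ref{lem1-spectral multiplier} applied to the derivatives $F^{(j)}$. As in the proof of Lemma \ref{lem1-spectral multiplier}, $F(\sqrt{\Delta_d})(m,n)=\widehat\varphi(m-n)$ is the $(m-n)$-th Fourier coefficient of $\varphi$, so Parseval's identity on $[-\pi,\pi]$ yields
\[
\sum_{m\in\mathbb Z}\abs{F(\sqrt{\Delta_d})(m,n)}^2(1+R|m-n|)^{2s}=\sum_{k\in\mathbb Z}(1+R|k|)^{2s}\abs{\widehat\varphi(k)}^2.
\]
Using $(1+R|k|)^{2s}\les 1+(R|k|)^{2s}$, the $1$-term is controlled directly by Lemma \ref{lem1-spectral multiplier}, and it suffices to bound $R^{2s}\sum_k|k|^{2s}\abs{\widehat\varphi(k)}^2$.

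For integer $s=N\in\mathbb N^*$, integration by parts in the Fourier integral combined with Parseval gives $\sum_k|k|^{2N}\abs{\widehat\varphi(k)}^2\sim\|\varphi^{(N)}\|_{L^2[-\pi,\pi]}^2$. Writing $\varphi=F\circ g$ with $g(\theta)=2|\sin(\theta/2)|$ (which is smooth on $\supp\varphi$, since $\supp F\subset[R/2,R]$ keeps $|\theta|$ bounded away from $0$), Faà di Bruno's formula gives $\varphi^{(N)}(\theta)=\sum_{j=1}^N F^{(j)}(g(\theta))P_{N,j}(\theta)$, where each $P_{N,j}$ is a polynomial in the derivatives $g^{(k)}$, all uniformly bounded on $(-\pi,\pi)$. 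Hence $|\varphi^{(N)}|^2\les_N\sum_{j=1}^N|F^{(j)}\circ g|^2$. Applying Lemma \ref{lem1-spectral multiplier} to each $F^{(j)}$, still supported in $[R/2,R]$, and using the scaling identity $(\delta_RF)^{(j)}=R^j\,\delta_R(F^{(j)})$, I obtain
\[
\|F^{(j)}\circ g\|_{L^2[-\pi,\pi]}^2\les R\|\delta_R(F^{(j)})\|_q^2=R^{1-2j}\|(\delta_RF)^{(j)}\|_q^2.
\]
Multiplying by $R^{2N}$, using $R\le 2$ to absorb $R^{2(N-j)}$ uniformly in $j\le N$, and summing, I conclude
\[
R^{2N}\|\varphi^{(N)}\|_{L^2}^2\les R\sum_{j=1}^N\|(\delta_RF)^{(j)}\|_q^2\les R\,\|\delta_RF\|_{W^{N+\epsilon}_q}^2,
\]
which is the desired estimate for integer $s$.

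For general $s=N+\alpha$ with $N=\lfloor s\rfloor$ and $\alpha\in[0,1)$, I interpolate complexly between the integer endpoints $s=N$ and $s=N+1$ established above. The linear map $F\mapsto((1+R|k|)^{\sigma}\widehat\varphi(k))_{k\in\mathbb Z}$ is bounded from the Bessel potential space $W^{\sigma+\epsilon}_q$ into the weighted space $\ell^2(\mathbb Z,(1+R|k|)^{2\sigma})$ at $\sigma=N$ and $\sigma=N+1$, uniformly in $R\in(0,2]$; interpolation at parameter $\alpha$ produces boundedness from $W^{s+\epsilon}_q$ to $\ell^2(\mathbb Z,(1+R|k|)^{2s})$, yielding the desired estimate. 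The main obstacle throughout is to track the sharp power of $R$: the chain rule in the integer step introduces cross-terms $R^{2(N-j)}$ which can be absorbed only because $R\le 2$, and the interpolation of the weighted $\ell^2$-targets and of Bessel potential spaces, although standard, must be carried out uniformly in $R$; this last point is also why the small regularity excess $\epsilon$ appears in the statement.
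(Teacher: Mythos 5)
Your argument reaches the conclusion, but by a genuinely different route than the paper. The paper writes $F(\sqrt{\Delta_d})=\frac{1}{2\pi}\int_{\mathbb R}\widehat G(\tau)\,e^{-R^{-2}(1-i\tau)\Delta_d}\,d\tau$ with $G(\lambda)=F(R\sqrt\lambda)e^\lambda$, and its key technical ingredient is the complex-time heat kernel estimate of Lemma~\ref{lem1-htk}, which directly produces the weight $(1+R|m-n|)^{-s}$ in each $\tau$-slice; the regularity excess is then sharpened from $s+(1+\kappa+\epsilon)/2$ to $s+\epsilon$ by a Stein-type interpolation against the unweighted Lemma~\ref{lem1-spectral multiplier} exactly as in \cite{DOS,MM}. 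You instead work entirely on the torus: Parseval converts the left-hand side to $\sum_k(1+R|k|)^{2s}|\widehat\varphi(k)|^2$ for $\varphi=F\circ g$, $g(\theta)=2|\sin(\theta/2)|$, and for integer $s=N$ you bound this by $\|\varphi^{(N)}\|_{L^2}^2$ via Faà di Bruno, using Lemma~\ref{lem1-spectral multiplier} applied to each $F^{(j)}$ and the scaling $\delta_R(F^{(j)})=R^{-j}(\delta_RF)^{(j)}$. This is more elementary and completely bypasses the complex-time heat kernel estimates. Your observation that the Faà di Bruno coefficients $P_{N,j}$ (polynomials in $g',g'',\ldots$) stay uniformly bounded on $\supp\varphi$ even as $R\to 0$ or as $R\to 2$ (where $g'$ vanishes at $\pm\pi$) is the right one, and the $q>4$ hypothesis is precisely what allows Lemma~\ref{lem1-spectral multiplier} to absorb the degenerate Jacobian of $g$ near $\theta=\pm\pi$.

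The one place you should be more careful is the final interpolation step for non-integer $s$. You interpolate an operator whose boundedness has only been established on the subspace of $W^{\sigma+\epsilon}_q$ consisting of functions (after scaling by $\delta_R$) supported in $[1/2,1]$; complex interpolation of subspaces given by support restrictions in Bessel potential scales is not automatic, and extending the operator to the full space via a fixed cutoff $\chi$ supported in, say, $[1/4,2]$ pushes the (unscaled) support out to $[R/4,2R]$, which for $R$ near $2$ exits $[0,2]$ and requires an additional truncation to $[0,2]$ that need not be bounded on $W^{\sigma+\epsilon}_q$. You should also note explicitly that the operator entering the interpolation is $G\mapsto(\widehat{G\circ(g/R)}(k))_k$ acting on $G=\delta_RF$, with the factor $R^{1/2}$ scaled out, so that the endpoint bounds are genuinely $R$-uniform. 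These points can almost certainly be resolved (for instance by working with $R'=\min(2R,2)$ and tracking the slightly enlarged support through Faà di Bruno, or by replacing the complex interpolation with a direct fractional Leibniz/difference-quotient estimate on $\|\varphi\|_{H^s[-\pi,\pi]}$), but as written this is the weakest link in the argument --- though, to be fair, the paper itself invokes its own interpolation step by reference to \cite{DOS,MM} without spelling out the details.
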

\begin{proof}
	
%	We consider two cases.
	
%	\textbf{Case 1: $0<R\le 1$}
%	\medskip
	
	Set $G(\lambda)=F(R\sqrt \lambda)e^{\lambda}$. Then we have
	$$
	G( \Delta_d/R^2)e^{- \Delta_d/R^2}=\f{1}{2\pi}\int_\mathbb{R} e^{-R^{-2}(1-i\tau)\Delta_d}\widehat{G}(\tau)d\tau,
	$$
	where $\widehat{G}$ is the usual Fourier transform of $G$ on $\mathbb R$.
	
	Therefore,
	\[
	F(\sqrt{\Delta_d})=\f{1}{2\pi}\int_\mathbb{R} e^{-R^{-2}(1-i\tau)\Delta_d}\widehat{G}(\tau)d\tau,
	\]
	which yields that 
	\[
	F(\sqrt{\Delta_d})(m,n)=\f{1}{2\pi}\int_\mathbb{R} \widehat{G}(\tau)h_{(1-i\tau)/R^2}(m-n)d\tau.
	\]
	As a consequence,
	\[
	\begin{aligned}
		\Big(\sum_{m\in\mathbb Z} &|F(\sqrt{\Delta_d})(m,n)|^2(1+R|m-n|)^{2s}\Big)^{1/2}\\
		&\le \int_{\mathbb R} |\widehat{G}(\tau)|\Big(\sum_{m\in\mathbb Z} |h_{(1-i\tau)/R^2}(m-n)|^2(1+R|m-n|)^{2s}\Big)^{1/2}d\tau
	\end{aligned}
	\]

	Using Lemma \ref{lem1-htk}, for a fixed $\kappa>1$, we have
	\[
	\begin{aligned}
		\Big(\sum_{m\in\mathbb Z} &|F(\sqrt{\Delta_d})(m,n)|^2(1+R|m-n|)^{2s}\Big)^{1/2}\\
		&\le \int_{\mathbb R} |\widehat{G}(\tau)|\Big[\sum_{m\in\mathbb Z} R^2 \Big(1+\f{R|m-n|}{\sqrt{1+|\tau|^2}}\Big)^{-2s-\kappa}(1+R|m-n|)^{2s}\Big]^{1/2}d\tau.
	\end{aligned}
	\]
	It is easy to check that 
	\[
	\begin{aligned}
		\sum_{m\in\mathbb Z} R^2 \Big(1+\f{R|m-n|}{\sqrt{1+|\tau|^2}}\Big)^{-2s-\kappa}(1+R|m-n|)^{2s}
		&\le (1+|\tau|^2)^{s+\kappa/2}\sum_{m\in\mathbb Z} R^2  (1+R|m-n| )^{-\kappa}\\
		&\les (1+|\tau|)^{2s+\kappa}R.
	\end{aligned}
	\]
	Therefore, for $\epsilon>0$, by H\"older's inequality,
	\[
	\begin{aligned}
		\Big(\sum_{m\in\mathbb Z} |F(\sqrt{\Delta_d})(m,n)|^2(1+R|m-n|)^{s}\Big)^{1/2}
		&\les \sqrt {R}  \int_{\mathbb R} |\widehat{G}(\tau)| (1+|\tau|)^{s+\kappa/2} d\tau\\
			&\les \sqrt R \Big[\int_{\mathbb R} |\widehat{G}(\tau)|^2(1+|\tau|)^{2s+\kappa+1+\epsilon} d\tau\Big]^{1/2}\Big[\int_{\mathbb R} (1+|\tau|)^{-1-\epsilon} d\tau\Big]^{1/2} \\
		&\les \sqrt R\|G\|_{W_2^{s +(1+\kappa+\epsilon)/2}}\\
		&\les \sqrt R\|G\|_{W_q^{s +(1+\kappa+\epsilon)/2}}.
	\end{aligned}
	\]
	From this and Lemma \ref{lem1-spectral multiplier}, by using the trick of interpolation as in \cite{DOS, MM}, we deduce \eqref{eq-prop1-spectral multiplier}.

This completes our proof.	

\end{proof}

\begin{lemma}
	\label{lem1-pointwise- spectral multiplier}
	For $q\in (4,\vc]$, there exists $C=C(q)>0$ such that 
	\[
 | F(\sqrt{\Delta_d})(m,n)| \le CR\|F\|_q
	\]
	for any Borel function $F$ such that $\supp F\subset [0,R]$ with $R\in (0,2]$.
\end{lemma}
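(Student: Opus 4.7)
The plan is to mirror the proof of Lemma \ref{lem1-spectral multiplier}, replacing the $\ell^2$ norm (and hence $L^2$ on the Fourier side) by the $L^1$ norm on the Fourier side, which gives a pointwise bound in physical variables. Concretely, I would start from the Fourier symbol representation
\[
F(\sqrt{\Delta_d})(m,n)=\FZ^{-1}\!\big[F(\sqrt{2(1-\cos\theta)})\big](m-n)=\f{1}{2\pi}\int_{-\pi}^{\pi}F(\sqrt{2(1-\cos\theta)})\,e^{-i(m-n)\theta}d\theta,
\]
and take absolute values inside the integral to get
\[
|F(\sqrt{\Delta_d})(m,n)|\le \f{1}{2\pi}\int_{-\pi}^{\pi}\big|F(\sqrt{2(1-\cos\theta)})\big|\,d\theta.
\]
Using the half-angle identity $\sqrt{2(1-\cos\theta)}=2|\sin(\theta/2)|$ and the change of variable $\phi=\theta/2$, this reduces to $\f{2}{\pi}\int_{0}^{\pi/2}|F(2\sin\phi)|\,d\phi$, and the support hypothesis $\supp F\subset [0,R]$ restricts the integral to $\phi\in[0,\arcsin(R/2)]$.

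At this point I would apply Hölder's inequality with the weight $\cos\phi$ (split as $\cos^{1/q}\phi\cdot\cos^{-1/q}\phi$), exactly as in the proof of Lemma \ref{lem1-spectral multiplier}, to obtain
\[
\int_{0}^{\arcsin(R/2)}|F(2\sin\phi)|\,d\phi\le \Big(\int_{0}^{\arcsin(R/2)}|F(2\sin\phi)|^{q}\cos\phi\,d\phi\Big)^{1/q}\Big(\int_{0}^{\arcsin(R/2)}\cos^{-q'/q}\phi\,d\phi\Big)^{1/q'}.
\]
In the first factor I substitute $u=2\sin\phi$, $du=2\cos\phi\,d\phi$, which turns it into $\tfrac{1}{2}\int_0^{R}|F(u)|^{q}du\le \tfrac{1}{2}\|F\|_{q}^{q}$. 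Since $q>4>2$ implies $q'/q<1$, the second factor is a finite quantity controlled uniformly on $[0,\pi/2]$.

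The main obstacle is producing the prefactor $R$ on the right-hand side from the second Hölder factor. I would split into two regimes: for $R\in(0,1]$, we have $\arcsin(R/2)\le\pi/6$ so $\cos\phi\ge \sqrt{3}/2$ on the integration interval, giving $\int_{0}^{\arcsin(R/2)}\cos^{-q'/q}\phi\,d\phi\le C_q\arcsin(R/2)\lesssim R$; for $R\in[1,2]$, the integral is bounded by the (finite, $q$-dependent) value $\int_{0}^{\pi/2}\cos^{-q'/q}\phi\,d\phi$, which is $\lesssim 1\lesssim R$ since $R\ge 1$. Combining the two regimes and taking the $1/q'$-power then yields $|F(\sqrt{\Delta_d})(m,n)|\le C R\|F\|_q$, as desired. (For the endpoint case $q=\infty$, one can skip Hölder entirely: the direct estimate $\|F\|_\infty\cdot\arcsin(R/2)\le (\pi/4)R\|F\|_\infty$ gives the result and serves as a sanity check.)
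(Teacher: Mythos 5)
The final step does not go through for finite $q$. From the H\"older factorisation you correctly arrive at
\[
|F(\sqrt{\Delta_d})(m,n)|\le C\,\|F\|_q\,R^{1/q'},
\]
but for $q\in(4,\vc)$ the exponent $1/q'=1-1/q$ lies strictly in $(3/4,1)$, so for $R\in(0,1)$ one has $R^{1/q'}>R$: the claim ``$R^{1/q'}\lesssim R$'' fails on the small-$R$ range, and taking the $1/q'$-power of your estimate does not produce the linear factor $R$. Indeed the printed bound $|F(\sqrt{\Delta_d})(m,n)|\le CR\|F\|_q$ cannot hold as stated: for $F=\mathbf 1_{[0,R]}$ one has $\|F\|_q=R^{1/q}$, while
\[
F(\sqrt{\Delta_d})(0,0)=\frac{1}{2\pi}\int_{-\pi}^{\pi}\mathbf 1_{\{2|\sin(\theta/2)|\le R\}}\,d\theta=\frac{2}{\pi}\arcsin(R/2)\sim R \qquad (R\to 0^+),
\]
which is much larger than $CR^{1+1/q}$ for small $R$.

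Your intermediate estimate is, however, the natural one. Since $\|\delta_R F\|_q=R^{-1/q}\|F\|_q$ for $\supp F\subset[0,R]$, the bound $CR^{1/q'}\|F\|_q$ is precisely $CR\,\|\delta_R F\|_q$, which matches the scaling in Lemma~\ref{lem1-spectral multiplier}; the $\|F\|_q$ on the right-hand side of the present lemma thus reads like a slip for $\|\delta_R F\|_q$, and with that reading your Fourier-side argument closes once you stop at $R^{1/q'}\|F\|_q$ rather than replacing $R^{1/q'}$ by $R$. Your route is also genuinely different from the paper's: the paper factors $F(\sqrt{\Delta_d})=G(\sqrt{\Delta_d})\,e^{-\Delta_d/R^2}$ with $G(\lambda)=F(\lambda)e^{\lambda^2/R^2}$, applies Cauchy--Schwarz in the summation variable, and controls the two factors by Lemma~\ref{lem1-spectral multiplier} (for $\|G(\sqrt{\Delta_d})(m,\cdot)\|_{\ell^2(\ZZ)}$) together with the heat-kernel estimate of Lemma~\ref{lem1-ht} (giving $\|h_{R^{-2}}\|_{\ell^2(\ZZ)}\les\sqrt R$). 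You instead re-derive the pointwise bound directly on the Fourier side, repeating the weighted H\"older device from the proof of Lemma~\ref{lem1-spectral multiplier} with an $L^1$ integral in place of the $L^2$ one; the paper's semigroup factorisation reuses that lemma as a black box.
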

\begin{proof}
	We have
	\[
	F(\sqrt{\Delta_d}) = G(\sqrt{\Delta_d})e^{-\Delta_d/R^2},
	\]
	where $G(\lambda)=F(\lambda)e^{\lambda^2/R^2}$.
	
	This, along with H\"older's inequality, Lemma \ref{lem1-spectral multiplier}, Lemma \ref{lem1-ht} and Lemma \ref{lem-elementary}, implies that 
	\[
	\begin{aligned}
		|F(\sqrt{\Delta_d})(m,n)|&=\Big|\sum_{k\in \ZZ}G(\sqrt{\Delta_d})(m,k)h_{R^{-2}}(n-k)\Big|\\
		&\le \|G(\sqrt{\Delta_d})(m,\cdot)\|_{\ell^2(\ZZ)}\|h_{R^{-2}}\|_{\ell^2(\ZZ)}\\
		&\le R\|G\|_q \sim R\|F\|_q.
	\end{aligned}
	\]
	
		This completes our proof.
	%Set $F_1(\lambda)=e^{-\lambda/R^2}$ and $F_2(\lambda) = F(\lambda)e^{-\lambda/R^2}$. Then we have
	%\[
	%F(\sqrt{\Delta_d})=F_2(\sqrt{\Delta_d})F_1(\sqrt{\Delta_d}),
	%\]
	%which implies
	%\[
	%F(\sqrt{\Delta_d})(m,n) = F_2(\sqrt{\Delta_d})\big[h_{R^{-2}}(\cdot-n)\big](m).
	%\]
	%Therefore,
	%\[
	%\begin{aligned}
	%	\|F(\sqrt{\Delta_d})(\cdot,n)\|_{\ell^2(\mathbb Z)}^2&\le \|F_2(\sqrt{\Delta_d})\|^2_{\ell^2(\mathbb Z) \to \ell^2(\mathbb Z)} \|h_{R^{-2}}(\cdot-n)\|^2_{\ell^2(\mathbb Z)}\\
	%	&\les \|F_2 \|^2_{\infty} \f{1}{\ceil{1/R}}\\
	%	&\les \|F\|^2_{\infty} \f{1}{\ceil{1/R}}.
	%\end{aligned}
	%\]
	%This completes our proof.
\end{proof}

\begin{proposition}\label{prop1-pointwise-spectral multiplier}
	Let $q\in (4,\vc]$, $R\in (0,2]$ and $s>0$. Then for any $\epsilon>0$ there exists a constant $C=C(q,s,\epsilon)$ such that
	\begin{equation}
		\label{eq-prop1-pointwise-spectral multiplier}
		 |F(\sqrt{\Delta_d})(m,n)|\le  R(1+R|m-n|)^{-s}\|\delta_RF\|_{W^{s+\epsilon}_q}
	\end{equation}
	for all  functions $F\in W^{s+\epsilon}_q(\mathbb R)$ such that $\supp\, F\subset [R/2,R]$.
\end{proposition}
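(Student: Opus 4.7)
The strategy is to follow the same scheme used in Lemma \ref{lem1-pointwise-spectral multiplier}, but now incorporating weights, so that in the final Cauchy--Schwarz step we can apply the weighted $\ell^2$ estimate already established in Proposition \ref{prop1-spectral multiplier}. Write $F(\sqrt{\Delta_d}) = G(\sqrt{\Delta_d})\,e^{-\Delta_d/R^2}$, where $G(\lambda)=F(\lambda)e^{\lambda^2/R^2}$, so that $\supp G \subset [R/2,R]$ just like $F$. Then the kernel factors as the convolution
\[
F(\sqrt{\Delta_d})(m,n) = \sum_{k\in\mathbb Z} G(\sqrt{\Delta_d})(m,k)\, h_{1/R^2}(n-k).
\]

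Next, I would use the elementary inequality $(1+R|m-n|)^{s}\le (1+R|m-k|)^{s}(1+R|k-n|)^{s}$ coming from $|m-n|\le |m-k|+|k-n|$, and then apply Cauchy--Schwarz in $k$ to obtain
\[
(1+R|m-n|)^{s}|F(\sqrt{\Delta_d})(m,n)|\le A_1^{1/2}\cdot A_2^{1/2},
\]
where
\[
A_1=\sum_{k}(1+R|m-k|)^{2s}|G(\sqrt{\Delta_d})(m,k)|^{2},\qquad A_2=\sum_{k}(1+R|k-n|)^{2s}|h_{1/R^2}(n-k)|^{2}.
\]

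For $A_1$, since $\supp G\subset [R/2,R]$, Proposition \ref{prop1-spectral multiplier} gives $A_1\le CR\,\|\delta_R G\|_{W_q^{s+\epsilon}}^{2}$, and because $\delta_R G(\mu) = \delta_R F(\mu)\,e^{\mu^{2}}$ with $\supp \delta_R G\subset [1/2,1]$, multiplication by the smooth bounded factor $e^{\mu^2}$ is a continuous operation on $W_q^{s+\epsilon}$, so $\|\delta_R G\|_{W_q^{s+\epsilon}}\lesssim \|\delta_R F\|_{W_q^{s+\epsilon}}$. For $A_2$, I apply Lemma \ref{lem1-ht} with $t=1/R^{2}$, choosing $N$ large enough that $4N+1-2s>1$; this gives $|h_{1/R^2}(j)|\lesssim R(1+R|j|)^{-2N-1}$, and hence
\[
A_2\lesssim R^{2}\sum_{j}(1+R|j|)^{-4N-2+2s}\lesssim R^{2}\cdot R^{-1}=R,
\]
where the last step uses the standard scaling sum $\sum_j(1+R|j|)^{-1-\delta}\sim R^{-1}$ for $R\in(0,2]$.

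Multiplying the two factors yields $A_1^{1/2}A_2^{1/2}\lesssim R\,\|\delta_R F\|_{W_q^{s+\epsilon}}$, which is exactly \eqref{eq-prop1-pointwise-spectral multiplier}. The only delicate point is the final identification $\|\delta_R G\|_{W_q^{s+\epsilon}}\lesssim \|\delta_R F\|_{W_q^{s+\epsilon}}$, which follows from the compact support of $\delta_R G$ in $[1/2,1]$ and the smoothness of $\mu\mapsto e^{\mu^{2}}$ on that interval; everything else is a weighted upgrade of the unweighted argument in Lemma \ref{lem1-pointwise-spectral multiplier}.
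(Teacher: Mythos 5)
Your proof is correct, and it takes a genuinely different route from the one in the paper. The paper sets $G(\lambda)=F(R\sqrt\lambda)e^{\lambda}$, writes $F(\sqrt{\Delta_d})(m,n)$ as an integral of $\widehat G(\tau)\,h_{(1-i\tau)/R^2}(m-n)$, applies the complex-time heat kernel bounds of Lemma~\ref{lem1-htk} pointwise and H\"older's inequality to land in $W_2^{s+(1+\epsilon)/2}$, and then runs the interpolation trick with Lemma~\ref{lem1-spectral multiplier} to lower the Sobolev order. You instead factor $F(\sqrt{\Delta_d})=G(\sqrt{\Delta_d})e^{-\Delta_d/R^2}$ with $G(\lambda)=F(\lambda)e^{\lambda^2/R^2}$ (which is precisely the decomposition used to get Lemma~\ref{lem1-pointwise-spectral multiplier} from Lemma~\ref{lem1-spectral multiplier}), insert the Peetre inequality $(1+R|m-n|)^s\le(1+R|m-k|)^s(1+R|k-n|)^s$, and apply Cauchy--Schwarz so that one factor falls under the already-proved weighted $\ell^2$ estimate of Proposition~\ref{prop1-spectral multiplier} and the other is controlled by the real-time heat kernel decay from Lemma~\ref{lem1-ht}. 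Your version is more modular: it uses Proposition~\ref{prop1-spectral multiplier} as a black box (so the Mauceri--Meda interpolation is absorbed there rather than repeated) and never touches the complex-time estimates. The only points one should state explicitly in the write-up are: (i) the kernel of $G(\sqrt{\Delta_d})$ is a function of $|m-k|$, so summing in $k$ with $m$ fixed is the same as the form appearing in Proposition~\ref{prop1-spectral multiplier}; and (ii) Lemma~\ref{lem1-ht} is literally stated only for $n=0$ and $|n|>N$, but here $t=1/R^2\ge 1/4$ is bounded below so the bound extends to $0<|n|\le N$ at the cost of a constant depending on $N$. With those noted, everything checks out and the exponents match.
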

\begin{proof}
	
	%	We consider two cases.
	
	%	\textbf{Case 1: $0<R\le 1$}
	%	\medskip
We follow idea in \cite{DOS, MM}. Set $G(\lambda)=F(R\sqrt \lambda)e^{\lambda}$. Then we have
	$$
	G( \Delta_d/R^2)e^{- \Delta_d/R^2}=\f{1}{2\pi}\int_\mathbb{R} e^{-R^{-2}(1-i\tau)\Delta_d}\widehat{G}(\tau)d\tau,
	$$
	where $\widehat{G}$ is the usual Fourier transform of $G$ on $\mathbb R$.
	
	Therefore,
	\[
	F(\sqrt{\Delta_d})=\f{1}{2\pi}\int_\mathbb{R} e^{-R^{-2}(1-i\tau)\Delta_d}\widehat{G}(\tau)d\tau,
	\]
	which yields that 
	\[
	F(\sqrt{\Delta_d})(m,n)=\f{1}{2\pi}\int_\mathbb{R} \widehat{G}(\tau)h_{(1-i\tau)/R^2}(m-n)d\tau.
	\]
	Using Lemma \ref{lem1-htk} and H\"older's inequality, we have, for $\epsilon>0$,
	\[
	\begin{aligned}
		|F(\sqrt{\Delta_d})(m,n)|
		&\le \int_{\mathbb R} |\widehat{G}(\tau)| R \Big(1+\f{R|m-n|}{\sqrt{1+|\tau|^2}}\Big)^{-s}  d\tau\\
				&\les R \big(1+ R|m-n| \big)^{-s}  \int_{\mathbb R} |\widehat{G}(\tau)|(1+|\tau|)^{s}   d\tau\\
		&\les R \big(1+ R|m-n| \big)^{-s}\Big[\int_{\mathbb R} |\widehat{G}(\tau)|^2(1+|\tau|)^{2s+1+\epsilon} d\tau\Big]^{1/2}\Big[\int_{\mathbb R} (1+|\tau|)^{-1-\epsilon} d\tau\Big]^{1/2} \\
		&\les R\big(1+ R|m-n| \big)^{-s}\|G\|_{W_2^{s+(1+\epsilon)/2}}.
\end{aligned}
\]
Since $G(\lambda)=F(R\sqrt \lambda)e^{\lambda}$ and $\supp F\subset [R/2, R]$, we have $\|G\|_{W_2^{s+(1+\epsilon)/2}}\sim \|\delta_R F\|_{W_2^{s+(1+\epsilon)/2}}\les \|\delta_R F\|_{W_q^{s+(1+\epsilon)/2}}$ for $q\in (4,\vc]$. Therefore,
 	\[	|F(\sqrt{\Delta_d})(m,n)|\les R\big(1+ R|m-n| \big)^{-s}\|\delta_R F\|_{W_q^{s+(1+\epsilon)/2}}.
		\]
	From this and Lemma \ref{lem1-spectral multiplier}, by using the trick of interpolation as in \cite{DOS, MM}, we deduce \eqref{eq-prop1-pointwise-spectral multiplier}.
	
	This completes our proof.	

\end{proof}

 Proposition \ref{prop1-pointwise-spectral multiplier} directly deduces the following corollary.
\begin{corollary}
	\label{lem1}
	Let $\psi\in \mathcal{S}(\mathbb R)$ be a Schwartz function supported in $[2,8]$. Then we have
	\[
	|\psi(t\sqrt{\Delta_d})(m,n)|\le \f{C_N}{t}\Big(1+\f{|m-n|}{t}\Big)^{-N}
	\]
	for all $N, t>0$ and $m,n\in \ZZ$.
\end{corollary}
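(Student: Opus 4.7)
The plan is to apply Proposition~\ref{prop1-pointwise-spectral multiplier} to a finite dyadic decomposition of $\psi$, and then handle the bounded range of $t$ separately through the Fourier inversion formula on $\ZZ$.

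Since the spectrum of $\sqrt{\Delta_d}$ is contained in $[0,2]$ and $\supp\psi\subset[2,8]$, for $t<1$ the function $\psi(t\lambda)$ vanishes identically on the spectrum, so $\psi(t\sqrt{\Delta_d})=0$ and the estimate is trivial. Henceforth assume $t\ge 1$, and write $\psi=\psi_1+\psi_2$ via a smooth partition of unity with $\psi_j\in\mathcal S(\mathbb R)$, $\supp\psi_1\subset[2,4]$, $\supp\psi_2\subset[4,8]$. Set $F_j(\lambda)=\psi_j(t\lambda)$, so that $\supp F_j\subset[R_j/2,R_j]$ with $R_j=2^{j+1}/t$. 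When $t\ge 4$ both $R_j\le 2$, and Proposition~\ref{prop1-pointwise-spectral multiplier} (taken with $s=N$, $q=\infty$ and some fixed $\epsilon>0$) yields
\[
|F_j(\sqrt{\Delta_d})(m,n)|\les R_j(1+R_j|m-n|)^{-N}\|\delta_{R_j}F_j\|_{W^{N+\epsilon}_\infty}.
\]
The key observation is that $\delta_{R_j}F_j(\lambda)=\psi_j(tR_j\lambda)=\psi_j(2^{j+1}\lambda)$ is a fixed Schwartz function independent of $t$, so its Sobolev norm is a universal constant $C_{N,\epsilon}$. Since $R_j\sim 1/t$, summing over $j=1,2$ gives the desired bound for $t\ge 4$.

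For $1\le t<4$, a bounded range, the claim reduces to $|\psi(t\sqrt{\Delta_d})(m,n)|\les(1+|m-n|)^{-N}$, because $1/t\sim 1$ and $(1+|m-n|/t)^{-N}\sim(1+|m-n|)^{-N}$ in this regime. Using the spectral identity $\Delta_d e_\theta=2(1-\cos\theta)e_\theta$, one has
\[
\psi(t\sqrt{\Delta_d})(m,n)=\f{1}{2\pi}\int_{-\pi}^\pi\psi\big(2t|\sin(\theta/2)|\big)e^{-i(m-n)\theta}\,d\theta.
\]
Since $\psi$ vanishes on $(-\infty,2]$, the integrand is identically zero on a neighborhood of $\theta=0$ (of size $\sim 1/t$, which avoids the singularity of $|\sin(\theta/2)|$), and it is smooth and periodic on the torus $[-\pi,\pi]$ elsewhere. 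Hence $N$ successive integrations by parts in $\theta$ yield the required $|m-n|^{-N}$ decay, with constants uniform in the compact range $t\in[1,4]$; combining with the trivial $L^\infty$-bound $|\psi(t\sqrt{\Delta_d})(m,n)|\les 1$ gives the $(1+|m-n|)^{-N}$ decay.

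The principal technical point is that the constraint $R\in(0,2]$ in Proposition~\ref{prop1-pointwise-spectral multiplier} is violated by $R_2=8/t$ when $t\in[1,4)$; this is precisely the bounded-$t$ regime that forces the direct torus integration-by-parts argument above.
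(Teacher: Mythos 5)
Your proof is genuinely more careful than the paper's, which disposes of this corollary with the single sentence that Proposition~\ref{prop1-pointwise-spectral multiplier} ``directly deduces'' it. You correctly identify two points the paper glosses over: (1) since $\supp\psi\subset[2,8]$ has dyadic ratio $4$, not $2$, one cannot take $F=\psi(t\cdot)$ in Proposition~\ref{prop1-pointwise-spectral multiplier} without first decomposing $\psi$; and (2) the constraint $R\in(0,2]$ in the proposition is genuinely violated when $t$ is in a bounded range, so a separate argument is needed there. Your treatment of the bounded range $1\le t<4$ --- vanishing of the integrand near the non-smooth point $\theta=0$ of $|\sin(\theta/2)|$, followed by $N$ periodic integrations by parts and interpolation with the trivial $L^\infty$ bound, with constants uniform over the compact $t$-interval --- is correct and is exactly what is needed to close the gap. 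The case $t<1$ is also handled correctly via the spectral support.

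There is, however, one small but genuine slip in the decomposition step. A smooth decomposition $\psi=\psi_1+\psi_2$ with $\psi_j\in\mathcal S(\mathbb R)$, $\supp\psi_1\subset[2,4]$ and $\supp\psi_2\subset[4,8]$ is impossible for general $\psi$: the two supports meet only at the single point $4$, so if such $\psi_1$ existed it would have to equal $\psi$ on $(2,4)$ and vanish to infinite order at $4$, forcing $\psi$ itself to vanish to infinite order at $4$ --- which a generic Schwartz $\psi$ supported in $[2,8]$ does not. You need either (a) finitely many \emph{overlapping} pieces, e.g.\ $\psi=\psi_1+\psi_2+\psi_3$ with $\supp\psi_1\subset[2,4]$, $\supp\psi_2\subset[3,6]$, $\supp\psi_3\subset[5,10]$ (each of ratio $\le 2$, covering $[2,8]$ with overlaps, and yielding $R_j\sim 1/t$), or (b) the observation that the conclusion of Proposition~\ref{prop1-pointwise-spectral multiplier} persists, with constants depending only on a fixed $c\in(0,1)$, when the support hypothesis is relaxed to $\supp F\subset[cR,R]$, which accommodates the full ratio-$4$ support of $\psi(t\cdot)$ directly with $R=8/t$. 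With either fix the argument is complete; the bounded-$t$ region remains necessary in either case.
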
	

\begin{lemma}
	\label{lem1-derivative of psi delta}
	Let $\psi\in \mathcal{S}(\mathbb R)$ be a Schwartz function  supported in $[2,8]$. Then we have
	\[
	|\mathfrak{D}^k\psi(t\sqrt{\Delta_d})(m,n)|\le \f{C_{N,k}}{t^{k+1}}\Big(1+\f{|m-n|}{t}\Big)^{-N}
	\]
	for all $N, t>0$, $k\in \mathbb N$ and $m,n\in \ZZ$.
\end{lemma}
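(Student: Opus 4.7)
The plan is to express the kernel as a Fourier integral on $[-\pi,\pi]$ and combine a pointwise size bound with integration by parts, in the spirit of the Bessel-type estimates already established.

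First I would note that the spectrum of $t\sqrt{\Dd}$ is contained in $[0,2t]$, which is disjoint from $\supp\psi\subset[2,8]$ whenever $t<1$; hence $\psi(t\sqrt{\Dd})=0$ and the lemma is vacuous in that range. For $t\ge 1$, translation invariance lets us write $\mathfrak{D}^k\psi(t\sqrt{\Dd})(m,n)=K(m-n)$ with
\[
K(j)=\f{1}{2\pi}\int_{-\pi}^\pi g(\theta)\,e^{-ij\theta}\,d\theta, \qquad g(\theta)=\mu(\theta)^k\,\psi\!\bigl(2t|\sin(\theta/2)|\bigr),
\]
where $\mu(\theta)=e^{-i\theta}-1$ if $\mathfrak{D}=D$, and $\mu(\theta)=1-e^{i\theta}$ if $\mathfrak{D}=D^*$. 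On the support of $g$ one has $|\mu(\theta)|=2|\sin(\theta/2)|\le 8/t$, and for $t\ge 8$ the support reduces to two arcs $|\theta|\sim 1/t$ of total Lebesgue measure $O(1/t)$.

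Next I would derive the two complementary bounds. Directly, $|K(0)|\le (2\pi)^{-1}\int|g|\,d\theta\lesssim (8/t)^k\cdot O(1/t) = O(t^{-k-1})$. For $j\ne 0$, since $g$ is smooth and periodic (the $\theta=0$ kink of $|\sin(\theta/2)|$ is harmless because $\psi(2t|\sin(\cdot/2)|)\equiv 0$ in a neighborhood of $\theta=0$), $N$ integrations by parts give $|K(j)|\le |j|^{-N}(2\pi)^{-1}\int|g^{(N)}(\theta)|\,d\theta$. By Leibniz, $g^{(N)}=\sum_{\ell=0}^N\binom{N}{\ell}[\mu^k]^{(\ell)}\,\partial_\theta^{N-\ell}[\psi(2t|\sin(\theta/2)|)]$, and two estimates are central: $|[\mu^k]^{(\ell)}(\theta)|\lesssim|\theta|^{\max(0,k-\ell)}$ (since $\mu^k$ is a trigonometric polynomial with a zero of order $k$ at $\theta=0$), and $|\partial_\theta^p[\psi(2t|\sin(\theta/2)|)]|\lesssim t^p$ (via Fa\`a di Bruno, using $|\partial_\theta^j(2t\sin(\theta/2))|\lesssim t$). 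Combined with $|\theta|\lesssim 1/t$ on $\supp g$, each Leibniz summand is $\lesssim t^{-\max(0,k-\ell)}\cdot t^{N-\ell}\lesssim t^{N-k}$, hence $|g^{(N)}(\theta)|\lesssim t^{N-k}$ on $\supp g$. Multiplying by the $O(1/t)$ support measure yields $\int|g^{(N)}|\,d\theta\lesssim t^{N-k-1}$, so $|K(j)|\lesssim t^{-(k+1)}(t/|j|)^N$ for $j\ne 0$. Combining with the $j=0$ bound gives $|K(j)|\lesssim C_{N,k}\,t^{-(k+1)}(1+|j|/t)^{-N}$, as required.

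The intermediate range $1\le t\le 8$ is handled by the same Fourier argument, with $\supp g$ now allowed to contain a neighborhood of $\pm\pi$ (at which $\lambda'(\theta)=t\cos(\theta/2)$ vanishes); since $t$ is bounded above and below, all constants are uniform and the conclusion reduces to standard Fourier coefficient estimates on the compact set $[-\pi,\pi]$. The main obstacle is the Leibniz computation above: balancing the $\theta$-vanishing of $[\mu^k]^{(\ell)}$ at order $k-\ell$ against the $t$-growth of the chain-rule derivatives of $\psi\circ(2t|\sin(\theta/2)|)$, to extract exactly $t^{N-k}$. This is the same Fa\`a di Bruno bookkeeping as in the proofs of Lemma \ref{lem1-htk} and Lemma \ref{lem-htk and HIGHER difference derivatives}, and the same techniques transfer.
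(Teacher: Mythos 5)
Your proof is correct, and it takes a genuinely different route from the paper. The paper's proof is a short operator-theoretic composition: it factors $\mathfrak{D}^k\psi(t\sqrt{\Delta_d})$ as $\mathfrak{D}^k(t^2\Delta_d)^{kN}e^{-t^2\Delta_d}$ composed with a Schwartz function of $t\sqrt{\Delta_d}$ still supported in $[2,8]$, then composes the kernel bound from Lemma \ref{lem-htk and HIGHER difference derivatives} (for the difference-differentiated heat semigroup, giving the $t^{-k-1}$ factor) with the polynomial decay from Corollary \ref{lem1}. You instead work entirely on the Fourier side of $\ZZ$: you exploit the fact that the spectrum of $\Delta_d$ lies in $[0,4]$ (so the lemma is vacuous for $t<1$ and the relevant frequency support has measure $O(1/t)$ for large $t$), write the kernel as the Fourier coefficient of $\mu(\theta)^k\psi(2t|\sin(\theta/2)|)$, and trade the order-$k$ vanishing of $\mu^k$ at $\theta=0$ against the $t^{N-\ell}$ growth of chain-rule derivatives of $\psi\circ(2t|\sin(\cdot/2)|)$ in $N$ integrations by parts. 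Both arguments are valid; yours is more self-contained and elementary — it bypasses the heat-kernel lemmas entirely and makes the dependence on the compactness of the discrete spectrum explicit — while the paper's is quicker given its already-proven preparatory estimates and illustrates the modular "functional calculus" strategy it uses throughout. Your Leibniz/Fa\`a di Bruno bookkeeping and the two-regime ($t\gtrsim 8$ versus $1\le t\lesssim 8$) treatment are both correct, and the two-sided bound combining the trivial size estimate with the $|j|^{-N}$ estimate from integration by parts gives exactly the required $t^{-(k+1)}(1+|m-n|/t)^{-N}$.
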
	
\begin{proof}
	We write
	\[
	\mathfrak{D}^k\psi(t\sqrt{\Delta_d}) =\mathfrak{D}^k(t^2\Delta_d)^{kN}e^{-t^2\Delta_d}\circ (t^2\Delta_d)^{kN}e^{-t^2\Delta_d}\psi(t\sqrt{\Delta_d}).  
	\]
	By Lemma \ref{lem-htk and difference derivatives} and Corollary \ref{lem1},
	\[
	|K_{\mathfrak{D}^k(t^2\Delta_d)^{kN}e^{-t^2\Delta_d}}(m,n)|\les \f{C_{N,k}}{t^{k+1}}\Big(1+\f{|m-n|}{t}\Big)^{-N}
	\]
	and
	\[
	|K_{(t^2\Delta_d)^{kN}e^{-t^2\Delta_d}\psi(t\sqrt{\Delta_d})}(m,n)|\les \f{C_{N,k}}{t}\Big(1+\f{|m-n|}{t}\Big)^{-N}
	\]
	for all $N, t>0$, $k\in \mathbb N$ and $m,n\in \ZZ$.
	
	It follows that
		\[
	|\mathfrak{D}^k\psi(t\sqrt{\Delta_d})(m,n)|\le \f{C_{N,k}}{t^{k+1}}\Big(1+\f{|m-n|}{t}\Big)^{-N}
	\]
	for all $N, t>0$, $k\in \mathbb N$ and $m,n\in \ZZ$.
	
	This completes the proof.
\end{proof}

\subsection{Distributions}\label{sec: distributions}

The class of test functions $\mathcal{S}(\ZZ)$  is defined as the set of all functions $f$ such that
\begin{equation}
	\label{Pml norm}
	 \|f\|_m:=\sup_{n\in \ZZ}(1+|n|)^m\sup_{0\le \ell\le m}|\Delta_d^\ell f(n)|<\vc, \ \ \forall m>0, \ell \in \mathbb{N}.
\end{equation}
It is easy to see that the norm $\|\cdot\|_m$ is equivalent to the norm $\|\cdot\|^*_m$ defined by
\[
	 \|f\|^*_m:=\sup_{n\in \ZZ}(1+|n|)^m|f(n)|<\vc, \ \ \forall m>0, \ell \in \mathbb{N}.
\]
It is easy to see  that $\mathcal{S}(\ZZ)$ is a complete locally convex space with topology generated by the family of semi-norms $\{\|f\|_m: \, m>0, \ell \in \mathbb{N}\}$. As usual, we define  the space of distributions $\mathcal{S}'(\ZZ)$ as the set of all continuous linear functional on $\mathcal{S}(\ZZ)$. It can be verified that $\mathcal{S}'(\ZZ)$ is the set of all functions $f$ such that 
\[
|f(n)|\le C_N (1+|n|)^N
\]
for all $n\in \ZZ$ and for some $N>0$.

It is obvious that if $f\in \mathcal S(\mathbb Z)$, then $\Delta_d^kf\in \mathcal S(\mathbb Z)$ for every $k\in \mathbb Z$. From Lemma \ref{lem1-ht}, we obtain $h_t(\cdot)\in \mathcal S(\ZZ)$ for every $t>0$, and hence $\partial_t^k h_t(\cdot)= (-\Delta_d)^kh_t(\cdot)\in \mathcal S(\ZZ)$ for every $t>0$ and $k\in \mathbb N$.

We also define 
$$
\mathcal P(\ZZ) = \{f: \mathbb Z \to \mathbb R: f(n) = p (n), n\in \mathbb Z \ \ \text{for some polynomial $p$}    \}.
$$
It is not difficult to show that 
\[
\mathcal P(\ZZ)=\{f\in \mathcal S'(\ZZ): \exists m \in \mathbb N, \Delta_d^m f =0\}.
\]

We now define the space $\mathcal{S}_\vc(\ZZ)$ as the set of all functions $f \in \mathcal{S}(\ZZ)$ such that for each $k\in \mathbb{N}$ there exists $g_k\in \mathcal{S}(\ZZ)$ so that $f=\Delta_d^kg_k$. Note that such an $g_k$, if exists, is unique. The topology in $\mathcal{S}_\vc(\ZZ)$ is generated by the following family of semi-norms 
\[
\|f\|_{m,k}=\|g_k\|_m, \ \forall m, k\in \mathbb{N}
\]
where $f=\Delta_d^k g_k$.

We will show the following important result.

\begin{lemma}\label{lem1-distribution}
Let $f\in \mathcal S_\infty(\ZZ)$. Suppose that $f = \Delta_d^k g$ for some $k\in \mathbb N$ and $g\in \mathcal S(\ZZ)$. Then for every $m\in \mathbb N$, 
\[
\|g\|_m\les \|f\|_{m+4k}.
\]
\end{lemma}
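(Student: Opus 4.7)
My plan is to pass to the Fourier side on $\mathbb{T} = [-\pi,\pi]$ and invert the factor $(4\sin^2(\theta/2))^k$ using a Hadamard-type argument. Set $\phi = \FZ(g)$ and $\psi = \FZ(f)$; both are $C^\infty$ on $\mathbb{T}$ because $g, f \in \mathcal{S}(\ZZ)$ are rapidly decreasing. Since the symbol of $\Delta_d$ is $2(1-\cos\theta) = 4\sin^2(\theta/2)$, the identity $f = \Delta_d^k g$ becomes the pointwise relation
\[
\psi(\theta) = \big(4\sin^2(\theta/2)\big)^k \phi(\theta)
\]
on $\mathbb{T}$, which forces $\psi$ to vanish to order at least $2k$ at $\theta = 0$, the unique zero of $4\sin^2(\theta/2)$ on $\mathbb{T}$.

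Using the equivalent norm $\|g\|_m \sim \sup_n (1+|n|)^m |g(n)|$ noted in the paper, I apply Fourier inversion $g(n) = \FZ^{-1}(\phi)(n)$ and integrate by parts $m$ times on the circle (boundary terms vanish by periodicity) to obtain
\[
(1+|n|)^m |g(n)| \lesssim \|\phi\|_{C^m(\mathbb{T})}.
\]
This reduces the lemma to proving $\|\phi\|_{C^m(\mathbb{T})} \lesssim \|\psi\|_{C^{m+2k}(\mathbb{T})}$.

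For this central step, take a smooth partition of unity $1 = \eta + (1-\eta)$ with $\eta \equiv 1$ near $\theta = 0$ and $\supp \eta \subset (-\pi/2,\pi/2)$. On $\supp(1-\eta)$, $(4\sin^2(\theta/2))^{-k}$ is smooth, so $\|(1-\eta)\phi\|_{C^m} \lesssim \|\psi\|_{C^m}$ by Leibniz. On $\supp\eta$, factor $(4\sin^2(\theta/2))^k = \theta^{2k}\omega(\theta)$ where $\omega(\theta) = (2\sin(\theta/2)/\theta)^{2k}$ is smooth and non-vanishing, so $\omega^{-1}$ is smooth. Since $\psi$ vanishes to order $\ge 2k$ at $\theta = 0$, Taylor's theorem with integral remainder gives
\[
\frac{\psi(\theta)}{\theta^{2k}} = \frac{1}{(2k-1)!}\int_0^1 (1-s)^{2k-1} \psi^{(2k)}(s\theta)\, ds,
\]
and differentiating under the integral shows this is $C^m$ on $\supp\eta$ with $C^m$-norm controlled by $\|\psi\|_{C^{m+2k}}$. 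Combining with the smooth factor $\eta\omega^{-1}$ via Leibniz yields $\|\eta\phi\|_{C^m} \lesssim \|\psi\|_{C^{m+2k}}$, and adding the two pieces of the partition of unity concludes the step.

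Finally, I bound $\|\psi\|_{C^{m+2k}(\mathbb{T})}$ by $\|f\|_{m+4k}$: for any $j \le m+2k$,
\[
\|\psi^{(j)}\|_\infty = \Big\|\sum_n (in)^j f(n) e^{in\theta}\Big\|_\infty \le \sum_n |n|^j |f(n)| \lesssim \|f\|_{j+2}\sum_n (1+|n|)^{-2} \lesssim \|f\|_{m+4k},
\]
since $j+2 \le m+2k+2 \le m+4k$ for $k \ge 1$ (the case $k=0$ being trivial, as then $f=g$). Chaining the three inequalities proves the lemma. The delicate point is the middle step: dividing by a zero of order $2k$ at $\theta=0$ without losing more than $2k$ derivatives of $\psi$, which is possible precisely because the factorization forces $\psi$ to vanish to that order, making Taylor's theorem applicable.
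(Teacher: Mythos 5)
Your proof is correct, and it takes a genuinely different route from the paper's. The paper works entirely on the sequence side: it factors $\Delta_d = -D^*D$, reduces to the single estimate $\sup_n (1+|n|)^m |h(n)| \lesssim \sup_n (1+|n|)^{m+2}|Dh(n)|$ by writing $h(n) = -\sum_{\ell \ge n} Dh(\ell)$ (telescoping, using decay of $h$ at $\pm\infty$), and then iterates this $2k$ times to invert $\Delta_d^k = (-D^*D)^k$, accumulating a loss of $4k$ orders. You instead pass to the Fourier side on $\mathbb{T}$, observe that the symbol $(4\sin^2(\theta/2))^k$ has its only zero at $\theta=0$ of order $2k$, and divide through using a smooth partition of unity plus Taylor's theorem with integral remainder; Fourier inversion and a crude bound on $\|\psi^{(j)}\|_\infty$ supply the bookends. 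Both hinge on the same heuristic that undoing a second-order zero costs two orders of decay. What yours buys: a sharper loss of $2k+2$ rather than $4k$ for $k \ge 2$ (the lemma only claims $4k$, so this is a strict improvement), and the Taylor argument for dividing by $\theta^{2k}$ handles arbitrary $k$ in one shot. What the paper's buys: it is elementary and entirely combinatorial, needing no Fourier analysis, no cutoff functions, and no integral-remainder formula; the telescoping-sum estimate is essentially a discrete fundamental theorem of calculus and is transparently sharp per inversion of $D$ or $D^*$.
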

\begin{proof}
	By iteration, it suffices to prove the lemma for $k=1$. Assume that $f= \Delta_d g$. We need to claim that 
	\[
	\|g\|_m\les \|f\|_{m+4}
	\]
	for every $m\in \mathbb N$.
	
	Recall that  $D$ and $D^*$ are operators defined by
	\[
	Df(n) = f(n+1) -f (n), \ \ \ \  D^*f(n) = f (n)-f(n-1).
	\]
	Then we have $-\Delta_d =D^*D=DD^*$.
	
	Therefore, we can reduce to prove the following inequality
	\begin{equation}\label{eq0-norm equivalent Dh f}
		\sup_{n\in \mathbb Z} (1+|n|)^m |h(n)|\le C_m \min\Big\{ \sup_{n\in \mathbb Z} (1+|n|)^{m+2} |Dh(n)|, \sup_{n\in \mathbb Z} (1+|n|)^{m+2} |D^*h(n)|\Big\}
	\end{equation}
	for every $m\in \mathbb N$ and $h\in \mathcal S(\ZZ)$.
	
	We only prove the inequality \eqref{eq0-norm equivalent Dh f} for $Dh$ since the inequality for $D^*h$ can be done similarly.
	
	Let $h\in \mathcal S(\ZZ)$. Define $f= D h$ with $h\in \mathcal S(\ZZ)$. We need to prove that  
	\begin{equation}\label{eq1-norm equivalent Dh f}
	\sup_{n\in \mathbb Z} (1+|n|)^m |h(n)|\le C_m \sup_{n\in \mathbb Z} (1+|n|)^{m+2} |f(n)|
	\end{equation}
	for every $m\in \mathbb N$.
	
	Fix $n\in \ZZ$. If $n>0$, then we have
	$$
	h(n) =\sum_{k=n-1}^\vc f(k),
	$$
	which implies that 
	$$
	\begin{aligned}
		(1+|n|)^m |h(n)|&\le \sum_{k=n-1}^\vc (1+|n|)^m|f(k)|\\
		&\le \sum_{k=n-1}^\vc \f{(1+|k|)^{m+2}|f(k)|}{(1+|k|)^{2}}\\
		&\le \sup_{k\in \mathbb Z}(1+|k |)^{m+2} |f(k)|,
	\end{aligned}
	$$
	which implies \eqref{eq1-norm equivalent Dh f}.

If $n\le 0$, then we write
\[
h(n) =\sum_{k=n-1}^{-\vc} f(k).
\]
Similarly, we also obtain that 
$$
(1+|n|)^m |h(n)| \le \sup_{k\in \mathbb Z}(1+|k |)^{m+2} |f(k)|.
$$
This proves \eqref{eq1-norm equivalent Dh f}.

Therefore, this completes our proof.
\end{proof}

We  denote by $\mathcal{S}_\vc'(\ZZ)$ the set of all continuous linear functionals on $\mathcal{S}_\vc(\ZZ)$. We then have: 
\begin{proposition}
	The
	following
	identification
	is
	valid $\mathcal S'(\ZZ)/\mathcal P(\ZZ) = \mathcal{S}_\vc'(\ZZ)$.
\end{proposition}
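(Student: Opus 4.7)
The plan is to construct a continuous restriction map $R\colon \mathcal S'(\ZZ) \to \mathcal S_\vc'(\ZZ)$, show that its kernel is precisely $\mathcal P(\ZZ)$, and prove surjectivity via Hahn--Banach. First, Lemma \ref{lem1-distribution} yields $\|f\|_{m,k} \les \|f\|_{m+4k}$, while trivially $\|f\|_m = \|f\|_{m,0}$; hence the intrinsic topology on $\mathcal S_\vc(\ZZ)$ coincides with the subspace topology it inherits from $\mathcal S(\ZZ)$. Consequently, for any $T \in \mathcal S'(\ZZ)$, the restriction $RT := T|_{\mathcal S_\vc(\ZZ)}$ is continuous on $\mathcal S_\vc(\ZZ)$, so $R$ is well defined.

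That $\mathcal P(\ZZ) \subset \ker R$ follows by summation by parts: if $\Delta_d^m p = 0$ and $f = \Delta_d^m g_m \in \mathcal S_\vc(\ZZ)$, then $\langle p, f\rangle = \langle p, \Delta_d^m g_m\rangle = \langle \Delta_d^m p, g_m\rangle = 0$, the interchange being justified since $p$ has polynomial growth and $g_m$ decays rapidly. For the converse I would pass to the Fourier side. Since $\FZ\colon \mathcal S(\ZZ) \to C^\infty(\mathbb T)$ is a topological isomorphism, it extends by duality to a bijection $\FZ\colon \mathcal S'(\ZZ) \to \mathcal D'(\mathbb T)$. The crucial identification is
\[
\FZ(\mathcal S_\vc(\ZZ)) = \{\phi \in C^\infty(\mathbb T) : \phi \text{ vanishes to infinite order at } \theta = 0\}.
\]
The forward inclusion is immediate from $\FZ(\Delta_d^k g_k)(\theta) = (2(1-\cos\theta))^k \FZ(g_k)(\theta)$ together with $2(1-\cos\theta) \sim \theta^2$ near $0$. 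For the reverse inclusion, write $2(1-\cos\theta) = \theta^2 u(\theta)$ with $u \in C^\infty(\mathbb T)$ strictly positive; any smooth function flat at $0$ is divisible by every power of $\theta$ within $C^\infty$ (Taylor's theorem with integral remainder), so division by $(2(1-\cos\theta))^k$ remains in $C^\infty(\mathbb T)$, producing the required preimage $g_k \in \mathcal S(\ZZ)$. If $T \in \ker R$, then $\FZ(T)$ annihilates every smooth function flat at $0$, so $\supp \FZ(T) \subset \{0\}$; the structure theorem for distributions supported at a point forces $\FZ(T) = \sum_{j=0}^N c_j \delta_0^{(j)}$, whose inverse transform is the polynomial $T(n) = \sum_{j=0}^N c_j (-i)^j n^j \in \mathcal P(\ZZ)$.

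Surjectivity of $R$ then follows from the Hahn--Banach theorem applied in the locally convex space $\mathcal S(\ZZ)$: given $S \in \mathcal S_\vc'(\ZZ)$, continuity provides $m, k$ with $|S(f)| \le C\|f\|_{m,k}$ on $\mathcal S_\vc(\ZZ)$, and Lemma \ref{lem1-distribution} upgrades this to $|S(f)| \le C'\|f\|_{m+4k}$, so $S$ extends to a continuous linear functional on $\mathcal S(\ZZ)$ whose restriction recovers $S$. Passing to the quotient yields the identification $\mathcal S'(\ZZ)/\mathcal P(\ZZ) \cong \mathcal S_\vc'(\ZZ)$. The main technical obstacle is the Fourier characterization of $\mathcal S_\vc(\ZZ)$, and in particular the verification that division by $(2(1-\cos\theta))^k$ preserves smoothness globally on $\mathbb T$; this reduces to the Taylor-remainder argument near $\theta = 0$ together with the positivity of $2(1-\cos\theta)$ elsewhere.
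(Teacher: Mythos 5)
Your proof is correct, and it takes a genuinely different — and in fact more complete — route than the paper's. The paper's argument is a short two-inclusion duality argument: it uses Hahn--Banach to show that every $f\in\mathcal S_\vc'(\ZZ)$ extends to some $\widetilde f\in\mathcal S'(\ZZ)$, and it checks directly that restriction to $\mathcal S_\vc(\ZZ)$ is well defined on equivalence classes, i.e.\ that $\mathcal P(\ZZ)\subset\ker R$. What the paper does \emph{not} verify is the converse inclusion $\ker R\subset\mathcal P(\ZZ)$, which is precisely what is needed for the Hahn--Banach extension to determine a \emph{unique} equivalence class in $\mathcal S'(\ZZ)/\mathcal P(\ZZ)$; without it, the map $\mathcal S_\vc'(\ZZ)\to\mathcal S'(\ZZ)/\mathcal P(\ZZ)$ is not obviously well defined. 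You supply exactly this missing ingredient, via the Fourier-side characterization $\FZ(\mathcal S_\vc(\ZZ))=\{\phi\in C^\infty(\mathbb T):\phi\text{ flat at }0\}$, the divisibility of flat functions by $(2(1-\cos\theta))^k$, and the structure theorem for distributions supported at a point. That Fourier argument is the one piece of machinery your proof adds beyond what appears in the paper, and it buys you a complete verification of $\ker R=\mathcal P(\ZZ)$; the price is the reliance on the Fourier isomorphism $\FZ\colon\mathcal S(\ZZ)\xrightarrow{\sim}C^\infty(\mathbb T)$ and the distribution-support theorem, which the paper's shorter (but gappier) argument avoids invoking. The remaining steps — using Lemma \ref{lem1-distribution} to conclude that the intrinsic and subspace topologies on $\mathcal S_\vc(\ZZ)$ agree, and the Hahn--Banach surjectivity argument — are essentially the same as the paper's.
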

\begin{proof}
	From Lemma \ref{lem1-distribution}, we have $\mathcal{S}_\vc(\ZZ)\hookrightarrow  \mathcal S(\ZZ)$. Consequently, if $f\in \mathcal{S}_\vc'(\ZZ)$, by the Hahn-Banach theorem, we can extend $f$ to $\widetilde f\in \mathcal S'(\ZZ)$ and the equivalence class, which will be denoted by $F\in \mathcal S'(\ZZ)/\mathcal P(\ZZ)$. Hence, $\mathcal{S}_\vc'(\ZZ)\subset  \mathcal S'(\ZZ)/\mathcal P(\ZZ)$.
	
	Conversely, let $F\in \mathcal S'(\ZZ)/\mathcal P(\ZZ)$. Assume that $f_1, f_2\in F$, then there exists $p\in \mathcal P(\ZZ)$ such that $f_1=f_2 +p$. Then we have, for each $g\in \mathcal S_\vc(\ZZ)$,
	\[
	\begin{aligned}
		\langle f_1, g\rangle &= \langle f_2 + p, g\rangle\\
		&= \langle f_2 , g\rangle + \langle p , g\rangle\\
		&= \langle f_2 , g\rangle.
	\end{aligned}
	\]
	It follows that we can associate with $F$ a unique bounded linear functional in $\mathcal S_\vc(\ZZ)$ defined by 
	\[
	F(g)=\langle f, g\rangle
	\]
	for every $f\in F$ and $g\in \mathcal S_\vc(\ZZ)$. It follows that $\mathcal S'(\ZZ)/\mathcal P(\ZZ) \subset \mathcal S_\vc'(\ZZ)$.
	
	This completes our proof.
\end{proof}
\medskip

From Corollary \ref{lem1}, let $\psi\in \mathscr{\mathbb R}$ with $\supp \psi\subset [2,8]$. Then $\psi(t\sqrt{\Delta_d})(\cdot, m), \psi(t\sqrt{\Delta_d})(m,\cdot)\in \mathcal S_\vc(\ZZ)$ for every $t>0$ and $m\in \ZZ$. Therefore, for any $f\in \mathcal S'_\vc(\ZZ)$ we can define
\[
\psi(t\sqrt{\Delta_d})f(n) = \langle \psi(t\sqrt{\Delta_d})(n,\cdot), f\rangle.
\]

\bigskip

In what follows, by a ``\textit{partition of unity}'' we shall mean a function $\psi\in \mathcal{S}(\mathbb{R})$ such that $\supp\psi\subset[2,8]$, $\int\psi(\xi)\,\f{d\xi}{\xi}\neq 0$ and
$$\sum_{j\in \mathbb{Z}}\psi_j(\lambda)=1 \textup{ on } (0,\infty), $$
where $\psi_j(\lambda):=\psi(2^{-j}\lambda)$ for each $j\in \mathbb{Z}$.

We have the following result which can be viewed as the Calder\'on reproducing formula regarding the functional calculus of the discrete Laplacian $\Delta_d$. 
\begin{proposition}
	\label{prop-Calderon1}
	Let $\psi$ be a partition of unity. Then for any $f\in \mathcal{S}_\vc'(\ZZ)$ we have
	\[
	f= \sum_{j\in \mathbb{Z}^-}\psi_j(\sqrt{\Delta_d})f \ \ \text{in $\mathcal{S}'(\ZZ)/\mathcal P(\ZZ)$}.
	\]
	Similarly, for any $f\in \mathcal{S}_\vc'(\ZZ)$ we have
	\[
	f= c_\psi \int_0^\vc \psi(t\sqrt{\Delta_d})f \f{dt}{t} \ \ \text{in $\mathcal{S}'(\ZZ)/\mathcal P(\ZZ)$},
	\]
	where $c_\psi =\Big[\int_0^\vc \psi(s)  \f{ds}{s}\Big]^{-1}$.
	
\end{proposition}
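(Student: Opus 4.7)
The plan is to pass through the Fourier transform $\FZ$, which is an isomorphism $\mathcal{S}(\ZZ)\to C^\infty([-\pi,\pi])$ (with $2\pi$-periodic identification), diagonalises $\Dd$ as multiplication by $D(\theta):=2(1-\cos\theta)$, and identifies $\mathcal{S}_\vc(\ZZ)$ with the subspace of smooth functions vanishing to infinite order at $\theta=0$ (since $\FZ(\Dd^k g_k)(\theta)=D(\theta)^k\FZ(g_k)(\theta)$). Under this correspondence the semi-norm $\|f\|_{m,k}$ is equivalent to $C^m$-control of $\FZ(f)/D^k$ on $[-\pi,\pi]$. On the spectral side, $\sum_{j\in\ZZ}\psi_j(\lambda)=1$ on $(0,\infty)$, and since $\supp\psi_j\subset[2^{j+1},2^{j+3}]$ and the spectrum of $\sqrt{\Dd}$ is $[0,2]$, one in fact has $\sum_{j\in\ZZ^-}\psi_j(\lambda)=1$ on $(0,2)$ (the remaining endpoint $\lambda=2$, corresponding to $\theta=\pm\pi$, is a null set for Fourier inversion). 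Similarly, $c_\psi\int_0^\infty\psi(ts)\f{dt}{t}=c_\psi\int_0^\infty\psi(u)\f{du}{u}=1$ for every $s>0$.

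By self-adjointness of $\psi_j(\sqrt{\Dd})$ (its kernel is symmetric) and continuity of $f\in\mathcal{S}'_\vc(\ZZ)$, one has $\langle S_Jf,g\rangle=\langle f,S_Jg\rangle$ for the partial sum $S_Jg:=\sum_{j=-J}^{-1}\psi_j(\sqrt{\Dd})g$, so it is enough to prove $S_Jg\to g$ in $\mathcal{S}_\vc(\ZZ)$ for every $g\in\mathcal{S}_\vc(\ZZ)$. On the Fourier side,
\[
\FZ(g-S_Jg)(\theta)=h_J(\theta)\FZ(g)(\theta),\qquad h_J(\theta):=1-\sum_{j=-J}^{-1}\psi_j\bigl(\sqrt{D(\theta)}\bigr).
\]
On the spectrum $h_J=\sum_{j\le-J-1}\psi_j(\sqrt{D(\cdot)})$ is supported in $\{|\theta|\les 2^{-J}\}$ with $|h_J|\le 1$; since $|\partial_\lambda^\ell h_J(\lambda)|\les\lambda^{-\ell}$ by standard Littlewood--Paley estimates and $\theta\mapsto\sqrt{D(\theta)}=2|\sin(\theta/2)|$ has bounded derivatives of all orders, Fa\`a di Bruno yields $|\partial_\theta^\ell h_J(\theta)|\les|\theta|^{-\ell}$ on the support.

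Because $g\in\mathcal{S}_\vc(\ZZ)$, $\FZ(g)$ vanishes to infinite order at $\theta=0$: for any $M,j\in\mathbb{N}$ there is $C_{M,j}>0$ with $|\partial_\theta^j\FZ(g)(\theta)|\le C_{M,j}|\theta|^{M-j}$. Combined with $|\partial_\theta^j D(\theta)^{-k}|\les|\theta|^{-2k-j}$, Leibniz's rule gives, on $\{|\theta|\les 2^{-J}\}$,
\[
\Big|\partial_\theta^m\Big(\f{h_J(\theta)\FZ(g)(\theta)}{D(\theta)^k}\Big)\Big|\les C_{M,m,k}\,|\theta|^{M-m-2k}.
\]
Choosing $M$ arbitrarily large forces this quantity to tend to $0$ uniformly on the shrinking support, so $\|g-S_Jg\|_{m,k}\to 0$ for all $m,k$, which establishes the first formula.

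The integral identity is proved by the same scheme with the truncation $T_\epsilon g:=c_\psi\int_\epsilon^{1/\epsilon}\psi(t\sqrt{\Dd})g\f{dt}{t}$ replacing $S_J$. After the substitution $u=ts$, the symbol $\Psi_\epsilon(s):=1-c_\psi\int_\epsilon^{1/\epsilon}\psi(ts)\f{dt}{t}=c_\psi\int_{(0,\epsilon s)\cup(s/\epsilon,\infty)}\psi(u)\f{du}{u}$ is supported, on the spectrum $[0,2]$, in $\{s\les\epsilon\}$, is bounded by $1$, and satisfies $|\partial_s^\ell\Psi_\epsilon(s)|\les s^{-\ell}$ by the same dilation argument; the Fourier computation above then yields $T_\epsilon g\to g$ in $\mathcal{S}_\vc(\ZZ)$, whence the integral identity holds in $\mathcal{S}'(\ZZ)/\mathcal{P}(\ZZ)$. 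The main obstacle is the displayed derivative estimate: one must verify that the infinite-order vanishing of $\FZ(g)$ at $\theta=0$ beats both the $|\theta|^{-\ell}$ growth of $\partial_\theta^\ell h_J$ and the $|\theta|^{-2k}$ singularity of $D(\theta)^{-k}$ simultaneously.
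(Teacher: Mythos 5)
Your proof is correct in substance, but the route is genuinely different from the paper's. You pass entirely to the Fourier side, identify $\mathcal S_\vc(\ZZ)$ with periodic $C^\infty$ functions vanishing to infinite order at $\theta=0$, and show directly that the multiplier $h_J(\sqrt{D(\theta)})$ (resp.\ $\Psi_\epsilon(\sqrt{D(\theta)})$) times $\FZ(g)/D^k$ tends to $0$ in every $C^m$ norm because the infinite-order vanishing of $\FZ(g)$ dominates the $|\theta|^{-\ell}$ and $|\theta|^{-2k}$ singularities on a shrinking support. The paper, in contrast, stays on the operator side: it bounds each seminorm $\|\psi_j(\sqrt{\Dd})f\|_{N,k}$ using the off-diagonal kernel decay (Corollary \ref{lem1}) together with Lemma \ref{lem1-distribution}, obtains geometric decay $\les 2^j$, invokes completeness of $\mathcal S_\vc(\ZZ)$ to get a limit $h$, and identifies $h=f$ via $\ell^2$ spectral theory. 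Your argument is more elementary and transparent precisely because $\Dd$ is explicitly diagonalized by $\FZ$, whereas the paper's argument is framed so as to match the kernel-estimate toolkit it develops and uses throughout (which is the form needed, e.g., in Sections 3--5). Two small points worth tightening: (i) $\theta\mapsto 2|\sin(\theta/2)|$ is \emph{not} $C^1$ at $\theta=0$, so Fa\`a di Bruno should be applied to the smooth chain $\theta\mapsto D(\theta)\mapsto H_J(D(\theta))$ with $H_J(u):=h_J(\sqrt u)$ (which is smooth because $h_J\equiv 1$ near $0$, and satisfies $|\partial_u^m H_J|\les u^{-m}$); the stated bound $|\partial_\theta^\ell(h_J\circ\sqrt D)|\les|\theta|^{-\ell}$ then follows. (ii) The handling of the endpoint $\lambda=2$ (i.e.\ $\theta=\pm\pi$) is fine, but the reason is not that it is a null set; it is that $\psi\in\mathcal S(\mathbb R)$ with $\supp\psi\subset[2,8]$ forces $\psi(2)=0$ by continuity, so $h_J$ vanishes in a full neighborhood of $\theta=\pm\pi$.
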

\begin{proof}
	By duality it suffices to prove that for each $f\in \mathcal{S}_\vc(\ZZ)$,
	\[
	f= \sum_{j\in \mathbb{Z}^-}\psi_j(\sqrt{\Delta_d})f \ \ \text{in $\mathcal{S}_\vc(\ZZ)$}.
	\]
	
	To do this, we fix $N>0$ and $k\in \mathbb N$. Since $f\in \mathcal{S}_\vc(\ZZ)$,  there exists $g_k\in \mathcal{S}(\ZZ)$ so that $f=\Delta_d^{k+\ell} g_k$, where $\ell\in \mathbb N$ will be fixed later. Then we have,	  
$$
\begin{aligned}
	\|\psi_j(\sqrt{\Delta_d})f\|_{N,k}&=\|\Delta_d^\ell\psi_j(\sqrt{\Delta_d})g_k\|_{N}\\
	&=2^{2j\ell} \|\psi_{j,\ell}(\sqrt{\Delta_d})g_k\|_{N}\\
	&=2^{2j\ell}\sup_{n\in \ZZ}\sup_{0\le i\le N}(1+|n|)^N |\Delta_d^i \psi_{j,\ell}(\sqrt{\Delta_d})g_k(n)|\\
	&=2^{2j(\ell+i)}\sup_{n\in \ZZ}\sup_{0\le i\le N}(1+|n|)^N | \psi_{j,\ell+i}(\sqrt{\Delta_d})g_k(n)|,
\end{aligned}
$$
where $=\psi_{j,l}(\lambda)=(2^j\lambda)^{2l}\psi_j(\lambda)$ for $l\in \mathbb N$.

By Lemma \ref{lem1} we have
\[
|\psi_{j,\ell+i}(\sqrt{\Delta_d})(n,m)|\le \f{C_N}{2^{-j}}\Big(1+\f{|m-n|}{2^{-j}}\Big)^{-N-2}.
\]
Hence, 
$$
\begin{aligned}
 (1+|n|)^N | \psi_{j,\ell+i}(\sqrt{\Delta_d})g_k(n)|&\les  (1+|n|)^N  \sum_{m\in \ZZ} \f{1}{2^{-j}}\Big(1+\f{|m-n|}{2^{-j}}\Big)^{-N-2}|g_k(m)|\\
 &\les  \|g\|_N  \sum_{m\in \ZZ} \f{1}{2^{-j}}\Big(1+\f{|m-n|}{2^{-j}}\Big)^{-N-2}(1+|m-n|)^N\\
 &\les  2^{-jN}\|g_k\|_N  \sum_{m\in \ZZ} \f{1}{2^{-j}}\Big(1+\f{|m-n|}{2^{-j}}\Big)^{-2} \ \ \ \ \ \ \ \text{(since $j\le 0$)}\\
 &\les  2^{-jN}\|g_k\|_N.
\end{aligned}
$$
Using Lemma \ref{lem1-distribution},
\[
 (1+|n|)^N | \psi_{j,\ell+i}(\sqrt{\Delta_d})g_k(n)|\les 2^{-jN}\|f\|_{N +2(k+\ell)}
\]
for all $0\le i\le N$.

As a consequence, taking $\ell>(N+1)/2$, we have
\[
\begin{aligned}
\|\psi_j(\sqrt{\Delta_d})f\|_{N,k}&\les 2^{2j(\ell+i -N/2)} \|f\|_{N +2(k+\ell)}\\
&\les 2^{j}\|f\|_{N +2(k+\ell)}.
\end{aligned}
	\]
This, along with  the fact that $\mathcal{S}_\vc(\ZZ)$ is complete, we deduce that there exists $h\in \mathcal{S}_\vc(\ZZ)$ so that 
\[
h= \sum_{j\in \mathbb{Z}^-}\psi_j(\sqrt{\Delta_d})f \ \ \text{in $\mathcal{S}_\vc(\ZZ)$}.
\]
On the other hand, by spectral theory we have
\[
f= \sum_{j\in \mathbb{Z}^-}\psi_j(\sqrt{\Delta_d})f \ \ \text{in $\ell^2(\ZZ)$}.
\]
Therefore, $f\equiv h$ and this concludes the proposition.

The second identity can be done similarly.

This completes our proof.
\end{proof}

\section{Hardy spaces}
The theory of Hardy space $H^p(\ZZ)$, $0<p\le 1$ was investigated in \cite{BZ,Boza, KS}. In these papers, the atomic decomposition and the molecular decomposition of the Hardy space $H^p(\ZZ)$ were obtained. The decomposition was inspired from the corresponding decomposition of the classical Hardy spaces. However, the existing decomposition is not suitable to our purpose in studying the boundedness of the spectral multipliers of the discrete Laplacian $\Delta_d$. In this section, we will approach the Hardy space $H^p(\ZZ)$ via new molecular decomposition. This approach is inspired from those in \cite{DY, HM, HLMMY, AMR}.

\subsection{Molecular decomposition}
Consider the area square function $S$ associated to $\Delta_d$ defined by
\begin{equation}
	\label{eq-area integral}
	S_{\Delta_d}f(n)=\Big(\int_0^\vc \sum_{m:|m-n|<t}\big|t^2\Delta_de^{-t^2\Delta_d}f(m)\big|^2\f{dt}{t(t+1)}\Big)^{1/2}.
\end{equation}
Note that the square function $S_{\Dd}f$ is well-defined for distributions $f\in \mathcal S'(\ZZ)$. However, similarly to the classical case, the whole class of distributions $\mathcal S'(\ZZ)$ is too large for our later purpose.  In fact, we need only distributions which vanish weakly at infinity in the following sense.
\begin{definition}\label{defn-vanish at infty}
	We say that a distribution $f\in \mathcal S'(\ZZ)$ vanishes weakly at infinity if $(t\Delta_d)^ke^{-t\Delta_d}f\to 0 $ in $\mathcal S'(\ZZ)$ as $t\to \vc$ for every $k\in \mathbb N$. 
\end{definition}

For $0<p\le 1$, we define the Hardy space $H^p_{S_{\Delta_d}}(\ZZ)$ to be the set of $f\in \mathcal S'$ vanishing weakly at infinity such that 
\[
\|f\|_{H^p_{S_{\Delta_d}}(\ZZ)} := \big\|S_{\Delta_d}f\big\|_{\ell^p(\ZZ)}<\vc.
\]

In \cite{HM, HLMMY, JY}, the Hardy spaces associated to operators were defined via the completion of the set of $L^2$-functions satisfying certain conditions. In this paper, by using the space of distribution $\mathcal S'$ we can define the Hardy spaces directly.

We now adapt ideas in \cite{HLMMY, JY} to define the molecule associated to the discrete Laplacian $\Delta_d$.
\begin{definition}\label{def: L-atom}
	Let $\epsilon>0$, $0<p\le 1$ and $M\in \mathbb{N}$. A function $a(n)$  is called a  $(p,M,\epsilon)_{\Delta_d}$-molecule associated to an interval $I \subset \ZZ$ if there exists a function $b$ such that
	\begin{enumerate}
		\item[{\rm (i)}]  $a=\Delta_d^M b$;
		\item[{\rm (ii)}] $\|\Delta_d^{k}b\|_{\ell^2(S_j(I))}\leq
		2^{-j\epsilon}\ell(I)^{2(M-k)}|2^jI|^{1/2-1/p}$ for all $k=0,1,\dots,M$ and $j=0,1,2\ldots$.
	\end{enumerate}
	
\end{definition}

\begin{definition}
	
	Given  $\epsilon>0$, $0<p\le 1$ and $M\in \mathbb{N}$, we  say that $f=\sum
	\lambda_ja_j$ is a molecule $(p,M,\epsilon)_{\Delta_d}$-representation if
	$\{\lambda_j\}_{j=0}^\infty\in \ell^p$, each $a_j$ is a $(p,M,\epsilon)_{\Delta_d}$-molecule,
	and the sum converges in $\mathcal S'(\ZZ)$. The space $H^{p}_{\Delta_d,{\rm mol},M,\epsilon}(\ZZ)$ is then defined by
	\[
	H^{p}_{\Delta_d,{\rm mol},M,\epsilon}(\ZZ):=\left\{f\in \mathcal S'(\ZZ):f \ \text{has a molecule
		$(p,M,\epsilon)_{\Delta_d}$-representation}\right\},
	\]
	with the norm given by
	$$
	\|f\|^p_{H^{p}_{\Delta_d,{\rm mol},M,\epsilon}(\ZZ)}=\inf\left\{ \sum|\lambda_j|^p :
	f=\sum \lambda_ja_j \ \text{is a molecule $(p,M,\epsilon)_{\Delta_d}$-representation}\right\}.
	$$
\end{definition}

By the standard argument, it can be verified that $H^{p}_{\Delta_d,{\rm mol},M,\epsilon}(\ZZ)$ is complete. We will show that the Hardy spaces $H^p(\ZZ)$ and $H^{p}_{\Delta_d,{\rm mol},M,\epsilon}(\ZZ)$ are coincide. To do this, we need the following technical ingredients.

\begin{proposition}\label{prop- M is bounded from Hp to Lp}
	Let $p\in (0,1]$, $M>\f{1}{2}\big(\f{1}{p}-1\big)$ and $\epsilon>0$. For each $\ell\in \mathbb N$, the operator 
	$$
	\mathcal{M}^*_\ell f(n) =\sup_{t>0}\sup_{|m-n|<t}|(t^2\Delta)^\ell e^{-t^2\Delta_d}f(m)|
	$$
	is bounded from $H^{p}_{\Delta_d,{\rm mol},M,\epsilon}(\ZZ)$ to $\ell^p(\ZZ)$.
\end{proposition}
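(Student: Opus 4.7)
My plan would be to exploit the $p$-subadditivity of the $\ell^p$-quasinorm (valid since $p\le 1$) to reduce the claim to the uniform bound $\|\mathcal{M}^*_\ell a\|_{\ell^p(\ZZ)}\le C$ for a single $(p,M,\epsilon)_{\Dd}$-molecule $a$ associated with an interval $I$, with $a=\Dd^M b$. I would then decompose the target space as $\ZZ = \bigcup_{j\ge 0} S_j(I)$ and apply H\"older's inequality annulus by annulus to obtain
\[
\|\mathcal{M}^*_\ell a\|_{\ell^p(\ZZ)}^p \;\le\; \sum_{j\ge 0}|2^j I|^{1-p/2}\,\|\mathcal{M}^*_\ell a\|_{\ell^2(S_j(I))}^p.
\]
The remaining task splits naturally into estimating the inner $\ell^2$-norms in the near regime ($j$ small) and the far regime ($j$ large).

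For the near annuli I would first verify that $\mathcal{M}^*_\ell$ is bounded on $\ell^2(\ZZ)$. The kernel $K_t$ of $(t^2\Dd)^\ell e^{-t^2\Dd}$ satisfies $|K_t(m,k)|\les t^{-1}(1+|m-k|/t)^{-1-\delta}$ for some $\delta>0$; this follows either by combining Lemma \ref{lem1-ht} with the fact that $\Dd^\ell$ is a finite-order difference operator, or from Lemma \ref{lem2-htk} applied through $(t^2\Dd)^\ell e^{-t^2\Dd}=(-s\partial_s)^\ell e^{-s\Dd}|_{s=t^2}$. For $|m-n|<t$ the weight $1+|m-k|/t$ is comparable to $1+|n-k|/t$, so Lemma \ref{lem-elementary} yields the pointwise domination $\mathcal{M}^*_\ell f(n)\les\mathcal Mf(n)$ and hence $\ell^2$-boundedness. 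Combined with the triangle inequality $\|a\|_{\ell^2(\ZZ)}\le\sum_{i\ge 0}\|a\|_{\ell^2(S_i(I))}\les|I|^{1/2-1/p}$ (from the $k=M$ case of Definition \ref{def: L-atom}), this shows that the contribution of the finitely many near annuli is a bounded constant.

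For far annuli $j\gg 1$, I would split the supremum defining $\mathcal{M}^*_\ell a(n)$ at the threshold $t\sim 2^j\ell(I)$. When $t\le 2^{j-3}\ell(I)$, any $m$ with $|m-n|<t$ has $\mathrm{dist}(m,I)\gtrsim 2^j\ell(I)$, so the fast polynomial decay of $K_t$ (with arbitrarily large exponent from Lemma \ref{lem1-ht}) applied annulus-by-annulus to $a$ gives decay $2^{-jN}$ for any prescribed $N$. When $t>2^{j-3}\ell(I)$, I would exploit the identity
\[
(t^2\Dd)^\ell e^{-t^2\Dd} a \;=\; t^{-2M}\,(t^2\Dd)^{\ell+M}e^{-t^2\Dd} b,
\]
which produces a free factor $t^{-2M}\les(2^j\ell(I))^{-2M}$; combined with the molecular $\ell^2$-bounds on $b$ (the $k=0$ case of Definition \ref{def: L-atom}) and the same $\ell^2$-domination as before, this yields $\|\mathcal{M}^*_\ell a\|_{\ell^2(S_j(I))}\les 2^{-2jM}\,2^{-j\epsilon'}\,|I|^{1/2-1/p}$ for some $\epsilon'>0$. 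Plugging back, the $j$-sum is dominated by $\sum_j 2^{j(1-p/2)}\,2^{-jp(2M+\epsilon')}$, which converges precisely when $M>\tfrac{1}{2}(\tfrac{1}{p}-1)$, matching the hypothesis.

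The main obstacle I anticipate is this large-$t$ regime in the far annuli: the kernel decay of $K_t$ alone is too weak to absorb the volume factor $|2^jI|^{1-p/2}$, so one genuinely has to cash in the molecular factorization $a=\Dd^M b$ to extract the decisive factor $t^{-2M}$. The assumption $M>\tfrac{1}{2}(\tfrac{1}{p}-1)$ is sharp precisely against this volume growth. A minor technical point — namely, that the inner supremum over $m$ with $|m-n|<t$ does not destroy the $\mathcal M f(n)$ domination — is handled cleanly by the comparability of the kernel weights at $m$ and $n$.
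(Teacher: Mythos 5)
Your architecture is genuinely different from the paper's: the paper splits the molecule via the operator identity $a = (I - e^{-\ell(I)^2\Dd})^M a + [I - (I-e^{-\ell(I)^2\Dd})^M]a$ and runs a double annular decomposition over both the source and target annuli, whereas you decompose only the target into annuli $S_j(I)$ and invoke the factorization $a=\Dd^M b$ directly. Your near-annuli argument, your use of the maximal-function domination, and your large-$t$ far-annuli estimate are all sound (with $\epsilon'$ there supplied by the surviving $t^{-1}$ from the heat kernel, as you suspect). The observation that the molecular factorization is what pays for the volume factor is correct.

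The genuine gap is in the far-annuli, small-$t$ estimate. You claim that for $t\le 2^{j-3}\ell(I)$ the bare kernel decay gives $2^{-jN}$ for arbitrary $N$. This fails near the top of the $t$-range. Indeed $|K_t(m,k)|\les t^{-1}(1+|m-k|/t)^{-N'}$, and for a source point $k$ in an inner annulus $S_i(I)$, $i\le j-2$, and $m$ in the cone over $n\in S_j(I)$, one has $|m-k|\gtrsim 2^j\ell(I)$, so the relevant function is $t\mapsto t^{N'-1}(2^j\ell(I))^{-N'}$, increasing in $t$ for $N'>1$; its supremum over $(0,2^{j-3}\ell(I)]$ is attained at $t\sim 2^j\ell(I)$ and equals $\sim (2^j\ell(I))^{-1}$, with no decay in $N'$ whatsoever. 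Feeding this into your scheme yields $\|\mathcal{M}^*_\ell a\|_{\ell^2(S_j(I))}\les 2^{-j/2}|I|^{1/2-1/p}$, so $|2^jI|^{1-p/2}\|\mathcal{M}^*_\ell a\|^p_{\ell^2(S_j(I))}\sim 2^{j(1-p)}$, which diverges for every $p<1$. The bare kernel gains only one power of $t^{-1}$ over that range, and the hypothesis $M>\f12(\f1p-1)$ is calibrated precisely for a gain of order $t^{-(2M+1)}$, so the cancellation of the molecule cannot be skipped there.

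To repair this you would need to move the threshold from $t\sim 2^j\ell(I)$ down to $t\sim 1$: for $t\le 1$ the endpoint of the bare-kernel estimate is $t=1$, where $(1+|m-k|/t)^{-N'}$ really does deliver $(2^j\ell(I))^{-N'}$; and for $1\le t\le 2^{j-3}\ell(I)$ you must cash in $a=\Dd^Mb$ \emph{and simultaneously} the higher-derivative heat kernel estimate of Lemma~\ref{lem1-htk} (valid for $t\ge 1$), which gives $t^{-2M-1}(1+|m-k|/t)^{-N}$ for arbitrary $N$; only then does the sup over this range yield the required $(2^j\ell(I))^{-(2M+1)}$. The paper's insertion of $(I-e^{-\ell(I)^2\Dd})^M=\int_{[0,\ell(I)^2]^M}\Dd^M e^{-|\vec s|\Dd}\,d\vec s$ is precisely a device that packages the factor $\ell(I)^{2M}$ with the kernel of $\Dd^{M+\ell}e^{-(t^2+|\vec s|)\Dd}$ uniformly in $t$, so it is doing the work that your intermediate $t$-range is missing rather than being a cosmetic preliminary.
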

\begin{proof}
	Since the kernel $h_{t,\ell}( \cdot) \in \mathcal S(\ZZ)$ for each $t>0$ and $\ell \in \mathbb N$, it suffices to show that there exists $C>0$ such that 
	\[
	\|\mathcal{M}^*_\ell a\|_{\ell^p(\ZZ)}\le C
	\]
	for every $(p,M,\epsilon)_{\Delta_d}$ molecule $a$.
	
	Suppose that $a$ is a $(p,M,\epsilon)_{\Delta_d}$ associated to an interval $I$. Then we have
	\[
	\begin{aligned}
		\|\mathcal{M}^*_\ell a\|_{\ell^p(\ZZ)}^p &\les \big\|\mathcal{M}^*_\ell(I-e^{-\ell(I)^2\Delta_d})^M a\big\|_{\ell^p(\ZZ)}^p + \big\|\mathcal{M}^*_\ell\big[I-(I-e^{-\ell(I)^2\Delta_d})^M\big] a\big\|_{\ell^p(\ZZ)}^p\\
		&=: E+ F.
	\end{aligned}
	\]
	
	We  take care of the term $E$ first. To do this, we write
	\[
	\begin{aligned}
		E &\les \sum_{j\ge 0} \big\|\mathcal{M}^*_\ell(I-e^{-\ell(I)^2\Delta_d})^M (a\cdot 1_{S_j(I)})\big\|_{\ell^p(\ZZ)}^p\\
		&=: \sum_{j\ge 0} E_j.
	\end{aligned}
	\]
	For each $j\ge 0$, by H\"older's inequality, 
	\[
	\begin{aligned}
		E_j&\le \sum_{k\ge 0}\|\mathcal{M}^*_\ell(I-e^{-\ell(I)^2\Delta_d})^M (a\cdot 1_{S_j(I)})\|_{\ell^p(S_k(2^jI))}^p\\
		&\le \sum_{k\ge 0}|2^{k+j}I|^{\f{2-p}{2}}\|\mathcal{M}^*_\ell(I-e^{-\ell(I)^2\Delta_d})^M (a\cdot 1_{S_j(I)})\|_{\ell^2(S_k(2^jI))}^p\\
		&=: \sum_{k\ge 0}E_{jk}.
	\end{aligned}
	\]
	For $k=0,1,2,3$, by the $L^2$-boundedness of $\mathcal{M}^*_\ell(I-e^{-\ell(I)^2\Delta_d})^M$ we have
	\[
	\begin{aligned}
		E_{jk}&\le |2^jI|^{\f{2-p}{2}}\|\mathcal{M}^*_\ell(I-e^{-\ell(I)^2\Delta_d})^M (a\cdot 1_{S_j(I)})\|_{\ell^2(\ZZ)}^p\\
		&\les |2^jI|^{\f{2-p}{2}}\|a\|_{\ell^2(S_j(I))}^p\\
		&\les 2^{-j\epsilon}|2^jI|^{\f{2-p}{2}}|2^jI|^{\f{p-2}{2}} \sim  2^{-(j+k)\epsilon}.
	\end{aligned}
	\]
	For $k\ge 4$, note that 
	\[
	(I-e^{-\ell(I)^2\Delta_d})^M =\int_{[0,\ell(I)^2]^M}\Delta^M_de^{-|\vec s|\Delta_d}d\vec s,
	\]
	where $d\vec{s} = ds_1\ldots ds_M$ and $|\vec s| = s_1+\ldots+s_M$.
	
	Then we have, for $n\in S_k(2^j(I))$,
	\[
	\begin{aligned}
		\mathcal{M}^*_\ell(I-e^{-\ell(I)^2\Delta_d})^{M+\ell} (a\cdot 1_{S_j(I)})(n)&\le \sup_{t>0}\sup_{|m-n|<t} \int_{[0,\ell(I)^2]^M}t^{2\ell}|\Delta_d^{M+\ell}e^{-(t^2+|\vec s|)\Delta_d}(a\cdot 1_{S_j(I)})(m)|d\vec s\\
		&\le \sup_{t>2^{k+j}\ell(I)/4}\ldots + \sup_{0<t\le 2^{k+j}\ell(I)/4}\ldots 
	\end{aligned}
	\]
	For $n\in S_k(2^j(I))$ and $m\in I$, we have $t>|m-n|\ge 1$. Hence, applying Lemma \ref{lem1-htk},  we conclude that
	\[
	\begin{aligned}
		\sup_{t>2^{k+j}\ell(I)/4}\sup_{|m-n|<t}& \int_{[0,\ell(I)^2]^M}t^{2\ell}|\Delta_d^{M+\ell}e^{-(t^2+|\vec s|)\Delta_d}(a\cdot 1_{S_j(I)})(m)|d\vec s\\
		&\les \sup_{t>2^{k+j}\ell(I)/4}\sup_{|m-n|<t} \int_{[0,\ell(I)^2]^M}\sum_{l\in S_j(I)}\f{t^{2\ell}}{(t^2+|\vec s|)^{M+\ell+1/2}} |a(l)|d\vec s\\
		&\les 2^{-(k+j)(2M+1)} |I|^{-1} \|a\|_{\ell^1(S_j(B))},
	\end{aligned}
	\]
	and
	\[
	\begin{aligned}
		\sup_{0<t\le 2^{k+j}\ell(I)/4}&\sup_{|m-n|<t} \int_{[0,\ell(I)^2]^M}t^{2\ell}|\Delta_d^{M+\ell}e^{-(t^2+|\vec s|)\Delta_d}(a\cdot 1_{S_j(I)})(m)|d\vec s\\
		&\les \sup_{0<t\le 2^{k+j}\ell(I)/4}\sup_{|m-n|<t} \int_{[0,\ell(I)^2]^M}\sum_{l\in S_j(I)}\f{t^{2\ell}}{(t^2+|\vec s|)^{M+\ell+1/2}}\Big(\f{\sqrt{t^2+|\vec s|}}{|m-l|}\Big)^{2M+2\ell+1} |a(l)|d\vec s\\
		&\les 2^{-(k+j)(2M+1)} |I|^{-1} \|a\|_{\ell^1(S_j(B))} \ \ \ \ \ \text{(since $|m-l|\sim 2^{k+j}\ell(I)$)}.
	\end{aligned}
	\]
	It follows that 
	\[
	\begin{aligned}
		E_{jk}&\les 2^{-(k+j)(2M+1)p}|2^{k+j}I|^{\f{2-p}{2}}|I|^{-p} \|a\|_{\ell^1(S_j(B))}^p|2^{j+k}I|^{p/2}\\
		&\les 2^{-(k+j)[(2M+1)p-1]}.
	\end{aligned}
	\]
	As a consequence,
	\[
	E \les \sum_{j,k\ge 0}2^{-(j+k)\epsilon}\sim 1,
	\]	
	as long as $M>\f{1}{2}\big(\f{1}{p}-1\big)$.

	It remains to show that $F\les 1$. Since 
	\[
	I -(I-e^{-\ell(I)^2\Delta_d})^M = \sum_{i=1}^Mc_i e^{-i\ell(I)^2\Delta_d}, 
	\]
	it suffices to prove that for each $i=1,\ldots, M$,  
	\[
	\big\|\mathcal{M}^*_\ell (\Delta_d^Me^{-i\ell(I)^2\Delta_d} b)\big\|_{\ell^p(\ZZ)}^p\les 1,
	\]
	where $a=\Delta_d^Mb$.
	
	At this stage, we can argue similarly to the estimate of $E$ to come up with
	\[
	\big\|\mathcal{M}^*_\ell (\Delta_d^Me^{-i\ell(I)^2\Delta_d} b)\big\|_{\ell^p(\ZZ)}^p\les 1
	\]
	for each $i=1,\ldots, M$.
	
	This completes our proof.
\end{proof}

The following lemma ensures that the class of distributions vanishing weakly at infinity makes a good sense in the definition of $H^p(\ZZ)$.
\begin{lemma}\label{lem-vanish at infty}
	\label{lem- vanish at vc} Let $p\in (0,1]$, $M>\f{1}{2}\big(\f{1}{p}-1\big)$ and $\epsilon>0$. If $f\in H^{p}_{\Delta_d,{\rm mol},M,\epsilon}(\ZZ)$, then $f$ vanishes weakly at infinity for each $k\in \mathbb N$.
\end{lemma}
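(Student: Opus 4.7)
The plan is to reduce the statement to two successive applications of the dominated convergence theorem, with Proposition \ref{prop- M is bounded from Hp to Lp} and the time-derivative kernel estimate of Lemma \ref{lem2-htk} as the main inputs. I would fix a $(p,M,\epsilon)_{\Delta_d}$-molecular representation $f=\sum_j\lambda_j a_j$ with $\{\lambda_j\}\in \ell^p$, a test function $\varphi\in \mathcal S(\ZZ)$, and $k\in\mathbb N$. The goal is to show $\langle (s\Delta_d)^k e^{-s\Delta_d}f,\varphi\rangle\to 0$ as $s\to\infty$, which is what vanishing weakly at infinity means after unfolding the duality pairing.

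The first step will be to establish, for each molecule $a=\Delta_d^M b$ associated with an interval $I$ and each $n\in \ZZ$, the pointwise limit $(s\Delta_d)^k e^{-s\Delta_d}a(n)\to 0$. Cauchy--Schwarz combined with the molecular condition applied to $b$ (the $k=0$ case in Definition \ref{def: L-atom}) will yield
\[
\|b\|_{\ell^1(\ZZ)}\le \sum_{j\ge 0}|2^j I|^{1/2}\|b\|_{\ell^2(S_j(I))}\les \ell(I)^{2M}\sum_{j\ge 0}2^{-j\epsilon}|2^jI|^{1-1/p}<\infty,
\]
the series converging since $\epsilon>0$ and $1-1/p\le 0$. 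Writing $(s\Delta_d)^k e^{-s\Delta_d}a=s^k\Delta_d^{k+M}e^{-s\Delta_d}b$ and inserting the uniform kernel bound from Lemma \ref{lem2-htk} will give $|(s\Delta_d)^k e^{-s\Delta_d}a(n)|\les s^k\, s^{-(k+M+1/2)}\|b\|_{\ell^1(\ZZ)}\to 0$ as $s\to\infty$.

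To transfer this per-molecule limit to the series, I would appeal to the uniform-in-$s$ pointwise majorization $|(s\Delta_d)^k e^{-s\Delta_d}a_j(n)|\le \mathcal M^*_k a_j(n)$, obtained by taking $m=n$ in the definition of $\mathcal M^*_k$. The $p$-subadditivity $(\sum x_j)^p\le \sum x_j^p$ valid for $0<p\le 1$ together with Proposition \ref{prop- M is bounded from Hp to Lp} will produce
\[
\Big\|\sum_j |\lambda_j|\mathcal M^*_k a_j\Big\|_{\ell^p(\ZZ)}^p\le \sum_j |\lambda_j|^p\|\mathcal M^*_k a_j\|_{\ell^p(\ZZ)}^p\les \sum_j|\lambda_j|^p<\infty,
\]
so $\sum_j|\lambda_j|\mathcal M^*_k a_j(n)<\infty$ pointwise in $n$. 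A first dominated-convergence argument in $j$ then promotes the per-molecule limit to $(s\Delta_d)^k e^{-s\Delta_d}f(n)\to 0$ at each $n$. Finally, since $\mathcal M^*_k f\in \ell^p(\ZZ)\subset \ell^1(\ZZ)$ (counting-measure nesting for $p\le 1$) and $\varphi\in \ell^\infty(\ZZ)$, the majorant $\mathcal M^*_k f(n)|\varphi(n)|$ will be summable in $n$, and a second application of dominated convergence over $n$ delivers $\langle (s\Delta_d)^k e^{-s\Delta_d}f,\varphi\rangle\to 0$.

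The main obstacle I anticipate is purely bookkeeping: verifying in the $\ell^1$-bound for $b$ when $p<1$ that the polynomial volume factor $|2^jI|^{1-1/p}$ does not destroy the geometric decay $2^{-j\epsilon}$. This is automatic because $p\le 1$ forces $1-1/p\le 0$, so the volume factor is uniformly bounded in $j$. Beyond this point the argument reduces to the uniform kernel estimates of Lemma \ref{lem2-htk} and the molecular maximal bound of Proposition \ref{prop- M is bounded from Hp to Lp}, so no new analytic ingredient is required.
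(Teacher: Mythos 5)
Your proposal is correct, but it takes a genuinely different route from the paper's argument. The paper also invokes Proposition \ref{prop- M is bounded from Hp to Lp}, but it uses an ``infimum over a ball'' device to turn the $\ell^p$-control on $\mathcal M^*_k f$ directly into a \emph{uniform} pointwise rate of decay: for any $m$ with $|m-n|<t$ the quantity $|(t^2\Delta_d)^ke^{-t^2\Delta_d}f(n)|$ appears inside the sup defining $\mathcal M^*_kf(m)$, hence it is bounded by $\inf_{m\in I_n(t)}\mathcal M^*_kf(m)\le|I_n(t)|^{-1/p}\|\mathcal M^*_kf\|_{\ell^p}\les(1+t)^{-1/p}\|f\|_{H^p_{\Delta_d,{\rm mol},M,\epsilon}}$. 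This yields $(t^2\Delta_d)^ke^{-t^2\Delta_d}f\to 0$ \emph{uniformly} in $n$ as $t\to\infty$, and convergence in $\mathcal S'(\ZZ)$ is immediate. Your version instead establishes a per-molecule pointwise limit (using the $\ell^1$-bound on $b$ together with the uniform kernel bound from Lemma \ref{lem2-htk}) and then promotes it to the series via two applications of dominated convergence, with $\sum_j|\lambda_j|\mathcal M^*_ka_j$ as the integrable majorant. Both arguments are correct; the paper's is shorter and gives a quantitative $O(t^{-1/p})$ rate with no per-molecule analysis and no dominated convergence, while yours is a more ``elementary'' term-by-term argument at the cost of invoking the extra heat-kernel decay estimate, which the paper's proof does not need at this stage.
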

\begin{proof}
	Suppose that $f\in H^{p}_{\Delta_d,{\rm mol},M,\epsilon}(\ZZ)$ and $k\in \mathbb N$. Then we have, by Proposition \ref{prop- M is bounded from Hp to Lp},
	\[
	\begin{aligned}
		|(t^2\Delta_d)^k e^{-t^2\Delta_d}f(n)|&\les \inf_{m\in I_n(t)} \mathcal M^*_k f(m)\\
		&\les |I_n(t)|^{-1/p} \|\mathcal M^*_k f\|_{\ell^p(I_n(t))}\\
		&\les (1+t)^{-1/p} \|f\|_{H^{p}_{\Delta_d,{\rm mol},M,\epsilon}(\ZZ)},
	\end{aligned}
	\]
	where we recall that $I_n(t)=[n-t,n+t]_{\mathbb R}\cap \ZZ$.
	
	It follows that $(t^2\Delta_d)^k e^{-t^2\Delta_d}f\to 0 $ in $\mathcal S'(\ZZ)$ as $t\to \vc$.
	
	The proof is hence completed.
\end{proof}

The main result of this section is the following.
\begin{theorem}\label{thm-Hardy space}
	Let $\epsilon>0$, $p\in (0,1]$ and $M>\f{1}{2}\big(\f{1}{p}-1\big)$. Then the Hardy spaces $H^{p}_{\Delta_d,{\rm mol},M,\epsilon}(\ZZ)$ and $H^{p}_{S_{\Delta_d}}(\ZZ)$ coincide and have equivalent norms.
\end{theorem}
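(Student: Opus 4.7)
The plan is to prove the two inclusions $H^p_{\Delta_d,\mathrm{mol},M,\epsilon}(\ZZ)\subset H^p_{S_{\Delta_d}}(\ZZ)$ and $H^p_{S_{\Delta_d}}(\ZZ)\subset H^p_{\Delta_d,\mathrm{mol},M,\epsilon}(\ZZ)$ separately, following the template developed for Hardy spaces adapted to operators (cf.\ \cite{HLMMY, DY, JY}), with modifications reflecting the bounded spectrum of $\Dd$ and the polynomial (rather than Gaussian) decay of the heat kernel, both of which are already encoded in the estimates of Section~2.

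For the forward direction, it suffices to show a uniform bound $\|S_{\Dd}a\|_{\ell^p(\ZZ)}\lesssim 1$ for every $(p,M,\epsilon)_{\Dd}$-molecule $a=\Dd^M b$ associated with an interval $I$. I would partition $\ZZ=\bigcup_{j\ge 0}S_j(I)$: for $j\in\{0,1,2,3\}$ I combine Hölder's inequality in $\ell^p$-versus-$\ell^2$ with the $L^2$-boundedness of $S_{\Dd}$, which is immediate from spectral theory. For $j\ge 4$, I rewrite $t^2\Dd e^{-t^2\Dd}a = t^2\Dd^{M+1} e^{-t^2\Dd}b$ and split the $t$-integral at $t\sim 2^j\ell(I)$. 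The large-$t$ regime exploits the $t^{2M}$ cancellation against the sharp time decay of $\partial_t^{M+1}h_t$ from Lemma \ref{lem2-htk}; the small-$t$ regime exploits the spatial separation $|m-\supp b|\gtrsim 2^j\ell(I)\gg t$ together with the polynomial spatial decay in the same lemma. The condition $M>\f{1}{2}\bigl(\f{1}{p}-1\bigr)$ is exactly what renders the geometric sum over $j$ summable, and the weight $\f{dt}{t(t+1)}$ causes no difficulty since the large-$t$ tail decays rapidly (as the spectrum of $\Dd$ lies in $[0,4]$).

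For the converse direction, I would combine the Calderón reproducing formula of Proposition \ref{prop-Calderon1} with the Coifman--Meyer--Stein tent-space atomic decomposition, adapted from $\mathbb R$ to $\ZZ$. Choosing $\psi,\tilde\psi\in\mathcal S(\mathbb R)$ supported in $[2,8]$ with $c_0\int_0^\infty\tilde\psi(s)\psi(s)\,ds/s=1$, Proposition \ref{prop-Calderon1} yields $f=c_0\int_0^\infty \tilde\psi(t\sqrt{\Dd})\psi(t\sqrt{\Dd})f\,dt/t$ in $\mathcal S'(\ZZ)/\mathcal P(\ZZ)$. The function $F(n,t)=\psi(t\sqrt{\Dd})f(n)$ lies in an appropriate tent space $T^p$ over $\ZZ\times(0,\infty)$ with $\|F\|_{T^p}\lesssim\|f\|_{H^p_{S_{\Dd}}}$; the norm comparison reduces to interchanging $\psi(t\sqrt{\Dd})$ and $(t^2\Dd) e^{-t^2\Dd}$, both being spectrally localized multipliers. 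Decomposing $F=\sum_k\lambda_k A_k$ into tent atoms over intervals $I_k$ with $\sum_k|\lambda_k|^p\lesssim\|f\|_{H^p_{S_{\Dd}}}^p$, and then using the identity $\tilde\psi(\lambda)=\lambda^{2M}\Psi(\lambda)$, where $\Psi(\lambda):=\lambda^{-2M}\tilde\psi(\lambda)\in\mathcal S(\mathbb R)$ is still supported in $[2,8]$, gives a decomposition $f=\sum_k\lambda_k\Dd^M b_k$ with
\[
b_k=c_0\int_0^\infty t^{2M}\Psi(t\sqrt{\Dd})A_k(\cdot,t)\,\f{dt}{t}.
\]

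The main obstacle will be verifying the annular molecular estimate $\|\Dd^i b_k\|_{\ell^2(S_j(I_k))}\lesssim 2^{-j\epsilon}\ell(I_k)^{2(M-i)}|2^jI_k|^{1/2-1/p}$ of Definition \ref{def: L-atom}(ii) for all $i\in\{0,\ldots,M\}$ and $j\ge 0$. The strategy is to dualize against a unit $\ell^2(S_j(I_k))$ function and apply Cauchy--Schwarz, reducing matters to the operator $\Dd^i\Psi(t\sqrt{\Dd})$ acting between $\hat I_k$ and $S_j(I_k)$. Here the rapidly decaying kernel bounds of Lemma \ref{lem1-derivative of psi delta} supply a gain of $(1+2^j\ell(I_k)/t)^{-N}$ for any $N$, which, combined with the $L^2$-normalization $\|A_k\|_{L^2(\hat I_k,\,dm\,dt/t)}\le|I_k|^{1/2-1/p}$ of the tent atom, delivers the required $2^{-j\epsilon}$ decay upon integrating $t$ over $(0,\ell(I_k)]$. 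Once the molecular representation is established, Lemma \ref{lem-vanish at infty} ensures the reconstructed $f$ vanishes weakly at infinity, completing the identification.
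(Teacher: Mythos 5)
Your overall plan---two inclusions, tent-space atomic decomposition, $L^2$-boundedness on the local annuli, kernel decay on the far annuli---is the standard template adapted from \cite{HLMMY,DY,JY} and matches the paper's structure. But the forward direction as sketched has a concrete gap, and the converse direction takes a genuinely different (and incomplete) route from the paper's.

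In the forward direction you rewrite $t^2\Dd e^{-t^2\Dd}a = t^2\Dd^{M+1}e^{-t^2\Dd}b$ and, for $n\in S_j(I)$ with $j\ge 4$, appeal to a ``spatial separation $|m-\supp b|\gtrsim 2^j\ell(I)$.'' However, molecules are \emph{not} compactly supported: Definition \ref{def: L-atom} only gives $\|\Delta_d^k b\|_{\ell^2(S_\ell(I))}\les 2^{-\ell\epsilon}\ell(I)^{2(M-k)}|2^\ell I|^{1/2-1/p}$, and $b$ has nonzero values near (and inside) $S_j(I)$. There is no support to separate from, so your small-$t$ estimate collapses unless you also decompose $b$ (or $a$) into annular pieces and sum the contributions using the $2^{-\ell\epsilon}$ decay. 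The paper sidesteps this precisely by inserting $(I-e^{-\ell(I)^2\Delta_d})^M$, decomposing $a=\sum_j a\cdot 1_{S_j(I)}$ first, and running a \emph{double} annular sum over $j$ and $k$ (the terms $E_{jk}$ in Proposition \ref{prop-H atom subset H square}); the $b$-estimates enter only for the regularized remainder $I-(I-e^{-\ell(I)^2\Dd})^M=\sum_i c_i e^{-i\ell(I)^2\Dd}$, where $e^{-i\ell(I)^2\Dd}$ tames $b$.

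In the converse direction your use of $\psi(t\sqrt{\Dd})$ with $\supp\psi\subset[2,8]$ is a legitimate alternative to the paper's operator $(t^2\Dd)^{M+N}e^{-t^2\Dd}$ from Lemma \ref{lem-Calderon formular hardy}, but it requires a step you only gesture at: controlling the $T^p_2$ norm of $\psi(t\sqrt{\Dd})f$ by that of $t^2\Dd e^{-t^2\Dd}f$. This equivalence is what underlies Remark \ref{rem2}(b), which the paper establishes \emph{after} Theorem \ref{thm-Hardy space}; it can be proved directly via tent-space arguments, but it must be proved, not assumed. More substantively, you do not verify that the proposed series $\sum_k\lambda_k a_k$ converges in $\mathcal S'(\ZZ)$. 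The paper devotes a significant portion of Proposition \ref{h-sh subset h-molecule} to exactly this, proving $|\langle a_i,\phi\rangle|\les 1$ uniformly by pairing against $D\phi$ and using the discreteness fact $|I_i|\ge 1$ (so $|I_i|^{1-1/p}\le 1$ for $p\le 1$); Lemma \ref{lem-vanish at infty}, which you invoke, shows that elements of $H^{p}_{\Delta_d,\mathrm{mol},M,\epsilon}(\ZZ)$ vanish weakly at infinity, but it does not yield the required $\mathcal S'$-convergence of the decomposition, which is logically prior. Finally, a small technical slip: to control $\Dd^i\Psi(t\sqrt{\Dd})$ you should rewrite it as $t^{-2i}\tilde\Psi(t\sqrt{\Dd})$ with $\tilde\Psi(\lambda)=\lambda^{2i}\Psi(\lambda)$ and apply Corollary \ref{lem1}; Lemma \ref{lem1-derivative of psi delta} concerns the difference operators $\mathfrak{D}^k$, not powers of $\Dd$.
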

Due to Theorem \ref{thm-Hardy space}, for $0<p\le 1$ we define the Hardy spaces $H^p_{\Dd}(\ZZ)$ to be either the space $H^{p}_{S_{\Delta_d}}(\ZZ)$ or the space $H^{p}_{\Delta_d,{\rm mol},M,\epsilon}(\ZZ)$ and $H^{p}_{S_{\Delta_d}}(\ZZ)$ with $\epsilon>0$, $p\in (0,1]$ and $M>\f{1}{2}\big(\f{1}{p}-1\big)$.

\bigskip

The proof of Theorem \ref{thm-Hardy space} is similar to those in \cite{HLMMY, HM}. However, as mentioned earlier since we define the Hardy spaces by using the space of distribution rather than the completion approach as in \cite{HLMMY, HM}. It is worth to provide the readers with the details. Before coming to the details of the proof we need to take care of the following material. We begin with some notations in \cite{CMS}.
\begin{itemize}
	\item For $x\in \ZZ$ and $\beta>0$, we denote $$\Gamma^\beta (n):=\big\{(m,t)\in \ZZ \times \big(0,\infty\big): |m-n| \leq \beta t\big\}.$$
	When $\beta=1$, we briefly write  $\Gamma(x)$ instead of $\Gamma^1(x)$. 
	
	\item For any  subset $F \subset \ZZ$ , define 
	$$ R^\beta (F) = \cup_{n\in F} \Gamma^\beta(n),$$ 
	and we also denote $R^1(F)$ by $R(F)$. 
	
	\item If $O$ is a set in $\ZZ$, then the tent over $\widehat{O}$ is defined as 
	$$\widehat{O}=\big(R(O^c)\big)^c.$$
	
\end{itemize}

For a measurable function $F$ defined in $\ZZ\times (0,\vc)$, we define
$$\mathcal A(F)(n)=\Big(\int_{0}^\vc \sum_{n:\,|m-n|<t}|F(m,t)|^2\frac{dt}{t(t+1)}\Big)^\frac{1}{2}.$$
For each $0<p\le 1$, the tent space $T^p_2$ is defined the set of all measurable functions $F$ such that 
\[
\|F\|_{T^p_2} := \|\mathcal A F\|_{\ell^p(\ZZ)}.
\]

 We now recall the definition of atoms in the tent space  in \cite{CMS}.
\begin{definition}[\cite{CMS}]		
	
	For $ 0<p\leq 1$, a measurable function $a$ on $\ZZ\times (0,\vc)$ is said to be a $T_2^p$-atom if there exists an interval $I\in \ZZ$ such that $a$ is supported in $\widehat{I}$ and 
	$$\int_{0}^\vc \sum_{n\in \ZZ}|a(n,t)|^2\frac{dt}{t} \leq |I|^{1-\frac{2}{p}}.$$
\end{definition}		
Then we have:
\begin{lemma}[\cite{CMS}]\label{atomic decoposition of T2}
	Let $0<p\leq 1$. For every $F\in T_2^p$ there exist a constant $C_p>0$, a sequence of numbers $\big\{\lambda_j\big\}_{j=0}^\infty$ and a sequence of $T_2^p$-atom $\big\{a_j\big\}_{j=0}^\infty$ such that 
	$$F= \sum_{j=0}^{\infty}\lambda_ja_j \text{ in $T_2^p$  a.e in $\ZZ\times (0,\vc)$}$$
	and $$\sum_{j=0}^{\infty}|\lambda_j|^p\leq C_p\|F\|^p_{T_2^p}.$$ 
	%Furthermore, if $F \in T_2^p \cap T_2^2$, then the sum also converges in $T^2_2$.
\end{lemma}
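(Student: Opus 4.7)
The plan is to follow the classical Coifman--Meyer--Stein Whitney decomposition of the tent over level sets of the area function, adapted to $\ZZ\times (0,\vc)$ with counting measure on $\ZZ$ and Lebesgue measure on $(0,\vc)$.

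\textbf{Step 1 (Level sets and good expansions).} For each $k\in\mathbb Z$, set
\[
O_k=\{n\in \ZZ: \mathcal A(F)(n)>2^k\}, \qquad O_k^*=\{n\in \ZZ: \mathcal M(\mathbf 1_{O_k})(n)>1/2\}.
\]
Since $\mathcal M$ is of weak type $(1,1)$ on $\ZZ$ we have $|O_k^*|\les |O_k|$, and $O_k^*$ is a union of disjoint intervals (the connected components in $\ZZ$) which we write $O_k^*=\bigsqcup_j I_{k,j}$. Let $F_k^*=\ZZ\setminus O_k^*$ and $T_k=\widehat{O_k^*}\setminus \widehat{O_{k+1}^*}$.

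\textbf{Step 2 (Decomposition of $F$).} First I would verify that a.e.\ point $(n,t)$ in the support of $F$ lies in some $\widehat{O_k^*}$: indeed, any such $(n,t)$ contributes nontrivially to $\mathcal A(F)(y)$ for $y$ near $n$, so those $y$ land in $O_k$ for all sufficiently large negative $k$, hence $(n,t)\in\widehat{O_k^*}$ for those $k$. Thus $F=\sum_k F\mathbf 1_{T_k}=\sum_{k,j} F\mathbf 1_{T_k\cap \widehat{I_{k,j}}}$ pointwise a.e. Define
\[
A_{k,j}(n,t)=F(n,t)\mathbf 1_{T_k\cap \widehat{I_{k,j}}}(n,t), \qquad \lambda_{k,j}=C\,2^{k}|I_{k,j}|^{1/p}, \qquad a_{k,j}=\lambda_{k,j}^{-1}A_{k,j},
\]
with $C$ chosen below.

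\textbf{Step 3 (The atom bound; this is the main obstacle).} The key estimate is
\[
\int_0^\vc\sum_{n\in \ZZ}|A_{k,j}(n,t)|^2\frac{dt}{t}\les 2^{2k}|I_{k,j}|.
\]
To prove this, I would use a Fubini/duality argument. Since $T_k\subset \ZZ\times(0,\vc)\setminus \widehat{O_{k+1}^*}$, every $(n,t)\in T_k$ admits some $y\in F_{k+1}^*$ with $|y-n|<t$, namely $(n,t)\in R(F_{k+1}^*)$. On the other hand the geometry of the expansion $O_{k+1}^*$ guarantees that for $(n,t)\in \widehat{I_{k,j}}$ the set of admissible good $y$'s, $E_{k,j}:=\{y\in F_{k+1}^*: |y-n|<t \text{ for some }(n,t)\in \widehat{I_{k,j}}\cap T_k\}$, satisfies $|E_{k,j}|\gtrsim |I_{k,j}|$ (this is where the threshold $1/2$ in the maximal-function expansion is used). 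Since $y\in F_{k+1}^*$ implies $\mathcal A(F)(y)\le 2^{k+1}$, Fubini gives
\[
\sum_{y\in E_{k,j}}\mathcal A(F)(y)^2\ge \int_0^\vc\sum_{(n,t)\in \widehat{I_{k,j}}\cap T_k}|F(n,t)|^2\,\#\{y\in E_{k,j}:|y-n|<t\}\frac{dt}{t(t+1)},
\]
and the inner counting is $\gtrsim t$ (or $\gtrsim t+1$ accounting for the small-$t$ factor in $\mathcal A$) for the relevant $(n,t)$, producing exactly the weight $\frac{dt}{t}$ on the right. Comparing the two sides yields the displayed atom bound with constant $\les 2^{2k}|I_{k,j}|$. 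Hence $a_{k,j}$ is a $T_2^p$-atom associated with $I_{k,j}$ once $C$ absorbs the implicit constant.

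\textbf{Step 4 ($\ell^p$ estimate on coefficients).} Using $|O_k^*|\les |O_k|$ and the layer-cake formula,
\[
\sum_{k,j}|\lambda_{k,j}|^p\les \sum_{k\in\mathbb Z}2^{kp}\sum_j|I_{k,j}|=\sum_{k\in\mathbb Z}2^{kp}|O_k^*|\les \sum_{k\in\mathbb Z}2^{kp}|O_k|\les \|\mathcal A(F)\|_{\ell^p(\ZZ)}^p=\|F\|_{T_2^p}^p.
\]
Convergence of $\sum_{k,j}\lambda_{k,j}a_{k,j}$ in $T_2^p$ follows from this bound together with the pointwise identity on supports established in Step 2 and the quasi-triangle inequality for $T_2^p$ (noting $p\le 1$).

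The only genuinely delicate point is Step 3: verifying both the lower bound $|E_{k,j}|\gtrsim |I_{k,j}|$ (which is the reason for introducing $O_k^*$ rather than $O_k$) and the comparison of the $\frac{dt}{t(t+1)}$ measure appearing in $\mathcal A$ with the $\frac{dt}{t}$ normalization demanded of $T_2^p$-atoms; everything else is bookkeeping.
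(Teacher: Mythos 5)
Your proposal correctly reproduces the Coifman--Meyer--Stein atomic decomposition of tent spaces, which is exactly what the paper invokes here by citation; the key discrete adaptation you flag (using $\#\{m:|m-n|<t\}\sim t+1$ so that the $t+1$ in the denominator of $\mathcal A$ cancels and yields the $\frac{dt}{t}$ normalization of the atom) is the right one and is handled correctly. One technical slip in Step~3: the good-point set should be $\ZZ\setminus O_{k+1}$ rather than $F_{k+1}^*=\ZZ\setminus O_{k+1}^*$, since for $(n,t)\in T_k$ the fact that some $y_0$ with $|y_0-n|\le t$ satisfies $\mathcal M(\mathbf 1_{O_{k+1}})(y_0)\le \tfrac12$ controls the density of $O_{k+1}$ (not of $O_{k+1}^*$) in the interval $\{m:|m-n|<t\}$; of course $\ZZ\setminus O_{k+1}$ still gives $\mathcal A(F)(y)\le 2^{k+1}$, so the Fubini argument closes. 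Relatedly, the global bound $|E_{k,j}|\gtrsim |I_{k,j}|$ is not the geometric fact the argument actually uses — the per-$(n,t)$ count $\#\{y\in I\setminus O_{k+1}\}\gtrsim t+1$ is — though you do state and use the latter.
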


\begin{lemma}\label{lem-Calderon formular hardy}
	If $f\in \mathcal S'(\ZZ)$ vanishes weakly at infinity, then for any $N\in \mathbb N^*$ we have
	\[
	f = \f{1}{(N-1)!} \int_0^\vc (t\Delta_d)^Ne^{-t\Delta_d} f\f{dt}{t} \ \ \ \text{in $\mathcal S'(\ZZ)$},
	\]
\end{lemma}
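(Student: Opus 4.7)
The plan is to reduce the identity to a scalar computation by pairing both sides against an arbitrary test function $\varphi\in\mathcal S(\ZZ)$, and then to establish the resulting scalar identity through integration by parts in $t$, using the hypothesis that $f$ vanishes weakly at infinity to kill the boundary contributions at $t=\infty$.

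Concretely, I would fix $\varphi\in\mathcal S(\ZZ)$ and set $u(t):=\langle f,e^{-t\Delta_d}\varphi\rangle$ for $t>0$. By Lemma~\ref{lem1-ht}, $e^{-t\Delta_d}\varphi$ lies in $\mathcal S(\ZZ)$ for every $t\geq 0$, so $u$ is smooth on $(0,\infty)$ with $u^{(j)}(t)=(-1)^j\langle f,\Delta_d^j e^{-t\Delta_d}\varphi\rangle$. Using $(t\Delta_d)^N e^{-t\Delta_d}=(-t)^N\partial_t^N e^{-t\Delta_d}$ together with the formal self-adjointness of the semigroup in the duality pairing, the proposition reduces to the scalar identity
\[
\int_0^\infty t^{N-1}u^{(N)}(t)\,dt=(-1)^N(N-1)!\,\langle f,\varphi\rangle.
\]

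The main computation is to truncate the integral to $[\epsilon,R]$ with $0<\epsilon<R<\infty$ and perform integration by parts $N-1$ times; an elementary induction gives
\[
\int_\epsilon^R t^{N-1}u^{(N)}(t)\,dt=\sum_{j=0}^{N-1}(-1)^{N-1-j}\frac{(N-1)!}{j!}\Bigl[t^{j}u^{(j)}(t)\Bigr]_\epsilon^R,
\]
after which only the two endpoint limits need to be tracked. Rewriting $t^{j}u^{(j)}(t)=(-1)^j\langle (t\Delta_d)^j e^{-t\Delta_d}f,\varphi\rangle$, every boundary term at $t=R$ vanishes as $R\to\infty$ by the vanishing-at-infinity hypothesis (Definition~\ref{defn-vanish at infty}) applied with $k=0,1,\dots,N-1$. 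At the lower endpoint only the $j=0$ term survives: it equals $(-1)^{N-1}(N-1)!\langle f,e^{-\epsilon\Delta_d}\varphi\rangle$ and tends to $(-1)^{N-1}(N-1)!\langle f,\varphi\rangle$ as $\epsilon\to 0^+$, while the terms with $j\geq 1$ carry a prefactor $\epsilon^j$ multiplying a quantity bounded as $\epsilon\to 0^+$ and hence disappear.

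The main obstacle is a soft-analysis step at $\epsilon\to 0^+$: I need to verify $e^{-\epsilon\Delta_d}\varphi\to\varphi$ in $\mathcal S(\ZZ)$ and the uniform boundedness of every Schwartz seminorm of $\Delta_d^j e^{-\epsilon\Delta_d}\varphi$ for $\epsilon\in(0,1]$. Both reduce to the fundamental-theorem identity $e^{-\epsilon\Delta_d}\varphi-\varphi=-\int_0^\epsilon \Delta_d e^{-s\Delta_d}\varphi\,ds$ combined with the kernel bounds of Lemmas~\ref{lem1-ht} and~\ref{lem2-htk}, which guarantee that $s\mapsto \Delta_d^j e^{-s\Delta_d}\varphi$ is uniformly bounded in every seminorm of $\mathcal S(\ZZ)$ on any compact subset of $[0,\infty)$. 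With these ingredients in place, the scalar identity follows, and since $\varphi\in\mathcal S(\ZZ)$ was arbitrary, the distributional identity is proved.
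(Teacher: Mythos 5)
Your proof is correct and follows essentially the same route as the paper: integrate by parts $N-1$ times in $t$, use the weak vanishing at infinity for the boundary at $t=\infty$, and use the soft facts that $e^{-\epsilon\Delta_d}\varphi\to\varphi$ in $\mathcal S(\ZZ)$ and $\epsilon^j\Delta_d^j e^{-\epsilon\Delta_d}\varphi\to 0$ for $j\ge 1$ at $\epsilon\to 0^+$. The only cosmetic difference is that you pair against a test function first and track the boundary terms through the scalar function $u(t)=\langle f,e^{-t\Delta_d}\varphi\rangle$, whereas the paper carries out the integration by parts at the operator level and invokes duality afterwards; the key ingredients (Lemmas \ref{lem1-ht} and \ref{lem2-htk}, the fundamental-theorem identity $(I-e^{-\epsilon\Delta_d})\varphi=\int_0^\epsilon \Delta_d e^{-s\Delta_d}\varphi\,ds$, and Definition \ref{defn-vanish at infty}) are identical.
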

\begin{proof}
	By the integration by part and the fact that $f$ vanishes weakly at infinity, we have, for $f\in \mathcal S'(\ZZ)$,
	\[
	\begin{aligned}
	\int_0^\vc (t\Delta_d)^Ne^{-t\Delta_d} f\f{dt}{t} &= \f{(-1)^N}{(N-1)!}\int_0^\vc t^{N-1}\f{\partial^N}{\partial t^N}e^{-t\Delta_d}fdt\\
	&=\lim_{t\to 0}\Big(e^{-t\Delta_d}f +\sum_{m=1}^{N-1}c_m t^m\Delta_d^m e^{-t\Delta_d}f\Big)
\end{aligned}
	\]
	for some $c_m$.
	
	For each $g\in \mathcal S$, we can write
	\[
	t^m\Delta_d^m e^{-t\Delta_d}g= t^m e^{-t\Delta_d}(\Delta_d^m g)
	\] 
	and
	\[
	\big(I-e^{-t\Delta_d}\big)g = \int_0^t e^{-s\Delta_d}(\Delta_dg) ds.
	\]
	On the other hand, by using \eqref{Pml norm} and Lemma \ref{lem1-ht} it can be verified that if $g\in \mathcal S(\ZZ)$, then for each $m,k\in \mathbb N$,
	\[
	\|e^{-t\Delta_d}(\Delta_d^k g)\|_m\les \|g\|_{m+2k}.
	\]
	Therefore, for $g\in \mathcal S(\mathbb Z)$ and $t>0$, we have
	\[
	t^m\Delta_d^m e^{-t\Delta_d}g \to 0 \ \ \text{in $\mathcal S(\mathbb Z)$ as  $t\to 0$},
	\]
	and
	\[
	\big(I-e^{-t\Delta_d}\big)g \to 0 \ \ \text{in $\mathcal S(\mathbb Z)$ as  $t\to 0$}.
	\]
	Consequently,
	$$
	\lim_{t\to 0}\Big(e^{-t\Delta_d}g +\sum_{m=1}^{N-1}c_m t^m\Delta_d^m e^{-t\Delta_d}g\Big) = g \ \ \ \text{in $\mathcal S(\ZZ)$}.
	$$
	Hence, by duality
	\[
	\lim_{t\to 0}\Big(e^{-t\Delta_d}f +\sum_{m=1}^{N-1}c_m t^m\Delta_d^m e^{-t\Delta_d}f\Big)=f
	\]
	in $\mathcal S'(\ZZ)$.
	
	This completes our proof.
\end{proof}

\begin{lemma}\label{atom to molecule}
	Let $0< p \leq 1$, $M\in \mathbb{N}$ and $\epsilon>0$. Suppose that $A$ is a $T_2^p$-atom supported in $\widehat{I}$ with some interval  $I \subset \ZZ $ . Then for every $M \geq 1$ and $N \ge  1/p+\epsilon $, the function 
	$$\int_{0}^{\infty} \big(t^2  \Delta_d\big)^{M+N} e^{-t^2  \Delta_d} A(n,t)\frac{dt}{t}$$ define  a multiple of  $(p,M,\epsilon)_{\Delta_d}$-molecule associated to the interval $I$ with a multiple constant depending on $M$ only. 
\end{lemma}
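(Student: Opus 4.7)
The natural choice is to set
\[
b(n) = \int_{0}^{\infty} t^{2M} (t^2 \Delta_d)^{N} e^{-t^2 \Delta_d} A(n,t)\frac{dt}{t},
\]
so that (at least formally) $\Delta_d^M b$ equals the given expression. More generally, for $0 \le k \le M$,
\[
\Delta_d^k b(n) = \int_{0}^{\infty} t^{2(M-k)} (t^2 \Delta_d)^{N+k} e^{-t^2 \Delta_d} A(n,t)\frac{dt}{t},
\]
and the task reduces to verifying, for every $j \ge 0$, the bound
\[
\|\Delta_d^k b\|_{\ell^2(S_j(I))} \lesssim 2^{-j\epsilon}\,\ell(I)^{2(M-k)}\,|2^j I|^{1/2-1/p}.
\]
I would carry out this verification by duality: fix $g$ supported in $S_j(I)$ with $\|g\|_{\ell^2(\ZZ)}=1$, use self-adjointness of $\Delta_d$, and exploit that $A$ is supported in $\widehat{I}$, so that $n \in I$ and $0 < t \lesssim \ell(I)$ in the pairing
\[
\langle \Delta_d^k b, g\rangle = \int_0^{\ell(I)} t^{2(M-k)}\!\!\sum_{n\in I} A(n,t)\,\overline{(t^2\Delta_d)^{N+k} e^{-t^2\Delta_d}g(n)}\,\frac{dt}{t}.
\]

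For the local annuli $j \in \{0,1,2,3\}$, I would apply Cauchy--Schwarz in $(n,t)$ together with the standard quadratic estimate from the spectral theorem,
\[
\int_{0}^{\infty}\bigl\|(t^2\Delta_d)^{N+k} e^{-t^2\Delta_d} g\bigr\|_{\ell^2(\ZZ)}^{2}\,\frac{dt}{t} \lesssim \|g\|_{\ell^2(\ZZ)}^{2},
\]
combined with the atom bound $\int\!\sum_n |A(n,t)|^2 \,dt/t \le |I|^{1-2/p}$ and the estimate $t^{2(M-k)} \le \ell(I)^{2(M-k)}$ on the support of $A$. This directly yields the desired inequality (for such $j$, the factor $2^{-j\epsilon}$ is harmless).

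For the far annuli $j \ge 4$ I would instead rely on the pointwise kernel decay of $(t^2\Delta_d)^{N+k} e^{-t^2\Delta_d}$: identifying this operator's kernel with $t^{2(N+k)}\partial_s^{N+k} h_s(m-n)|_{s=t^2}$ and invoking Lemma \ref{lem2-htk} (valid for all $t>0$) gives
\[
\bigl|K_{(t^2\Delta_d)^{N+k} e^{-t^2\Delta_d}}(m,n)\bigr| \lesssim \frac{1}{t}\Big(1+\frac{|m-n|}{t}\Big)^{-(N+k)}.
\]
Since $n\in I$, $m \in S_j(I)$ and $t \le \ell(I)$ force $|m-n|/t \gtrsim 2^j$, this produces a gain of $(t/(2^j\ell(I)))^{N+k}$. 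Inserting this into the pairing, applying Cauchy--Schwarz first in $m$ (gaining $|2^jI|^{1/2}$) and then in $(n,t)$ against $A$, and computing the elementary $t$-integral $\int_0^{\ell(I)} t^{2[2(M-k)+(N+k)-1]}\,dt/t$, one arrives at a bound of the form $2^{-j(N+k-1/2)}\,\ell(I)^{2(M-k)}\,|I|^{1/2-1/p}$. Converting $|I|^{1/2-1/p}$ to $|2^jI|^{1/2-1/p}$ costs a factor $2^{j(1/p-1/2)}$, leaving the final decay $2^{-j(N+k-1/p+1/2-1/2)} = 2^{-j(N+k-1/p)}$, which is $\le 2^{-j\epsilon}$ precisely under the hypothesis $N \ge 1/p+\epsilon$.

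The main obstacle I anticipate is bookkeeping rather than a single hard step: one must justify the convergence of the integral defining $b$ in $\mathcal S'(\ZZ)$, verify that the duality pairing is legitimate (so that $a = \Delta_d^M b$ in the distributional sense), and carefully track the exponents of $t$ and $\ell(I)$ through the Cauchy--Schwarz steps so that the final exponent of $\ell(I)$ comes out to exactly $2(M-k)$. The identity $N \ge 1/p+\epsilon$ is the sharp threshold that makes the decay $2^{-j(N+k-1/p)}$ dominate the growth $2^{j(1/p-1/2)}$ coming from passing from $|I|$ to $|2^jI|$; this explains the quantitative role of the hypothesis.
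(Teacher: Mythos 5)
Your proposal is correct and follows the paper's argument almost exactly: same choice of $b$, same duality reduction, same dichotomy between near annuli (quadratic estimate) and far annuli (pointwise kernel decay), same power counting leading to the threshold $N\ge 1/p+\epsilon$. Two small technical choices you make are arguably cleaner than the paper's write-up: for the far annuli you invoke Lemma~\ref{lem2-htk}, which gives the decay $(1+|m-n|/t)^{-(N+k)}$ for all $t>0$, whereas the paper cites Lemma~\ref{lem1-htk}, whose statement is restricted to $|z|\ge 1$ even though here $t$ ranges over $(0,\ell(I)]$; and for the near annuli you first pull out $t^{2(M-k)}\le\ell(I)^{2(M-k)}$ on the support of $A$ and then apply the integrated quadratic estimate $\int_0^\infty\|(t^2\Delta_d)^{N+k}e^{-t^2\Delta_d}g\|_{\ell^2}^2\,\frac{dt}{t}\lesssim\|g\|_{\ell^2}^2$, which correctly handles the boundary case $k=M$, where the paper's displayed integral $\int_0^{\ell(I)}t^{4(M-k)}\,\frac{dt}{t}$ would diverge.
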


\begin{proof}
	Let $A$ be a $T_2^p$-atom supported in $\widehat{I}$ with some interval $I$. Then we have,
	$$\int_{0}^{\infty}\sum_{n\in I}|A(n,t)|^2\frac{dt}{t}\leq |I|^{1-\frac{2}{p}}.$$
	We now define
	$$a (n) = \int_{0}^{\infty}\big(t^2  \Delta_d\big)^{M+N}e^{-t^2  \Delta_d}\big(A(\cdot,t)\big)(n)\frac{dt}{t}=\Delta_d^M\int_{0}^{\infty}t^{2M} \big(t^2  \Delta_d\big)^{N}e^{-t^2  \Delta_d}\big(A(\cdot,t)\big)(n)\frac{dt}{t}.
	$$
	Hence, $a = \Delta_d^Mb$, where 
	$$b(n) = \int_{0}^{\infty}t^{2M}\big(t^2  \Delta_d\big)^{N} e^{-t^2  \Delta_d}\big(A(\cdot,t)\big)(n)\frac{dt}{t}. 
	$$
	In addition, let $f \in \ell^2(S_j(I))$ such that $\supp f \subseteq S_j(I)$ and $\|f\|_{\ell^2(S_j(I))}=1$. Then, for each $k=0,1,\ldots, M$, 
	\begin{align*}
		\Big|\langle\Delta_d^k b,f \rangle_{\ell^2(\ZZ)}\Big|&=\Big|\sum_{n\in \ZZ} \Delta_d^k b(n) f(n) \Big|\\
		&\leq \int_{0}^\vc \sum_{n\in I} t^{2(M-k)}\Big|A(n,t)\big(t^2  \Delta_d\big)^{N+k} e^{-t^2  \Delta_d} f(n)\Big|\frac{dt}{t}\\
		&= \int_{0}^{\ell(I)} \sum_{n\in I} t^{2(M-k)}\Big|A(n,t)\big(t^2  \Delta_d\big)^{N+k} e^{-t^2  \Delta_d} f(n)\Big|\frac{dt}{t}\\
		&\leq \Big(\int_{0}^{\ell(I)} \sum_{n\in I} |A(n,t)|^2\frac{dt}{t}\Big)^\frac{1}{2} \Big(\int_{0}^{\ell(I)} \sum_{n\in I} t^{4(M-k)}\Big|\big(t^2  \Delta_d\big)^{N+k} e^{-t^2  \Delta_d }f(n)\Big|^2\frac{dt}{t}\Big)^\frac{1}{2}\\
		&\leq |I|^{\frac{1}{2}-\frac{1}{p}}\Big(\int_{0}^{\ell(I)} \sum_{n\in I} t^{4(M-k)}\Big|\big(t^2  \Delta_d\big)^{N+k} e^{-t^2  \Delta_d }f(n)\Big|^2\frac{dt}{t}\Big)^\frac{1}{2}.
	\end{align*}
	For $j=0,1,2$, using the $L^2$-boundedness of $\big(t^2  \Delta_d\big)^{N+k} e^{-t^2  \Delta_d }$, 
	\begin{align*}
		\Big|\langle\Delta_d^k b,f \rangle_{\ell^2(\ZZ)}\Big|
		&\leq |I|^{\frac{1}{2}-\frac{1}{p}}\Big(\int_{0}^{\ell(I)} \sum_{n\in I} t^{4(M-k)}\|f\|_{\ell^2(S_j(\ZZ))}\frac{dt}{t}\Big)^\frac{1}{2}\\
		&\leq (\ell(I))^{2(M-k)}|I|^{\frac{1}{2}-\frac{1}{p}}\\
		&\sim 2^{-j\epsilon }\ell(I)^{2(M-k)} |2^jI|^{\frac{1}{2}-\frac{1}{p}}.
	\end{align*}
By duality, for $j=0,1,2,$
\[
\|\Delta_d^k b\|_{\ell^2(S_j(\ZZ))}\les 2^{-j\epsilon }\ell(I)^{2(M-k)} |2^jI|^{\frac{1}{2}-\frac{1}{p}}.
\]
For $k=0,1,2,\ldots, M$, $j\ge 3$ and $f\in \ell^2(S_j(\ZZ))$, using Lemma \ref{lem1-htk},
\[
\begin{aligned}
	\sum_{n\in I} \Big|\big(t^2  \Delta_d\big)^{N+k} e^{-t^2  \Delta_d }f(n)\Big|^2&\les \sum_{n\in I} \Big|\sum_{m\in S_j(I)}\f{1}{t}\Big(1+\f{|m-n|}{t}\Big)^{-N}|f(m)|\Big|^2\\
	&\les |I| t^{2(N-1)} (2^{j}\ell(I))^{-2N}\|f\|_{\ell^1(S_j(I))}^2 \ \  \quad  (\text{since $|m-n|\sim 2^j\ell(I)$})\\
	&\les  |I| t^{2(N-1)} (2^{j}\ell(I))^{-2N}|2^jI|\|f\|_{\ell^2(S_j(I))}^2  \quad \quad (\text{H\"older's inequality})\\
	&\les 2^{-j(2N-1)}\Big(\f{t}{\ell(I)}\Big)^{2(N-1)}.
\end{aligned}
\]
Therefore,
	\begin{align*}
		\Big(\int_{0}^{\ell(I)} \sum_{n\in I} t^{4(M-k)}\Big|\big(t^2  \Delta_d\big)^{N+k} e^{-t^2  \Delta_d }f(n)\Big|^2\frac{dt}{t}\Big)^\frac{1}{2}		&\leq  2^{-j(N-1/2) }\ell(I)^{2(M-k)}\|f\|_{\ell^2(S_j(\ZZ))}\\
		&\les  2^{-j(N-1/2) }\ell(I)^{2(M-k)}\| f\|_{\ell^2(S_j(\ZZ))}\\
		&\leq 2^{-j(N-1/2) }\ell(I)^{2(M-k)}\| f\|_{\ell^2(S_j(\ZZ))}.
	\end{align*}
	Taking supremum over all $f$ such that $\|f\|_{\ell^2(S_j(I))}=1$ we obtain 
	$$ \|\Delta_d^k b\|_{\ell^2(S_j(I))}\leq c_M  2^{-j\epsilon }\ell(I)^{2(M-k)} |2^jI|^{\frac{1}{2}-\frac{1}{p}}$$
	for all $k=0,1,\ldots, M$ provided that $N \ge   1/p+\epsilon$.
	
	This completes the proof.
	
\end{proof}

\begin{proposition}\label{h-sh subset h-molecule}
	For   $0<p\le  1$, $\epsilon >0$ and $M \geq 1$. Then 
	$$H^p_{S_{\Delta_d }}(\ZZ)\hookrightarrow  H^p_{\Delta_d ,{\rm mol},M,\epsilon}(\ZZ).$$ 
\end{proposition}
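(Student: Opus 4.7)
The plan is to follow the classical Coifman--Meyer--Stein scheme adapted to $\Dd$: represent $f$ by a Calder\'on reproducing formula in which the operator splits as $(t^2\Dd)^{M+N}e^{-t^2\Dd}$ composed with a square-function building block, decompose the building block via the tent-space atomic decomposition (Lemma \ref{atomic decoposition of T2}), and recognize each resulting piece as a multiple of a $(p,M,\epsilon)_{\Dd}$-molecule via Lemma \ref{atom to molecule}.

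\smallskip

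\textit{Step 1 (Adapted reproducing formula).} Fix $N\in\mathbb N$ with $N\ge 1/p+\epsilon$. Since $f\in H^p_{S_{\Dd}}(\ZZ)$ vanishes weakly at infinity, Lemma \ref{lem-Calderon formular hardy} applied with exponent $M+N+1$, together with the change of variables $t=2s^2$, yields in $\mathcal S'(\ZZ)$
\[
f\,=\,c_{M,N}\int_0^\infty (s^2\Dd)^{M+N+1}\,e^{-2s^2\Dd}\,f\,\frac{ds}{s}.
\]
The extra factor $2$ in the exponential is crucial since it lets me split
\[
(s^2\Dd)^{M+N+1}e^{-2s^2\Dd}\,=\,\bigl[(s^2\Dd)^{M+N}e^{-s^2\Dd}\bigr]\circ\bigl[(s^2\Dd)e^{-s^2\Dd}\bigr].
\]
Setting $F(n,t):=(t^2\Dd)e^{-t^2\Dd}f(n)$ therefore gives
\[
f\,=\,c_{M,N}\int_0^\infty (t^2\Dd)^{M+N}\,e^{-t^2\Dd}\,F(\cdot,t)\,\frac{dt}{t}\quad\text{in }\mathcal S'(\ZZ).
\]

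\smallskip

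\textit{Step 2 (Tent decomposition and assembly).} By construction $\mathcal A(F)(n)=S_{\Dd}f(n)$, so $\|F\|_{T^p_2}=\|f\|_{H^p_{S_{\Dd}}(\ZZ)}$. Lemma \ref{atomic decoposition of T2} then produces $F=\sum_j\lambda_j A_j$, with $A_j$ a $T^p_2$-atom supported in some tent $\widehat{I_j}$ and $\sum_j|\lambda_j|^p\lesssim \|F\|_{T^p_2}^p$. Substituting and interchanging the sum with the integral yields
\[
f\,=\,c_{M,N}\sum_j\lambda_j\, a_j,\qquad a_j(n):=\int_0^\infty(t^2\Dd)^{M+N}e^{-t^2\Dd}A_j(\cdot,t)\,\frac{dt}{t}.
\]
Because $N\ge 1/p+\epsilon$, Lemma \ref{atom to molecule} asserts that each $a_j$ is a uniform constant multiple of a $(p,M,\epsilon)_{\Dd}$-molecule associated with $I_j$. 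Taking the infimum in the definition of $\|\cdot\|_{H^p_{\Dd,\mathrm{mol},M,\epsilon}}$ and using $\sum_j|\lambda_j|^p\lesssim\|f\|_{H^p_{S_{\Dd}}(\ZZ)}^p$ produces the desired embedding.

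\smallskip

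\textit{Main obstacle.} Two subtleties need care. First, one must justify the ``twisted'' Calder\'on formula of Step 1 in $\mathcal S'(\ZZ)$: this is an elementary change of variables starting from Lemma \ref{lem-Calderon formular hardy}, with the weak vanishing at infinity built into the definition of $H^p_{S_{\Dd}}(\ZZ)$ handling the boundary terms at $t\to 0$ and $t\to\infty$. Second, and more delicate, one must justify the passage from the tent-space identity $F=\sum_j\lambda_j A_j$ (valid in $T^p_2$ and a.e.\ on $\ZZ\times(0,\infty)$) to the distributional identity $f=c_{M,N}\sum_j\lambda_j a_j$ in $\mathcal S'(\ZZ)$. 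This requires a Fubini-type interchange as well as convergence of $\sum_j\lambda_j a_j$ as distributions; both follow from the uniform molecular size bounds supplied by Lemma \ref{atom to molecule}, which allow one to control $|\langle a_j,\varphi\rangle|$ against any $\varphi\in\mathcal S(\ZZ)$ by a standard annular decomposition over $\{S_k(I_j)\}_{k\ge 0}$ and the rapid decay of~$\varphi$.
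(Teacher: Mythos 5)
Your proposal is correct and follows the same route as the paper: the reproducing formula \eqref{f(n)=int} in the paper's proof is exactly your change-of-variable $t\mapsto 2t^2$ applied to Lemma \ref{lem-Calderon formular hardy}, followed by the tent-space atomic decomposition of Lemma \ref{atomic decoposition of T2} and the identification of each piece as a $(p,M,\epsilon)_{\Delta_d}$-molecule via Lemma \ref{atom to molecule}. The only minor difference is in justifying that $\sum_j\lambda_j a_j$ converges in $\mathcal S'(\ZZ)$: you deduce the uniform bound on $|\langle a_j,\varphi\rangle|$ from the molecular size estimates of Lemma \ref{atom to molecule} together with $|2^kI_j|\ge 1$, whereas the paper obtains it directly from the $T^p_2$-atom via a discrete integration by parts, Cauchy--Schwarz in $t$, and the fact that $|I_j|^{1-1/p}\le 1$; both are valid, and yours is slightly more economical since it reuses estimates already established.
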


\begin{proof} 
	Suppose $f\in  H^p_{S_{\Delta_d }}(\ZZ).$ We set $F(n,t)=t^2 \Delta_d  e^{-t^2 \Delta_d  } f(n)$. From the definition of $H_{S_{\Delta_d }}^p$, $F(n,t) \in T_2^p$. Thus, by using Lemma $\ref{atomic decoposition of T2} $, $F$ can be represented in the form $F=\sum_{i=1}^{\infty}\lambda_i A_i$, where $A_i$ is $T_2^p$-atom. % and the sum converges in $T_2^p$. 
	In addition, $$\sum_{i=0}^{\infty}|\lambda_i|^p \leq C\|F\|^p_{T_2^p}=C\|f\|^p_{H^p_{S_{\Delta_d }}}.$$
	
	Since $f$ vanishes weakly at infinity, by Lemma \ref{lem-Calderon formular hardy}, 
	\begin{equation}\label{f(n)=int}
		f = C \int_{0}^{\infty}\big(t^2 \Delta_d \big)^{M+N} e^{-t^2 \Delta_d  } t^2 \Delta_d  e^{-t^2 \Delta_d  }f \frac{dt}{t} \ \ \text{in $\mathcal S'(\ZZ)$},
	\end{equation}
	where $N \ge  1/p+1+\epsilon$.
	
	%This, in combination with the fact that  the function $\pi_{M}$ defined by
	%\begin{equation}\label{pi-L defenition}
	%		\pi_{M}(F)(n):= \int_{0}^{\infty}\big(t^2 \Delta_d \big)^M e^{-t^2 \Delta_d  } F(n,t) \frac{dt}{t},
	%	\end{equation}
%	is a bounded from $T_2^2$ to $\ell^2(\ZZ)$, implies that 

It follows that 
\begin{equation}\label{eq-series}
	\begin{aligned}
		f(n)  &=C \sum_{i=0}^{\infty}\lambda_i\int_{0}^{\infty}\big(t^2 \Delta_d \big)^{M+N} e^{-t^2 \Delta_d  } A_i(n,t) \frac{dt}{t}\\
		&=:C \sum_{i=0}^{\infty}\lambda_i a_i.
	\end{aligned}
\end{equation}
We will show that the identity \eqref{eq-series} holds in $\mathcal S'(\ZZ)$. It suffices to show the series in \eqref{eq-series} converges in $\mathcal S'(\ZZ)$. To do this, we need to prove that for $\phi\in \mathcal S(\ZZ)$,
\[
\lim_{L\to \vc} \sum_{i=L}^\vc \lambda_i\langle a_i,\phi\rangle =0.
\]
Assume $A_i$ is a $T_2^p$-atom supported in $\widehat{I}_i$ with some interval $I_i$ for each $i$. Then we have
\[
\begin{aligned}
	\langle a_i,\phi\rangle &= \int_0^\vc \sum_{n\in \ZZ}\big(t^2 \Delta_d \big)^{M+N} e^{-t^2 \Delta_d  } A_i(n,t) \phi(n) \frac{dt}{t}\\
	&= -\int_0^{\ell(I_i)} \sqrt {t}\sum_{n\in I}A_i(n,t) \sqrt t D\big(t^2 \Delta_d \big)^{M+N-1} e^{-t^2 \Delta_d  }  (D\phi)(n) \frac{dt}{t},
\end{aligned}
\]
where in the last inequality we used the fact that $\Delta_d^{M+N}e^{-t^2 \Delta_d  }  \phi=-D\Delta_d^{M+N-1}e^{-t^2 \Delta_d  }  (D\phi)$.

By H\"older's inequality,
\begin{equation}\label{eq- aphi}
\begin{aligned}
	|\langle a_i,\phi\rangle| &\le \int_0^{\ell(I_i)} \sqrt {t}\|A_i(\cdot,t)\|_{\ell^2(I)} \|\sqrt t D\big(t^2 \Delta_d \big)^{M+N-1} e^{-t^2 \Delta_d  }  (D\phi)\|_{\ell^2(\ZZ)} \frac{dt}{t}.
\end{aligned}
\end{equation}
From the kernel bound of $\sqrt t D\big(t^2 \Delta_d \big)^{M+N-1} e^{-t^2 \Delta_d  }$ in Lemma \ref{lem-htk and difference derivatives} and Lemma \ref{lem-elementary}, we yield that the operator $\sqrt t D\big(t^2 \Delta_d \big)^{M+N-1} e^{-t^2 \Delta_d  }$ is bounded on $\ell^p(\ZZ)$ for all $1<p\le \vc$. Consequently,
\[
\|\sqrt t D\big(t^2 \Delta_d \big)^{M+N-1} e^{-t^2 \Delta_d  }  (D\phi)\|_{\ell^2(\ZZ)}\les  \|D\phi\|_{\ell^2(\ZZ)}.
\]
On the other hand, since $f\in \mathcal S(\ZZ)$, $D\phi\|_{\ell^2(\ZZ)}<\vc$. It follows that 
\[
\|\sqrt t D\big(t^2 \Delta_d \big)^{M+N-1} e^{-t^2 \Delta_d  }  (D\phi)\|_{\ell^2(\ZZ)}<\vc.
\]
Plugging this into \eqref{eq- aphi} and then using H\"older's inequality we further obtain
\[
\begin{aligned}
	|\langle a_i,\phi\rangle| &\les \int_0^{\ell(I)} \sqrt {t}\|A_i(\cdot,t)\|_{\ell^2(I_i)} \frac{dt}{t}\\
	&\les \Big(\int_0^{\ell(I_i)} \|A_i(\cdot,t)\|^2_{\ell^2(I)} \frac{dt}{t}\Big)^{1/2}\Big(\int_0^{\ell(I_i)} t \frac{dt}{t}\Big)^{1/2}\\
	&\les |I_i|^{1/2-1/p}|\ell(I)|^{1/2}\\
	&\les |I_i|^{1-1/p}\\
	&\les 1,
\end{aligned}
\]
where in the last inequality we used  the facts $0<p\le 1$ and $|I_i|\ge 1$ for each $i$.

As a consequence,
\[
\sum_{i=L}^\vc |\lambda_i\langle a_i,\phi\rangle|\les \sum_{i=L}^\vc |\lambda_i| \to 0 \ \ \text{as $L\to \vc$},
\]
since $\sum_{i=0}^{\infty}|\lambda_i|^p \les \|f\|^p_{H^p_{S_{\Delta_d }}}$.

Therefore, the identity \eqref{eq-series} holds in $\mathcal S'(\ZZ)$. In addition, in Lemma \ref{atom to molecule} we have proved that each $a_i$ is multiple of a  $(p,M,\epsilon)_{\Delta_d }$-molecule with a harmless constant. Therefore, $f \in {H}^p_{\Delta_d , {\rm mol},M,\epsilon}(\ZZ)$ and 
$$\|f\|_{H^p_{\Delta_d , {\rm mol},M,\epsilon}(\ZZ)} \lesssim \Big(\sum_{i=0}^{\infty}\lambda_i^p\Big)^\frac{1}{p} \lesssim \|f\|_{H_{S_{\Delta_d }}^p}.$$ 

This completes our proof.
\end{proof}

\begin{proposition}\label{prop-H atom subset H square}
	For   $0<p\le  1$, $\epsilon >0$ and $M \geq 1$. Then 
	$$H^p_{\Delta_d ,{\rm mol},M,\epsilon}(\ZZ)\hookrightarrow  H^p_{S_{\Delta_d }}(\ZZ).$$ 
\end{proposition}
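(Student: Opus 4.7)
The plan is to reduce the inclusion to a uniform estimate on molecules and then split the area integral into a local piece where $L^2$-boundedness is used and a global piece where the cancellation hidden in $a=\Delta_d^M b$ provides extra decay in $t$. More precisely, let $f\in H^p_{\Delta_d,{\rm mol},M,\epsilon}(\ZZ)$ and fix a molecular representation $f=\sum_i\lambda_i a_i$ converging in $\mathcal S'(\ZZ)$. Because $0<p\le 1$, the quasi $p$-subadditivity of $\|\cdot\|_{\ell^p(\ZZ)}^p$ together with the sub-linearity of $S_{\Delta_d}$ (pointwise) gives
\[
\|S_{\Delta_d}f\|_{\ell^p(\ZZ)}^p\le \sum_i|\lambda_i|^p\,\|S_{\Delta_d}a_i\|_{\ell^p(\ZZ)}^p,
\]
so the whole proof reduces to the uniform estimate $\|S_{\Delta_d}a\|_{\ell^p(\ZZ)}\le C$ for every $(p,M,\epsilon)_{\Delta_d}$-molecule $a$ associated to any interval $I\subset\ZZ$.

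Fix such an $a=\Delta_d^M b$. I would decompose $\ZZ=\bigcup_{k\ge 0}S_k(I)$ and treat the near annuli $k\in\{0,1,2,3\}$ and the far annuli $k\ge 4$ separately. For the near part, Hölder's inequality gives
\[
\|S_{\Delta_d}a\|_{\ell^p(S_k(I))}^p\le |2^kI|^{1-p/2}\,\|S_{\Delta_d}a\|_{\ell^2(\ZZ)}^p\lesssim |2^kI|^{1-p/2}\,\|a\|_{\ell^2(\ZZ)}^p,
\]
using the $\ell^2$-boundedness of $S_{\Delta_d}$ (a standard consequence of the spectral theorem combined with the fact that the averaging $t^{-1}\sum_{|m-n|<t}$ is bounded on $\ell^2$). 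The molecular size condition (ii) summed in $j$ yields $\|a\|_{\ell^2(\ZZ)}\lesssim |I|^{1/2-1/p}$, which gives a bound that is uniform in $k\in\{0,1,2,3\}$.

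For the far annuli $k\ge 4$, I would further split each $S_{\Delta_d}a$ into a local time part ($0<t\le \ell(I)+1$) and a global time part ($t>\ell(I)+1$). In the local part I write $a=\sum_{j\ge 0}a\cdot\mathbf 1_{S_j(I)}$, use Lemma \ref{lem1-htk} (or equivalently Lemma \ref{lem2-htk}) for the kernel of $t^2\Delta_d e^{-t^2\Delta_d}$ to get polynomial decay in $|m-n|/t$, and combine with the molecular $\ell^2$ bounds on each $S_j(I)$; for $n\in S_k(I)$ and $m\in S_j(I)$ with $|j-k|$ large, the separation $|m-n|\sim 2^{\max(j,k)}\ell(I)$ produces a factor that, after integrating in $t\in(0,\ell(I)+1]$ against $dt/(t(t+1))$, is summable in both $j$ and $k$. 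For the global part I exploit the cancellation by writing
\[
t^2\Delta_d e^{-t^2\Delta_d}a \;=\; t^{-2M}\,(t^2\Delta_d)^{M+1}e^{-t^2\Delta_d}b,
\]
so that the kernel of the operator acting on $b$ gains a factor $t^{-2M}$, which together with the measure $dt/(t(t+1))$ gives a convergent integral for large $t$ once $M>\tfrac12(\tfrac1p-1)$ is used (the same threshold as in Proposition \ref{prop- M is bounded from Hp to Lp}); the resulting spatial sum is handled with the molecular bounds on $\Delta_d^k b$ for $k=0,\dots,M$.

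The main obstacle is the far/global term: we need the $t^{-2M}$ gain from the $\Delta_d^M$ cancellation to dominate the growth produced by the Hölder step $|2^{k+j}I|^{1-p/2}$ and the fact that $\sqrt t$ can be as large as $2^k\ell(I)$ (which forces the kernel decay to compete with the size of the annulus). This is exactly where the condition $M>\tfrac12(\tfrac1p-1)$ enters, and where Lemma \ref{lem1-htk} with a sufficiently large decay exponent (and not Gaussian bounds) must be used, since the heat kernel in this discrete setting only has polynomial decay. Once the double sum in $j$ and $k$ is shown to be summable with uniform constant, the proof is complete.
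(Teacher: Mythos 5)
Your plan has the same blueprint as the paper's proof: reduce to the uniform estimate $\|S_{\Delta_d}a\|_{\ell^p(\ZZ)}\le C$ on $(p,M,\epsilon)_{\Delta_d}$-molecules, decompose $\ZZ$ into annuli, control the near annuli with H\"older and the $\ell^2$-boundedness of $S_{\Delta_d}$, and control the far annuli using heat-kernel decay and the cancellation $a=\Delta_d^Mb$. Where you genuinely differ is the device used to extract the cancellation. The paper writes $a=(I-e^{-\ell(I)^2\Delta_d})^Ma+\bigl[I-(I-e^{-\ell(I)^2\Delta_d})^M\bigr]a$, expands the first piece as $\int_{[0,\ell(I)^2]^M}\Delta_d^Me^{-|\vec s|\Delta_d}a\,d\vec s$, and handles the complementary piece by writing it as a finite sum of $\Delta_d^Me^{-i\ell(I)^2\Delta_d}b$. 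You instead exploit the direct identity $t^2\Delta_de^{-t^2\Delta_d}a=t^{-2M}(t^2\Delta_d)^{M+1}e^{-t^2\Delta_d}b$ and feed in the molecular size bounds on $b$. Both are legitimate; yours avoids the paper's two-piece $E+F$ split and is in that sense more streamlined.

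Two steps in your sketch need to be spelled out to close the argument. First, the time split at $t\sim\ell(I)+1$ is not at the right scale for the far annuli: for $n\in S_k(I)$ with $k\ge 4$, the factor $t^{-2M}$ coming from the cancellation is not enough by itself in the intermediate regime $\ell(I)\lesssim t\lesssim 2^k\ell(I)$; there one must additionally use the polynomial spatial decay of the kernel of $(t^2\Delta_d)^{M+1}e^{-t^2\Delta_d}$ from Lemma \ref{lem1-htk} to produce the factor $(2^k\ell(I)/t)^{-N}$, and only for $t\gtrsim 2^k\ell(I)$ does $t^{-2M}$ take over on its own. The paper sidesteps this by splitting the $t$-integral directly at the adaptive threshold $t\sim 2^{k+j}\ell(I)/4$ rather than at $\ell(I)$; your ``global'' range therefore needs a further case distinction at $t\sim 2^{k}\ell(I)$. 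Second, in your far-annuli/local-time discussion you only invoke the separation $|m-n|\sim 2^{\max(j,k)}\ell(I)$ for $|j-k|$ large; when $j\approx k$ there is no spatial separation to exploit, and the correct tool there is the $\ell^2$-boundedness of $S_{\Delta_d}$ combined with the $\ell^2$-smallness $\|a\|_{\ell^2(S_j(I))}\le 2^{-j\epsilon}|2^jI|^{1/2-1/p}$ of the molecular pieces (this is exactly the paper's $k\le 3$ case, performed relative to the dilated interval $2^jI$). With both points made explicit the double sum over $(j,k)$ closes precisely when $(2M+1)p>1$, i.e.\ $M>\frac12(\frac1p-1)$, as you correctly anticipate.
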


\begin{proof} 
	Since the kernel $\partial_t h_{t}(\cdot) \in \mathcal S(\ZZ)$, it suffices to show that there exists $C>0$ such that 
\[
\|S_{\Delta_d} a\|_{\ell^p(\ZZ)}\le C
\]
for every $(p,M,\epsilon)_{\Delta_d}$ molecule $a$.

Suppose that $a$ is a $(p,M,\epsilon)_{\Delta_d}$ associated to an interval $I\subset \ZZ$. Then we have
\[
\begin{aligned}
	\|S_{\Dd} a\|_{\ell^p(\ZZ)}^p &\les \big\|S_{\Dd}(I-e^{-\ell(I)^2\Delta_d})^M a\big\|_{\ell^p(\ZZ)}^p + \big\|S_{\Dd}\big[I-(I-e^{-\ell(I)^2\Delta_d})^M\big] a\big\|_{\ell^p(\ZZ)}^p\\
	&=: E+ F.
\end{aligned}
\]

We  take care of the term $E$ first. To do this, we write
\[
\begin{aligned}
	E &\les \sum_{j\ge 0} \big\|S_{\Dd}(I-e^{-\ell(I)^2\Delta_d})^M (a\cdot 1_{S_j(I)})\big\|_{\ell^p(\ZZ)}^p\\
	&=: \sum_{j\ge 0} E_j.
\end{aligned}
\]
For each $j\ge 0$, by H\"older's inequality, 
\[
\begin{aligned}
	E_j&\le \sum_{k\ge 0}\|S_{\Dd}(I-e^{-\ell(I)^2\Delta_d})^M (a\cdot 1_{S_j(I)})\|_{\ell^p(S_k(2^jI))}^p\\
	&\le \sum_{k\ge 0}|2^{k+j}I|^{\f{2-p}{2}}\|S_{\Dd}(I-e^{-\ell(I)^2\Delta_d})^M (a\cdot 1_{S_j(I)})\|_{\ell^2(S_k(2^jI))}^p\\
	&=: \sum_{k\ge 0}E_{jk}.
\end{aligned}
\]
For $k=0,1,2,3$, by the $L^2$-boundedness of $S_{\Dd}\circ (I-e^{-\ell(I)^2\Delta_d})^M$ we have
\[
\begin{aligned}
	E_{jk}&\le |2^jI|^{\f{2-p}{2}}\|S_{\Dd}(I-e^{-\ell(I)^2\Delta_d})^M (a\cdot 1_{S_j(I)})\|_{\ell^2(\ZZ)}^p\\
	&\le |2^jI|^{\f{2-p}{2}}\|a\|_{\ell^2(S_j(I))}^p\\
	&\le 2^{-j\epsilon}|2^jI|^{\f{2-p}{2}}|2^jI|^{\f{p-2}{2}} \sim  2^{-(j+k)\epsilon}.
\end{aligned}
\]
For $k\ge 4$, note that 
\[
(I-e^{-\ell(I)^2\Delta_d})^M =\int_{[0,\ell(I)^2]^M}\Delta^M_de^{-|\vec s|\Delta_d}d\vec s,
\]
where $d\vec{s} = ds_1\ldots ds_M$ and $|\vec s| = s_1+\ldots+s_M$.

Then we have, for $n\in S_k(2^j(I))$,
\[
\begin{aligned}
	\big[S_{\Dd}&(I-e^{-\ell(I)^2\Delta_d})^{M} (a\cdot 1_{S_j(I)})(n)\big]^2\\
	&\le \int_0^\vc \sum_{m:|m-n|<t} \Big[\int_{[0,\ell(I)^2]^M}t^{2}|\Delta_d^{M+1}e^{-(t^2+|\vec s|)\Delta_d}(a\cdot 1_{S_j(I)})(m)|d\vec s\Big]^2\f{dt}{t(t+1)}\\
	&\le \int_{2^{k+j}\ell(I)/4}^\vc\ldots + \int_0^{2^{k+j}\ell(I)/4}\ldots\\
	&=: F_1 + F_2. 
\end{aligned}
\]
Since for $n\in S_k(2^j(I))$ and $m\in I$, $t>|m-n|\ge 1$.  By Lemma \ref{lem1-htk},
\[
\begin{aligned}
	F_1&\les \int_{2^{k+j}\ell(I)/4}^\vc\sum_{m:|m-n|<t} \Big[ \int_{[0,\ell(I)^2]^M}\sum_{l\in S_j(I)}\f{t^{2}}{(t^2+|\vec s|)^{M+3/2}} |a(l)|d\vec s\Big]^2\f{dt}{t(t+1)}\\
	&\les 2^{-(k+j)(4M+2)}  |I|^{-2}\|a\|^2_{\ell^1(S_j(B))},
\end{aligned}
\]
and
\[
\begin{aligned}
	F_2&\les \int^{2^{k+j}\ell(I)/4}_0\sum_{m:|m-n|<t} \Big[ \int_{[0,\ell(I)^2]^M}\sum_{l\in S_j(I)}\f{t^{2}}{(t^2+|\vec s|)^{M+3/2}}\Big(\f{\sqrt{t^2+|\vec s|}}{|m-l|}\Big)^{2M+3} |a(l)|d\vec s\Big]^2\f{dt}{t(t+1)}\\
	&\les 2^{-(k+j)(4M+2)}  |I|^{-2}\|a\|^2_{\ell^1(S_j(B))} \ \ \ \ \ \text{(since $|m-l|\sim 2^{k+j}\ell(I)$)}.
\end{aligned}
\]
It follows that 
\[
\begin{aligned}
	E_{jk}&\les 2^{-(k+j)(2M+1)p}|2^{k+j}I|^{\f{2-p}{2}}|I|^{-p}\|a\|_{\ell^1(S_j(B))}^p|2^{j+k}I|^{p/2}\\
	&\les 2^{-(j+k)[(2M+1)p-1]}.
\end{aligned}
\]
As a consequence,
\[
E \les \sum_{j,k\ge 0}2^{-(j+k)\epsilon}\sim 1,
\]	
as long as $M>\f{1}{2}\big(\f{1}{p}-1\big)$.

It remains to show that $F\les 1$. Since 
\[
I -(I-e^{-\ell(I)^2\Delta_d})^M = \sum_{i=1}^Mc_i e^{-i\ell(I)^2\Delta_d}, 
\]
it suffices to prove that for each $i=1,\ldots, M$,  
\[
\big\|S_{\Dd} (\Delta_d^Me^{-i\ell(I)^2\Delta_d} b)\big\|_{\ell^p(\ZZ)}^p\les 1,
\]
where $a=\Delta_d^Mb$.

At this stage, we can argue similarly to the estimate of $E$ to come up with
\[
\big\|S_{\Dd} (\Delta_d^Me^{-i\ell(I)^2\Delta_d} b)\big\|_{\ell^p(\ZZ)}^p\les 1
\]
for each $i=1,\ldots, M$.

This completes our proof.	
\end{proof}	

\begin{remark}\label{rem2}
	\noindent (a) For each $N \in \mathbb N$, we define
	\begin{equation}
		\label{eq-area integral}
		S_{\Delta_d, N}f(n)=\Big(\int_0^\vc \sum_{m:|m-n|<t}\big|(t^2\Delta_d)^Ne^{-t^2\Delta_d}f(m)\big|^2\f{dt}{t(t+1)}\Big)^{1/2}.
	\end{equation}
Then we can define the Hardy spaces $H^p_{S_{\Dd, N}}(\ZZ)$ similarly to $H^p_{S_{\Dd}}(\ZZ)$. By a careful examination, we also have $H^p_{S_{\Dd, N}}(\ZZ) = H^p_{\Dd}(\ZZ)$ for all $0<p\le 1$.

\noindent (b) Let $\psi\in \mathcal S(\mathbb R)$ supported in $[2,8]$. We now define
$$
	S_{\psi}f(n)=\Big(\int_0^\vc \sum_{m:|m-n|<t}\big|\psi(t\sqrt{\Dd})f(m)\big|^2\f{dt}{t(t+1)}\Big)^{1/2}.
$$
For $0<p\le 1$, we define the Hardy space $H^p_{\psi}(\ZZ)$ to be the completion of the set
\[
\big\{f\in \ell^2(\ZZ): S_{\psi}f \in \ell^p(\ZZ)\big\}
\]
under the norm
\[
\|f\|_{H^p_{\psi}(\ZZ)} := \big\|S_{\psi}f\big\|_{\ell^p(\ZZ)}<\vc.
\]
Then by the argument in the proof of Theorem \ref{thm-Hardy space}, it is easy to see that $H^p_{\psi}(\ZZ)=H^p_{\Dd}(\ZZ)$.

\end{remark}
\subsection{Coincidence with the classical Hardy spaces}

%Bull is cool!

Let $0<p<q$ and $p\le 1 \le q\le \vc$. A function $a$ is called a $(p,q)$ atom if there exists an interval $I\subset \ZZ$ such that 
\begin{enumerate}
\item $\supp a\subset I$;
\item $\|a\|_{\ell^q(\ZZ)}\le |I|^{1/q-1/p}$;
\item $\sum_{n\in \ZZ} n^\alpha a(n)=0$ for every $n\in \mathbb N$ such that $n\le 1/p-1$.
\end{enumerate}

The Hardy space $H^{p,q}(\ZZ)$ is the set of all functions $f\in \ell^1(\ZZ)$ if $p=1$ and $f\in \mathcal S'(\ZZ)$ if $0<p<1$ such that there exist a sequence of $(p,q)$ atoms $\{a_j\}_{j\in \mathbb N}$ and a sequence of numbers $\{\lambda_j\}_{j\in \mathbb N}$ satisfying that $\sum_{j\in \mathbb N}|\lambda_j|^p<\vc$ and $f = \sum_{j\in \mathbb N}\lambda_j a_j$, where the series converges in $\ell^1(\ZZ)$ if $p=1$ and in $\mathcal S'(\ZZ)$ if $0<p<1$. For each $f\in H^{p,q}(\ZZ)$, we define
\[
\|f\|_{H^{p,q}(\ZZ)}^p = \inf\Big\{\sum_{j\in \mathbb N}|\lambda_j|^p:  f = \sum_{j\in \mathbb N}\lambda_j a_j\Big\},
\] 
where the infimum is taken over all atomic decomposition of $f$.

It is well-known that for $0<p<q$ and $p\le 1 \le q\le \vc$, we have $H^{p,q}(\ZZ)=H^{p,\vc}(\ZZ)$. For this reason, for $0<p<q$ and $p\le 1 \le q\le \vc$ we define $H^p(\ZZ)$ to be any space $H^{p,q}(\ZZ)$.

Let $0<p\le 1 <q\le \vc$ and $\alpha>1/p-1/q$. A function $b$ is said to be a $(p,q,\alpha)$-molecule centered in $n_0\in \ZZ$ if the following holds
\begin{enumerate}[(i)]
\item  $\|b\|^{1-\theta}_{\ell^q(\ZZ)}\||\cdot-n_0|^\alpha b\|_{\ell^q(\ZZ)}^\theta<\vc$, where $\theta = (1/p-1/q)/\alpha$;
\item $\sum_{n\in \ZZ} n^\beta b(n)=0$ for every $\beta\in \mathbb N$ such that $\beta\le 1/p-1$.
\end{enumerate}

\begin{theorem}
\label{thm-coincidence of Hardy spaces}
For $0<p\le 1$, we have 
\[
H^p_{\Delta_d}(\ZZ)\equiv  H^p(\ZZ)
\]
with equivalent norms.
\end{theorem}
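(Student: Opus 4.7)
The plan is to prove both inclusions separately, using the molecular characterisation of $H^p_{\Dd}(\ZZ)$ established in Theorem \ref{thm-Hardy space} together with the atomic decomposition of the classical $H^p(\ZZ)$.

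For the inclusion $H^p_{\Dd}(\ZZ)\hookrightarrow H^p(\ZZ)$, the plan is to show that each $(p,M,\epsilon)_{\Dd}$-molecule $a=\Delta_d^M b$ associated with an interval $I$ satisfies $\|a\|_{H^p(\ZZ)}\lesssim 1$. The key structural observation is the \emph{automatic cancellation}: because $\Delta_d(1)=\Delta_d(n)=0$ and $\Delta_d$ reduces the degree of every polynomial of degree $\ge 2$ by exactly two, the operator $\Delta_d^M$ annihilates every polynomial of degree $\le 2M-1$. Summation by parts — justified by the $\ell^2$-decay of $b$ across the annuli $S_j(I)$ from Definition \ref{def: L-atom}(ii) — then gives
$$\sum_{n\in\ZZ}n^{\alpha}a(n)=\sum_{n\in\ZZ}\bigl(\Delta_d^M n^{\alpha}\bigr)\,b(n)=0,\qquad 0\le\alpha\le 2M-1.$$
Since $M>\tfrac12(1/p-1)$, the exponent $2M-1$ is at least $\lfloor 1/p-1\rfloor$, so $a$ satisfies every classical moment condition. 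Combined with the size estimate $\|a\|_{\ell^2(S_j(I))}\le 2^{-j\epsilon}|2^jI|^{1/2-1/p}$ (the case $k=0$ in Definition \ref{def: L-atom}(ii)), a standard annular decomposition — write $a=\sum_j a\mathbf{1}_{S_j(I)}$ and absorb the polynomial corrections into the neighbouring larger annulus — converts $a$ into a sum of classical $(p,2)$-atoms with $\ell^p$-summable coefficients.

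For the reverse inclusion I plan to exploit the $p$-subadditivity of $S_{\Dd}$: since $H^p_{\Dd}(\ZZ)$ is $p$-normed and classical atoms visibly vanish weakly at infinity, it suffices to show $\|S_{\Dd}a\|_{\ell^p(\ZZ)}\lesssim 1$ for every classical $(p,2)$-atom $a$ supported in an interval $I$. I would split
$$\|S_{\Dd}a\|_{\ell^p(\ZZ)}^p\lesssim \|S_{\Dd}a\|_{\ell^p(4I)}^p+\sum_{j\ge 2}\|S_{\Dd}a\|_{\ell^p(S_j(I))}^p.$$
On $4I$ use H\"older's inequality, the $\ell^2$-boundedness of $S_{\Dd}$, and the size bound $\|a\|_{\ell^2(\ZZ)}\le |I|^{1/2-1/p}$. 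On the distant annuli I would exploit the cancellation of $a$ up to order $\lfloor 1/p-1\rfloor$ by subtracting the discrete Taylor polynomial of $K_{t^2\Dd e^{-t^2\Dd}}(m,\cdot)$ of that order centred at the midpoint $n_I$ of $I$, and controlling the remainder via iterated discrete derivatives. The decisive bound is provided by Lemma \ref{lem-htk and HIGHER difference derivatives}, which delivers a gain of the form $(\ell(I)/|m-n_I|)^{\lfloor 1/p-1\rfloor+1}$ inside an inverse $t$-power; after integrating in $t$ and summing over $m\in S_j(I)$ this produces a geometric factor $2^{-j\delta}$ with $\delta>0$, hence $\ell^p$-summability.

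The hardest step will be the molecule-to-atom reduction of the first inclusion when $p<1$: several polynomial moments must be cancelled simultaneously on each annular piece while the supports of the corrections remain confined to the adjacent annulus and the $\ell^2$-normalisation is preserved, requiring a cumulative moment-cancellation bookkeeping analogous to the classical $H^p(\mathbb R^n)$ molecular theory. Because discrete intervals in $\ZZ$ are balls in a space of homogeneous type, the finite-dimensional polynomial projections on each annulus admit bounds uniform in the scale, and the scheme goes through. Everything else reduces to the heat-kernel estimates already established in Section 2.
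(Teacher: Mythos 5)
Your overall two-inclusion strategy matches the paper, but there are differences worth flagging, and in the second inclusion a concrete gap.

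\emph{Direction $H^p_{\Delta_d}(\ZZ)\hookrightarrow H^p(\ZZ)$.} The key observation you make — that $\Delta_d^M$ annihilates every polynomial of degree $\le 2M-1$, so $\sum_n n^\beta a(n)=\sum_n(\Delta_d^M n^\beta)b(n)=0$ — is exactly what the paper uses. But you then propose rebuilding the classical molecule$\to$atom decomposition from scratch (annular splitting and moment corrections pushed to the next annulus), which is considerably heavier than what is actually needed. The paper instead notes that a $(p,M,\epsilon)_{\Delta_d}$-molecule already \emph{is} a classical $(p,2,\alpha)$-molecule (for any $0<\alpha<\epsilon$): the size/decay condition follows by summing $\|a\|_{\ell^2(S_j(I))}$ and $\||\cdot-n_0|^\alpha a\|_{\ell^2(S_j(I))}$ over annuli, and the cancellation is the computation you describe. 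Once it is a classical molecule, membership in $H^p(\ZZ)$ with uniform norm is a cited fact; no atomization is required. Also, be careful about the range of $\alpha$: the summation by parts justifying $\sum_n n^\alpha a(n)=\sum_n(\Delta_d^M n^\alpha)b(n)$ is only absolutely convergent when the decay of $b$ across the annuli dominates $n^\alpha$, which holds for $\alpha\le \lfloor 1/p-1\rfloor<1/p-1+\epsilon$ but need not hold all the way up to $2M-1$; fortunately only the moments up to $\lfloor 1/p-1\rfloor$ are required.

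\emph{Direction $H^p(\ZZ)\hookrightarrow H^p_{\Delta_d}(\ZZ)$.} Here your plan has a real gap. First, you work with $S_{\Delta_d}$ (time-derivative order $1$), whereas the paper uses $S_{\Delta_d,M}$ with $M>\lfloor 1/p-1\rfloor+1$, invoking Remark \ref{rem2} that all $S_{\Delta_d,N}$ define the same Hardy space. The extra time-derivative order supplies the $t$-decay $t^{-M-1/2}$ from Lemma \ref{lem2-htk} that makes the small-$t$ integral on far annuli converge for all $0<p\le 1$; with $M=1$ alone, the spatial decay available from Lemma \ref{lem2-htk} for $t<1$ is only $(1+|n|/\sqrt t)^{-1}$, which is not enough when $p$ is small. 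Second, the lemma you designate as decisive — Lemma \ref{lem-htk and HIGHER difference derivatives} — gives $|\mathfrak{D}^k\partial_t^{k\ell}h_t(n)|\lesssim t^{-k\ell-(k+1)/2}(1+|n|/\sqrt t)^{-\ell}$, where the time-derivative order $k\ell$ is locked to a multiple of the number $k$ of difference derivatives. To control a discrete Taylor remainder of order $K+1$ you would need $k=K+1$ with a spatial decay exponent $\ell$ large, which forces the time-derivative order to be $(K+1)\ell$, a structure the square-function kernel $t^2\Delta_d e^{-t^2\Delta_d}$ does not supply. The tool the paper actually uses is Lemma \ref{lemma- derivative estimate - Taylor}, which extends the kernel to a smooth function $g_{t,k}(x)$ on $\mathbb R$ and bounds its real $x$-derivatives $|\partial_x^j g_{t,k}(x)|$; this permits the ordinary (continuous) Taylor theorem at the centre $n_I$, giving the gain $(\ell(I)/d(n,I))^{K+1}$ with a free spatial-decay parameter $k$. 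Your plan should be revised to use $S_{\Delta_d,M}$ and Lemma \ref{lemma- derivative estimate - Taylor} in place of Lemma \ref{lem-htk and HIGHER difference derivatives}.
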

\begin{proof}
If $a = \Delta_d^M b$ is a $(p,M,\epsilon)_{\Delta_d}$ molecule associated to an interval $I\subset \ZZ$ with $M>(1/p-1)/2$ and $\epsilon> 0$, then it suffices to prove that $a$ is a $(p,2,\alpha)$ molecule with $\epsilon> \alpha>0$ centered at any $n_0\in I$. Indeed, we first have
\[
\|a\|_{\ell^2(\ZZ)} \le \sum_{j\ge 0}\|a\|_{\ell^2(S_j(I))}\le \sum_{j\ge 0} 2^{-j\epsilon}|2^jI|^{1/2-1/p}.
\]
Secondly,  for $\theta=(1/p-1/2)/\alpha$,
\[
\begin{aligned}
	\||\cdot-n_0|^\alpha a\|_{\ell^2(\ZZ)}&\le \sum_{j\ge 0}\||\cdot-n_0|^\alpha a\|_{\ell^2(S_j(I))}\\
	&\les \sum_{j\ge 0} 2^{j\alpha} \|a\|_{\ell^2(S_j(I))}\\
	&\les  \sum_{j\ge 0} 2^{j(\alpha-\epsilon)}|2^jI|^{1/2-1/p}.
\end{aligned}
\]

It follows that for $\theta=(1/p-1/2)/\alpha$, 
\[
\begin{aligned}
	\|a\|_{\ell^2(\ZZ)}^{1-\theta}\||\cdot-n_0|^\alpha a\|_{\ell^2(\ZZ)}^\theta& \les \sum_{j\ge 0} 2^{-j(\epsilon-\theta\alpha)}|2^jI|^{1/2-1/p}\\
	&\les \sum_{j\ge 0} 2^{-j\epsilon}|I|^{1/2-1/p}\\
	&<\vc.
\end{aligned}	
\]

On the other hand, for any $\beta\in \mathbb N$ such that $\beta\le 1/p-1$, we have
\[
\sum_{n\in \ZZ} n^\beta a(n) = \sum_{n\in \ZZ} \Delta_d^M n^\beta b(n).
\] 
Since $M> (1/p-1)/2 \ge \beta/2$, $\Delta_d^M n^\beta = 0$. Hence,
\[
\sum_{n\in \ZZ} n^\beta a(n)=0.
\]
As a consequence, each $(p,M,\epsilon)_{\Delta_d}$ molecule is also a $(p,q,\alpha)$-molecule, which implies that 
\[
H^p_{\Delta_d}(\ZZ)\hookrightarrow H^p(\ZZ).
\]

It remains to prove that $H^p(\ZZ)\hookrightarrow H^p_{\Delta_d}(\ZZ)$. To do this, by Remark \ref{rem2} it suffices to prove that there exists $C>0$ such that 
\[
\|S_{\Delta_d,M}a\|_{\ell^p(\ZZ)}\le C
\]
for all $(p,2)$ atom, where $M\in \mathbb N$ such that $M>\lfloor 1/p-1 \rfloor+1=:K$.

Suppose that $a$ is a $(p,2)$ atom associated to an interval $I$. Then we have
\[
\begin{aligned}
	\|S_{\Dd,M} a\|_{\ell^p(\ZZ)}^p &  \les  \big\|S_{\Dd, M}a\big\|_{\ell^p(4I)}^p +\big\|S_{\Dd, M}a\big\|_{\ell^p(\ZZ\backslash 4I)}^p.
\end{aligned}
\]

For the first term, by the $L^2$-boundedness of $S_{\Dd,M}$ we have
\[
\begin{aligned}
	\big\|S_{\Dd, M}a\big\|_{\ell^p(4I)}^p &\le |4I|^{\f{2-p}{2}}\|S_{\Dd,M} a\|_{\ell^2(4I)}^p\\
	&\les |I|^{\f{2-p}{2}}\|a\|_{\ell^2(I)}^p\\
	&\les 1.
\end{aligned}
\]
For $n\in \ZZ\backslash 4I$, we have
\[
\begin{aligned}
	\big[S_{\Dd,M}a(n)\big]^2
	&=\int_0^\vc \sum_{m:|m-n|<t} \Big[ |(t^2\Delta_d)^{M}e^{-t^2 \Delta_d}a(m)|d\vec s\Big]^2\f{dt}{t(t+1)}\\
	&= \int_{0}^{\ell(I)}\ldots + \int_{\ell(I)}^\vc \ldots\\
	&=: E_1 + E_2. 
\end{aligned}
\]
Since $t>|m-n|\ge 1$ for $n\in \ZZ\backslash 4I$ and $m\in I$, by Lemma \ref{lem1-htk}, 
\[
\begin{aligned}
	E_1&\les \int^{\ell(I)}_0\sum_{m:|m-n|<t} \Big[ \sum_{l\in I}\f{1}{t}\Big(\f{t}{|m-l|}\Big)^{2M} |a(l)|\Big]^2\f{dt}{t(t+1)}\\
	&\les \int^{\ell(I)}_0\sum_{m:|m-n|<t} \Big[ \sum_{l\in I}\f{1}{t}\Big(\f{t}{d(n,I)}\Big)^{2M} |a(l)|\Big]^2\f{dt}{t(t+1)}\\
	&\les \int^{\ell(I)}_0 \Big[ \sum_{l\in I}\f{1}{t}\Big(\f{t}{d(n,I)}\Big)^{2M} |a(l)|\Big]^2\f{dt}{t}\\
	&\les \Big[ \|a\|_{\ell^1(\ZZ)}  \f{\ell(I)^{2M-1}}{d(n,I)^{2M}}\Big]^2\\
	&\les \Big[ \|a\|_{\ell^1(\ZZ)}  \f{\ell(I)^{K}}{d(n,I)^{K+1}}\Big]^2.
\end{aligned}
\]
For the second term $E_2$, we have

	Using \eqref{eq-integration by parts fomular for the time derivative of heat kernel}, we have for each $k\in \mathbb N$,
\[
\begin{aligned}
	(-\Delta_d)^{M}e^{-t\Delta_d} a(n)&=\sum_{m\in I}\f{(-1)^{\lfloor (k+1)/2\rfloor}}{\pi {(n-m)}^k}\int_0^\pi \partial_\theta^k\partial_{t}^M  \varphi_{t}(\theta)\Phi_k((n-m)\theta)a(m) d\theta\\
	&:=\sum_{m\in I} g_{M,k,t}(n-m)a(m).
\end{aligned}
\]
Due to the vanishing moment condition of the atom $a$, for a fixed $n_I\in I$ and $K:=\lfloor 1/p-1 \rfloor +1$,
\[
(-\Delta_d)^{M}e^{-t\Delta_d} a(n) =\sum_{m\in I}  \Big[g_{M,K+1,t}(n-m)-\sum_{j=0}^K \f{(n_I-m)^j}{j!}g^{(j)}_{\ell,K+1,t}(n-n_I)\Big]a(m).
\]
Applying Taylor theorem and Lemma \ref{lemma- derivative estimate - Taylor}, for $n\in \ZZ\backslash 4I,$
\begin{equation}\label{eq1- Taylor 1}
	\begin{aligned}
		|(t\Delta_d)^{M}e^{-t\Delta_d}a(n)|&\les  \sum_{m\in I} \Big(\f{\ell(I)}{|n-m|\wedge \sqrt t}\Big)^{K+1}\f{1}{\sqrt t}\Big(\f{|n-m|}{\sqrt t}\Big)^{-K-1}|a(m)|\\
		&\les  \Big(\f{\ell(I)}{d(n,I)\wedge \sqrt t}\Big)^{K+1}\f{1}{\sqrt t}\Big(\f{d(n,I)}{\sqrt t}\Big)^{-K-1}\|a\|_{\ell^1(\ZZ)}.
	\end{aligned}
\end{equation}
On the other hand, we also have
\[
(-\Delta_d)^{M}e^{-t\Delta_d} a(n) =\sum_{m\in I}  \Big[g_{M,K+1,t}(n-m)-\sum_{j=0}^K \f{(n_I-m)^j}{j!}g^{(j)}_{\ell,0,t}(n-n_I)\Big]a(m).
\]
Applying Taylor theorem and Lemma \ref{lemma- derivative estimate - Taylor} again, for $n\in \ZZ\backslash 4I,$
\begin{equation}\label{eq2- Taylor 1}
	\begin{aligned}
		|(t\Delta_d)^{M}e^{-t\Delta_d}a(n)|&\les  \sum_{m\in I} \f{1}{\sqrt t}\Big(\f{\ell(I)}{\sqrt t}\Big)^{K+1} |a(m)|\\
		&\les \f{1}{\sqrt t}\Big(\f{\ell(I)}{d(n,I)\wedge \sqrt t}\Big)^{K+1} \|a\|_{\ell^1(\ZZ)}.
	\end{aligned}
\end{equation}

From \eqref{eq1- Taylor 1} and \eqref{eq2- Taylor 1}, we have, for $n\in \ZZ\backslash 4I,$
\begin{equation}\label{eq3- Taylor 1}
	\begin{aligned}
		|(t\Delta_d)^{M}e^{-t\Delta_d}a(n)| &\les  \Big(\f{\ell(I)}{d(n,I)\wedge \sqrt t}\Big)^{K+1}\f{1}{\sqrt t}\Big(1+\f{d(n,I)}{\sqrt t}\Big)^{-K-1}\|a\|_{\ell^1(\ZZ)}\\
		&\les \f{1}{\sqrt t} \Big(\f{\ell(I)}{d(n,I)}\Big)^{K+1}\|a\|_{\ell^1(\ZZ)}.
	\end{aligned}
\end{equation}

It follows that 
\[
E_2\les \Big[ \|a\|_{\ell^1(\ZZ)}  \f{\ell(I)^{K}}{d(n,I)^{K+1}}\Big]^2.
\]
Therefore, for $n\in \ZZ\backslash 4I$,
\[
	 S_{\Dd,M}a(n)\les \|a\|_{\ell^1(\ZZ)}  \f{\ell(I)^{K}}{d(n,I)^{K+1}}\les |I|^{1-1/p}\f{\ell(I)^{K}}{d(n,I)^{K+1}}.
\]
As a consequence,
\[
\big\|S_{\Dd,M}a\big\|_{\ell^p(\ZZ\backslash 4I)}\le C.
\]

This completes our proof.
\end{proof}

\section{Besov and Triebel--Lizorkin spaces associated to $\Dd$}
The main aim of this section is to develop the theory of Besov and Triebel--Lizorkin spaces associated to $\Dd$. This is motivated by the work of the authors with H-Q. Bui \cite{BBD}. It is interesting to note that our function spaces are identical with the classical spaces defined by \cite{Sun, Torres}. Therefore, the obtained results in this section can be viewed as new characterizations for the classical Besov and Triebel--Lizorkin spaces.

\bigskip

As we mentioned in Section \ref{sec: distributions}, if $\psi\in \mathscr{\mathbb R}$ with $\supp \psi\subset [2,8]$, then 
$$\psi(t\sqrt{\Delta_d})(\cdot, m), \psi(t\sqrt{\Delta_d})(m,\cdot)\in \mathcal S_\vc(\ZZ)$$ for every $t>0$ and $m\in \ZZ$. Therefore, for any $f\in \mathcal S'_\vc(\ZZ)$ we can define
\[
\psi(t\sqrt{\Delta_d})f(n) = \langle \psi(t\sqrt{\Delta_d})(n,\cdot), f\rangle.
\]
\begin{definition}
	Let $\psi$ be  a partition of unity. For $0< p, q\leq \vc$ and $\alpha\in \mathbb{R}$, we define the  homogeneous Besov space $\B^{\alpha,\psi,\Delta_d}_{p,q}(\ZZ)$ as follows 
	$$
	\B^{\alpha,\psi,\Delta_d}_{p,q}(\ZZ) = 
	\Big\{f\in \mathcal{S}'(\ZZ)/\mathcal P(\ZZ):  \|f\|_{\B^{\alpha,\psi,\Delta_d}_{p,q}(\ZZ)}<\vc\Big\},
	$$
	where
	\[
	\|f\|_{\B^{\alpha,\psi,\Delta_d}_{p,q}(\ZZ)}= \Big\{\sum_{j\in \mathbb{Z}^-}\left(2^{j\alpha}\|\psi_j(\sqrt{\Delta_d})f\|_{\ell^p(\ZZ)}\right)^q\Big\}^{1/q}.
	\]
	
	Similarly, for $0< p<\vc$, $0<q\le \vc$ and $\alpha\in \mathbb{R}$, the homogeneous Triebel--Lizorkin space $\F^{\alpha,\psi}_{p,q}(\ZZ)$ is defined by 
	$$
	\F^{\alpha, \psi,\Delta_d}_{p,q}(\ZZ) = 
	\Big\{f\in \mathcal{S}'(\ZZ)/\mathcal P(\ZZ):  \|f\|_{\F^{\alpha,\psi}_{p,q}(\ZZ)}<\vc\},
	$$
	where
	\[
	\|f\|_{\F^{\alpha, \psi,\Delta_d}_{p,q}(\ZZ)}= \Big\|\Big[\sum_{j\in \mathbb{Z}^-}(2^{j\alpha}|\psi_j(\sqrt{\Delta_d})f|)^q\Big]^{1/q}\Big\|_{\ell^p(\ZZ)}.
	\]
\end{definition}

We would like to highlight that in the above definitions of the Besov and Triebel--Lizorkin spaces, the sums are taken over $j\in \mathbb Z^-$ instead of $j\in \mathbb Z$. This is because since the spectral of $\Delta_d$ is contained in $[0,4]$, $\psi_j(\sqrt{\Dd}) =0$ whenever $j\in \mathbb Z^+$.

It is obvious that if $f\in \mathcal{P}(\ZZ)$ then $\psi_j(\sqrt{\Delta_d})f = 0$ for all $j$.  Conversely, we assume that $\psi_j(\sqrt{\Delta_d})f = 0$ for all $j\in \mathbb{Z}$. By Proposition \ref{prop-Calderon1}, $f=0$ in $\mathcal S'/\mathcal P(\ZZ)$, or equivalently, $f$ is a polynomial. As a consequence, if $\|f\|_{\B^{\alpha, \psi,\Delta_d}_{p,q}(\ZZ)}=0$ or  $\|f\|_{\F^{\alpha, \psi,\Delta_d}_{p,q}(\ZZ)}=0$, then $f\in \mathcal P(\ZZ)$.

Therefore, each of the above spaces is a quasi-normed linear space (normed linear space when $p,q\ge 1$).

\bigskip

For $\lambda>0, j\in \mathbb{Z}$ and $\varphi\in \mathcal{S}(\mathbb{R})$ supported in $[2,8]$ the Peetre's type maximal function is defined, for $f\in \mathcal S'(\ZZ)/\mathcal P(\ZZ)$, by
\begin{equation}
	\label{eq-PetreeFunction}
	\varphi_{j,\lambda}^*(\sqrt{\Delta_d})f(n)=\sup_{m\in \ZZ}\f{|\varphi_j(\sqrt{\Delta_d})f(m)|}{(1+2^j|m-n|)^\lambda}\, , n\in \ZZ,
\end{equation}
where $\varphi_j(\lambda)=\varphi(2^{-j}\lambda)$.

Obviously, we have
\[
\varphi_{j,\lambda}^*(\sqrt{\Delta_d})f(n)\geq |\varphi_j(\sqrt{\Delta_d})f(n)|, \ \ \ \ n\in \ZZ.
\]
Similarly, for $s, \lambda>0$ we set
\begin{equation}
	\label{eq2-PetreeFunction}
	\varphi_{\lambda}^*(s\sqrt{\Delta_d})f(n)=\sup_{m\in \ZZ}\f{|\varphi(s\sqrt{\Delta_d})f(m)|}{(1+|m-n|/s)^\lambda}, \ \ \ f\in \mathcal S'(\ZZ)/\mathcal P(\ZZ).
\end{equation}

We next prove the following result which can be viewed as a consequence of  Proposition 3.2 and Proposition 3.3 in \cite{BBD}.
\begin{proposition}
	\label{prop2-thm1}
	Let $\varphi$ and $\psi$ be a partition of unity. Then we have:
	\begin{enumerate}[(a)]
		\item For $0< p, q\le \vc$, $\alpha\in \mathbb{R}$ and $\lambda>1/p$,
		$$\displaystyle \Big\{\sum_{j\in \mathbb{Z}}\left(2^{j\alpha}\|\varphi^*_{j,\lambda}(\sqrt{\Delta_d})f\|_{\ell^p(\ZZ)}\right)^q\Big\}^{1/q}\sim \|f\|_{\B^{\alpha, \psi,\Delta_d}_{p,q}(\ZZ)}.
		$$
		
		\item For $0< p<\vc$, $0<q\le \vc$, $\alpha\in \mathbb{R}$ and $\lambda>\max\{1/q, 1/p\}$,
		$$\displaystyle \Big\|\Big[\sum_{j\in \mathbb{Z}}(2^{j\alpha}|\varphi^*_{j,\lambda}(\sqrt{\Delta_d})f|)^q\Big]^{1/q}\Big\|_{\ell^p(\ZZ)}\sim \|f\|_{\F^{\alpha, \psi,\Delta_d}_{p,q}(\ZZ)}.$$
	\end{enumerate}
\end{proposition}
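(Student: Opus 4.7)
The plan is to reduce both (a) and (b) to the pointwise Peetre-type estimate
\[
\varphi^*_{j,\lambda}(\sqrt{\Delta_d})f(n) \;\le\; C_r \sum_{|k-j|\le c_0}\mathcal{M}_r\big(\psi_k(\sqrt{\Delta_d})f\big)(n), \qquad (\star)
\]
valid for any $r$ with $\lambda r > 1$, where $c_0$ depends only on the supports of $\varphi$ and $\psi$. Once $(\star)$ is in hand, statement (a) follows from the $\ell^p$-boundedness of $\mathcal{M}_r$ in \eqref{boundedness maximal function} (applied with $1/\lambda < r < p$, possible since $\lambda > 1/p$), after taking the weighted $\ell^q$ norm in $j\in \mathbb{Z}^-$ and absorbing the finite shift $|k-j|\le c_0$ into a constant. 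Statement (b) is obtained by picking $r$ with $1/\lambda < r < \min\{p,q\}$ and invoking the Fefferman--Stein vector-valued maximal inequality \eqref{FSIn}. The reverse inequalities in both (a) and (b) are immediate from $\varphi^*_{j,\lambda}(\sqrt{\Delta_d})f \ge |\varphi_j(\sqrt{\Delta_d})f|$ pointwise; applying this with $\varphi = \psi$ links the Peetre quantity back to the $\psi$-defined Besov/Triebel--Lizorkin norm.

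I would prove $(\star)$ in two stages. In the first, since $\varphi$ and $\psi$ are both supported in $[2,8]$ and $\sum_k \psi_k \equiv 1$ on $(0,\infty)$, the decomposition $\varphi_j(\sqrt{\Delta_d})f = \sum_{|k-j|\le c_0}\varphi_j(\sqrt{\Delta_d})\psi_k(\sqrt{\Delta_d})f$ involves only finitely many $k$. Corollary \ref{lem1} applied to each such composition (which corresponds to a Schwartz function of $\sqrt{\Delta_d}$ with compact support in a fixed dyadic annulus) yields the kernel bound
\[
|K_{\varphi_j(\sqrt{\Delta_d})\psi_k(\sqrt{\Delta_d})}(m,l)| \;\le\; \frac{C_N\, 2^j}{(1+2^j|m-l|)^N}
\]
for arbitrary $N>0$. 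Inserting this into $|\varphi_j(\sqrt{\Delta_d})f(m)| \le \sum_{|k-j|\le c_0}\sum_l |K_{\varphi_j\psi_k}(m,l)|\,|\psi_k(\sqrt{\Delta_d})f(l)|$, using $|\psi_k(\sqrt{\Delta_d})f(l)| \le \psi^*_{k,\lambda}(\sqrt{\Delta_d})f(n)(1+2^k|l-n|)^{\lambda}$ (with $2^k\sim 2^j$), invoking the triangle-type inequality $(1+2^j|l-n|)\le (1+2^j|m-l|)(1+2^j|m-n|)$, and summing over $l$ via Lemma \ref{lem-elementary}, one obtains $\varphi^*_{j,\lambda}(\sqrt{\Delta_d})f(n) \le C\sum_{|k-j|\le c_0}\psi^*_{k,\lambda}(\sqrt{\Delta_d})f(n)$. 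In the second stage, the standard Peetre trick---reproducing $\psi_k(\sqrt{\Delta_d}) = \tilde\psi_k(\sqrt{\Delta_d})\psi_k(\sqrt{\Delta_d})$ via an auxiliary bump $\tilde\psi$ with $\tilde\psi\psi = \psi$, splitting $|\psi_k(\sqrt{\Delta_d})f(l)| = |\psi_k(\sqrt{\Delta_d})f(l)|^r\,|\psi_k(\sqrt{\Delta_d})f(l)|^{1-r}$, bounding the second factor by $[\psi^*_{k,\lambda}(\sqrt{\Delta_d})f(n)]^{1-r}(1+2^k|l-n|)^{\lambda(1-r)}$, and absorbing after Lemma \ref{lem-elementary}---gives $\psi^*_{k,\lambda}(\sqrt{\Delta_d})f(n) \le C_r\mathcal{M}_r(\psi_k(\sqrt{\Delta_d})f)(n)$ whenever $\lambda r > 1$, completing the proof of $(\star)$.

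The main technical obstacle is the absorption step in the Peetre trick, which is legitimate only when $\psi^*_{k,\lambda}(\sqrt{\Delta_d})f(n)<\infty$. I would handle it by the standard truncation: first restrict the supremum in the definition of $\psi^*_{k,\lambda}$ to $|m|\le R$, so that all quantities are finite; carry out the absorption to obtain the bound with the truncated Peetre maximal function; then let $R\to\infty$ using monotone convergence. All other ingredients---the Schwartz-type kernel decay of Corollary \ref{lem1}, the discrete summation bounds of Lemma \ref{lem-elementary}, and the maximal inequalities \eqref{boundedness maximal function} and \eqref{FSIn}---are already established in the excerpt, so the argument closely parallels Propositions 3.2 and 3.3 of \cite{BBD}.
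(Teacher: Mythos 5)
Your proof is correct and takes essentially the same route as the paper, which states Proposition \ref{prop2-thm1} as a consequence of Propositions 3.2 and 3.3 in \cite{BBD} without reproducing the argument. The Peetre-type pointwise estimate $(\star)$, obtained by decomposing $\varphi_j(\sqrt{\Delta_d})$ against the partition of unity, applying the kernel decay of Corollary \ref{lem1}, and carrying out the absorption trick (with the truncation argument to justify it), followed by the scalar and Fefferman--Stein vector-valued maximal inequalities, is exactly the mechanism used there.
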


\bigskip
\begin{remark}
	Proposition \ref{prop2-thm1} tells us that if $\psi, \varphi$ are partition of unities, then $\B_{p,q}^{\alpha,\psi ,\Delta_d}(\ZZ)\equiv \B_{p,q}^{\alpha,\varphi ,\Delta_d}(\ZZ)$ for $0<p,q\le \vc$  and $\F_{p,q}^{\alpha,\psi ,\Delta_d}(\ZZ)\equiv \F_{p,q}^{\alpha,\varphi ,\Delta_d}(\ZZ)$ for $0<p<\vc$ and $0<q\le \vc$. For this reason, for $\alpha \in \mathbb R, 0<p,q\le \vc$, we define the Besov space $\B_{p,q}^{\alpha,\Delta_d}(\ZZ)$ to be the space $\B_{p,q}^{\alpha,\psi ,\Delta_d}(\ZZ)$ with any partition of unity $\psi$. The Triebel--Lizokin space $\F_{p,q}^{\alpha,\Delta_d}(\ZZ)$ for $\alpha \in \mathbb R, 0<p< \vc, 0<q\le \vc$ can be defined similarly.
\end{remark}

 The following theorem regarding the continuous version of the Besov and Triebel--Lizorkin spaces which is taken from \cite[Theorem 3.5]{BBD}. 
\begin{theorem}
	\label{thm1-continuouscharacter}
	Let $\psi\in \mathcal S(\mathbb R)$ supported in $[2,8]$. Then we have:
	\begin{enumerate}[(a)]
		\item For  $0< p, q\le \vc$, $\alpha\in \mathbb{R}$, $\lambda>1/p$ and $f\in \mathcal{S}'(\ZZ)/\mathcal P(\ZZ)$,
		\begin{equation}\label{eq-B continuous cha}
			\begin{aligned}
				\|f\|_{\B_{p,q}^{\alpha ,\Delta_d}(\ZZ)}&\sim \Big(\int_0^\vc \Big[t^{-\alpha}\|\psi(t\sqrt{\Delta_d})f\|_{\ell^p(\ZZ)}\Big]^q\f{dt}{t}\Big)^{1/q}\\
				&\sim \Big(\int_0^\vc \Big[t^{-\alpha}\|\psi^*_\lambda(t\sqrt{\Delta_d})f\|_{\ell^p(\ZZ)}\Big]^q\f{dt}{t}\Big)^{1/q}.
			\end{aligned}
			\end{equation}
			\item For  $0<q\le \vc$, $\alpha\in \mathbb{R}$,  $\lambda>\max\{1/q, 1/p\}$ and $f\in \mathcal{S}'(\ZZ)/\mathcal P(\ZZ)$,
		\begin{equation}\label{eq-F continuous cha}
					\begin{aligned}
				\|f\|_{\F_{p,q}^{\alpha, \Delta_d}(\ZZ)}&\sim \Big\|\Big(\int_0^\vc \Big[t^{-\alpha}|\psi(t\sqrt{\Delta_d})f|\Big]^q\f{dt}{t}\Big)\Big\|_{\ell^p(\ZZ)}\\&\sim \Big\|\Big(\int_0^\vc \Big[t^{-\alpha}\psi^*_\lambda(t\sqrt{\Delta_d})f\Big]^q\f{dt}{t}\Big)\Big\|_{\ell^p(\ZZ)}.
			\end{aligned}
		\end{equation}
	\end{enumerate}
\end{theorem}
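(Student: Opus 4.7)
The plan is to adapt the argument of \cite[Theorem 3.5]{BBD} to the discrete setting, focusing on the Besov equivalence \eqref{eq-B continuous cha}; the Triebel--Lizorkin equivalence \eqref{eq-F continuous cha} then follows with identical ingredients after interchanging the order of the $\ell^p$ and $L^q$ norms and replacing the ordinary Fefferman--Stein inequality by \eqref{YFSIn}. Since the pointwise bound $|\psi(t\sqrt{\Delta_d})f(n)|\le \psi_\lambda^*(t\sqrt{\Delta_d})f(n)$ is immediate, the chain of equivalences reduces to the two estimates
\[
\Big(\int_0^\vc \big[t^{-\alpha}\|\psi_\lambda^*(t\sqrt{\Delta_d})f\|_{\ell^p(\ZZ)}\big]^q\f{dt}{t}\Big)^{1/q}\les \|f\|_{\B_{p,q}^{\alpha,\Dd}(\ZZ)}\les \Big(\int_0^\vc \big[t^{-\alpha}\|\psi(t\sqrt{\Delta_d})f\|_{\ell^p(\ZZ)}\big]^q\f{dt}{t}\Big)^{1/q}.
\]
My first move is to decompose the $t$-integral dyadically into $[2^{-j-1},2^{-j}]$, noting that $\psi(t\sqrt{\Delta_d})=0$ whenever $t$ is incompatible with $\supp\psi\subset[2,8]$ and the spectral range $[0,2]$ of $\sqrt{\Delta_d}$, so only $j\le 0$ effectively contributes; on each dyadic piece $t^{-\alpha}\sim 2^{j\alpha}$, and the integral becomes a weighted $\ell^q$-sum.

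For the left inequality, I fix $t\in[2^{-j-1},2^{-j}]$ and a partition of unity $\phi$ (distinct from the theorem's $\psi$). Proposition \ref{prop-Calderon1} provides $f=\sum_{k\in\mathbb Z^-}\phi_k(\sqrt{\Delta_d})f$ in $\mathcal S'(\ZZ)/\mathcal P(\ZZ)$, and the spectral-support disjointness of $\psi(t\,\cdot)$ and $\phi_k$ collapses this sum to $|k-j|\le C$ for some absolute $C$. Applying Proposition \ref{prop1-pointwise-spectral multiplier} to each composite symbol $\psi(t\,\cdot)\phi_k(\cdot)$ yields the kernel bound $C_N 2^k(1+2^k|m-n|)^{-N}$ for arbitrary $N$, and a standard Peetre-type manipulation (insert $(1+2^k|n-\ell|)^{\lambda}(1+2^k|n-\ell|)^{-\lambda}$ and apply the triangle inequality for $(1+2^k|m-\ell|)^{-\lambda}$) gives the pointwise comparison
\[
\psi_\lambda^*(t\sqrt{\Delta_d})f(n)\les \sum_{|k-j|\le C}\phi_{k,\lambda}^*(\sqrt{\Delta_d})f(n).
\]
Taking $\ell^p$ norms, raising to the $q$-th power, summing in $j$, and invoking Proposition \ref{prop2-thm1} then closes the estimate. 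The right inequality is symmetric: substituting the integral Calder\'on formula $f = c_\psi\int_0^\vc \tilde\psi(s\sqrt{\Delta_d})\psi(s\sqrt{\Delta_d})f\,\f{ds}{s}$ (with $\tilde\psi$ a suitable Schwartz partner of $\psi$, also provided by Proposition \ref{prop-Calderon1}) into $\phi_j(\sqrt{\Delta_d})f$ and re-using the same disjointness/kernel-decay mechanism yields $\|\phi_j(\sqrt{\Delta_d})f\|_{\ell^p(\ZZ)}\les \int_{2^{-j-C}}^{2^{-j+C}}\|\psi(s\sqrt{\Delta_d})f\|_{\ell^p(\ZZ)}\,\f{ds}{s}$, after which H\"older and dyadic summation finish the job.

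The chief obstacle is pinning down the Peetre-type pointwise bound above with constants uniform in $(t,k)$. One must verify that every rescaled spectral multiplier encountered lies within the support range $(0,2]$ required by Proposition \ref{prop1-pointwise-spectral multiplier} --- which is automatic here because only $k\le 0$ arise --- and that the Sobolev index $s+\epsilon$ can be tuned to match the admissible $\lambda>1/p$ (respectively $\lambda>\max\{1/p,1/q\}$) coming from Proposition \ref{prop2-thm1}. In the Triebel--Lizorkin case one must further absorb the finite shift over $|k-j|\le C$ simultaneously inside the $\ell^q$- and $\ell^p$-norms, which is exactly what the vector-valued Young-type inequality \eqref{YFSIn} delivers rather than the ordinary Fefferman--Stein estimate; once this is in place, the rest of the argument reduces to routine bookkeeping.
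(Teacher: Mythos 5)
The paper gives no proof of this theorem: it is quoted verbatim from \cite[Theorem~3.5]{BBD}, so your proposal is essentially a from-scratch attempt rather than a match to anything in the paper. Your high-level structure — reduce to ``Peetre continuous $\lesssim$ discrete Besov $\lesssim$ plain continuous'', handle the first via Proposition~\ref{prop1-maximal function} and Proposition~\ref{prop2-thm1}, handle the second via the continuous Calder\'on formula — is the right architecture and mirrors what is done in \cite{BBD}. The direction
\[
\Big(\int_0^\vc \big[t^{-\alpha}\|\psi_\lambda^*(t\sqrt{\Delta_d})f\|_{\ell^p}\big]^q\tfrac{dt}{t}\Big)^{1/q}\les \|f\|_{\B^{\alpha,\Delta_d}_{p,q}(\ZZ)}
\]
is handled correctly.

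There is, however, a genuine gap in the opposite direction. You assert that substituting the Calder\'on formula into $\phi_j(\sqrt{\Delta_d})f$ and ``re-using the same disjointness/kernel-decay mechanism'' yields
\[
\|\phi_j(\sqrt{\Delta_d})f\|_{\ell^p(\ZZ)}\les \int_{2^{-j-C}}^{2^{-j+C}}\|\psi(s\sqrt{\Delta_d})f\|_{\ell^p(\ZZ)}\,\tfrac{ds}{s}.
\]
For $p\ge 1$ this is immediate from the kernel bound and Minkowski's integral inequality, but for $0<p<1$ the $\ell^p$ quasi-norm is \emph{not} subadditive over integrals, so the ``mechanism'' does not give this estimate. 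Indeed, the kernel bound and the Peetre manipulation only produce
\[
\phi^*_{j,\lambda}(\sqrt{\Delta_d})f(n)\les\int_{2^{-j-C}}^{2^{-j+C}}\psi^*_\lambda(s\sqrt{\Delta_d})f(n)\,\tfrac{ds}{s},
\]
and passing this pointwise bound through the $\ell^p$ quasi-norm for $p<1$ is exactly where the real content of the proof lies. The missing ingredient is the continuous version of the sub-mean-value/Peetre maximal lemma, i.e.\ the bound
\[
\psi^*_\lambda(s\sqrt{\Delta_d})f(n)\les \mathcal M_r\big(\psi(s\sqrt{\Delta_d})f\big)(n),\qquad \lambda>1/r,\ 0<r<p\wedge q,
\]
which in turn gives $\|\psi^*_\lambda(s\sqrt{\Delta_d})f\|_{\ell^p}\les\|\psi(s\sqrt{\Delta_d})f\|_{\ell^p}$ uniformly in $s$ by \eqref{boundedness maximal function}. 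This is the exact continuous-time analogue of the content of Proposition~\ref{prop2-thm1} (Propositions~3.2--3.3 of \cite{BBD}), and it is what allows the argument to route through the continuous Peetre norm rather than attacking $\|\phi_j f\|_{\ell^p}\les\int\|\psi(s)f\|_{\ell^p}\tfrac{ds}{s}$ head-on. Your sketch neither states nor uses this lemma, so the $p<1$ case (and in the Triebel--Lizorkin statement the $q<1$ case) is left unjustified.

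Two smaller points. First, Proposition~\ref{prop-Calderon1} as stated reproduces with a single factor $\psi(t\sqrt{\Delta_d})$, not with the two-factor $\tilde\psi(s\sqrt{\Delta_d})\psi(s\sqrt{\Delta_d})$ you invoke; the two-factor version is used implicitly in the proof of Theorem~\ref{thm1- atom Besov} but is not the content of that proposition, and you should state it separately. Second, your argument tacitly assumes $\psi\not\equiv 0$ (otherwise no such $\tilde\psi$ exists), which is an implicit nondegeneracy hypothesis on the $\psi$ in the statement and is worth flagging.
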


Similarly to the classical Triebel--Lizorkin spaces we also have characterizations for Triebel-Lizorkin spaces via Lusin functions and the Littlewood--Paley functions. To see this, for $\alpha\in \mathbb{R}, \lambda, a>0$, $0<q<\vc$ and a measurable function $F$ defined on $\ZZ\times (0,\vc)$ we define the Lusin function and the Littlewood--Paley function by setting
\begin{equation}\label{g-function}
	\mathcal{G}^{\alpha}_{\lambda, q}F(n)=\left[\int_0^\vc\sum_{m\in \ZZ}(t^{-\alpha}|F(m,t)|)^q\Big(1+\f{|m-n|}{t}\Big)^{-\lambda q} \f{  dt}{t(t+1)}\right]^{1/q}
\end{equation}
and
\begin{equation}\label{lusin-function}
	\mathcal{S}^\alpha_{a,q}F(n)=\left[\int_0^\vc\sum_{m:|m-n|<at}(t^{-\alpha}|F(m,t)|)^q\f{  dt}{t(t+1)}\right]^{1/q}
\end{equation}
for every measurable function $F$, respectively.

	We have the following characterization for the Triebel--Lizorkin spaces which can be viewed as a particular case of Proposition 3.13 in \cite{BBD}.
	\begin{proposition}\label{prop4.1b}
		Let $\psi$ be a partition of unity. Then for  $0<p, q<\vc$, and $\alpha\in \mathbb{R}$, we have
		$$
		\|f\|_{\F^{\alpha,\Delta_d}_{p,q}(\ZZ)}\sim  \|\mathcal{G}^{\alpha}_{\lambda, q}(\psi(t\sqrt{\Delta_d})f)\|_{\ell^p(\ZZ)}\sim \|\mathcal{S}^{\alpha}_{q}(\psi(t\sqrt{\Delta_d})f)\|_{\ell^p(\ZZ)}
		$$
		for all $f\in \mathcal{S}_\vc'(\ZZ)$, provided that $\lambda>\gamma(p,q)$, where
		\begin{equation}
			\label{eq-gamma}
			\gamma(p,q)=\begin{cases}
				\f{1}{p\wedge q}, \ \ \ &p\ge q\\
				\f{2}{q}(1-\f{p}{q}), \ \ \ &p< q.
			\end{cases}
		\end{equation}
	\end{proposition}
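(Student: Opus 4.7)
The plan is to establish the three-way equivalence via the circular chain
\[
\|\mathcal{S}^\alpha_{1,q}(\psi(t\sqrt{\Dd})f)\|_{\ell^p(\ZZ)} \le C\|\mathcal{G}^{\alpha}_{\lambda,q}(\psi(t\sqrt{\Dd})f)\|_{\ell^p(\ZZ)} \le C\|f\|_{\F^{\alpha,\Dd}_{p,q}(\ZZ)} \le C\|\mathcal{S}^\alpha_{1,q}(\psi(t\sqrt{\Dd})f)\|_{\ell^p(\ZZ)}.
\]
The first inequality is immediate: whenever $|m-n|<t$ the weight $(1+|m-n|/t)^{-\lambda q}$ exceeds $2^{-\lambda q}$, so $\mathcal{S}^\alpha_{1,q}F(n)\le 2^\lambda\mathcal{G}^\alpha_{\lambda,q}F(n)$ pointwise for any measurable $F$ on $\ZZ\times(0,\vc)$.

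For the middle inequality the goal is to dominate $\mathcal{G}^\alpha_{\lambda,q}(\psi(\cdot\sqrt{\Dd})f)$ in $\ell^p(\ZZ)$ by the continuous Peetre characterization in Theorem \ref{thm1-continuouscharacter}(b). A naive attempt uses the pointwise bound $|\psi(t\sqrt{\Dd})f(m)|\le(1+|m-n|/t)^{\nu}\psi^*_{\nu}(t\sqrt{\Dd})f(n)$; substituting into $\mathcal{G}^\alpha_{\lambda,q}$ and summing over $m$ via Lemma \ref{lem-elementary} produces a factor $C(t+1)$ provided $(\lambda-\nu)q>1$, which cancels the denominator in $dt/[t(t+1)]$. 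This crude route only produces the excessive threshold $\lambda>1/q+\max(1/p,1/q)$. To reach the sharp bound $\lambda>\gamma(p,q)$, I will follow the refined interpolation scheme of \cite[Proposition 3.13]{BBD}: replace the pointwise Peetre dominance by the Hardy--Littlewood variant $\psi^*_{\nu}(t\sqrt{\Dd})f(n)\lesssim\mathcal{M}_r(\psi(t\sqrt{\Dd})f)(n)$ for $r<p\wedge q$, and invoke the Fefferman--Stein vector-valued maximal inequality \eqref{FSIn}. When $p\ge q$ this yields $\lambda>1/(p\wedge q)=\gamma(p,q)$ directly; when $p<q$ an additional H\"older splitting in the $t$-variable, interpolating between the trivial $q=\vc$ endpoint and the Fefferman--Stein bound, furnishes the sharper value $(2/q)(1-p/q)$.

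For the closing inequality I apply the continuous Calder\'on reproducing formula from Proposition \ref{prop-Calderon1},
\[
\psi(s\sqrt{\Dd})f=c_\psi\int_0^{\vc}\psi(s\sqrt{\Dd})\psi(t\sqrt{\Dd})f\,\frac{dt}{t},
\]
combined with the rapid off-diagonal decay of the composite kernel of $\psi(s\sqrt{\Dd})\psi(t\sqrt{\Dd})$ supplied by Corollary \ref{lem1}, to show that $|\psi(s\sqrt{\Dd})f(n)|$ is controlled by a weighted average of $|\psi(t\sqrt{\Dd})f(m)|$ integrated against a tent-adapted kernel whose support is essentially contained in the cone $|m-n|<t$. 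Inserting this bound into the continuous characterization \eqref{eq-F continuous cha} and invoking \eqref{FSIn} once more closes the cycle and completes the proof.

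The main obstacle is the sharpness of the threshold $\lambda>\gamma(p,q)$ in the middle step when $p<q$: the bound $(2/q)(1-p/q)$ is strictly smaller than $1/q$ in a large range of $(p,q)$ and cannot be attained by plain Peetre--Fefferman--Stein; it requires the delicate H\"older-interpolation trick of \cite[Proposition 3.13]{BBD}, which distributes the weight $(1+|m-n|/t)^{-\lambda q}$ between a maximal-function factor and an integrable remainder with optimally balanced exponents.
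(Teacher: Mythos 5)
The paper itself does not actually prove Proposition~\ref{prop4.1b}: the surrounding text simply remarks that it ``can be viewed as a particular case of Proposition~3.13 in \cite{BBD}'' and supplies no argument. You have tried to fill in what the paper leaves implicit, which is a reasonable undertaking, but as written the proposal has two distinct shortcomings.

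For the middle estimate $\|\mathcal G^{\alpha}_{\lambda,q}(\psi(t\sqrt{\Delta_d})f)\|_{\ell^p}\lesssim\|f\|_{\F^{\alpha,\Delta_d}_{p,q}}$ at the sharp threshold $\lambda>\gamma(p,q)$, you correctly diagnose that the naive Peetre--Fefferman--Stein route only yields $\lambda>1/q+\max\{1/p,1/q\}$, and you correctly identify that the improvement to $\gamma(p,q)$ (in particular the value $(2/q)(1-p/q)$ when $p<q$) depends on an interpolation refinement. But you then explicitly defer to \cite[Proposition~3.13]{BBD} without reproducing that refinement. At this point your argument is a citation, not a proof, which is precisely what the paper already does; so this portion adds nothing.

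The closing link $\|f\|_{\F^{\alpha,\Delta_d}_{p,q}}\lesssim\|\mathcal S^{\alpha}_{1,q}(\psi(t\sqrt{\Delta_d})f)\|_{\ell^p}$ contains a genuine gap. You assert that the composite kernel $K_{\psi(s\sqrt{\Delta_d})\psi(t\sqrt{\Delta_d})}(n,m)$ furnished by Corollary~\ref{lem1} has ``support essentially contained in the cone $|m-n|<t$.'' This is false: Corollary~\ref{lem1} only gives the decay $\frac{1}{t}(1+|m-n|/t)^{-N}$, which has tails far outside the cone. After substituting the Calder\'on formula, the off-cone region $|m-n|\geq t$ contributes, via a dyadic decomposition, a series of wider-aperture Lusin functions $\mathcal S^{\alpha}_{2^k,q}$ weighted by $2^{-kN}$. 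To dominate this by the unit-aperture Lusin function $\mathcal S^{\alpha}_{1,q}$ in $\ell^p(\ZZ)$ one needs an aperture-comparison estimate
\[
\|\mathcal S^{\alpha}_{a,q}F\|_{\ell^p(\ZZ)}\lesssim C(a)\,\|\mathcal S^{\alpha}_{1,q}F\|_{\ell^p(\ZZ)}
\]
with explicit polynomial dependence $C(a)$ on the aperture $a$, a Coifman--Meyer--Stein type result that your proposal never establishes. Without it the cycle does not close: your first step gives $\mathcal S\lesssim\mathcal G$ pointwise, your second gives $\mathcal G\lesssim$ $\F$-norm in $\ell^p$, but your third step in reality only delivers $\F$-norm $\lesssim\mathcal G$ in $\ell^p$ (since the natural output of the kernel estimate is the $\mathcal G$-function, not the $\mathcal S$-function), leaving the implication $\|\mathcal G\|_{\ell^p}\lesssim\|\mathcal S\|_{\ell^p}$ unproven. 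You would need either to prove the aperture comparison explicitly, or to bypass it by using the $T^p_q$ tent-space atomic decomposition together with the molecular decomposition of $\F^{\alpha,\Delta_d}_{p,q}(\ZZ)$ from Theorem~\ref{thm2- atom TL spaces}, which is closer to what \cite{BBD} actually does.
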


\subsection{Molecular decomposition}

In this section, we aim to establish molecular decomposition for our new Besov and Triebel--Lizorkin spaces.

\begin{definition}\label{defLmol}
	Let $0< p\leq \vc$  and $N, M\in \mathbb{N}_+$. A function $a$ is said to be a $(M, N, p)_{\Delta_d}$ molecule if there exist a function $b$  and a dyadic interval $I\in \mathcal{I}$ so that
	\begin{enumerate}[{\rm (i)}]
		\item $a=\Delta_d^M b$;
		
		\item $\displaystyle |\Delta_d^k b(n)|\leq \ell(I)^{2(M-k)}|I|^{-1/p}\Big(1+\f{d(n,I)}{\ell(I)}\Big)^{-1-N}$, $k=0,\ldots , 2M$.
	\end{enumerate}
	
\end{definition}

\begin{proposition}
	\label{prop1-maximal function}
	Let $\psi\in \mathcal{S}(\mathbb{R})$ with ${\rm supp}\,\psi\subset [2,8]$ and  $\varphi\in \mathcal{S}(\mathbb{R})$ be a partition of unity. Then for any $\lambda>0$ and $j\in \mathbb{Z}$ we have
	\begin{equation}
		\label{eq-psistar vaphistar}
		\sup_{s\in [2^{-j-1},2^{-j}]}\psi^*_{\lambda}(s\sqrt{\Delta_d})f(n) \les \sum_{\ell=j-2}^{j+3} \varphi^*_{\ell,\lambda}(\sqrt{\Delta_d})f(n), 
	\end{equation}
	for all $f\in \mathcal{S}'(\ZZ)/\mathcal P(\ZZ)$ and $n\in \ZZ$.
\end{proposition}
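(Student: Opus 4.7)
The plan is to spectrally decompose $\psi(s\sqrt{\Dd})$ using the partition of unity $\{\varphi_\ell\}$, obtain rapidly decaying kernel bounds for each localized piece uniformly in $s$, and then transfer the resulting pointwise estimate to the Peetre maximal functions $\varphi^*_{\ell,\lambda}(\sqrt{\Dd})f$.

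First, I would fix $s\in[2^{-j-1},2^{-j}]$ and locate spectral supports: the map $\lambda\mapsto\psi(s\lambda)$ is supported in $[2/s,8/s]\subset[2^{j+1},2^{j+4}]$, while $\supp\varphi_\ell\subset[2^{\ell+1},2^{\ell+3}]$. Since $\sum_\ell\varphi_\ell\equiv 1$ on $(0,\infty)$ and $\psi(0)=0$, only indices $\ell\in\{j-2,\ldots,j+3\}$ contribute, giving the six-term decomposition
\[
\psi(s\sqrt{\Dd})f=\sum_{\ell=j-2}^{j+3}\psi(s\sqrt{\Dd})\varphi_\ell(\sqrt{\Dd})f.
\]
Next, choose an auxiliary $\tilde\varphi\in\mathcal{S}(\mathbb{R})$ supported in $[1,16]$ with $\tilde\varphi\equiv 1$ on $[2,8]$, so that $\tilde\varphi_\ell\varphi_\ell=\varphi_\ell$. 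Setting $\Psi_{s,\ell}(\lambda)=\psi(s\lambda)\tilde\varphi_\ell(\lambda)$, we factor
\[
\psi(s\sqrt{\Dd})\varphi_\ell(\sqrt{\Dd})f(m)=\sum_{k\in\ZZ}K_{\Psi_{s,\ell}(\sqrt{\Dd})}(m,k)\,\varphi_\ell(\sqrt{\Dd})f(k).
\]

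The key technical step is to apply Proposition~\ref{prop1-pointwise-spectral multiplier} to $\Psi_{s,\ell}$ with $R=2^{\ell+4}\in(0,2]$ (legitimate in the nontrivial range because the spectrum of $\sqrt{\Dd}$ lies in $[0,2]$). The rescaled symbol $\delta_R\Psi_{s,\ell}(\lambda)=\psi(sR\lambda)\tilde\varphi(16\lambda)$ has Schwartz norms bounded uniformly in $s$ and $\ell$, since $sR\in[2^{\ell-j+3},2^{\ell-j+4}]$ stays in a bounded range for $\ell\in\{j-2,\ldots,j+3\}$. This yields, for every $N>0$,
\[
|K_{\Psi_{s,\ell}(\sqrt{\Dd})}(m,k)|\les 2^\ell\bigl(1+2^\ell|m-k|\bigr)^{-N}
\]
with constants independent of $s\in[2^{-j-1},2^{-j}]$.

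To finish, I would invoke the defining pointwise bound $|\varphi_\ell(\sqrt{\Dd})f(k)|\le(1+2^\ell|k-n|)^\lambda\,\varphi^*_{\ell,\lambda}(\sqrt{\Dd})f(n)$, together with the elementary inequality $(1+2^\ell|k-n|)^\lambda\le(1+2^\ell|m-n|)^\lambda(1+2^\ell|m-k|)^\lambda$, take $N=\lambda+2$, and sum in $k$ via Lemma~\ref{lem-elementary} at scale $2^{-\ell}$. This produces
\[
|\psi(s\sqrt{\Dd})\varphi_\ell(\sqrt{\Dd})f(m)|\les(1+2^\ell|m-n|)^\lambda\,\varphi^*_{\ell,\lambda}(\sqrt{\Dd})f(n).
\]
Since $2^\ell\sim 1/s$ in the six-term range, $(1+2^\ell|m-n|)^\lambda\les(1+|m-n|/s)^\lambda$; dividing by this factor, summing in $\ell$, and taking $\sup_{m\in\ZZ}$ and $\sup_{s\in[2^{-j-1},2^{-j}]}$ delivers \eqref{eq-psistar vaphistar}.

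The main obstacle is the uniformity in the kernel estimate of the second step: one must verify that the rescaled multipliers $\delta_R\Psi_{s,\ell}$ have $W^{\sigma+\epsilon}_q$-norms bounded independently of $s$ and $(j,\ell)$, which reduces to observing that the product $sR=s\cdot 2^{\ell+4}$ is confined to a fixed compact interval for all relevant pairs $(s,\ell)$. Once that uniformity is secured, everything else is the standard Peetre-type reshuffle familiar from the continuous setting.
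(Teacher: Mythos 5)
Your overall strategy (decompose $\psi(s\sqrt{\Dd})$ against the six relevant $\varphi_\ell(\sqrt{\Dd})$, obtain a kernel bound at scale $1/s$, then do the Peetre reshuffle) is exactly what the paper does. The decomposition, the triangle inequality $(1+2^\ell|k-n|)^\lambda\le(1+2^\ell|m-n|)^\lambda(1+2^\ell|m-k|)^\lambda$, the summation via Lemma \ref{lem-elementary}, and the final identification $2^\ell\sim 1/s$ all match.

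The detour through the auxiliary cutoff $\tilde\varphi_\ell$ and the composite symbol $\Psi_{s,\ell}=\psi(s\cdot)\tilde\varphi_\ell$ is where you run into trouble. You claim the application of Proposition \ref{prop1-pointwise-spectral multiplier} is ``legitimate'' with $R=2^{\ell+4}\in(0,2]$, but for the nontrivial range $j\le 0$ and $\ell\in\{j-2,\dots,j+3\}$ one only has $\ell\le 1$, so $R$ can be as large as $32$, and moreover $\supp\Psi_{s,\ell}\subset[2^\ell,2^{\ell+4}]$ spans four dyadic octaves, not one, so the hypothesis $\supp F\subset[R/2,R]$ fails. One could repair this by splitting $\Psi_{s,\ell}$ into dyadic pieces and applying the proposition piecewise, but it is unnecessary: since $\psi$ itself is supported in $[2,8]$, Corollary \ref{lem1} already gives $|K_{\psi(s\sqrt{\Dd})}(m,k)|\le C_N s^{-1}(1+|m-k|/s)^{-N}$ with a constant uniform in $s$, and that kernel applied directly to $\varphi_\ell(\sqrt{\Dd})f$ is all the paper's proof uses. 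So the uniformity in $s$ that you flag as ``the main obstacle'' is handled for free by Corollary \ref{lem1}, and the $\tilde\varphi$ device only manufactures the difficulty you then have to resolve. With that substitution the rest of your argument goes through.
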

\begin{proof}
	Fix $j\in \mathbb{Z}$ and $s\in [2^{-j-1},2^{-j}]$. First note that
	\[
	\psi(s\sqrt{\Delta_d})=\sum_{\ell=j-2}^{j+3}\psi(s\sqrt{\Delta_d})\varphi_\ell(\sqrt{\Delta_d}).
	\]
	Since $2^{-j}\sim s$, by Corollary \ref{lem1} we have,  for $m\in \ZZ$ and $N>n$,
	\begin{equation}\label{eq1-pro1}
		\begin{aligned}
			|\psi(s\sqrt{\Delta_d})f(m)|&\leq \sum_{\ell=j-2}^{j+3}|\psi(s\sqrt{\Delta_d})\varphi_\ell(\sqrt{\Delta_d})f(m)|\\
			&\les  \sum_{\ell=j-2}^{j+3}\sum_{k\in \ZZ} \f{1}{(1+2^{-j})}(1+2^j|m-k|)^{-N-\lambda}|\varphi_\ell(\sqrt{\Delta_d})f(k)| .
		\end{aligned}
	\end{equation}
	It follows that
	\begin{equation}\label{eq2-pro1}
		\begin{aligned}
			\f{|\psi(s\sqrt{\Delta_d})f(m)|}{(1+|m-n|/s)^\lambda}&\les  \sum_{\ell=j-2}^{j+3}\sum_{k\in \ZZ} \f{1}{(1+2^{-j})}(1+2^j|m-k|)^{-N-\lambda}\f{|\varphi_\ell(\sqrt{\Delta_d})f(k)|}{(1+2^j|m-n|)^\lambda} \\
			&\les  \sum_{\ell=j-2}^{j+3}\sum_{k\in \ZZ} \f{1}{(1+2^{-j})}(1+2^j|m-k|)^{-N}\f{|\varphi_\ell(\sqrt{\Delta_d})f(k)|}{(1+2^j|m-n|)^\lambda(1+2^j|m-k|)^\lambda} \\
			&\les  \sum_{\ell=j-2}^{j+3}\sum_{k\in \ZZ} \f{1}{(1+2^{-j})}(1+2^j|m-k|)^{-N}\f{|\varphi_\ell(\sqrt{\Delta_d})f(k)|}{(1+2^jd(x,z))^\lambda} .\\
		\end{aligned}
	\end{equation}
	Using the fact that $2^\ell\sim 2^j$, we obtain 
	\begin{equation}\label{eq2s-pro1}
		\begin{aligned}
			\f{|\psi(s\sqrt{\Delta_d})f(m)|}{(1+|m-n|/s)^\lambda}&\les  \sum_{\ell=j-2}^{j+3}\sum_{k\in \ZZ} \f{1}{(1+2^{-j})}(1+2^j|m-k|)^{-N}\f{|\varphi_\ell(\sqrt{\Delta_d})f(k)|}{(1+2^kd(x,z))^\lambda} \\
			&\les \sum_{\ell=j-2}^{j+3} \varphi^*_{\ell,\lambda}(\sqrt{\Delta_d})f(n)\sum_{k\in \ZZ} \f{1}{(1+2^{-j})}(1+2^j|m-k|)^{-N} \\
			&\les \sum_{\ell=j-2}^{j+3} \varphi^*_{\ell,\lambda}(\sqrt{\Delta_d})f(n),
		\end{aligned}
	\end{equation}
	where in the last inequality we use Lemma \ref{lem-elementary}.
	Taking the supremum over $m\in \ZZ$, we derive \eqref{eq-psistar vaphistar}.
\end{proof}

We first state the molecular decomposition result for the Besov space $\B^{\alpha, \Delta_d}_{p,q}(\ZZ)$.
\begin{theorem}\label{thm1- atom Besov}
	Let $\alpha\in \mathbb{R}$, $0<p,q\leq \vc$, $M\in \mathbb{N}_+$. Assume $f\in \B^{\alpha, \Delta_d}_{p,q}(\ZZ)$. Then there exist a sequence of $(M,N,p)_{\Delta_d}$ atoms $\{a_I\}_{I\in \mathcal{I}_\nu, \nu\in \mathbb{Z}}$ and a sequence of coefficients  $\{s_I\}_{I\in \mathcal{I}_\nu, \nu\in\mathbb{Z}^-}$ so that
	$$
	f=\sum_{\nu\in\mathbb{Z}^-}\sum_{I\in \mathcal{I}_\nu}s_Ia_I \ \ \text{in $\mathcal{S}_\vc'(\ZZ)$}.
	$$
	Moreover,
	$$
	\Big[\sum_{\nu\in\mathbb{Z}^-}2^{\nu \alpha q}\Big(\sum_{I\in \mathcal{I}_\nu}|s_I|^p\Big)^{q/p}\Big]^{1/q}\les  \|f\|_{\B^{\alpha, \Delta_d}_{p,q}(\ZZ)}.
	$$
\end{theorem}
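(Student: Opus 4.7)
The plan is to adapt the Frazier--Jawerth discretization of the Calder\'on reproducing formula to our setting and build molecules with the required $\Dd^M$ structure. The starting point is a variant of Proposition \ref{prop-Calderon1}: I choose a partition of unity $\psi$ and a Schwartz function $\tilde\psi$ supported in $[2,8]$ such that $\int_0^\vc \lambda^{2M}\tilde\psi(\lambda)\psi(\lambda)\f{d\lambda}{\lambda}=c\neq 0$. Then
\[
f = c^{-1}\Dd^M \int_0^\vc t^{2M}\tilde\psi(t\sqrt{\Dd})\psi(t\sqrt{\Dd})f\,\f{dt}{t}\quad \text{in } \mathcal{S}'(\ZZ)/\mathcal P(\ZZ).
\]
Because the spectrum of $\Dd$ lies in $[0,4]$, $\psi(t\sqrt{\Dd})$ vanishes unless $t\sim 2^{-\nu}$ for some $\nu\in \ZZ^-$, so I split the scale integral into pieces $[2^{-\nu-1},2^{-\nu}]$. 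For each such piece I spatially decompose $\psi(t\sqrt{\Dd})f$ over the dyadic intervals $I\in \mathcal I_\nu$, arriving at the candidate representation
\[
f = c^{-1}\sum_{\nu\in \ZZ^-}\sum_{I\in \mathcal I_\nu}\Dd^M \tilde B_I,\quad \tilde B_I(n):=\int_{2^{-\nu-1}}^{2^{-\nu}} t^{2M}\sum_{m\in I}K_{\tilde\psi(t\sqrt{\Dd})}(n,m)\,\psi(t\sqrt{\Dd})f(m)\,\f{dt}{t}.
\]

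I set $s_I:=|I|^{1/p}\sup_{t\in [2^{-\nu-1},2^{-\nu}],\,m\in I}|\psi(t\sqrt{\Dd})f(m)|$ and $a_I:=c^{-1}s_I^{-1}\Dd^M \tilde B_I=\Dd^M b_I$ with $b_I:=c^{-1}s_I^{-1}\tilde B_I$, so that $f=\sum_\nu\sum_I s_I a_I$. To check that $a_I$ is a $(M,N,p)_{\Dd}$-molecule I must estimate $\Dd^k b_I$ for $0\le k\le 2M$. Since $\lambda^{2k}\tilde\psi(\lambda)$ is again Schwartz and supported in $[2,8]$, Corollary \ref{lem1} applied to $\lambda^{2k}\tilde\psi(\lambda)$ yields
\[
|K_{\Dd^k\tilde\psi(t\sqrt{\Dd})}(n,m)|\les t^{-2k-1}\Big(1+\f{|n-m|}{t}\Big)^{-N-2}.
\]
Because $t\sim \ell(I)$ on the integration range, for $m\in I$ the bracket satisfies $1+|n-m|/t\sim 1+d(n,I)/\ell(I)$, giving $\sum_{m\in I}(1+|n-m|/t)^{-N-2}\les |I|(1+d(n,I)/\ell(I))^{-N-2}$. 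Integrating $t^{2M-2k-2}$ over $t\sim \ell(I)$ contributes $\ell(I)^{2M-2k-1}$, and combining with the spatial factor $|I|\sim \ell(I)$ and the normalization $s_I^{-1}$ produces
\[
|\Dd^k b_I(n)|\les \ell(I)^{2(M-k)}|I|^{-1/p}\Big(1+\f{d(n,I)}{\ell(I)}\Big)^{-1-N}
\]
for every $k=0,1,\dots,2M$, confirming condition (ii) of Definition \ref{defLmol} once $N$ is chosen large enough.

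For the coefficient bound I use Peetre's maximal function via Proposition \ref{prop1-maximal function}: for $n\in I$, $m\in I$ and $t\in[2^{-\nu-1},2^{-\nu}]$ one has $|m-n|/t\le 2$, so
\[
\sup_{t,\,m\in I}|\psi(t\sqrt{\Dd})f(m)|\les \inf_{n\in I}\sum_{\ell=\nu-2}^{\nu+3}\psi^*_{\ell,\lambda}(\sqrt{\Dd})f(n).
\]
Using $|I|\inf_{n\in I}g(n)\le \sum_{n\in I}g(n)$ for any nonnegative $g$, one obtains $\sum_{I\in \mathcal I_\nu}|s_I|^p\les \sum_{\ell=\nu-2}^{\nu+3}\|\psi^*_{\ell,\lambda}(\sqrt{\Dd})f\|_{\ell^p(\ZZ)}^p$. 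Weighting by $2^{\nu\alpha q}$, summing in $\nu$, and exploiting the finite overlap of the shifted index windows delivers
\[
\Big[\sum_{\nu\in \ZZ^-}2^{\nu\alpha q}\Big(\sum_{I\in \mathcal I_\nu}|s_I|^p\Big)^{q/p}\Big]^{1/q}\les \Big[\sum_{\ell\in \ZZ^-}\big(2^{\ell\alpha}\|\psi^*_{\ell,\lambda}(\sqrt{\Dd})f\|_{\ell^p(\ZZ)}\big)^q\Big]^{1/q}\sim \|f\|_{\B^{\alpha,\Dd}_{p,q}(\ZZ)},
\]
where the last equivalence is Proposition \ref{prop2-thm1}. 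Convergence of $\sum_\nu\sum_I s_I a_I$ in $\mathcal S'_\vc(\ZZ)$ is handled by pairing with a test function $\phi\in \mathcal S_\vc(\ZZ)$ and using the rapid decay of $\psi(t\sqrt{\Dd})\phi$, analogous to the proof of Proposition \ref{prop-Calderon1}. The most delicate point is matching powers of $t$, $\ell(I)$ and $|I|$ in the kernel bound so that the spatial sum factor $|I|$, the scale integral $\int t^{2M-2k-2}dt\sim \ell(I)^{2M-2k-1}$, and the normalization $|I|^{-1/p}$ in $s_I$ combine into exactly the $\ell(I)^{2(M-k)}|I|^{-1/p}$ demanded by Definition \ref{defLmol}.
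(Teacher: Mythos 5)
Your proof is correct and follows essentially the same route as the paper's: discretize the Calder\'on reproducing formula of Proposition \ref{prop-Calderon1} in scale (using the compact spectrum to restrict to $\nu\in\mathbb Z^-$) and in space (over $\mathcal I_\nu$), set $a_I=\Delta_d^M b_I$ with $b_I$ the normalized localized piece, establish the molecule bound via Corollary \ref{lem1}, and control the coefficients with the Peetre maximal function through Propositions \ref{prop1-maximal function} and \ref{prop2-thm1}. The only cosmetic differences are that you allow two distinct generating functions $\psi,\tilde\psi$ where the paper uses $\psi$ twice, and your $s_I$ takes a supremum over $t\in[2^{-\nu-1},2^{-\nu}]$ whereas the paper integrates $\int_{2^{-\nu}}^{2^{-\nu+1}}\frac{dt}{t}$; since the two quantities are comparable on a fixed dyadic scale, this does not affect the argument.
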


\begin{proof}
	Let $\psi$ be a partition of unity. Due to Proposition \ref{prop-Calderon1}, for $f\in \mathcal{S}'(\ZZ)/\mathcal P(\ZZ)$ we have
	$$
	\begin{aligned}
		f&=c\int_0^\vc (t^2\Delta_d)^M\psi(t\sqrt{\Delta_d})\psi(t\sqrt{\Delta_d})f\f{dt}{t}\\
		&=c\int_1^\vc (t^2\Delta_d)^M\psi(t\sqrt{\Delta_d})\psi(t\sqrt{\Delta_d})f\f{dt}{t}
	\end{aligned}
	$$
	in $\mathcal{S}'(\ZZ)/\mathcal P(\ZZ)$, where $\displaystyle c=\Big[\int_0^\vc \xi^{2M}\psi(\xi)^2 \f{d\xi}{\xi}\Big]^{-1}$.
	
	As a consequence, we have
	\begin{equation}
		\label{eq1-atom}
		\begin{aligned}
			f&=c\sum_{\nu\in \mathbb{Z}^-}\int_{2^{-\nu}}^{2^{-\nu+1}}(t^2\Delta_d)^M\psi(t\sqrt{\Delta_d})[\psi(t\sqrt{\Delta_d})f]\f{dt}{t}\\
			&=c\sum_{\nu\in \mathbb{Z}^-}\sum_{I\in \mathcal{I}_\nu}\int_{2^{-\nu}}^{2^{-\nu+1}}(t^2\Delta_d)^M\psi(t\sqrt{\Delta_d})\big[\psi(t\sqrt{\Delta_d})f\cdot 1_{I}\big]\f{dt}{t}.
		\end{aligned}
	\end{equation}

	For each $\nu\in \mathbb{Z}^-$ and $I\in \mathcal{I}_\nu$, we set
	$$
	s_I= |I|^{1/p}\sup_{m\in I}\int_{2^{-\nu}}^{2^{-\nu+1}}|\psi(t\sqrt{\Delta_d})f(m)|\f{dt}{t},
	$$
	and $a_I=\Delta_d^Mb_I$, where
	\begin{equation}\label{eq-bQ}
		b_I=\f{1}{s_I} \int_{2^{-\nu}}^{2^{-\nu+1}} t^{2M}\psi(t\sqrt{\Delta_d})\big[\psi(t\sqrt{\Delta_d})f\cdot 1_I\big]\f{dt}{t}.
	\end{equation}
	Obviously,  we deduce from \eqref{eq1-atom} that
	$$
	f=\sum_{\nu\in\mathbb{Z}^-}\sum_{I\in \mathcal{I}_\nu}s_Ia_I \ \ \text{in $\mathcal{S}'(\ZZ)/\mathcal P(\ZZ)$}.
	$$
	For $k=0,\ldots,2M$, we have
	$$
	\begin{aligned}
		\Delta_d^{k}b_I(n)&=\f{1}{s_I} \int_{2^{-\nu}}^{2^{-\nu+1}} t^{2(M-k)}(t^2\Delta_d)^k\psi(t\sqrt{\Delta_d})[\psi(t\sqrt{\Delta_d})f\cdot 1_I ]\f{dt}{t}\\
		&=\f{1}{s_I} \int_{2^{-\nu}}^{2^{-\nu+1}}\sum_{m\in I} t^{2(M-k)}K_{(t^2L)^k\psi(t\sqrt{\Delta_d})}(n,m)\psi(t\sqrt{\Delta_d})f(m) \f{dt}{t}.
	\end{aligned}
	$$
	By Corollary \ref{lem1} and the fact that $\ell(I)\sim |I|\sim 2^{-\nu}$,
	$$
	\begin{aligned}
		|\Delta_d^{k}b_I(x)|&\les   2^{-2\nu(M-k)}  \int_{2^{-\nu}}^{2^{-\nu+1}} \sum_{m\in I} \f{1}{2^{-\nu}}\Big(1+\f{|m-n|}{2^{-\nu}}\Big)^{-N}\f{dt}{t}\\
		&\les   (\ell(I))^{-2(M-k)}   \Big(1+\f{d(n,I)}{\ell(I)}\Big)^{-N}\\
	\end{aligned}
	$$
	It follows that $a_I$ is (a multiple of) a $(M,N, p)$ molecule.
	
	We now prove that
	$$
	\Big[\sum_{\nu\in\mathbb{Z}^-}2^{\nu\alpha q}\Big(\sum_{I\in \mathcal{I}_\nu}|s_I|^p\Big)^{q/p}\Big]^{1/q}\sim \|f\|_{\B^{\alpha, \Delta_d}_{p,q}(\ZZ)}.
	$$
	
	Indeed, for any $\lambda>0$ we note that
	\[
	\begin{aligned}
		s_I&\sim |I|^{1/p}\sup_{m\in I}\int_{2^{-\nu}}^{2^{-\nu+1}}|\psi(t\sqrt{\Delta_d})f(m)|\f{dt}{t}\le \Big[\sum_{m\in I} |F^*_{\psi,\nu,\lambda}(\sqrt{\Delta_d})f(m)|^p \Big]^{1/p},
	\end{aligned}
	\]
	where 
	\[
	F^*_{\psi,\nu,\lambda}(\sqrt{\Delta_d})f(n)=\sup_{m\in\ZZ}\f{\displaystyle \int_{2^{-\nu}}^{2^{-\nu+1}}|\psi(t\sqrt{\Delta_d})f(m)|\f{dt}{t}}{(1+2^\nu |m-n|)^\lambda}.
	\]
	As a consequence,
	\[
	\sum_{I\in \mathcal{I}_\nu}|s_I|^p\le \sum_{m\in \ZZ} |F^*_{\psi,\nu,\lambda}(\sqrt{\Delta_d})f(m)|^p .
	\]
	By Proposition \ref{prop1-maximal function},
	\[
	F^*_{\psi,\nu,\lambda}(\sqrt{\Delta_d})f(m)\le \sum_{\ell=\nu-1}^{\nu+4} \psi^*_{\ell,\lambda}(\sqrt{\Delta_d})f(m).
	\]
	This, in combination with Proposition \ref{prop2-thm1}, implies
	\begin{equation}
		\label{eq1-atom Besov}
		\begin{aligned}
			\Big[\sum_{\nu\in\mathbb{Z}^-}2^{\nu\alpha q}\Big(\sum_{I\in \mathcal{I}_\nu}|s_I|^p\Big)^{q/p}\Big]^{1/q}&\leq \Big[\sum_{\nu\in\mathbb{Z}^-}2^{\nu\alpha q}\Big(\Big\|\sum_{\ell=\nu-1}^{\nu+4} \psi^*_{\ell,\lambda}(\sqrt{\Delta_d})f\Big\|_{\ell^p(\ZZ)}\Big)^{q}\Big]^{1/q}\\
			&\les \Big[\sum_{\ell\in\mathbb{Z}^-}2^{\ell\alpha q}\Big(\Big\| \psi^*_{\ell,\lambda}(\sqrt{\Delta_d})f\Big\|_{\ell^p(\ZZ)}\Big)^{q}\Big]^{1/q}\\
			&\les \|f\|_{\B^\alpha_{p,q}(\ZZ)}.
		\end{aligned}
	\end{equation}
	
	This completes the  proof.
\end{proof}

\bigskip

For the reverse direction we have:
\begin{theorem}\label{thm2- atom Besov}
	Let $\alpha\in \mathbb{R}$, $0<p,q\leq \vc$. Assume that 
	$$
	f=\sum_{\nu\in\mathbb{Z}^-}\sum_{I\in \mathcal{I}_\nu}s_Ia_I \ \ \text{in $\mathcal{S}'(\ZZ)/\mathcal P(\ZZ)$},
	$$
	where $\{a_I\}_{I\in \mathcal{I}_\nu, \nu\in \mathbb{Z}^-}$ is a sequence of $(M,N,p)_{\Delta_d}$ molecules and $\{s_I\}_{I\in \mathcal{I}_\nu, \nu\in\mathbb{Z}^-}$ is a sequence of coefficients satisfying
	$$
	\Big[\sum_{\nu\in\mathbb{Z}^-}2^{\nu\alpha q}\Big(\sum_{I\in \mathcal{I}_\nu}|s_I|^p\Big)^{q/p}\Big]^{1/q}<\vc.
	$$
	Then $f\in \B^{\alpha, \Delta_d}_{p,q}(\ZZ)$ and
	$$
	\|f\|_{\B^{\alpha, \Delta_d}_{p,q}(\ZZ)} \les  \Big[\sum_{\nu\in\mathbb{Z}^-}2^{\nu\alpha q}\Big(\sum_{I\in \mathcal{I}_\nu}|s_I|^p\Big)^{q/p}\Big]^{1/q}
	$$
	provided $M>\f{1}{2}+\f{1}{2}\max\{\alpha,\f{1}{1\wedge p\wedge q}-\alpha\}$ and $N>1+\f{1}{1\wedge p\wedge q}$.
\end{theorem}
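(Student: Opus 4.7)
The plan follows the standard almost-diagonal / molecular estimate strategy. By Proposition \ref{prop2-thm1}, it suffices to bound $\bigl(\sum_{\mu\in\ZZ^-}(2^{\mu\alpha}\|\psi_\mu(\sqrt{\Dd})f\|_{\ell^p})^q\bigr)^{1/q}$ for a fixed partition of unity $\psi$. Writing $\psi_\mu(\sqrt{\Dd})f = \sum_{\nu\in\ZZ^-}\sum_{I\in\mathcal I_\nu}s_I\psi_\mu(\sqrt{\Dd})a_I$, the proof reduces to a two-sided almost-diagonal kernel estimate followed by discrete Young/maximal-function summation.

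The crux is the kernel estimate: for any $L$ with $L + 1 \le N$ and any $I\in\mathcal I_\nu$, $\mu\in\ZZ^-$,
$$|\psi_\mu(\sqrt{\Dd})a_I(n)|\lesssim |I|^{-1/p}\,\min\{2^{-2M(\mu-\nu)},\,2^{2M(\mu-\nu)}\}\,\Bigl(1+\frac{d(n,I)}{2^{-(\mu\wedge\nu)}}\Bigr)^{-L}.$$
Since $a_I = \Dd^M b_I$, the two sides come from two factorizations of $\psi_\mu$. When $\mu\ge\nu$, write $\psi_\mu(\sqrt{\Dd}) = 2^{-2\mu M}\eta_\mu(\sqrt{\Dd})\Dd^M$ with $\eta(\xi) := \psi(\xi)/\xi^{2M}\in\mathcal S(\mathbb R)$ still supported in $[2,8]$; then $\psi_\mu(\sqrt{\Dd})a_I = 2^{-2\mu M}\eta_\mu(\sqrt{\Dd})[\Dd^{2M}b_I]$, and one combines the kernel bound of Corollary \ref{lem1} for $\eta_\mu(\sqrt{\Dd})$ with clause (ii) of Definition \ref{defLmol} applied at $k = 2M$. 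When $\mu<\nu$, use instead $\psi_\mu(\sqrt{\Dd})\Dd^M = 2^{2\mu M}\tilde\psi_\mu(\sqrt{\Dd})$ with $\tilde\psi(\xi):=\xi^{2M}\psi(\xi)\in\mathcal S(\mathbb R)$, Corollary \ref{lem1} and clause (ii) at $k=0$. In both cases the sum in $m$ is controlled by the elementary estimate $(1+d(m,I)/\ell(I))^{-1-N}\lesssim (1+d(n,I)/r)^{-L}(1+|m-n|/r)^L$ with $r = 2^{-(\mu\wedge\nu)}$, followed by Lemma \ref{lem-elementary}; the hypothesis $N > 1 + 1/(1\wedge p\wedge q)$ guarantees enough decay.

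Setting $r_0 := 1\wedge p\wedge q$ and choosing $r\in(0,r_0)$ with $Lr>1$, a standard Peetre-style maximal-function argument (absorb $|I|^{-1/p}(1+d(n,I)/R)^{-L}$ into $\mathcal M_r(\mathbf 1_I)(n)^{1/r}$ after pulling out the $(R/|I|)$-volume factor, then use disjointness of the $I\in\mathcal I_\nu$) upgrades the pointwise bound to
$$\|\psi_\mu(\sqrt{\Dd})f\|_{\ell^p(\ZZ)}\lesssim \sum_{\nu\in\ZZ^-}\beta_{\mu,\nu}\Bigl(\sum_{I\in\mathcal I_\nu}|s_I|^p\Bigr)^{1/p},\qquad \beta_{\mu,\nu}=\begin{cases}2^{-2M(\mu-\nu)},&\mu\ge\nu,\\ 2^{2M(\mu-\nu)}\,2^{(\nu-\mu)/p},&\mu<\nu,\end{cases}$$
using the $\ell^p$-boundedness of $\mathcal M_r$ (which needs $r<p$). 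Put $\Lambda_\nu := 2^{\nu\alpha}(\sum_I|s_I|^p)^{1/p}$ and $\gamma_j := 2^{j\alpha}\beta_{j,0}$, so the inequality becomes $2^{\mu\alpha}\|\psi_\mu(\sqrt{\Dd})f\|_{\ell^p}\lesssim (\gamma * \Lambda)_\mu$, with $\gamma_j = 2^{j(\alpha-2M)}$ for $j\ge 0$ and $\gamma_j = 2^{j(\alpha+2M-1/p)}$ for $j<0$. The hypothesis $M > \tfrac12 + \tfrac12\max\{\alpha,\,1/r_0 - \alpha\}$ makes both tails of $\gamma_j$ decay strictly faster than the $\ell^{1\wedge q}$ threshold, so that $(\gamma_j)\in\ell^{1\wedge q}$, and the Young-type inequality $\|\gamma*\Lambda\|_{\ell^q(\ZZ^-)}\lesssim\|\gamma\|_{\ell^{1\wedge q}}\|\Lambda\|_{\ell^q(\ZZ^-)}$ (classical for $q\ge 1$, via $q$-subadditivity for $q<1$) finishes the proof.

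The main obstacle is the kernel estimate in Step 1, whose essential ingredient is the factorization of $\psi_\mu(\sqrt{\Dd})$ as a Schwartz cutoff (still supported in $[2,8]$ and thus inheriting the arbitrary-order polynomial decay of Corollary \ref{lem1}) times a power of $\Dd$ --- precisely where the support of $\psi$ away from zero matters. Once Step 1 is in hand, Steps 2--3 are routine, though matching the exponent conditions to the paper's sharp hypothesis on $M$ requires careful choice of the maximal-function exponent $r$ close to $r_0$.
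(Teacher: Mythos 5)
Your proposal is correct and follows the same strategy as the paper: establish the two-sided almost-diagonal kernel estimate for $\psi_\mu(\sqrt{\Delta_d})a_I$ by factoring $\psi$ as $\xi^{\pm 2M}$ times a Schwartz bump still supported in $[2,8]$, invoke Corollary \ref{lem1} together with the molecular size conditions at $k=0$ and $k=2M$, and then finish with the standard $\ell^p$-in-$I$, $\ell^q$-in-$\nu$ summation that the paper attributes to \cite{FJ1, FJ2, BBD}. One small slip worth flagging: for $\mu<\nu$ the scale factor in $\beta_{\mu,\nu}$ should be $2^{(\nu-\mu)/(1\wedge p)}$ rather than $2^{(\nu-\mu)/p}$ (the two agree when $p\le 1$; for $p>1$ the $\ell^p$ bound of $\sum_{I\in\mathcal I_\nu}|s_I||I|^{-1/p}(1+d(\cdot,I)/R)^{-L}$ is obtained via Young's inequality and produces the full volume ratio $R/\ell(I)$, not its $p$-th root), but this does not affect the conclusion because the stated hypothesis on $M$ is strictly stronger than what either exponent requires.
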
	
\begin{proof}
By the standard argument in \cite{FJ2, FJ1, BBD}, it suffices to prove the following estimate: 
	Let $\psi$ be a partition of unity and  let $a_I$ be a $(M,N+1,p)$ molecule with some $I\in \mathcal{I}_\nu$ with $\nu\in \ZZ^-$. Then for any $t>0$ and  $N'<N$ we have:
	\begin{equation}\label{eq- psi atom}
		|\psi(t\sqrt{\Delta_d}) a_I(n)|\les  \Big(\f{t}{2^{-\nu}}\wedge \f{2^{-\nu}}{t}\Big)^{2M-1}|I|^{-1/p} \Big(1+\f{d(n,I)}{2^{-\nu}\vee t}\Big)^{-N'}.
	\end{equation}

In order to prove \eqref{eq- psi atom}, we now consider two cases: $t\le 2^{-\nu}$ and $t>2^{-\nu}$.

	\noindent{\bf Case 1: $t\le 2^{-\nu}$.} Observe that
	$$
	\psi(t\sqrt{\Delta_d}) a_I=t^{2M}\psi_M(t\sqrt{\Delta_d})(\Delta_d^Ma_I) 
	$$
	where $\psi_M(\lambda)=\lambda^{-2M}\psi(\lambda)$.
	
	This, along with Corollary \ref{lem1} and the definition of the molecules, yields
	$$
	\begin{aligned}
		|\psi(t\sqrt{\Delta_d}) a_I(n)|&\les  \sum_{m\in \ZZ} \f{t^{2M}}{(t+1)}\Big(1+\f{|m-n|}{t}\Big)^{-N'}|a_I(m)| \\
		&\les  \sum_{m\in \ZZ} \Big(\f{t}{2^{-\nu}}\Big)^{2M}|I|^{-1/p}\f{1}{(t+1)}\Big(1+\f{|m-n|}{t}\Big)^{-N'}\Big(1+\f{d(m,I)}{2^{-\nu}}\Big)^{-N-1} \\
		&\les  \Big(\f{t}{2^{-\nu}}\Big)^{2M}|I|^{-1/p}\sum_{m\in \ZZ} \f{1}{t}\Big(1+\f{|m-n|}{2^{-\nu}}\Big)^{-N'}\Big(1+\f{d(m,I)}{2^{-\nu}}\Big)^{-N-1}.
	\end{aligned}
	$$
	We now use the inequality
	\[
	\Big(1+\f{|m-n|}{2^{-\nu}}\Big)^{-N'}\Big(1+\f{d(m,I)}{2^{-\nu}}\Big)^{-N'}\le \Big(1+\f{d(n,I)}{2^{-\nu}}\Big)^{-N'}
	\]
	to obtain further that 
	$$
	\begin{aligned}
		|\psi(t\sqrt{\Delta_d}) a_I(n)|& \les  \Big(\f{t}{2^{-\nu}}\Big)^{2M-1}|I|^{-1/p}\Big(1+\f{d(n,I)}{2^{-\nu}}\Big)^{-N'}\sum_{m\in \ZZ} \f{1}{2^{-\nu}} \Big(1+\f{d(n,I)}{2^{-\nu}}\Big)^{-1-N+N'}\\
		& \les  \Big(\f{t}{2^{-\nu}}\Big)^{2M-1}|I|^{-1/p} \Big(1+\f{d(n,I)}{2^{-\nu}}\Big)^{-N'},
	\end{aligned}
	$$ 
	which yields  \eqref{eq- psi atom}.
	
	\medskip
	
	\noindent{\bf Case 2: $t> 2^{-\nu}$.} We first write  $a_I=\Delta_d^{M}b_I$. Hence,
	\[
	\psi(t\sqrt{\Delta_d})a_I=t^{-2M}\tilde \psi_M(t\sqrt{\Delta_d})b_I
	\]
	where $\tilde \psi_M(\lambda)=\lambda^{2M}\psi(\lambda)$.
	
	This, along with Corollary \ref{lem1}, implies that
	$$
	\begin{aligned}
		|\psi(t\sqrt{\Delta_d})a_I(n)|&\les  \sum_{m\in \ZZ}\f{t^{-2M}}{t}\Big(1+\f{|m-n|}{t}\Big)^{-N'}|b_I(m)| \\
		&\les  \Big(\f{2^{-\nu}}{t}\Big)^{2M} |I|^{-1/p}\sum_{m\in \ZZ}\f{1}{t}\Big(1+\f{|m-n|}{t}\Big)^{-N'}\Big(1+\f{d(m,I)}{2^{-\nu}}\Big)^{-N-1}\\
		&\les  \Big(\f{2^{-\nu}}{t}\Big)^{2M} |I|^{-1/p}\sum_{m\in \ZZ}\f{1}{t}\Big(1+\f{|m-n|}{t}\Big)^{-N'}\Big(1+\f{d(m,I)}{t}\Big)^{-N-1}\\
		&\les  \Big(\f{2^{-\nu}}{t}\Big)^{2M} |I|^{-1/p} \Big(1+\f{d(n,I)}{t}\Big)^{-N'}\sum_{m\in \ZZ}\f{1}{t} \Big(1+\f{d(m,I)}{t}\Big)^{-1-N+N'}\\
		&\les  \Big(\f{2^{-\nu}}{t}\Big)^{2M} |I|^{-1/p} \Big(1+\f{d(n,I)}{t}\Big)^{-N'}.
	\end{aligned}
	$$
	Hence \eqref{eq- psi atom} follows.
	
	This completes our proof.
\end{proof}
\medskip

Similarly we have molecular decomposition theorem for the spaces $\F^{\alpha,\Delta_d}_{p,q}(\ZZ)$. 
\begin{theorem}\label{thm1- atom TL spaces}
	Let $\alpha\in \mathbb{R}$, $0<p<\vc$, $0<q\leq \vc$, $N>0$ and $M\in \mathbb{N}_+$. If $f\in \F^{\alpha,\Delta_d}_{p,q}(\ZZ)$,  then there exist a sequence of $(M,N,p)_{\Delta_d}$ molecules $\{a_I\}_{I\in \mathcal{I}_\nu, \nu\in \mathbb{Z}}$ and a sequence of coefficients  $\{s_I\}_{I\in \mathcal{I}_\nu, \nu\in\mathbb{Z}^-}$ so that
	$$
	f=\sum_{\nu\in\mathbb{Z}^-}\sum_{I\in \mathcal{I}_\nu}s_Ia_I \ \ \text{in $\mathcal{S}'(\ZZ)/\mathcal P(\ZZ)$}.
	$$
	Moreover,
	\begin{equation}\label{eq1-thm1 atom TL space}
		\Big\|\Big[\sum_{\nu\in\mathbb{Z}^-}2^{\nu\alpha q}\Big(\sum_{I\in \mathcal{I}_\nu}|I|^{-1/p}|s_I|\cdot 1_I \Big)^q\Big]^{1/q}\Big\|_{\ell^p(\ZZ)}\les  \|f\|_{\F^{\alpha,\Delta_d}_{p,q}(\ZZ)}.
	\end{equation}
\end{theorem}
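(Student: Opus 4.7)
The plan is to mimic the Besov case (Theorem 4.4) with the only genuine change occurring when we pass from the sequence space of coefficients to the Triebel--Lizorkin norm. I would start from the Calder\'on reproducing formula of Proposition \ref{prop-Calderon1}: for any partition of unity $\psi$ and $f\in \mathcal{S}'(\ZZ)/\mathcal P(\ZZ)$,
\[
f = c\int_0^\vc (t^2\Delta_d)^M \psi(t\sqrt{\Delta_d})\bigl[\psi(t\sqrt{\Delta_d})f\bigr]\,\frac{dt}{t}\quad\text{in }\mathcal S'(\ZZ)/\mathcal P(\ZZ),
\]
and discretise it exactly as in \eqref{eq1-atom} by chopping $(0,\infty)$ into dyadic pieces $[2^{-\nu},2^{-\nu+1}]$ (only $\nu\in\ZZ^-$ contribute since $\supp\psi\subset[2,8]$ and $\sigma(\Dd)\subset[0,4]$) and by splitting each $\psi(t\sqrt{\Delta_d})f$ according to the dyadic partition $\mathcal I_\nu$ of $\ZZ$. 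This gives the candidate decomposition with
\[
s_I := |I|^{1/p}\sup_{m\in I}\int_{2^{-\nu}}^{2^{-\nu+1}}\!|\psi(t\sqrt{\Delta_d})f(m)|\,\frac{dt}{t},\qquad a_I := \Delta_d^M b_I,
\]
where $b_I$ is defined by \eqref{eq-bQ}.

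The verification that each $a_I$ is (up to a harmless multiplicative constant) a $(M,N,p)_{\Delta_d}$ molecule is identical to the Besov case: I would apply Corollary \ref{lem1} to the operators $(t^2\Delta_d)^k\psi(t\sqrt{\Delta_d})$ for $0\le k\le 2M$, using $\ell(I)\sim 2^{-\nu}\sim t$, to obtain
\[
|\Delta_d^k b_I(n)|\les \ell(I)^{2(M-k)}|I|^{-1/p}\Bigl(1+\tfrac{d(n,I)}{\ell(I)}\Bigr)^{-N},
\]
which is precisely condition (ii) of Definition \ref{defLmol}.

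The core novelty lies in estimating the $\F$-norm of the coefficient sequence. Fix $n\in\ZZ$. For every $\nu\in\ZZ^-$ there is a unique $I_n^\nu\in\mathcal I_\nu$ containing $n$, so the sum in \eqref{eq1-thm1 atom TL space} collapses to a single term:
\[
\sum_{I\in\mathcal I_\nu}|I|^{-1/p}|s_I|\,\mathbf 1_I(n) = |I_n^\nu|^{-1/p}|s_{I_n^\nu}|
 = \sup_{m\in I_n^\nu}\int_{2^{-\nu}}^{2^{-\nu+1}}\!|\psi(t\sqrt{\Delta_d})f(m)|\,\frac{dt}{t}.
\]
Since $|m-n|\le \ell(I_n^\nu)\sim 2^{-\nu}$ for $m\in I_n^\nu$, the right side is dominated (for any $\lambda>0$) by the Peetre--type quantity
\[
F^*_{\psi,\nu,\lambda}(\sqrt{\Delta_d})f(n) := \sup_{m\in\ZZ}\frac{\int_{2^{-\nu}}^{2^{-\nu+1}}|\psi(t\sqrt{\Delta_d})f(m)|\,\frac{dt}{t}}{(1+2^\nu|m-n|)^\lambda}.
\]
Invoking Proposition \ref{prop1-maximal function} as in the Besov case, $F^*_{\psi,\nu,\lambda}(\sqrt{\Delta_d})f(n)\les \sum_{\ell=\nu-1}^{\nu+4}\psi^*_{\ell,\lambda}(\sqrt{\Delta_d})f(n)$.

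Finally, I would insert these pointwise inequalities into \eqref{eq1-thm1 atom TL space} and appeal to Proposition \ref{prop2-thm1}(b), choosing $\lambda$ large enough (i.e.\ $\lambda>\max\{1/p,1/q\}$), to get
\[
\Big\|\Bigl[\sum_{\nu\in\ZZ^-}2^{\nu\alpha q}\Bigl(\sum_{I\in\mathcal I_\nu}|I|^{-1/p}|s_I|\mathbf 1_I\Bigr)^{q}\Bigr]^{1/q}\Big\|_{\ell^p(\ZZ)}\les \Big\|\Bigl[\sum_{\ell\in\ZZ^-}(2^{\ell\alpha}\psi^*_{\ell,\lambda}(\sqrt{\Delta_d})f)^q\Bigr]^{1/q}\Big\|_{\ell^p(\ZZ)}\sim \|f\|_{\F^{\alpha,\Delta_d}_{p,q}(\ZZ)}.
\]
The shift in the range $\ell=\nu-1,\dots,\nu+4$ costs only a bounded constant since it affects at most six consecutive scales and $2^{\nu\alpha}\sim 2^{\ell\alpha}$ for such indices. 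The main expected obstacle is the vector-valued version of this last step: unlike the Besov case where one uses Minkowski's inequality in $\ell^q(\ZZ^-;\ell^p(\ZZ))$, here we work in $\ell^p(\ZZ;\ell^q(\ZZ^-))$, so the interchange must be made pointwise in $n$ before taking the $\ell^p$-norm, which is exactly what the dyadic partition structure of $\mathcal I_\nu$ permits.
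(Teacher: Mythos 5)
Your proposal is correct and follows essentially the same route as the paper's proof: the paper also reuses the decomposition and the formulas for $s_I$ and $a_I=\Delta_d^M b_I$ from the proof of Theorem \ref{thm1- atom Besov}, observes the pointwise bound $\sum_{I\in\mathcal I_\nu}|I|^{-1/p}|s_I|\mathbf 1_I \les F^*_{\psi,\nu,\lambda}(\sqrt{\Delta_d})f$, applies Proposition \ref{prop1-maximal function}, and closes with Proposition \ref{prop2-thm1}(b). Your remarks about the collapse to a single term for each $(n,\nu)$ and the $\ell^p(\ell^q)$ versus $\ell^q(\ell^p)$ distinction are correct elaborations of details the paper leaves implicit, but they do not change the argument.
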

\begin{proof}
	Recall that in the proof of Theorem \ref{thm1- atom Besov}, we have proved the representation
	$$
	f=\sum_{\nu\in\mathbb{Z}^-}\sum_{I\in \mathcal{I}_\nu}s_Ia_I \ \ \text{in $\mathcal{S}'(\ZZ)/\mathcal P(\ZZ)$},
	$$
	where
	$$
	s_I= |I|^{1/p}\sup_{m\in I}\int_{2^{-\nu}}^{2^{-\nu+1}}|\psi(t\sqrt{\Delta_d})f(m)|\f{dt}{t},
	$$
	and $a_I=\Delta_db_I$ is a $(M,N,p)_{\Delta_d}$ molecule defined by
	$$
	b_I=\f{1}{s_I} \int_{2^{-\nu}}^{2^{-\nu+1}} t^{2M}\psi(t\sqrt{\Delta_d})[\psi(t\sqrt{\Delta_d})f\cdot 1_I ]\f{dt}{t}.
	$$
	
	It remains to prove \eqref{eq1-thm1 atom TL space}. Indeed, for any $\lambda>0$ it is easy to see that
	\[
	\begin{aligned}
		|I|^{-1/p}s_I\cdot 1_I &= \sup_{m\in I}\int_{2^{-\nu}}^{2^{-\nu+1}}|\psi(t\sqrt{\Delta_d})f(m)|\f{dt}{t}\cdot 1_I \les  1_I  .F^*_{\psi,\nu,\lambda}(\sqrt{\Delta_d})f
	\end{aligned}
	\]
	where 
	\[
	F^*_{\psi,\nu,\lambda}(\sqrt{\Delta_d})f(n)=\sup_{m\in \ZZ}\f{\displaystyle \int_{2^{-\nu}}^{2^{-\nu+1}}|\psi(t\sqrt{\Delta_d})f(m)|\f{dt}{t}}{(1+2^\nu |m-n|)^\lambda}.
	\]
	As a consequence,
	\[
	\sum_{I\in \mathcal{I}_\nu}|I|^{-1/p}|s_I|\cdot 1_I \les  F^*_{\psi,\nu,\lambda}(\sqrt{\Delta_d})f.
	\]
	By Proposition \ref{prop1-maximal function}
	\[
	F^*_{\psi,\nu,\lambda}(\sqrt{\Delta_d})f(m)\le \sum_{\ell=\nu-1}^{\nu+4} \psi^*_{\ell,\lambda}(\sqrt{\Delta_d})f(m).
	\]
	This, in combination with Proposition \ref{prop2-thm1}, implies
	\[
	\begin{aligned}
		\Big\|\Big[\sum_{\nu\in\mathbb{Z}^-}2^{\nu\alpha q}\Big(\sum_{I\in \mathcal{I}_\nu}|I|^{-1/p}|s_I|1_I \Big)^q\Big]^{1/q}\Big\|_{\ell^p(\ZZ)}&\les  \Big\|\Big[\sum_{\nu\in\mathbb{Z}^-}2^{\nu\alpha q}\Big(\sum_{\ell=\nu-1}^{\nu+4} \psi^*_{\ell,\lambda}(\sqrt{\Delta_d})f\Big)^q\Big]^{1/q}\Big\|_{\ell^p(\ZZ)}\\
		&\les  \Big\|\Big[\sum_{\ell\in\mathbb{Z}^-}2^{\ell\alpha q}\Big( \psi^*_{\ell,\lambda}(\sqrt{\Delta_d})f\Big)^q\Big]^{1/q}\Big\|_{\ell^p(\ZZ)}\\
		&\les \|f\|_{\F^\alpha_{p,q}(\ZZ)}.
	\end{aligned}
	\]	
	
	This completes our proof.
\end{proof}
\bigskip

Arguing similarly to the proof of Theorem \ref{thm2- atom Besov}, we are able to prove the converse direction.
\begin{theorem}\label{thm2- atom TL spaces}	
	Let $\alpha\in \mathbb{R}$, $0<p<\vc$ and $0<q\le \vc$. If
	$$
	f=\sum_{\nu\in\mathbb{Z}^-}\sum_{I\in \mathcal{I}_\nu}s_Ia_I \ \ \text{in $\mathcal{S}'(\ZZ)/\mathcal P(\ZZ)$},
	$$
	where $\{a_I\}_{I\in \mathcal{I}_\nu, \nu\in \mathbb{Z}}$ is a sequence of $(M,N,p)_{\Delta_d}$ molecules and $\{s_I\}_{I\in \mathcal{I}_\nu, \nu\in\mathbb{Z}^-}$ is a sequence of coefficients satisfying
	$$
	\Big\|\Big[\sum_{\nu\in\mathbb{Z}^-}2^{\nu\alpha q}\Big(\sum_{I\in \mathcal{I}_\nu}|I|^{-1/p}|s_I|1_I \Big)^q\Big]^{1/q}\Big\|_{\ell^p(\ZZ)}<\vc,
	$$
	then $f\in \F^{\alpha,\Delta_d}_{p,q}(\ZZ)$ and
	$$
	\|f\|_{\F^{\alpha,\Delta_d}_{p,q}(\ZZ)} \les  \Big\|\Big[\sum_{\nu\in\mathbb{Z}^-}2^{\nu\alpha q}\Big(\sum_{I\in \mathcal{I}_\nu}|I|^{-1/p}|s_I|\cdot 1_I \Big)^q\Big]^{1/q}\Big\|_{\ell^p(\ZZ)}
	$$
	provided $M>\f{n}{2}+\f{1}{2}\max\{\alpha,\f{1}{1\wedge p\wedge q}-\alpha\}$ and $N>1+\f{1}{1\wedge p\wedge q}$.
\end{theorem}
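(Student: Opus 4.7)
The plan is to parallel the argument of Theorem \ref{thm2- atom Besov}, with the Besov summation over $\nu\in\ZZ^-$ replaced by the $\F^{\alpha,\Dd}_{p,q}$ quasi-norm and the Fefferman--Stein vector-valued maximal inequalities \eqref{FSIn}--\eqref{YFSIn} playing the role of Young's inequality in $\ell^q$. The starting point is identical: one reuses verbatim the pointwise molecular bound \eqref{eq- psi atom} established in the Besov proof, specialized at $t=2^{-j}$, which reads, for $I\in\mathcal{I}_\nu$ and any $N'<N$,
\[
|\psi_j(\sqrt{\Dd})a_I(n)| \les 2^{-(2M-1)|j-\nu|}\,|I|^{-1/p}\,\Big(1+\frac{d(n,I)}{2^{-(j\wedge \nu)}}\Big)^{-N'}.
\]

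Setting $\sigma_\nu:=\sum_{I\in\mathcal{I}_\nu}|s_I|\,|I|^{-1/p}\,1_I$, the central pointwise step is the estimate
\[
\sum_{I\in\mathcal{I}_\nu}|s_I|\,|I|^{-1/p}\Big(1+\frac{d(n,I)}{2^{-(j\wedge\nu)}}\Big)^{-N'} \les 2^{(\nu-j)_+/r}\,\mathcal{M}_r\sigma_\nu(n)
\]
for any $r>1/N'$. The case $\nu\le j$ is the standard Frazier--Jawerth convolution bound: the weight lives on the natural scale $|I|$ of the molecule, so the left side rewrites as $\Phi_{2^{-\nu}}*\sigma_\nu(n)$ for a normalized kernel $\Phi_{\epsilon}(n)=\epsilon^{-1}(1+|n|/\epsilon)^{-N'}$ and reduces immediately to $\mathcal{M}_r\sigma_\nu$. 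The case $\nu>j$ is where the \emph{oversampling factor} $2^{(\nu-j)/r}$ appears: the convolution scale $2^{-j}$ exceeds the scale $2^{-\nu}$ on which $\sigma_\nu$ is piecewise constant, and a spike-type test function on a single interval $I$ of size $2^{-\nu}$ shows that the ratio $(\Phi_{2^{-j}}*\sigma_\nu)/\mathcal{M}_r\sigma_\nu$ can be as large as $(2^{-j}/2^{-\nu})^{1/r-1}=2^{(\nu-j)(1/r-1)}$; combined with the intrinsic dyadic volume factor $2^{\nu-j}$ coming from the number of scale-$2^{-\nu}$ intervals inside a scale-$2^{-j}$ window, this is exactly $2^{(\nu-j)/r}$.

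Substituting into the pointwise bound and multiplying by $2^{j\alpha}$ yields $2^{j\alpha}|\psi_j(\sqrt{\Dd})f(n)|\les \sum_\nu \gamma_{j-\nu}\,2^{\nu\alpha}\mathcal{M}_r\sigma_\nu(n)$, where the convolution coefficients decay as $\gamma_k \sim 2^{-(2M-1-\alpha)k}$ for $k\ge 0$ (sum over $\nu\le j$) and $\gamma_k\sim 2^{(2M-1-1/r+\alpha)k}$ for $k<0$ (sum over $\nu>j$, now paying the oversampling price). Choosing $r<\min(1,p,q)$ close to that infimum, the hypothesis $M>\tfrac{1}{2}+\tfrac{1}{2}\max\{\alpha,\,1/(1\wedge p\wedge q)-\alpha\}$ is precisely what guarantees $\{\gamma_k\}\in\ell^1$. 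One application of \eqref{YFSIn} (which requires $r<\min(p,q)$, consistent with the previous choice) then gives
\[
\Big\|\Big(\sum_j(2^{j\alpha}|\psi_j(\sqrt{\Dd})f|)^q\Big)^{1/q}\Big\|_{\ell^p(\ZZ)} \les \Big\|\Big(\sum_\nu(2^{\nu\alpha}\sigma_\nu)^q\Big)^{1/q}\Big\|_{\ell^p(\ZZ)},
\]
whose right-hand side is exactly the hypothesized coefficient norm since $\sigma_\nu=\sum_I|s_I|\,|I|^{-1/p}\,1_I$, and whose left-hand side controls $\|f\|_{\F^{\alpha,\Dd}_{p,q}(\ZZ)}$ by Proposition \ref{prop2-thm1} together with the continuous characterization Theorem \ref{thm1-continuouscharacter}. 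The main obstacle, and the heart of the proof, is calibrating the two competing exponent constraints---the molecular decay $2M-1$ versus the oversampling loss $1/r$ (bounded below by $1/(1\wedge p\wedge q)$) on one side, and $2M-1$ versus $\alpha$ on the other---which is precisely what the two-sided maximum in the hypothesis on $M$ encodes; every other ingredient is either a direct transcription from the Besov proof or a routine application of Fefferman--Stein.
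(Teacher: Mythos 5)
Your argument is correct and is precisely the unpacking of what the paper defers to ``the standard argument in \cite{FJ2, FJ1, BBD}'' after establishing the pointwise molecular estimate in the proof of Theorem~\ref{thm2- atom Besov}: the Frazier--Jawerth almost-diagonal maximal bound with the oversampling factor $2^{(\nu-j)_+/r}$, followed by the vector-valued Fefferman--Stein inequality \eqref{YFSIn}, and your exponent arithmetic correctly recovers the thresholds $M>\f{1}{2}+\f{1}{2}\max\{\alpha,\f{1}{1\wedge p\wedge q}-\alpha\}$ and $N>1+\f{1}{1\wedge p\wedge q}$. One cosmetic point: after applying \eqref{YFSIn} the left-hand side is already $\|f\|_{\F^{\alpha,\psi,\Delta_d}_{p,q}(\ZZ)}$ by definition, so the closing appeals to Proposition~\ref{prop2-thm1} and Theorem~\ref{thm1-continuouscharacter} are superfluous.
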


\begin{remark}
	\label{remark-molecular decomposition}
	We now consider a new variant of molecules. Recall that $\mathfrak{D}$ is either $D$ or $D^*$.
	\begin{definition}\label{defLmol}
		Let $0< p\leq \vc$, $\alpha\in \mathbb{R}$, and $N, M\in \mathbb{N}_+$. A function $a$ is said to be a $(M, N, p)_{\mathfrak{D}}$ molecule if there exist a function $b$  and a dyadic interval $I\in \mathcal{I}$ so that
		\begin{enumerate}[{\rm (i)}]
			\item $a=\Delta_d^M b$;
			
			\item $\displaystyle |\mathfrak{D}^k b(n)|\leq \ell(I)^{2M-k}|I|^{-1/p}\Big(1+\f{d(n,I)}{\ell(I)}\Big)^{-1-N}$, $k=0,\ldots , 4M$.
		\end{enumerate}
		
	\end{definition}

By the similar argument by making use of Lemma \ref{lem1-derivative of psi delta}, we can show that the results in Theorems \ref{thm1- atom Besov}, \ref{thm2- atom Besov}, \ref{thm1- atom TL spaces} and \ref{thm2- atom TL spaces} still hold true if we replace $(M,N,p)$ molecules by the new $(M, N, p)_{\mathfrak{D}}$ molecules. We would like to leave the details to the interested reader.
\end{remark}

\subsection{Coincidence with the classical Besov and Triebel--Lizorkin spaces}
According to \cite{Sun}, we say that  $(\varphi_\nu,\psi_\nu)_{\nu\in \ZZ^-} \in \mathcal S\times \mathcal S$ is an \textit{admissible pair} on $\ZZ$ if the following conditions hold true
\begin{enumerate}[(i)]
	\item $\supp \mathcal F_{\ZZ}({\varphi}_\nu), \supp \FZ({\psi}_\nu) \subset [-2^{\nu}\pi,2^{\nu}\pi]\backslash [-2^{\nu-2}\pi,2^{\nu-2}\pi]$;
	\item $\displaystyle \sum_{\nu\in \ZZ^-}\overline{\FZ({\varphi}_\nu)(x)} \FZ({{\psi}_\nu})(x)=1$ for $x\in [-\pi,\pi]\backslash\{0\}$;
	\item there exist constants $A$ and $A_k, k\ge 0$, independent of $\nu$, such that
          \begin{itemize}
          	\item $\displaystyle |\partial^k_x \FZ({\varphi}_\nu)(x)|, |\partial^k_x \FZ({\psi}_\nu)(x)|\le A_k2^{-k\nu} $ for $x\in [-\pi,\pi]$, $k\in \mathbb N$ and $\nu\in \ZZ^-$;
          	\item $\displaystyle |\FZ({\varphi}_\nu)(x)|, | \FZ({\psi}_\nu)(x)|\ge A$ for $x\in T_\nu$, where
          	\[
          	T_\nu =\begin{cases}
          		[-\pi,\pi]\backslash [-\frac{3}{8}\pi,\frac{3}{8}\pi] \ \ \ \ &\text{if $\nu=0$},\\
          		[-3.2^{\nu-2}\pi,3.2^{\nu-2}\pi]\backslash [-3.2^{\nu-3}\pi,3.2^{\nu-3}\pi]  \ \ \ \ &\text{if $\nu\le -1$}.
          	\end{cases}
          	\]
          \end{itemize}	
\end{enumerate}

\begin{definition}
	Let $(\varphi_\nu,\psi_\nu)_{\nu \in \ZZ}$ be an
admissible pair on $\ZZ$. 
	\begin{enumerate}[(i)]
	 \item For $\alpha\in \mathbb R$ and $0<p,q\le \vc$, we define the sequence spaces $\B^\alpha_{p,q}(\ZZ)$ of Besov type as the collection of complex-valued sequences $f\in \mathcal S'/\mathcal P(\ZZ)$ such that the quasinorms
	 \begin{equation}
	 	\label{eq-Besov}
	 	\|f\|_{\B^\alpha_{p,q}(\ZZ)}:=\Big(\sum_{\nu\in \mathbb Z^-}\big(2^{\nu\alpha}\|\varphi_\nu\ast f\|_p\big)^q\Big)^{1/q}
	 \end{equation}
 are finite.
	 \item For $\alpha\in \mathbb R, 0<p<\vc$ and $0<q\le \vc$, we define the sequence spaces $\F^\alpha_{p,q}(\ZZ)$ of Triebel--Lizorkin type as the collection of complex-valued sequences $f\in \mathcal S'/\mathcal P(\ZZ)$ such that the quasinorms
	 \begin{equation}
	 	\label{eq-Besov}
	 	\|f\|_{\F^\alpha_{p,q}(\ZZ)}:=\Big\|\Big[\sum_{\nu\in \mathbb Z^-}\big(2^{\nu\alpha}|\varphi_\nu\ast f|\big)^q\Big]^{1/q}\Big\|_{\ell^p(\ZZ)}
	 \end{equation}
	 are finite.
\end{enumerate}
Here $\varphi_\nu\ast f(n) = \sum_{m\in \ZZ} \varphi_\nu(n-m)f(m)$.
\end{definition}

\begin{definition}\label{defL-atom}
	Let $0< p\leq \vc$, $\alpha\in \mathbb{R}$, and $N, M,k\in \mathbb{N}_+$. A function $a$ is said to be a $(M, K, p)$ atom if there exists  a dyadic interval $I$ with $\ell(I)\ge 1$ so that the following holds true
	\begin{enumerate}[(i)]
		\item $\supp a \subset 3I$;
		\item $\displaystyle |D^\ell a(n)|\leq \ell(I)^{-2\ell}|I|^{-1/p}$, $\ell=0,\ldots , M$;
		\item $\displaystyle \sum_{n\in \ZZ} n^\beta a(n)=0$ for any $\beta\in \mathbb N$ and $\beta \le K$.
	\end{enumerate}
\end{definition}

\begin{definition}\label{defL-mol}
	Let $0< p\leq \vc$, $\alpha\in \mathbb{R}$, and $N, M,k\in \mathbb{N}_+$. A function $a$ is said to be a $(M, K, p)$ molecule if there exists  a dyadic interval $I$ with $\ell(I)\ge 1$ so that the following holds true
	\begin{enumerate}[(i)]
		\item $\displaystyle |D^\ell a(n)|\leq \ell(I)^{-2\ell}|I|^{-1/p}\Big(1+\f{d(n,I)}{\ell(I)}\Big)^{-1-N}$ for all $\ell=0,\ldots , M$, \ \ $n\in \ZZ$;
		\item $\displaystyle \sum_{n\in \ZZ} n^\beta a(n)=0$ for any $\beta\in \mathbb N$ and $\beta \le K$.
	\end{enumerate}
\end{definition}

For $\alpha\in \mathbb R$ and $0<p,q\le \vc$, we set $M_0, N_0, K_0$ to be the smallest integers such that  
\begin{equation}\label{eq-parameter}
M_0\ge  \alpha +2, K_0 \ge \f{1}{1\wedge p\wedge q} -\alpha\ \ \ \text{and}  \ \ \ N_0\ge \f{1}{1\wedge p\wedge q} +1 
\end{equation}

\begin{theorem}
	\label{thm-BS TL atomic decomposition}
	{\rm (a)} Let $\alpha\in \mathbb R, 0<p<\vc,0<q\le \vc$ and let $M> M_0, N> N_0, K> K_0$. For each $f\in \B^\alpha_{p,q}(\ZZ)$, there exist  is a sequence of $(M, K, p)$ atoms $\{a_I\}_{I\in \mathcal I}$ and a sequence of numbers $ \{s_I\}_{I\in \mathcal I}$ such that   
	\begin{equation*}\label{eq-TL representation}
		f= \sum\limits_{\nu  \in \mathbb{Z}^-} {\sum\limits_{I \in {\mathcal{I}_\nu }} {{s_I}{a_I}} } \text{ in  $\mathcal S'/\mathcal P(\ZZ)$} 
	\end{equation*}
	and
	\[
	\|f\|_{\F^\alpha_{p,q}(\ZZ)}\sim \Big\| {{{\Big[ {\sum\limits_{\nu  \in \mathbb{Z}^-} {{2^{\nu \alpha q}}{{\Big( {\sum\limits_{I \in {\mathcal{I}_\nu }} {|I|^{ - 1/p}}| {{s_I}}|.{1 _I}} } \Big)}^q}} } \Big]}^{1/q}} \Big\|_{\ell^p(\ZZ)}.
	\]
	Conversely, if 
	\[f= \sum\limits_{\nu  \in \mathbb{Z}^-} {\sum\limits_{I \in {\mathcal{I}_\nu }} {{s_I}{b_I}} } \text{ in  $\mathcal S'/\mathcal P(\ZZ)$}\]
	for some sequence of $(M, N, K, p)$ molecules $\{b_I\}_{I\in \mathcal I}$ and some sequence of numbers $ \{s_I\}_{I\in \mathcal I}$, then we have
	\[
	\|f\|_{\F^\alpha_{p,q}(\ZZ)}\les \Big\| {{{\Big[ {\sum\limits_{\nu  \in \mathbb{Z}^-} {{2^{\nu \alpha q}}{{\Big( {\sum\limits_{I \in {\mathcal{I}_\nu }} {|I|^{ - 1/p}}| {{s_I}}|.{1 _I}} } \Big)}^q}} } \Big]}^{1/q}} \Big\|_{\ell^p(\ZZ)}.
	\]
	
	\noindent {\rm (b)} Let $\alpha\in \mathbb R$ and $0<p,q\le \vc$ and let $M\ge M_0, N\ge N_0, K\ge K_0$. For each $f\in \B^\alpha_{p,q}(\ZZ)$, there exist  is a sequence of $(M, K, p)$ atoms $\{a_I\}_{I\in \mathcal I}$ and a sequence of numbers $ \{s_I\}_{I\in \mathcal I}$ such that   
	\begin{equation*}\label{eq-Besov representation}
	f= \sum\limits_{\nu  \in \mathbb{Z}^-} {\sum\limits_{I \in {\mathcal{I}_\nu }} {{s_I}{a_I}} } \text{ in  $\mathcal S'/\mathcal P(\ZZ)$} 
	\end{equation*}
	and
	\[
	\|f\|_{\B^\alpha_{p,q}(\ZZ)}\sim \Big[ {\sum\limits_{\nu  \in \ZZ^-} {{2^{\nu \alpha q}}{{\Big( {\sum\limits_{I \in {\mathcal{I}_\nu }} {|{s_I}{|^p}} } \Big)}^{q/p}}} } \Big]^{1/q}.
	\]
	Conversely, if 
	\[f= \sum\limits_{\nu  \in \mathbb{Z}^-} {\sum\limits_{I \in {\mathcal{I}_\nu }} {{s_I}{b_I}} } \text{ in  $\mathcal S'/\mathcal P(\ZZ)$}\]
	for some sequence of $(M, N, K, p)$ molecules $\{b_I\}_{I\in \mathcal I}$ and some sequence of numbers $ \{s_I\}_{I\in \mathcal I}$, then we have
	\[
	\|f\|_{\B^\alpha_{p,q}(\ZZ)}\les \Big[ {\sum\limits_{\nu  \in \ZZ^-} {{2^{\nu \alpha q}}{{\Big( {\sum\limits_{I \in {\mathcal{I}_\nu }} {|{s_I}{|^p}} } \Big)}^{q/p}}} } \Big]^{1/q}.
	\]
\end{theorem}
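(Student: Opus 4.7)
The plan is to reduce Theorem \ref{thm-BS TL atomic decomposition} to the molecular decomposition theorems already established for the adapted spaces, namely Theorems \ref{thm1- atom Besov}, \ref{thm2- atom Besov}, \ref{thm1- atom TL spaces}, \ref{thm2- atom TL spaces} together with Remark \ref{remark-molecular decomposition}. This requires two bridges: a bridge between the classical spaces $\B^\alpha_{p,q}(\ZZ), \F^\alpha_{p,q}(\ZZ)$ and the adapted spaces $\B^{\alpha,\Dd}_{p,q}(\ZZ), \F^{\alpha,\Dd}_{p,q}(\ZZ)$, and a bridge between the classical atoms/molecules of Definitions \ref{defL-atom}--\ref{defL-mol} and the adapted $(M,N,p)_{\mathfrak D}$-molecules of Remark \ref{remark-molecular decomposition}.

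For the first bridge, I would exploit the fact that on the Fourier side $\sqrt{\Dd}$ is multiplication by $2|\sin(\theta/2)|$, which is comparable to $|\theta|$ on $[-\pi,\pi]$. Consequently, if $\psi$ is a partition of unity supported in $[2,8]$, then $\psi_j(\sqrt{\Dd})$ corresponds to a smooth frequency cutoff on dyadic shells $|\theta|\sim 2^j$ for $j\in\ZZ^-$, which are exactly the frequency regions used by an admissible pair $(\varphi_\nu,\psi_\nu)$. One can then compare the two Littlewood--Paley families via the Peetre-type maximal estimate in Proposition \ref{prop2-thm1} (which transfers immediately to any family of smooth frequency cutoffs on comparable dyadic shells) to deduce the norm equivalences $\|f\|_{\F^\alpha_{p,q}(\ZZ)}\sim \|f\|_{\F^{\alpha,\Dd}_{p,q}(\ZZ)}$ and $\|f\|_{\B^\alpha_{p,q}(\ZZ)}\sim \|f\|_{\B^{\alpha,\Dd}_{p,q}(\ZZ)}$.

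For the second bridge, I handle the two directions separately. In the \emph{easy} direction (converse parts of (a) and (b)), I would show that every classical $(M,N,K,p)$-molecule $a$ with $K\ge 2M'-1$ can be written as $a=\Dd^{M'}b$ for a suitable $b$ satisfying the decay estimates of an $(M',N',p)_{\mathfrak D}$-molecule. The natural choice is $b=\Dd^{-M'}a=c\int_0^\infty t^{M'-1}e^{-t\Dd}a\,dt$, which is well defined modulo polynomials via Section \ref{sec: distributions}; the vanishing moments of $a$ and a Taylor-expansion argument against the heat kernel bounds in Lemmas \ref{lem1-htk} and \ref{lem-htk and difference derivatives} (in the spirit of the calculation \eqref{eq1- Taylor 1}--\eqref{eq3- Taylor 1} in the proof of Theorem \ref{thm-coincidence of Hardy spaces}) give the required polynomial decay on $\mathfrak D^k b$ for $0\le k\le 4M'$. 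In the \emph{hard} direction (forward parts of (a) and (b), producing compactly supported atoms), I would start from the adapted molecular decomposition given by Theorem \ref{thm1- atom TL spaces} or Theorem \ref{thm1- atom Besov}, whose molecules $a_I=\Dd^M b_I$ already have the vanishing moments $\sum_n n^\beta a_I(n)=0$ for $\beta\le 2M-1$ automatically (since $\Dd^M$ annihilates polynomials of degree less than $2M$, by discrete integration by parts against $b_I\in\mathcal S(\ZZ)$). Compact support is then achieved by decomposing $b_I=\sum_{j\ge 0}b_I\cdot \mathbf 1_{S_j(I)}$ and subtracting polynomial corrections on each annulus so that the resulting pieces $\Dd^M(b_I\cdot\mathbf 1_{S_j(I)}-p_{I,j})$ are supported on $3\cdot 2^j I$ and retain the moment condition; the polynomial decay of $\mathfrak D^k b_I$ ensures that the $\ell^p$-aggregated coefficients do not blow up.

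The main obstacle will be this final step of converting decaying molecules into compactly supported atoms while preserving both the pointwise derivative bounds on $D^\ell a$ and the vanishing moments of order $K$. The bookkeeping of polynomial corrections on the annuli $S_j(I)$, combined with the requirement that the truncated pieces still lie in the image of $\Dd^M$, is the technically delicate point; the parameter thresholds \eqref{eq-parameter} on $M_0, N_0, K_0$ are precisely what is needed to make the Frazier--Jawerth style summation converge in the mixed $\ell^p(\ell^q)$ norm after aggregating the resulting atoms back via Theorem \ref{thm2- atom TL spaces} (respectively Theorem \ref{thm2- atom Besov}).
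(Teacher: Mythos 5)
The paper's own proof of Theorem~\ref{thm-BS TL atomic decomposition} is a one-line reference: part (a) is quoted directly from Sun \cite[Theorem~3]{Sun}, with the observation that Sun's argument for atoms carries over to molecules, and part (b) is declared to follow from (a) by the standard Besov/Triebel--Lizorkin analogy. Your proposal is a genuinely different, self-contained route: instead of invoking Sun, you try to re-derive the classical atomic/molecular decomposition from the adapted-space machinery (Theorems~\ref{thm1- atom Besov}--\ref{thm2- atom TL spaces}, Remark~\ref{remark-molecular decomposition}) plus two ``bridges''. If it worked, this would have the merit of making Section~4 logically self-contained. But as sketched it has two serious gaps.

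First, your Bridge~1 --- the norm equivalence $\|f\|_{\F^\alpha_{p,q}(\ZZ)}\sim\|f\|_{\F^{\alpha,\Dd}_{p,q}(\ZZ)}$ --- is exactly the content of Theorem~\ref{thm-coincidence Besov and TL}, and in the paper \emph{that} theorem is proved by appealing to Theorem~\ref{thm-BS TL atomic decomposition}. So you cannot use the published Theorem~\ref{thm-coincidence Besov and TL}; you would have to supply an independent proof of the equivalence via the Fourier/Peetre comparison you describe. You assert that Proposition~\ref{prop2-thm1} ``transfers immediately'' to cross-compare the two Littlewood--Paley families, but Proposition~\ref{prop2-thm1} as stated only compares two partitions of unity \emph{for the operator $\Dd$}, not an operator-adapted family against a convolution family $\varphi_\nu\ast$. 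One needs a cross-term estimate of the form $\sup_m|\varphi_\nu\ast f(m)|/(1+2^\nu|m-n|)^\lambda\les\sum_{\ell\sim\nu}\psi^*_{\ell,\lambda}(\sqrt\Dd)f(n)$ and its converse, which requires (i) almost-orthogonality of the symbols $\widehat\varphi_\nu(\theta)$ and $\psi(2^{-\ell}\,2|\sin(\theta/2)|)$, and (ii) polynomial off-diagonal decay for kernels of the composite operators. This is plausible but is a new estimate, not a restatement of Proposition~\ref{prop2-thm1}.

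Second, and more decisively, Bridge~2b is internally inconsistent. You propose to truncate $b_I$ on the annuli $S_j(I)$ and then subtract polynomial corrections $p_{I,j}$ before applying $\Dd^M$, writing the prospective atoms as $\Dd^M(b_I\cdot\mathbf 1_{S_j(I)}-p_{I,j})$. But if $p_{I,j}$ is a polynomial of degree $<2M$ (the only case in which vanishing moments could be affected), then $\Dd^M p_{I,j}=0$, so the correction is annihilated and contributes nothing; and if $p_{I,j}$ is instead compactly supported on $3\cdot2^jI$, it is not a polynomial and your rationale for calling it a ``polynomial correction'' disappears. Meanwhile, the genuine difficulty --- that the sharp truncation $b_I\cdot\mathbf 1_{S_j(I)}$ has jumps at $\partial S_j(I)$, which, after applying $D^\ell\Dd^M$, may violate the required pointwise bound $\ell(2^jI)^{-2\ell}|2^jI|^{-1/p}$ at those boundary points --- is not addressed by either reading. (The vanishing-moment condition, by contrast, is automatic: for any compactly supported $g$, $\sum_n n^\beta\Dd^Mg(n)=\sum_n(\Dd^Mn^\beta)g(n)=0$ when $\beta<2M$, so no correction is needed for moments provided $M$ is taken with $2M-1\ge K$.) Some mechanism controlling the boundary contributions of the truncation is needed, and the proposal does not supply one.

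In short, the paper's proof is a citation to Sun's discrete Frazier--Jawerth theorem; your proposal attempts a different, self-contained derivation, but Bridge~1 is unproved and risks circularity, and Bridge~2b contains a step (polynomial correction under $\Dd^M$) that is vacuous as written, leaving the truncation-to-atom conversion unresolved.
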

\begin{proof}
	(a) The first direction was proved in \cite[Theorem 3]{Sun}. The converse direction was also proved in \cite[Theorem 3]{Sun} for the atomic decomposition; however, the argument still works well for the molecular decomposition.
	
	(b) Once the proof for the Triebel--Lizorkin spaces has been done, the proof for the Besov spaces can be done similarly as a folklore. Hence, we would like to leave the details to the interested reader.
	
	This completes our proof.
\end{proof}

Our main result in this section is the following.

\begin{theorem}
	\label{thm-coincidence Besov and TL}
	We have the following identification
	\[
	\B^{\alpha}_{p,q}(\ZZ) =\B^{\alpha, \Delta_d}_{p,q}(\ZZ), \ \ \ \ 0<p,q\le \vc, \alpha \in \mathbb R,
	\]
	and
	\[
	\F^{\alpha}_{p,q}(\ZZ) =\F^{\alpha, \Delta_d}_{p,q}(\ZZ), \ \ \ \ 0<p< \vc, 0<q\le \vc, \alpha \in \mathbb R.
	\]
\end{theorem}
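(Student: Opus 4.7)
The plan is to prove both inclusions via the molecular decompositions already established for each family of spaces, and to show that a molecule in the $\Delta_d$ sense is, up to constants, a classical molecule in the sense of Definition \ref{defL-mol}, and vice versa. We describe the Besov case; the Triebel--Lizorkin case runs in parallel using Theorems \ref{thm1- atom TL spaces}, \ref{thm2- atom TL spaces} together with part (a) of Theorem \ref{thm-BS TL atomic decomposition}.

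For the inclusion $\B^{\alpha,\Delta_d}_{p,q}(\ZZ)\hookrightarrow\B^\alpha_{p,q}(\ZZ)$, take $f\in \B^{\alpha,\Delta_d}_{p,q}(\ZZ)$ and apply Theorem \ref{thm1- atom Besov} together with the variant of Remark \ref{remark-molecular decomposition} to decompose $f=\sum_{\nu\in\ZZ^-}\sum_{I\in\mathcal I_\nu}s_I a_I$ in $\mathcal S'/\mathcal P(\ZZ)$, where each $a_I=\Delta_d^M b_I$ is an $(M,N,p)_{\mathfrak D}$-molecule with $M,N$ chosen large (we take $2M>K_0$ and $N>N_0$, with $K_0,N_0$ as in \eqref{eq-parameter}). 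Each $a_I$ is then a bounded multiple of a classical $(M_0,N_0,K_0,p)$-molecule: the vanishing moments come from summation by parts, justified by the polynomial decay of $b_I$, together with the observation that $\Delta_d$ annihilates constants and lowers the degree of a polynomial by two, so $\Delta_d^M n^\beta\equiv 0$ for every $\beta<2M$; the pointwise smoothness follows from the identities $DD^*=D^*D=-\Delta_d$ (a direct check, since both operators equal $f\mapsto f(n+1)-2f(n)+f(n-1)$) and the resulting factorization $D^\ell a_I=(-1)^M (D^*)^M D^{M+\ell}b_I$, into which one substitutes the molecular estimate from Definition \ref{defLmol} iteratively. The converse direction of Theorem \ref{thm-BS TL atomic decomposition}(b) then delivers $f\in\B^\alpha_{p,q}(\ZZ)$ with the desired norm bound.

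For the inclusion $\B^\alpha_{p,q}(\ZZ)\hookrightarrow \B^{\alpha,\Delta_d}_{p,q}(\ZZ)$, take $f\in\B^\alpha_{p,q}(\ZZ)$ and apply Theorem \ref{thm-BS TL atomic decomposition}(b) to write $f=\sum s_I a_I$ in terms of classical $(M_*,K_*,p)$-atoms (Definition \ref{defL-atom}) with $K_*\ge 2M-1$ and $M$ chosen to match the requirements of Theorem \ref{thm2- atom Besov}. For each such atom $a$ one constructs its $\Delta_d$-antiderivative via the subordination formula
\begin{equation*}
b:=\frac{1}{(M-1)!}\int_0^{\infty}t^{M-1}e^{-t\Delta_d}a\,dt,
\end{equation*}
so that $a=\Delta_d^M b$. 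Splitting the integral at $t=\ell(I)^2$: for $t\le\ell(I)^2$, the raw pointwise heat kernel bound from Lemma \ref{lem1-ht} yields $\int_0^{\ell(I)^2}t^{M-1}|e^{-t\Delta_d}a(n)|\,dt\les \ell(I)^{2M}|I|^{-1/p}$; for $t\ge \ell(I)^2$ one uses the vanishing moments of $a$ by Taylor-expanding $h_t(\cdot-m)$ about a fixed $n_I\in I$ to order $K_*+1$, exactly as in the treatment of the term $E_2$ in the proof of Theorem \ref{thm-coincidence of Hardy spaces} (see \eqref{eq1- Taylor 1}--\eqref{eq3- Taylor 1}), to obtain an integrand with enough decay in $t$ for absolute convergence. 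Differentiating under the integral in the spatial variable via Lemmas \ref{lem-htk and difference derivatives} and \ref{lem-htk and HIGHER difference derivatives}, together with the same Taylor cancellation, produces the full set of bounds $|\mathfrak D^k b(n)|\les \ell(I)^{2M-k}|I|^{-1/p}(1+d(n,I)/\ell(I))^{-1-N}$ for $k=0,\ldots,4M$ required by the $(M,N,p)_{\mathfrak D}$-molecule definition. Theorem \ref{thm2- atom Besov} in the variant of Remark \ref{remark-molecular decomposition} then places $f$ in $\B^{\alpha,\Delta_d}_{p,q}(\ZZ)$ with the correct norm control.

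The principal obstacle is the second inclusion: verifying that the $\Delta_d$-antiderivative $b=\Delta_d^{-M}a$, constructed by the integral formula above, satisfies uniformly the full list of $(M,N,p)_{\mathfrak D}$-molecule estimates up to order $k=4M$. This requires a careful three-regime analysis of the time integral ($t\ll\ell(I)^2$, $t\sim\ell(I)^2$, $t\gg\ell(I)^2$), balancing the polynomial factor $t^{M-1}$ against the heat-kernel decay from Lemmas \ref{lem1-ht}--\ref{lem-htk and HIGHER difference derivatives} and the moment-induced cancellation of $a$ via the Taylor argument of Section 3. Once these uniform estimates are secured, the rest is a direct application of the molecular decomposition theorems already proved in this section.
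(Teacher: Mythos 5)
Your treatment of the inclusion $\B^{\alpha,\Delta_d}_{p,q}\hookrightarrow\B^\alpha_{p,q}$ (and the corresponding Triebel--Lizorkin inclusion) matches the paper's in spirit: decompose into $(M,N,p)_{\mathfrak D}$-molecules and check that each is a classical molecule, using $\Delta_d=-DD^*$ and the summation-by-parts argument for the vanishing moments.

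For the converse inclusion $\B^\alpha_{p,q}\hookrightarrow\B^{\alpha,\Delta_d}_{p,q}$, your route is genuinely different from the paper's. The paper never constructs the antiderivative $b=\Delta_d^{-M}a$; instead it proves the pointwise estimate
\[
|\psi(t\sqrt{\Dd})a(n)|\les\Big(\tfrac{t}{2^{-\nu}}\wedge\tfrac{2^{-\nu}}{t}\Big)^{M-1}|I|^{-1/p}\Big(1+\tfrac{d(n,I)}{2^{-\nu}\vee t}\Big)^{-N}
\]
for classical atoms by splitting $t\lessgtr 2^{-\nu}$, writing $\psi(t\sqrt{\Dd})a$ in factored form $t^MD^M(t^2\Dd)^{MN}e^{-t^2\Dd}(D^Ma)$ (small $t$) and using Taylor cancellation on the extended kernel $h_t(x)$, $x\in\mathbb R$, together with Lemma \ref{lemma- derivative estimate - Taylor} (large $t$), then feeds these bounds into the standard Frazier--Jawerth almost-orthogonality machinery. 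Your proposal instead reduces to the $\Delta_d$-molecular decomposition theorem by building $b=\Delta_d^{-M}a$ via the subordination integral. This is a legitimate alternative, but as written it has a gap.

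The gap is in the high-order difference estimates on $b$. The $(M,N,p)_{\mathfrak D}$-molecule conditions require $|\mathfrak D^k b(n)|\les\ell(I)^{2M-k}|I|^{-1/p}(1+d(n,I)/\ell(I))^{-1-N}$ for $k=0,\dots,4M$. Since $\Delta_d=-\tau_1 D^2$ (where $\tau_sf(n)=f(n-s)$), one has the \emph{exact identity} $D^{2M}b(n)=(-1)^M a(n+M)$, so for $k\ge 2M$ the quantity $\mathfrak D^kb$ collapses to $D^{k-2M}a(\cdot+M)$, up to sign. Controlling this up to $k=4M$ requires pointwise bounds on $D^{\ell}a$ for $\ell$ up to $2M$, i.e.\ the classical atom must be taken with smoothness order $M_*\ge 2M$. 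Your stated hypotheses impose only $K_*\ge 2M-1$ (which is indeed the correct condition for absolute convergence of the integral at $t\to\infty$, since the Taylor cancellation of order $K_*+1$ gives $|e^{-t\Delta_d}a|\les t^{-(K_*+2)/2}\ell(I)^{K_*+1}$) and leave $M_*$ unspecified. Moreover, your claim that Lemmas \ref{lem-htk and difference derivatives} and \ref{lem-htk and HIGHER difference derivatives} suffice to differentiate under the integral for all $k\le 4M$ does not hold in the small-$t$ regime: those lemmas only purchase spatial decay for $\mathfrak D^k h_t$ at the cost of time derivatives $\partial_t^{k\ell}$, so for small $t$ and $k>M_*$ you cannot bound $\mathfrak D^k e^{-t\Delta_d}a$ without commuting $\mathfrak D^k$ through $e^{-t\Delta_d}$ onto $a$, which is precisely where the requirement $M_*\ge 2M$ (or, more simply, $M_*\ge 4M$ to cover all $k$ directly) re-enters. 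This is permissible because Theorem \ref{thm-BS TL atomic decomposition} lets one take $M_*$ arbitrarily large, but it needs to be stated; without it the bounds on $\mathfrak D^kb$ for $k>M_*$ are not justified by anything in your argument.
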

\begin{proof}
	We only prove 
	\[
	\F^{\alpha}_{p,q}(\ZZ) =\F^{\alpha, \Delta_d}_{p,q}(\ZZ), \ \ \ \ 0<p< \vc, 0<q\le \vc, \alpha \in \mathbb R,
	\]
	since the proof of the identification of the Besov spaces can be done similarly. 
	
	Firstly, let $a$ be a $(M, N, p)_{\Delta_d}$ molecule. Then $a$ is also a $(M,N,2M-1,p)$ molecule. This implies $\F^{\alpha, \Delta_d}_{p,q}(\ZZ)\hookrightarrow\F^{\alpha}_{p,q}(\ZZ)$.

	By the standard argument as in \cite{FJ2, FJ1, BBD}, the inclusion $\F^{\alpha}_{p,q}(\ZZ)\hookrightarrow \F^{\alpha, \Delta_d}_{p,q}(\ZZ)$ follows directly from the following estimates: Let $a$ be a $(M,K, p)$ atom with some $I\in \mathcal{I}_\nu$ with $\nu\in \ZZ^-$. Let $\psi\in \mathcal S(\mathbb R)$ supported in $[2,8]$. Then for any $t>0$ and  $N>0$ we have:
		\begin{equation}\label{eq2- psi atom}
			|\psi(t\sqrt{\Dd}) a(n)|\les  \Big(\f{t}{2^{-\nu}}\wedge \f{2^{-\nu}}{ t}\Big)^{M-1}|I|^{-1/p} \Big(1+\f{d(n,I)}{2^{-\nu}\vee  t}\Big)^{-N}.
		\end{equation}
	
	Indeed, we first claim that for $t>0$ and $N>0$,
	\begin{equation}\label{eq3- psi atom}
		|(t\Delta_d)^{M(N+1)}e^{-t\Delta_d} a(n)|\les  \Big(\f{\sqrt t}{2^{-\nu}}\wedge \f{2^{-\nu}}{\sqrt t}\Big)^{M-1}|I|^{-1/p} \Big(1+\f{d(n,I)}{2^{-\nu}\vee \sqrt t}\Big)^{-N},
	\end{equation}
where $a$ is a $(M,K, p)$ atom with some $I\in \mathcal{I}_\nu$ with $\nu\in \ZZ^-$.

To do this, we now consider two cases: $t\le 2^{-\nu}$ and $t>2^{-\nu}$.

	\noindent{\bf Case 1: $\sqrt t\le 2^{-\nu}$.} Observe that
	$$
	(t\Delta_d)^{M(N+1)}e^{-t\Delta_d} a=t^{M}D^M(t\Delta_d)^{MN}e^{-t\Delta_d} (D^Ma). 
	$$

    Applying Lemma \ref{lem-htk and HIGHER difference derivatives}, then arguing similarly to the proof of \eqref{eq- psi atom}, 	we obtain
	$$
	\begin{aligned}
		|(t\Delta_d)^{M(N+1)}e^{-t\Delta_d}a(n)|
		& \les  \Big(\f{\sqrt t}{2^{-\nu}}\Big)^{M-1}|I|^{-1/p} \Big(1+\f{d(n,I)}{2^{-\nu}}\Big)^{-N},
	\end{aligned}
	$$ 
	which yields  \eqref{eq- psi atom}.
	
	\medskip
	
	\noindent{\bf Case 2: $\sqrt t> 2^{-\nu}$.} 
	
	Using \eqref{eq-integration by parts fomular for the time derivative of heat kernel}, we have for each $k,\ell\in \mathbb N$,
	\[
	\begin{aligned}
	 (-\Delta_d)^{\ell}e^{-t\Delta_d} a(n)&=\sum_{m\in I}\f{(-1)^{\lfloor (k+1)/2\rfloor}}{\pi {(n-m)}^k}\int_0^\pi \partial_\theta^k\partial_{t}^\ell  \varphi_{t}(\theta)\Phi_k((n-m)\theta)a(m) d\theta\\
	 &:=\sum_{m\in I}  g_{\ell,k,t}(n-m)a(m).
\end{aligned}
	\]
	Due to the vanishing moment condition of the atom $a$, for a fix $n_I\in I$,
	\[
	(-\Delta_d)^{\ell}e^{-t\Delta_d} a(n) =\sum_{m\in I}  \Big[g_{\ell,k,t}(n-m)-\sum_{j=0}^K \f{(n_I-m)^j}{j!}g^{(j)}_{\ell,k,t}(n-n_I)\Big]a(m).
	\]
Arguing similarly to \eqref{eq3- Taylor 1}, we obtain
	\begin{equation}\label{eq- Taylor 1}
	\begin{aligned}
		|(t\Delta_d)^{M(N+1)}e^{-t\Delta_d}a(n)|&\les  \sum_{m\in I} \Big(\f{2^{-\nu}}{|n-m|\wedge \sqrt t}\Big)^{K+1}\f{1}{\sqrt t}\Big(1+\f{|n-m|}{\sqrt t}\Big)^{-N}|a(m)|.
	\end{aligned}
\end{equation}
	On the other hand, from \eqref{eq2-heat kernel},
	\[
	h_t(n) =\f{1}{\pi}\int_0^\pi e^{-2t(1-\cos\theta)}\cos(n\theta) d\theta
	\]
	Extending the function $h_t$ from $\ZZ$ to $\mathbb R$ by setting
	\[
	h_t(x) =\f{1}{\pi}\int_0^\pi e^{-2t(1-\cos\theta)}\cos(x\theta) d\theta, \ \ \ \ x\in \mathbb R.
	\]
	It is easy to  see that for  $k, \ell \in \mathbb N$,
	\[
	|\partial_x^k\partial_t^\ell h_t(x)| \les   \int_0^\pi \theta^k(1-\cos\theta)^\ell e^{-2t(1-\cos\theta)} d\theta.
	\]
	Since $1-\cos\theta \sim \theta^2$, by  a straightforward calculation,
	 \[
	 |\partial_x^k\partial_t^\ell h_t(x)| \les   \f{1}{t^{\ell+k/2}}.
	 \]
	 This, along with the vanishing moment condition of the atom and Taylor's theorem, implies
	 \[
\begin{aligned}
	|(t\Delta_d)^{M(N+1)}&e^{-t\Delta_d}a(n)| \\
	&=\sum_{m\in I}  t^{M(N+1)}\Big[\partial_t^{M(N+1)}h_t(m-n)-\sum_{j=0}^K \f{(n_I-m)^j}{j!}\partial_x^j\partial_t^{M(N+1)}h_t(n-n_I)\Big]a(m)\\
	&\les \sum_{m\in I} \Big(\f{2^{-\nu}}{\sqrt t}\Big)^{K+1}|a(m)|.
\end{aligned}
	 \]
From this and \eqref{eq- Taylor 1} we conclude that
\[
\begin{aligned}
		|(t\Delta_d)^{M(N+1)}e^{-t\Delta_d}a(n)|&\les  \sum_{m\in I} \Big(\f{2^{-\nu}}{\sqrt t}\Big)^{K+1}\f{1}{\sqrt t}\Big(1+\f{|n-m|}{\sqrt t}\Big)^{-N}|a(m)|\\
		&\les  \sum_{m\in I} \Big(\f{2^{-\nu}}{\sqrt t}\Big)^{K+1}\f{2^{-\nu}}{\sqrt t}\Big(1+\f{d(n, I)}{\sqrt t}\Big)^{-N},
\end{aligned}
\]	 
which ensures \eqref{eq2- psi atom}.

We now turn to the proof  of  \eqref{eq3- psi atom}. We also consider two cases: $t\le 2^{-\nu}$ and $t>2^{-\nu}$. 

\medskip	
	
	If $t\le 2^{-\nu}$, we write
	\[
	\psi(\lambda) = \lambda^{M(N+1)}e^{-\lambda^2}\widetilde\psi(\lambda),
	\]
	where $\widetilde\psi(\lambda)=\lambda^{-M(N+1)}e^{\lambda^2}\psi(\lambda)$.
	
	We then apply  \eqref{eq- psi atom} and Lemma \ref{lem1} to obtain
	\[
	\begin{aligned}
	|\psi(t\sqrt {\Delta_d})a(n)| & = |\widetilde\psi(t\sqrt{\Dd})\circ (t^2\Dd)^{M(N+1)}e^{-t^2\Dd}a(n)|\\
	&\les \sum_{m\in \ZZ}\Big(\f{ t}{2^{-\nu}} \Big)^{M-1}|I|^{-1/p} \f{1}{t}\Big(1+\f{|m-n|}{t}\Big)^{-N-2}\Big(1+\f{d(m,I)}{2^{-\nu}}\Big)^{-N}.	
\end{aligned}
	\]
	Using the inequality
	\[
	\Big(1+\f{|m-n|}{t}\Big)^{-N-2}\Big(1+\f{d(m,I)}{2^{-\nu}}\Big)^{-N}\le \Big(1+\f{|m-n|}{t}\Big)^{-2}\Big(1+\f{d(n,I)}{2^{-\nu}}\Big)^{-N},
	\]
	we further obtain
	\[
	\begin{aligned}
		|\psi(t\sqrt {\Delta_d})a(n)| &\les \Big(\f{ t}{2^{-\nu}} \Big)^{M-1}|I|^{-1/p}\Big(1+\f{d(n,I)}{2^{-\nu}}\Big)^{-N}\sum_{m\in \ZZ} \f{1}{t}\Big(1+\f{|m-n|}{t}\Big)^{-2}\\	
		&\les \Big(\f{  t}{2^{-\nu}} \Big)^{M-1}|I|^{-1/p}\Big(1+\f{d(n,I)}{2^{-\nu}}\Big)^{-N}.
	\end{aligned}
	\]
	The second case $t\ge  2^{-\nu}$ can be done similarly.
	
	This completes the proof of \eqref{eq3- psi atom} and hence the proof of the inclusion $\F^{\alpha}_{p,q}(\ZZ)\hookrightarrow \F^{\alpha, \Delta_d}_{p,q}(\ZZ)$.
	
	This completes our proof.
\end{proof}

We also have  further identifications.
\begin{theorem}
	We have the following identifications.
	\begin{enumerate}[\rm (a)]
		\item $\F_{p,2}^0(\ZZ)= H^p(\ZZ)$ for all $0<p\le 1$;
		\item $\F_{p,2}^0(\ZZ)= \ell^p(\ZZ)$ for all $1<p<\vc$.
	\end{enumerate}
\end{theorem}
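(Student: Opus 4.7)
The plan is to reduce both identifications to results already established earlier in the paper via the square function characterization of $\F_{p,2}^{0,\Delta_d}(\ZZ)$. First, by Theorem \ref{thm-coincidence Besov and TL} we have $\F_{p,2}^0(\ZZ) = \F_{p,2}^{0,\Delta_d}(\ZZ)$ with equivalent quasi-norms, so it suffices to identify $\F_{p,2}^{0,\Delta_d}(\ZZ)$ with $H^p(\ZZ)$ when $0<p\le 1$ and with $\ell^p(\ZZ)$ when $1<p<\vc$. Fixing a partition of unity $\psi$, Proposition \ref{prop4.1b} with $\alpha=0$, $q=2$ and $\lambda$ large enough yields
\[
\|f\|_{\F_{p,2}^{0,\Delta_d}(\ZZ)} \sim \|\mathcal{S}^{0}_{2}(\psi(t\sqrt{\Delta_d})f)\|_{\ell^p(\ZZ)} = \|S_\psi f\|_{\ell^p(\ZZ)},
\]
where $S_\psi$ is precisely the area function introduced in Remark \ref{rem2}(b).

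For part (a), Remark \ref{rem2}(b) identifies $\|S_\psi f\|_{\ell^p(\ZZ)}$ with the norm $\|f\|_{H^p_{\Delta_d}(\ZZ)}$ whenever $0<p\le 1$, and Theorem \ref{thm-coincidence of Hardy spaces} then delivers $H^p_{\Delta_d}(\ZZ)=H^p(\ZZ)$. Chaining these equivalences proves (a).

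For part (b), the task reduces to showing $\|S_\psi f\|_{\ell^p(\ZZ)} \sim \|f\|_{\ell^p(\ZZ)}$ for all $1<p<\vc$. The case $p=2$ is immediate: by Fubini and the fact that $|\{n\in\ZZ: |m-n|<t\}|\sim t+1$ for $t>0$, one computes
\[
\|S_\psi f\|_{\ell^2(\ZZ)}^2 \sim \int_0^\vc \|\psi(t\sqrt{\Delta_d})f\|_{\ell^2(\ZZ)}^2\,\f{dt}{t},
\]
and the spectral theorem together with the substitution $s=t\lambda$ converts the right hand side into $c_\psi\|f\|_{\ell^2(\ZZ)}^2$ with $c_\psi=\int_0^\vc |\psi(s)|^2\,\f{ds}{s}>0$, using that the spectral measure of $\Delta_d$ assigns no mass to $\{0\}$ on $\ell^2(\ZZ)$.

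For general $p\in(1,\vc)$ one needs Littlewood--Paley estimates for $\Delta_d$. The upper bound $\|S_\psi f\|_{\ell^p(\ZZ)}\lesssim\|f\|_{\ell^p(\ZZ)}$ can be obtained by viewing $f\mapsto\{\psi(t\sqrt{\Delta_d})f\}_{t>0}$ as a vector-valued Calder\'on--Zygmund operator with values in $L^2(dt/(t(t+1)))$; the pointwise size estimate of the kernel is furnished by Corollary \ref{lem1} and the regularity estimate by Lemma \ref{lem1-derivative of psi delta} combined with the mean value theorem. Alternatively, one may use the molecular decomposition from Theorem \ref{thm-Hardy space} to verify that $S_\psi$ maps $H^1$-molecules into (multiples of) $T_2^1$-atoms, and then interpolate with the $p=2$ result. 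The lower bound $\|f\|_{\ell^p(\ZZ)}\lesssim\|S_\psi f\|_{\ell^p(\ZZ)}$ follows by duality: using the Calder\'on reproducing formula from Proposition \ref{prop-Calderon1}, namely
\[
\langle f,g\rangle = c_\psi\int_0^\vc \langle \psi(t\sqrt{\Delta_d})f,\psi(t\sqrt{\Delta_d})g\rangle\,\f{dt}{t},
\]
together with tent-space duality $(T_2^p)^*=T_2^{p'}$, one estimates $|\langle f,g\rangle|\lesssim\|S_\psi f\|_{\ell^p(\ZZ)}\|S_\psi g\|_{\ell^{p'}(\ZZ)}$ and then invokes the already-established upper bound in the conjugate exponent to conclude.

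The main obstacle is the upper bound for $p\ne 2$: although the Calder\'on--Zygmund route is standard in principle, one has to check that the vector-valued kernel satisfies the required Hörmander-type regularity. The difference estimates from Lemma \ref{lem1-derivative of psi delta} give this, but the measure $dt/(t(t+1))$ (rather than $dt/t$) means that the small-$t$ and large-$t$ regimes must be handled separately; this is the same dichotomy that already pervades Sections 2 and 3 of the paper and can be dispatched using exactly the same kernel estimates already proved.
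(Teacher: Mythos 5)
Your proof of part (a) follows essentially the paper's own route: the paper cites the same chain (Theorem \ref{thm-coincidence Besov and TL}, Proposition \ref{prop4.1b} with $\alpha=0$, $q=2$, and Remark \ref{rem2}(b)). The one thing you add explicitly is the final link to the classical space via Theorem \ref{thm-coincidence of Hardy spaces}; the paper's citation list produces only $\F^0_{p,2}(\ZZ)=H^p_{\Delta_d}(\ZZ)$, so your extra step is in fact needed and it is good that you spotted it.

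For part (b) you take a genuinely different route: the paper simply defers the identification $\F^0_{p,2}(\ZZ)=\ell^p(\ZZ)$ to the external reference \cite{Sun} (and then applies Theorem \ref{thm-coincidence Besov and TL}), whereas you sketch a self-contained proof using the paper's internal kernel estimates, the $p=2$ spectral computation, vector-valued Calder\'on--Zygmund theory or molecular interpolation for the upper bound, and the reproducing formula plus tent-space duality for the lower bound. This is a plausible and more informative program, but as written it has several gaps a rigorous write-up must close. The Calder\'on--Zygmund argument applied to $f\mapsto\{\psi(t\sqrt{\Delta_d})f\}_{t>0}$ gives $\ell^p$-boundedness of the \emph{vertical} square function $(\int_0^\vc |\psi(t\sqrt{\Delta_d})f(n)|^2\,\f{dt}{t(t+1)})^{1/2}$, not directly of the cone function $S_\psi$; passing from vertical to cone for $1<p<\vc$ does follow from Theorem \ref{thm1-continuouscharacter} together with Proposition \ref{prop4.1b}, but this should be stated since otherwise the argument is circular. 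The reproducing formula you use in the duality step, $\langle f,g\rangle = c\int_0^\vc \langle\psi(t\sqrt{\Delta_d})f,\psi(t\sqrt{\Delta_d})g\rangle\,\f{dt}{t}$, is not the one in Proposition \ref{prop-Calderon1}; it is the variant with $\psi^2$ in place of $\psi$, whose validity rests on $\int_0^\vc|\psi(s)|^2\,\f{ds}{s}\ne 0$ rather than $\int_0^\vc\psi(s)\,\f{ds}{s}\ne 0$, so this needs a sentence of justification. Finally, the tent-space H\"older inequality and duality $(T^p_2)^*=T^{p'}_2$ on $\ZZ\times(0,\vc)$ with measure $dt/(t(t+1))$ are invoked but not in the paper; they are routine adaptations of \cite{CMS}, but they must at least be asserted with a pointer. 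None of these is a fatal flaw, and if carried out your argument would replace the opaque citation to \cite{Sun} with a proof entirely inside the paper's framework, which is arguably preferable.
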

\begin{proof}
\noindent	(a) This item follows directly from Theorem \ref{thm-coincidence Besov and TL}, Proposition \ref{prop4.1b} and Remark \ref{rem2} (b).
	
\noindent	(b) For this identification we refer \cite{Sun}.
	
	This completes our proof. 
\end{proof}

We also recall an interpolation result for the  Triebel--Lizorkin spaces.
\begin{proposition}
	\label{prop-comple interpolation}
	We have
	\begin{equation}
		\label{complex interpolation}
		\left(\F^{\alpha_0}_{p_0,q_0}(\ZZ),\F^{\alpha_1}_{p_1,q_1}(\ZZ)\right)_\theta = \F^{\alpha}_{p,q}(\ZZ)
	\end{equation}
	for all $\alpha_0, \alpha_1 \in \mathbb{R}$, $0<p_0,p_1, q_0, q_1<\vc$, $\theta\in (0,1)$ and
	\[
	\alpha=(1-\theta)\alpha_0 +\theta \alpha_1, \ \ \ \f{1}{p} =\f{1-\theta}{p_0}+\f{\theta}{p_1}, \ \ \ \f{1}{q} =\f{1-\theta}{q_0}+\f{\theta}{q_1}
	\] 	
	where $(\cdot, \cdot)_\theta$ stands for the complex interpolation brackets.
\end{proposition}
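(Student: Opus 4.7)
The plan is to deduce \eqref{complex interpolation} from the classical complex interpolation identity for mixed--norm sequence spaces by a standard retraction/coretraction argument based on the functional calculus of $\Dd$. Let $\psi$ be a partition of unity and write $\psi_\nu(\lambda)=\psi(2^{-\nu}\lambda)$. Choose a second partition of unity $\widetilde\psi$ such that $\sum_{\nu\in\ZZ^-}\widetilde\psi_\nu(\lambda)\psi_\nu(\lambda)=1$ on the spectrum of $\sqrt{\Dd}$; this is possible because the sums in the definition of the spaces are taken over $\nu\in\ZZ^-$ only. Define the retraction
\[
\mathcal J f = \{\psi_\nu(\sqrt{\Dd})f\}_{\nu\in\ZZ^-}
\]
and the coretraction
\[
\mathcal K(\{g_\nu\}_{\nu\in\ZZ^-})=\sum_{\nu\in\ZZ^-}\widetilde\psi_\nu(\sqrt{\Dd})g_\nu,
\]
so that, by Proposition \ref{prop-Calderon1}, $\mathcal K\circ \mathcal J=\mathrm{Id}$ on $\mathcal S_\vc'(\ZZ)$.

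First, by Theorem \ref{thm-coincidence Besov and TL}, each $\F^{\alpha_i}_{p_i,q_i}(\ZZ)$ coincides with $\F^{\alpha_i,\Dd}_{p_i,q_i}(\ZZ)$ and similarly for the triple $(\alpha,p,q)$. Introduce the weighted vector-valued Lebesgue spaces
\[
\ell^{p}(\ZZ;\ell^{q}_{\alpha}):=\Big\{\{g_\nu\}_{\nu\in\ZZ^-}:\ \Big\|\Big(\sum_{\nu\in\ZZ^-}(2^{\nu\alpha}|g_\nu|)^q\Big)^{1/q}\Big\|_{\ell^p(\ZZ)}<\vc\Big\}.
\]
By the definition of $\F^{\alpha,\Dd}_{p,q}(\ZZ)$, the map $\mathcal J$ is an isometry from $\F^{\alpha_i,\Dd}_{p_i,q_i}(\ZZ)$ into $\ell^{p_i}(\ZZ;\ell^{q_i}_{\alpha_i})$. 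For the boundedness of $\mathcal K$, note that $\widetilde\psi_\mu(\sqrt{\Dd})\psi_\nu(\sqrt{\Dd})=0$ whenever $|\mu-\nu|\ge 3$ by the support conditions $\supp\psi,\,\supp\widetilde\psi\subset[2,8]$. The off-diagonal kernel bound in Corollary \ref{lem1} together with the Peetre-maximal characterization of Proposition \ref{prop2-thm1} then reduces the boundedness of $\mathcal K$ to the Fefferman--Stein vector-valued inequality \eqref{FSIn} (or \eqref{YFSIn} for the shifted tails), yielding
\[
\|\mathcal K(\{g_\nu\})\|_{\F^{\alpha_i,\Dd}_{p_i,q_i}(\ZZ)}\les \|\{g_\nu\}\|_{\ell^{p_i}(\ZZ;\ell^{q_i}_{\alpha_i})}.
\]

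Next, invoke the classical complex interpolation for weighted mixed-norm sequence spaces (Bergh--L\"ofstr\"om): for $\theta\in(0,1)$,
\[
\big(\ell^{p_0}(\ZZ;\ell^{q_0}_{\alpha_0}),\ell^{p_1}(\ZZ;\ell^{q_1}_{\alpha_1})\big)_\theta=\ell^{p}(\ZZ;\ell^{q}_{\alpha})
\]
with the indicated relations between the parameters. Applying the retract principle (Triebel, \emph{Interpolation Theory, Function Spaces, Differential Operators}, Section 1.2.4), the fact that $\mathcal J$ and $\mathcal K$ are bounded between the corresponding pairs and satisfy $\mathcal K\circ \mathcal J=\mathrm{Id}$ forces $\F^{\alpha_i,\Dd}_{p_i,q_i}(\ZZ)$ to be a complemented subspace of $\ell^{p_i}(\ZZ;\ell^{q_i}_{\alpha_i})$, and hence the interpolation brackets commute with the retract, giving \eqref{complex interpolation}.

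The main obstacle is to verify the boundedness of $\mathcal K$ without loss; this is where the almost-orthogonality of the dyadic pieces $\widetilde\psi_\mu(\sqrt{\Dd})g_\nu$ and the kernel estimates of Section 2 enter decisively, since one must avoid the continuous-spectrum trick (the spectrum of $\Dd$ being the bounded set $[0,4]$ restricts $\nu$ to $\ZZ^-$, but does not cause the argument to degenerate thanks to the truncation already built into the definition of the spaces). Once this boundedness is established, the identity \eqref{complex interpolation} is immediate.
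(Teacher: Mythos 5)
The paper does not actually prove Proposition \ref{prop-comple interpolation}: it is introduced with ``We also recall an interpolation result for the Triebel--Lizorkin spaces'' and stated without argument as a known fact, the classical reference being Triebel's work on $\F^\alpha_{p,q}$ via the $\varphi$-transform/retraction method, here transplanted to the $\Dd$-adapted scale through Theorem~\ref{thm-coincidence Besov and TL}. Your proposal supplies precisely the standard retraction/coretraction argument that underlies that recalled result, so there is nothing to compare against and the approach is the expected one.

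Two points worth tightening. First, $\mathcal J$ is not an isometry onto its image: by Proposition~\ref{prop2-thm1} and the equivalence of partitions of unity, the $\F^{\alpha,\Dd}_{p,q}$-norm is only \emph{comparable} to $\|\mathcal J f\|_{\ell^{p}(\ZZ;\ell^{q}_{\alpha})}$, which is all the retract principle needs. Second, and more substantively, the whole proposition allows $p_i,q_i<1$, so the spaces involved are merely quasi-Banach. Both the interpolation identity for the weighted mixed-norm sequence spaces and the retract/complementation principle you invoke (citing Bergh--L\"ofstr\"om and Triebel~\S 1.2.4, which are stated in the Banach setting) require a version of complex interpolation adapted to quasi-Banach spaces (e.g.\ via Calder\'on products or the analytically-convex framework of Kalton--Mitrea, or the construction in Triebel's ``Theory of Function Spaces'', Thm.~2.4.7). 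Your proof treats this as immediate; it is true but should be referenced to the quasi-Banach literature rather than to the Banach-space retract principle. Aside from these attributions, the scheme — existence of the dual partition $\widetilde\psi$, almost-orthogonality $\widetilde\psi_\mu(\sqrt{\Dd})\psi_\nu(\sqrt{\Dd})=0$ for $|\mu-\nu|\ge 3$ forced by the supports, boundedness of $\mathcal K$ via Corollary~\ref{lem1} plus the Fefferman--Stein inequality, $\mathcal K\mathcal J=\mathrm{Id}$ via Proposition~\ref{prop-Calderon1}, and the retract principle — is correct and is the standard route to \eqref{complex interpolation}.
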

\section{Applications}

\subsection{Spectral multipliers}

\begin{theorem}\label{mainthm-spectralmultipliers} 	Let   $\alpha\in \mathbb{R}$ and $0<p,q<\vc$.  Suppose that $F: [0,\vc)\to \mathbb C$ be a bounded Borel function  satisfying 
	\[
	\sup_{t\ge 1/2}\|\eta\, \delta_tF\|_{W^{s}_r}<\vc
	\]
for $s> \f{1}{1\wedge p\wedge  q}-\f{1}{2} $ and for some $r\in (4,\vc]$, 	where $\delta_tF(\cdot)= F(t\cdot)$ and $\eta$ is a $C^\vc_c(\mathbb R_+)$ function supported in $[2,8]$, not identically zero. Then we have:
	\begin{enumerate}[\rm (a)]
		\item the spectral multiplier $F(\sqrt \Dd)$ is bounded on $\F^{\alpha}_{p,q}(\ZZ)$, i.e.,
		\[
		\|F(\sqrt{\Dd})\|_{\F^{\alpha}_{p,q}(\ZZ)\to \F^{\alpha}_{p,q}(\ZZ)}\les |F(0)|+\sup_{t\ge 1/2}\|\eta\, \delta_tF\|_{W^{s}_r};
		\]
		\item the spectral multiplier $F(\sqrt \Dd)$ is bounded on $\B^{\alpha}_{p,q}(\ZZ)$, i.e.,
		$$
		\|F(\sqrt \Dd)\|_{\B^{\alpha}_{p,q}(\ZZ)\to \B^{\alpha}_{p,q}(\ZZ)}\les |F(0)| +\sup_{t\ge 1/2}\|\eta\, \delta_tF\|_{W^{s}_r}.
		$$		
	\end{enumerate}
\end{theorem}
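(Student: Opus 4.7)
The plan is to use the Peetre-type maximal function characterization of $\B^{\alpha}_{p,q}(\ZZ)$ and $\F^{\alpha}_{p,q}(\ZZ)$ supplied by Proposition \ref{prop2-thm1}, combined with the weighted kernel estimates of Propositions \ref{prop1-spectral multiplier} and \ref{prop1-pointwise-spectral multiplier}. As a first reduction I would isolate the constant term by writing $F(\sqrt{\Dd}) = F(0)\,\mathrm{Id} + (F - F(0))(\sqrt{\Dd})$: the constant part contributes $|F(0)|\|f\|$ trivially, and only the non-constant remainder carries analytic content. Fix a partition of unity $\psi$ with $\supp\psi\subset[2,8]$ and a fattened Schwartz bump $\tilde\psi$ satisfying $\tilde\psi\equiv 1$ on $[2,8]$ and $\supp\tilde\psi\subset[1,16]$, so that $\psi_j\tilde\psi_j=\psi_j$; since the spectrum of $\sqrt{\Dd}$ lies in $[0,2]$, only finitely many non-trivial $j\in\mathbb{Z}^-$ play a role in the norm characterizations.

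For each such $j$, functional calculus gives $\psi_j(\sqrt{\Dd})F(\sqrt{\Dd}) = (\psi_j F)(\sqrt{\Dd}) \circ \tilde\psi_j(\sqrt{\Dd})$. I would apply Proposition \ref{prop1-spectral multiplier} to $\psi_j F$ (after splitting its support into at most two pieces of the required form $[R/2,R]$ with $R\sim 2^j$) to obtain the weighted $L^2$ kernel bound
\[
\sum_{m} |K_j(m,n)|^2 (1+2^j|m-n|)^{2\sigma} \lesssim 2^j\,C_F^2,\qquad C_F := |F(0)|+\sup_{t\ge 1/2}\|\eta\,\delta_tF\|_{W^s_r},
\]
valid for any $\sigma<s$. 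Setting $g_j := \tilde\psi_j(\sqrt{\Dd})f$, the Cauchy--Schwarz inequality with weight $(1+2^j|m-n|)^\sigma$, combined with the defining bound $|g_j(m)| \le (1+2^j|m-n|)^\lambda\,\tilde\psi^*_{j,\lambda}(\sqrt{\Dd})f(n)$ and summability of $(1+2^j|m-n|)^{2\lambda-2\sigma}$ in $m$ (valid whenever $\sigma>\lambda+1/2$), yields the key pointwise estimate
\[
|\psi_j(\sqrt{\Dd})F(\sqrt{\Dd})f(n)| \lesssim C_F\,\tilde\psi^*_{j,\lambda}(\sqrt{\Dd})f(n).
\]
Weighting by $2^{j\alpha}$ and taking the appropriate iterated norm (first $\ell^p$ then $\ell^q$ in the Besov case; first $\ell^q$ then $\ell^p$ in the Triebel--Lizorkin case), Proposition \ref{prop2-thm1} identifies the resulting quantity with $C_F\|f\|$ in the respective space.

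The main obstacle will be matching the sharp Sobolev threshold $s>(1\wedge p\wedge q)^{-1}-1/2$. A direct Peetre argument only forces $\lambda>\max(1/p,1/q)$ (Triebel--Lizorkin) or $\lambda>1/p$ (Besov), and combined with $\sigma>\lambda+1/2$ this is weaker than claimed, especially when $p$ or $q<1$. Closing this gap requires dominating the Peetre maximal by an $L^r$-averaged maximal $\mathcal{M}_r$ with $r<1\wedge p\wedge q$ via a Rychkov-type inequality (already implicit in Proposition \ref{prop2-thm1}), and then absorbing the $j$-summation through the Fefferman--Stein inequality \eqref{FSIn}; careful interpolation between the pointwise kernel estimate of Proposition \ref{prop1-pointwise-spectral multiplier} and the weighted $L^2$ estimate of Proposition \ref{prop1-spectral multiplier} is also required. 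A secondary technicality is that the hypothesis controls $F$ only at arguments $\gtrsim 1$; the very negative dyadic pieces $\psi_j F$ must therefore be treated using the boundedness of $F$ together with the $|F(0)|$ term, which is precisely why both quantities appear on the right-hand side of the claim.
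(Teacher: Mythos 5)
Your framework correctly identifies the relevant ingredients (Propositions \ref{prop1-spectral multiplier}, \ref{prop1-pointwise-spectral multiplier}, \ref{prop2-thm1}), and your Cauchy--Schwarz computation leading to
$|\psi_j(\sqrt{\Dd})F(\sqrt{\Dd})f(n)| \lesssim C_F\,\tilde\psi^*_{j,\lambda}(\sqrt{\Dd})f(n)$
is sound given the requirement $\sigma>\lambda+1/2$. You also honestly flag the problem: this yields boundedness only when $s>\lambda+1/2>\max(1/p,1/q)+1/2$, which is strictly weaker than the claimed $s>\tfrac{1}{1\wedge p\wedge q}-\tfrac12$ for every $(p,q)$, not merely for small $p,q$. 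The gap is genuine, and, crucially, the repair you sketch does not close it. The Rychkov-type sub-mean inequality combined with the Fefferman--Stein maximal inequality \eqref{FSIn} can reduce the Peetre parameter from $\max(1/p,1/q)$ to any $\lambda>\tfrac{1}{1\wedge p\wedge q}$, but one is still paying the extra $+\tfrac12$ from the passage from the weighted $\ell^2$ kernel bound to a pointwise convolution bound. That route plateaus at $s>\tfrac{1}{1\wedge p\wedge q}$, which is exactly the (weaker) threshold in part (b) of the intermediate Theorem \ref{thm-spectralmultipliers}, not the sharp threshold of Theorem \ref{mainthm-spectralmultipliers}.

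The missing idea is that the $-\tfrac12$ saving does not come from any single kernel estimate; it comes from complex interpolation between function spaces. The paper's proof proceeds in two stages. First it proves boundedness on $\F^{0}_{p,2}(\ZZ)=H^p(\ZZ)$ for $0<p\le1$ under $s>\tfrac1p-\tfrac12$, via the molecular decomposition of Theorem \ref{thm-Hardy space}: for a molecule $a=\Delta_d^Mb$ one splits $F(\sqrt{\Dd})a$ using $(I-e^{-\ell(I)^2\Delta_d})^M$, and the weighted $\ell^2$ estimate of Proposition \ref{prop1-spectral multiplier} is used against the $\ell^1$ norm of the molecule on each annulus; the factor $|2^{j+\ell}I|^{(2-p)/2}$ from H\"older's inequality is precisely what cancels the lost $\tfrac12$-power. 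Interpolating this with the trivial $L^2$ bound ($\|F\|_\infty$, requiring no Sobolev regularity) via Proposition \ref{prop-comple interpolation} gives the claim for $\F^\alpha_{p,2}$, all $p$. Second, it proves the general-$q$ statement under the non-sharp $s>\tfrac{1}{1\wedge p\wedge q}$ via a pointwise molecule estimate (Lemma \ref{lem2-thm2 atom Besov-spectral}), which is close in spirit to your route. Interpolating these two families (as in \cite{BD}) then yields the final threshold $s>\tfrac{1}{1\wedge p\wedge q}-\tfrac12$ on $\F^\alpha_{p,q}$ and $\B^\alpha_{p,q}$. Your proposal essentially reconstructs the second stage but has no mechanism producing the first, and without it the interpolation that delivers the $-\tfrac12$ cannot be run.

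Two smaller points. The claim that ``only finitely many non-trivial $j\in\ZZ^-$ play a role'' is backwards: since $\supp\psi_j\subset[2^{j+1},2^{j+3}]$ and the spectrum of $\sqrt{\Dd}$ is $[0,2]$, it is the $j\ge0$ pieces that vanish, while all $j\le-1$ contribute, and the sum over $j\in\ZZ^-$ is genuinely infinite. Also, the reduction $F=F(0)+(F-F(0))$ is harmless but not what the paper uses; the $|F(0)|$ term instead enters when converting the $\|F\|_{\ell^\infty}$ appearing in the intermediate Theorem \ref{thm-spectralmultipliers} into the hypothesis of Theorem \ref{mainthm-spectralmultipliers}, where it controls $F$ near the origin of the spectrum where the $t\ge1/2$ dilates of $\eta$ give no information.
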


By the argument used in the proof of \cite[Theorem 1.1]{BD}, it suffices to prove the following result.
\begin{theorem}\label{thm-spectralmultipliers} 		
	\begin{enumerate}[(a)]
		\item Let   $\alpha\in \mathbb{R}$ and $0<p<\vc$.  Suppose that $F: [0,\vc)\to \mathbb C$ be a bounded Borel function satisfying 
		\[
		\sup_{t\ge 1/2}\|\eta\, \delta_tF\|_{W^{s}_r}<\vc
		\]
		for $s> \f{1}{1\wedge p}-\f{1}{2} $ and for some $r\in (4,\vc]$. Then the spectral multiplier $F(\sqrt{\Dd})$ is bounded on $\F^{\alpha}_{p,2}(\ZZ)$; moreover,
		\begin{equation}\label{eq1-thm1}
			\|F(\sqrt{\Dd})\|_{\F^{\alpha}_{p,2}(\ZZ)\to \F^{\alpha}_{p,2}(\ZZ)}\les  \|F\|_{L^\vc} +\sup_{t>0}\|\eta\, \delta_tF\|_{W^{s}_r}.
		\end{equation}
		\item Let   $\alpha\in \mathbb{R}$ and $0<p,q<\vc$.  Suppose that $F: [0,\vc)\to \mathbb C$ be a bounded Borel function  satisfying
		\[
		\sup_{t\ge 1/2}\|\eta\, \delta_tF\|_{W^{s}_r}<\vc
		\]
		for $s> \f{1}{1\wedge p\wedge  q}$ and for some $r\in (4,\vc]$. Then the spectral multiplier $F(\sqrt{\Dd})$ is bounded on $\F^{\alpha}_{p,q}(\ZZ)$ provided that $\alpha\in \mathbb{R}$, $0<p,q< \vc$ and $s>\f{1}{1\wedge p\wedge q} $; moreover,
		\begin{equation}\label{eq2-thm1}
			\|F(\sqrt{\Dd})\|_{\F^{\alpha}_{p,q}(\ZZ)\to \F^{\alpha}_{p,q}(\ZZ)}\les \|F\|_{L^\vc}+ \sup_{t>0}\|\eta\, \delta_tF\|_{W^{s}_r}.
		\end{equation}
	\end{enumerate}
\end{theorem}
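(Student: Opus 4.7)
The plan is to combine the Littlewood--Paley characterization of $\F^{\alpha}_{p,q}(\ZZ)$ from Proposition \ref{prop2-thm1} with the kernel bounds in Propositions \ref{prop1-spectral multiplier} and \ref{prop1-pointwise-spectral multiplier}, and control the resulting expressions via the Peetre-type maximal function. First I would reduce: by Proposition \ref{prop2-thm1}(b), it suffices to bound
$\bigl\| \bigl(\sum_{j\in\mathbb{Z}^-} 2^{j\alpha q}|\psi_j(\sqrt{\Dd})F(\sqrt{\Dd})f|^q\bigr)^{1/q}\bigr\|_{\ell^p(\ZZ)}$.
Since the spectrum of $\Dd$ is contained in $[0,4]$, only finitely many $j$ near $0$ yield a nontrivial $\psi_j(\sqrt{\Dd})$, and those pieces are absorbed into the $\|F\|_{L^\vc}$ term. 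For $j\ll 0$, letting $\widetilde{\psi}_j=\sum_{|k-j|\le 2}\psi_k$ (so $\widetilde\psi_j\psi_j=\psi_j$) yields the key localization
$\psi_j(\sqrt{\Dd})F(\sqrt{\Dd})f = (\psi_jF)(\sqrt{\Dd})\widetilde{\psi}_j(\sqrt{\Dd})f$.

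Next I would apply the kernel bounds to $G_j:=\psi_jF$ with $R=2^{j+3}\in(0,2]$. By the Leibniz rule, $\|\delta_R G_j\|_{W^\gamma_r}\les M_\gamma:=\sup_{t\ge 1/2}\|\eta\,\delta_tF\|_{W^\gamma_r}$. Then Proposition \ref{prop1-pointwise-spectral multiplier} gives the pointwise bound $|K_{G_j(\sqrt{\Dd})}(m,n)|\les 2^j(1+2^j|m-n|)^{-\gamma}M_{\gamma+\epsilon}$, and Proposition \ref{prop1-spectral multiplier} the weighted-$\ell^2$ bound $\sum_m|K_{G_j(\sqrt{\Dd})}(m,n)|^2(1+2^j|m-n|)^{2\gamma}\les 2^j M_{\gamma+\epsilon}^2$, valid for any $\gamma>0$.

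For part (a) with $q=2$, Cauchy--Schwarz against the weight $(1+2^j|m-n|)^{2\gamma}$ gives
\[
|(\psi_jF)(\sqrt{\Dd})\widetilde{\psi}_j(\sqrt{\Dd})f(n)|\le\Bigl(\sum_m|K_{G_j(\sqrt{\Dd})}(n,m)|^2(1+2^j|m-n|)^{2\gamma}\Bigr)^{1/2}\Bigl(\sum_m\tfrac{|\widetilde{\psi}_j(\sqrt{\Dd})f(m)|^2}{(1+2^j|m-n|)^{2\gamma}}\Bigr)^{1/2}.
\]
The first factor is $\les 2^{j/2}M_{\gamma+\epsilon}$ by the Plancherel-type bound; the second is $\les 2^{-j/2}\widetilde{\psi}^*_{j,\lambda}(\sqrt{\Dd})f(n)$ whenever $\gamma>\lambda+1/2$, after inserting $|\widetilde\psi_j(\sqrt{\Dd})f(m)|\le(1+2^j|m-n|)^\lambda\widetilde\psi^*_{j,\lambda}(\sqrt{\Dd})f(n)$ and summing the remaining weight. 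The $2^{\pm j/2}$ factors cancel, and taking $\lambda$ just above $1/(1\wedge p)$ (as required by the Peetre characterization of $\F^\alpha_{p,2}$) yields precisely the sharp hypothesis $s>1/(1\wedge p)-1/2$. Inserting the resulting pointwise estimate into Proposition \ref{prop2-thm1}(b) closes part (a).

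For part (b), the pointwise kernel bound alone gives $|(\psi_jF)(\sqrt{\Dd})g(n)|\les M_{s+\epsilon}\mathcal{M}_r g(n)$ via Lemma \ref{lem-elementary} with $r\ge 1$ (covering $p\wedge q \ge 1$), and the Fefferman--Stein inequality \eqref{FSIn} then produces the $\F^\alpha_{p,q}$-bound. In the range $p\wedge q<1$ this direct route fails because the $\mathcal{M}_r$-domination acquires an unbounded scale-dependent prefactor when $r<1$; I would instead invoke the molecular decomposition from Theorems \ref{thm1- atom TL spaces}, \ref{thm2- atom TL spaces} and Remark \ref{remark-molecular decomposition}, showing that $F(\sqrt{\Dd})$ sends each $(M,N,p)_{\mathfrak D}$-molecule associated with $I\in\mathcal I_\nu$ into a multiple (controlled by $M_{s+\epsilon}$) of such a molecule, by splitting $F=\sum_j\psi_jF$ and treating the regimes $j\ge\nu$ and $j<\nu$ separately via the kernel decay. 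The main obstacle is precisely this molecular step: verifying that $\Dd^k F(\sqrt{\Dd})b$ satisfies the molecular profile requires combining both kernel bounds and a careful resummation across scales, and it is here that the threshold $s>1/(1\wedge p\wedge q)$ emerges. The analogous sharpness gain in part (a) rides on the $\ell^2$-orthogonality encoded in Proposition \ref{prop1-spectral multiplier}, which has no direct counterpart in part (b)---this asymmetry is the structural reason the two parts require separate arguments.
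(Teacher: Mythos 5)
Your plan for part (a) has a genuine arithmetic gap that breaks the sharp threshold. After Cauchy--Schwarz with the weight $(1+2^j|m-n|)^{2\gamma}$, you correctly note that the kernel factor is $\les 2^{j/2}M_{\gamma+\epsilon}$ (so you need $s>\gamma$), and that dominating the other factor by the Peetre function and summing the residual weight $(1+2^j|m-n|)^{2\lambda-2\gamma}$ forces $\gamma>\lambda+\tfrac12$. But Proposition \ref{prop2-thm1}(b) requires $\lambda>\max\{1/p,1/2\}$, so the argument as written delivers $s>\tfrac{1}{1\wedge p}+\tfrac12$, not $s>\tfrac{1}{1\wedge p}-\tfrac12$ --- off by a full unit. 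Your assertion that ``the $2^{\pm j/2}$ factors cancel'' and ``taking $\lambda$ just above $1/(1\wedge p)$ yields precisely the sharp hypothesis'' is where the error enters: the constraints $\gamma>\lambda+\tfrac12$ and $\lambda>1/(1\wedge p)$ together push $\gamma$ (and hence $s$) \emph{up}, not down. The Littlewood--Paley/Peetre route does not exploit the $\ell^2$-orthogonality at the atomic level, which is exactly where the gain of $1$ in the derivative count comes from.

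The paper's proof of (a) takes a different path: it first establishes the $H^p(\ZZ)\equiv\F^0_{p,2}(\ZZ)$ bound for $0<p\le 1$ by acting on $(p,M,\epsilon)_{\Delta_d}$ molecules. The crucial step is applying Proposition \ref{prop1-spectral multiplier} to estimate $\|K_{\varphi(t\sqrt{\Dd})F(\sqrt{\Dd})(I-e^{-\ell(I)^2\Dd})^M}(\cdot,m)\|_{\ell^2(S_\ell(2^jI))}$ on the good set $S_\ell(2^jI)$, which after resummation in $j,\ell$ gives a geometric gain $2^{-\ell p(s'-(1/p-1/2))}$ --- the $(1/p-1/2)$ shows up because the passage from $\ell^2$ to $\ell^p$ over $S_\ell(2^jI)$ contributes $|2^{j+\ell}I|^{(2-p)/2}$, which is exactly compensated by the Plancherel decay. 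That is the structural mechanism you would need for the sharp exponent, and it is not recoverable by a single scale-by-scale Cauchy--Schwarz. The paper then interpolates with the trivial $\ell^2$ bound and with $\F^\alpha_{2,2}$ (Proposition \ref{prop-comple interpolation}) to cover $0<p<\vc$ and all $\alpha$.

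For part (b) your outline is essentially aligned with the paper, which works entirely through the molecular decomposition (Theorems \ref{thm1- atom TL spaces}--\ref{thm2- atom TL spaces}) via the pointwise estimate in Lemma \ref{lem2-thm2 atom Besov-spectral} rather than splitting into $p\wedge q\ge 1$ and $p\wedge q<1$ cases. Your observation that the pointwise kernel bound alone suffices for $p\wedge q>1$ is correct and does recover the threshold there, but the molecular step --- which you acknowledge as the ``main obstacle'' --- must be carried out uniformly to get the stated range; that step is precisely the paper's proof that $\psi(t\sqrt{\Dd})F(\sqrt{\Dd})a_I$ satisfies the decay \eqref{eq- psi atom-spectral}, using Proposition \ref{prop1-pointwise-spectral multiplier}, and you have only sketched it.
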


\begin{proof} 
(a) We first prove that if $\sup_{t>0}\|\eta\, \delta_tF\|_{W^{s}_r}$ for some $r\in (4,\vc]$,  $s>\f{1}{p}-\f{1}{2}$ for $0<p\le 1$, then $F(\sqrt{\Dd})$ is bounded on $H^p(\ZZ)\equiv \F^0_{p,2}(\ZZ)$. Indeed, let $\varphi$ be a partition of unity. Define
\[
S_\varphi f(n) = \Big[\int_0^\vc |\varphi(t\sqrt{\Delta_d})f(n)|^2\f{dt}{t}\Big]^{1/2}.
\]
Since $\supp \varphi \subset [2,8]$ and the spectral of $\Delta_d$ is contained in $[0,4]$,
\[
S_\varphi f(n) = \Big[\int_1^\vc |\varphi(t\sqrt{\Delta_d})f(n)|^2\f{dt}{t}\Big]^{1/2}.
\]

Suppose $a=\Delta_d^Mb$ is a $(p,M,\epsilon)_{\Delta_d}$ molecule associated to an interval $I$. Then we have
\[
\begin{aligned}
	\varphi(t\sqrt{\Delta_d})&F(\sqrt{\Delta_d})a\\
	& = \varphi(t\sqrt{\Delta_d})F(\sqrt{\Delta_d})(I-e^{-\ell(I)^2\Delta_d})^Ma + \sum_{i=1}^M c_i \varphi(t\sqrt{\Delta_d})F(\sqrt{\Delta_d})\Delta_d^Me^{-i\ell(I)^2\Delta_d} b,
\end{aligned}
\]
which implies that 
\begin{equation}\label{eq1-proof of spectral multiplier}
	\begin{aligned}
		\big\|S_\varphi (F(\sqrt{\Delta_d})a)\big\|^p_{\ell^p(\ZZ)}&\les \big\|S_\varphi \big[F(\sqrt{\Delta_d})(I-e^{-\ell(I)^2\Delta_d})^Ma\big]\big\|^p_{\ell^p(\ZZ)} + \sum_{i=1}^M\big\|S_\varphi\big[F(\sqrt{\Delta_d})\Delta_d^Me^{-i\ell(I)^2\Delta_d} b\big]\big\|^p_{\ell^p(\ZZ)}\\
		&=: G + H.
	\end{aligned}
\end{equation}
We need only to estimate the first term $G$ since the second term $H$ can be done similarly.

To do this,  we write
\[
\begin{aligned}
	G&\le \sum_{j\ge 1} \big\|S_\varphi \big[F(\sqrt{\Delta_d})(I-e^{-\ell(I)^2\Delta_d})^M(a\cdot 1_{S_j(I)})\big]\big\|^p_{\ell^p(\ZZ)}\\
	&\le \sum_{\ell\ge 0}\sum_{j\ge 1} \big\|S_\varphi \big[F(\sqrt{\Delta_d})(I-e^{-\ell(I)^2\Delta_d})^M(a\cdot 1_{S_j(I)})\big]\big\|^p_{\ell^p(S_\ell(2^jI))}\\
	&:=\sum_{\ell\ge 0}\sum_{j\ge 1}G_{j\ell}.
\end{aligned}
\]
We now apply H\"older's inequality to obtain
\[
G_{jk}  \le |2^{j+\ell}I|^{\f{2-p}{2}}\big\|S_\varphi \big[F(\sqrt{\Delta_d})(I-e^{-\ell(I)^2\Delta_d})^M(a\cdot 1_{S_j(I)})\big]\big\|^p_{\ell^2(S_\ell(2^jI))}.
\]
For $\ell=0,1,2,3$ and $j\ge 0$, by H\"older's inequality and the $L^2$-boundedness of $S_\varphi$ and $(I-e^{-\ell(I)^2\Delta_d})^M$, 
\[
\begin{aligned}
	G_{jk} & \le |2^jI|^{\f{2-p}{2}}\big\|S_\varphi \big[F(\sqrt{\Delta_d})(I-e^{-\ell(I)^2\Delta_d})^M(a\cdot 1_{S_j(I)})\big]\big\|^p_{\ell^2(S_\ell(2^jI))}	\\
	& \les |2^jI|^{\f{2-p}{2}}\|a\|_{\ell^2(S_j(I))}^p\\
	& \les 2^{-j\epsilon p} \sim 2^{-(j+\ell)\epsilon p}.	
\end{aligned}
\]
For $\ell\ge 4$, we have 
\[
\begin{aligned}
	\big\|S_\varphi \big[F(\sqrt{\Delta_d})&(I-e^{-\ell(I)^2\Delta_d})^M(a\cdot 1_{S_j(I)})\big]\big\|^2_{\ell^2(S_\ell(2^jI))}\\
	&=\int_1^\vc \Big\|\varphi(t\sqrt{\Delta_d})F(\sqrt{\Delta_d})(I-e^{-\ell(I)^2\Delta_d})^M(a\cdot 1_{S_j(I)})\Big\|^2_{\ell^2(S_\ell(2^jI))}\f{dt}{t}\\
	&= \int_1^\vc \Big\|\sum_{m\in S_j(I)}K_{\varphi(t\sqrt{\Delta_d})F(\sqrt{\Delta_d})(I-e^{-\ell(I)^2\Delta_d})^M}(n,m)a(m)\Big\|^2_{\ell_n^2(S_\ell(2^jI))}\f{dt}{t}\\
	&\les  \int_1^\vc \Big[\sum_{m\in S_j(I)}\big\|K_{\varphi(t\sqrt{\Delta_d})F(\sqrt{\Delta_d})(I-e^{-\ell(I)^2\Delta_d})^M}(n,m)\big\|_{\ell^2_n(S_\ell(2^jI))}|a(m)|\Big]^2\f{dt}{t},
\end{aligned}
\] 
where in the last inequality we used  Minkowski's inequality.

We note that in this situation $|m-n|\sim 2^{j+\ell}\ell(I)$. We thus apply Proposition \ref{prop1-spectral multiplier} to deduce that for $s>s'>1/p-1/2$,
\[
\begin{aligned}
	\big\|S_\varphi &\big[F(\sqrt{\Delta_d})(I-e^{-\ell(I)^2\Delta_d})^M(a\cdot 1_{S_j(I)})\big]\big\|^2_{\ell^2(S_\ell(2^jI))}\\
	&\les \int_1^\vc t^{-1}\Big\|\delta_{t^{-1}}\big[\varphi(t\sqrt{\Delta_d})F(\sqrt{\Delta_d})(I-e^{-\ell(I)^2\Delta_d})^M\big]\Big\|^2_{W^s_2}t^{2s'}(2^{j+\ell}\ell(I))^{-2s'}\|a\|^2_{\ell^1(S_j(I))}\f{dt}{t}. 
\end{aligned}
\]
Note that 
\[
\Big\|\delta_{t^{-1}}\big[\varphi(t\sqrt{\lambda})F(\sqrt{\lambda})(I-e^{-\ell(I)^2\lambda})^M\big]\Big\|^2_{W^s_2}\les \min\{1, (\ell(I)/t)^{4M}\},
\]
and
\[
\|a\|^2_{\ell^1(S_j(I))}\le |2^jI| \|a\|^2_{\ell^2(S_j(I))}\les 2^{-2j\epsilon} |2^jI|^{2-2/p}.
\]
Therefore,
\[
\begin{aligned}
	\big\|S_\varphi \big[F(\sqrt{\Delta_d})&(I-e^{-\ell(I)^2\Delta_d})^M(a\cdot 1_{S_j(I)})\big]\big\|^2_{\ell^2(S_\ell(2^jI))}\\
	&\les \int_1^\vc t^{-1} \min\{1, (\ell(I)/t)^{4M}\} t^{2s'}(2^{j+\ell}\ell(I))^{-2s'}2^{-2j\epsilon} |2^jI|^{2-2/p} \f{dt}{t}\\
	&\les 2^{-2j\epsilon}2^{-2s'(j+\ell)}|2^jI|^{2-2/p}\int_0^\vc t^{-1}(t/\ell(I))^{2s'} \min\{1, (\ell(I)/t)^{4M}\} \f{dt}{t}\\
	&\les 2^{-2j\epsilon}2^{-2s'(j+\ell)}|2^jI|^{2-2/p} \ell(I)^{-1}.
\end{aligned}
\]
It follows that for $\ell\ge 4,$
\[
\begin{aligned}
	G_{j\ell}&\les 2^{-jp\epsilon}2^{-s'p(j+\ell)}|2^jI|^{p-1} |I|^{-p/2} |2^{j+\ell}I|^{\f{2-p}{2}}\\
	&\les 2^{-jp\epsilon} 2^{-jp(s'-1/2)}2^{-\ell p(s' - (1/p-1/2))}.
\end{aligned}
\]
Since $s'>1/p-1/2\ge 1/2$, we have
\[
G\le \sum_{j,\ell\ge 0} G_{j\ell}\les 1,
\]
which implies
$$
\big\|S_\varphi \big[F(\sqrt{\Delta_d})(I-e^{-\ell(I)^2\Delta_d})^Ma\big]\big\|^p_{\ell^p(\ZZ)}\les 1.
$$

We have just proved that 
\begin{equation*}
	\|F(\sqrt{\Dd})\|_{\F^{0}_{p,2}(\ZZ)\to \F^{0}_{p,2}(\ZZ)}\les  \|F\|_{L^\vc} +\sup_{t>0}\|\eta\, \delta_tF\|_{W^{s}_r}
\end{equation*}
for for some $r\in (4,\vc]$, $s>\f{1}{p}-\f{1}{2}$ and $0<p\le 1$.

On the other hand, we have $\|F(\sqrt \Dd)\|_{\ell^2(\ZZ)\to \ell^2(\ZZ)}\le \|F\|_{\vc}$. This, along with the interpolation in Proposition \ref{prop-comple interpolation}, implies 
\begin{equation}\label{eq1-thm1-Hardy}
	\|F(\sqrt{\Dd})\|_{\F^{0}_{p,2}(\ZZ)\to \F^{0}_{p,2}(\ZZ)}\les  \|F\|_{L^\vc} +\sup_{t>0}\|\eta\, \delta_tF\|_{W^{s}_r}
\end{equation}
for for some $r\in (4,\vc]$, $s>\f{1}{p}-\f{1}{2}$ and $0<p<\vc$.

In addition, for $\alpha\in \mathbb R$
\[
\begin{aligned}
	\|F(\sqrt \Dd) f\|_{\F^{\alpha}_{2,2}(\ZZ)}&=\|F(\sqrt \Dd) f\|_{\B^{\alpha}_{2,2}(\ZZ)}\\
	&\sim \Big(\int_0^\vc \Big[t^{-\alpha}\|\psi(t\sqrt{\Delta_d})F(\sqrt \Dd)f\|_{\ell^2(\ZZ)}\Big]^2\f{dt}{t}\Big)^{1/2}\\
	&\les \|F\|_\vc \Big(\int_0^\vc \Big[t^{-\alpha}\|\psi(t\sqrt{\Delta_d})f\|_{\ell^2(\ZZ)}\Big]^2\f{dt}{t}\Big)^{1/2}\\
	&\sim \|f\|_{\F^{\alpha}_{2,2}(\ZZ)}.
\end{aligned}
\]
This, along with \eqref{eq1-thm1-Hardy} and Proposition \ref{prop-comple interpolation}, yields that 
\begin{equation}\label{eq2-thm1-Hardy}
	\|F(\sqrt{\Dd})\|_{\F^{\alpha}_{p,2}(\ZZ)\to \F^{\alpha}_{p,2}(\ZZ)}\les  \|F\|_{L^\vc} +\sup_{t\ge 1/2}\|\eta\, \delta_tF\|_{W^{s}_r}
\end{equation}
for some $r\in (4,\vc]$, $s>\f{1}{p}-\f{1}{2}$ with $0<p<\vc$ and $\alpha\in \mathbb R$.

This completes the proof of (a).

(b) For the item (b), we need the following estimates whose proof will be given at the end of this section.

	\begin{lemma}\label{lem2-thm2 atom Besov-spectral}
	Let $\psi\in \mathcal S(\mathbb R)$ supported in $[2,8]$ and  let $a_I$ be an $(M, N, p)_{\Delta_d}$ molecule with some $I\in \mathcal{I}_\nu$. Then for any bounded Borel function $F$ satisfying $\sup_{t>1/2}\|\eta\, \delta_tF\|_{W^{s}_r}$ for some $s>\f{n}{1\wedge p\wedge q}$ and some $r\in (4,\vc]$, we have
	\begin{equation}\label{eq- psi atom-spectral}
		|\psi(t\sqrt{\Delta_d})F(\sqrt{\Delta_d}) a_I(n)|\les \Big(\f{t}{2^{-\nu}}\wedge \f{2^{-\nu}}{t}\Big)^{2M-1}|I|^{-1/p} \Big(1+\f{d(n,I)}{2^{-\nu}\vee t}\Big)^{-s'} 
	\end{equation}
	for all $t>0$, $s>s'>\f{n}{1\wedge p\wedge q}$ and $N>s+1$.
\end{lemma}

With the estimates in Lemma \ref{lem2-thm2 atom Besov-spectral}, we can proceed similarly to the proof of Theorem \ref{thm2- atom Besov}. 

This completes the proof of (b).

\end{proof}

\begin{proof}[Proof of Lemma \ref{lem2-thm2 atom Besov-spectral}:]
	We now consider two cases: $t\le 2^{-\nu}$ and $t>2^{-\nu}$.

\noindent{\bf Case 1: $t\le 2^{-\nu}$.} Observe that
$$
\psi(t\sqrt{\Delta_d}) a_I=t^{2M}\psi_M(t\sqrt{\Delta_d})(\Delta_d^Ma_I) 
$$
where $\psi_M(\lambda)=\lambda^{-2M}\psi(\lambda)$.

This, along with Proposition \ref{prop1-pointwise-spectral multiplier} and the definition of the molecules, yields
$$
\begin{aligned}
	|\psi(t\sqrt{\Delta_d}) a_I(n)|&\les  \sum_{m\in \ZZ} \f{t^{2M}}{(t+1)}\Big(1+\f{|m-n|}{t}\Big)^{-s'}|a_I(m)| \\
	&\les  \sum_{m\in \ZZ} \Big(\f{t}{2^{-\nu}}\Big)^{2M}|I|^{-1/p}\f{1}{(t+1)}\Big(1+\f{|m-n|}{t}\Big)^{-s'}\Big(1+\f{d(m,I)}{2^{-\nu}}\Big)^{-N} \\
	&\les  \Big(\f{t}{2^{-\nu}}\Big)^{2M}|I|^{-1/p}\sum_{m\in \ZZ} \f{1}{t}\Big(1+\f{|m-n|}{2^{-\nu}}\Big)^{-s'}\Big(1+\f{d(m,I)}{2^{-\nu}}\Big)^{-N}.
\end{aligned}
$$
We now use the inequality
\[
\Big(1+\f{|m-n|}{2^{-\nu}}\Big)^{-s'}\Big(1+\f{d(m,I)}{2^{-\nu}}\Big)^{-s'}\le \Big(1+\f{d(n,I)}{2^{-\nu}}\Big)^{-s'}
\]
to obtain further that 
$$
\begin{aligned}
	|\psi(t\sqrt{\Delta_d}) a_I(n)|& \les  \Big(\f{t}{2^{-\nu}}\Big)^{2M-1}|I|^{-1/p}\Big(1+\f{d(n,I)}{2^{-\nu}}\Big)^{-s'}\sum_{m\in \ZZ} \f{1}{2^{-\nu}} \Big(1+\f{d(m,I)}{2^{-\nu}}\Big)^{-N+s'}\\
	& \les  \Big(\f{t}{2^{-\nu}}\Big)^{2M-1}|I|^{-1/p}\Big(1+\f{d(n,I)}{2^{-\nu}}\Big)^{-s'},
\end{aligned}
$$ 
which yields  \eqref{eq- psi atom-spectral}.

\medskip

\noindent{\bf Case 2: $t> 2^{-\nu}$.} We first write  $a_I=\Delta_d^{M}b_I$. Hence,
\[
\psi(t\sqrt{\Delta_d})a_I=t^{-2M}\tilde \psi_M(t\sqrt{\Delta_d})b_I
\]
where $\tilde \psi_M(\lambda)=\lambda^{2M}\psi(\lambda)$.

This, along with Proposition \ref{prop1-pointwise-spectral multiplier}, implies that
$$
\begin{aligned}
	|\psi(t\sqrt{\Delta_d})a_I(n)|&\les  \sum_{m\in \ZZ}\f{t^{-2M}}{t}\Big(1+\f{|m-n|}{t}\Big)^{-s'}|b_I(m)| \\
	&\les  \Big(\f{2^{-\nu}}{t}\Big)^{2M} |I|^{-1/p}\sum_{m\in \ZZ}\f{1}{t}\Big(1+\f{|m-n|}{t}\Big)^{-s'}\Big(1+\f{d(m,I)}{2^{-\nu}}\Big)^{-N}\\
	&\les  \Big(\f{2^{-\nu}}{t}\Big)^{2M} |I|^{-1/p}\sum_{m\in \ZZ}\f{1}{t}\Big(1+\f{|m-n|}{t}\Big)^{-s'}\Big(1+\f{d(m,I)}{t}\Big)^{-N}\\
	&\les  \Big(\f{2^{-\nu}}{t}\Big)^{2M-1} |I|^{-1/p} \Big(1+\f{d(n,I)}{t}\Big)^{-s'}\sum_{m\in \ZZ}\f{1}{t}\Big(1+\f{d(m,I)}{t}\Big)^{-N+s'}\\
	&\les  \Big(\f{2^{-\nu}}{t}\Big)^{2M-1} |I|^{-1/p} \Big(1+\f{d(n,I)}{t}\Big)^{-s'}.
\end{aligned}
$$
Hence \eqref{eq- psi atom} follows.
	The desired estimate  \eqref{eq- psi atom} then follows.
\end{proof}

\subsection{Riesz transforms}
\begin{theorem}\label{mainthm-Riesz transforms}
	The Riesz transforms $D\Dd^{-1/2}$ and $D^* \Dd^{-1/2}$ are bounded on the Besov spaces $\B^{\alpha}_{p,q}(\ZZ)$ for $\alpha\in \mathbb{R}$, $0<p<\vc$, $0<q\le \vc$ and bounded on the Triebel--Lizorkin spaces $\F^{\alpha}_{p,q}(\ZZ)$ for $\alpha\in \mathbb{R}$, $0<p,q<\vc$.
\end{theorem}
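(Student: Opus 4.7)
The plan is to run the Riesz transforms through the molecular decomposition machinery of Section~4, using the $\mathfrak{D}$-molecule variant described in Remark~\ref{remark-molecular decomposition}. By Theorems~\ref{thm1- atom Besov} and~\ref{thm1- atom TL spaces}, any $f \in \B^{\alpha}_{p,q}(\ZZ)$ or $\F^{\alpha}_{p,q}(\ZZ)$ admits a representation $f = \sum_{\nu, I} s_I a_I$ in $\mathcal{S}'/\mathcal{P}(\ZZ)$, where each $a_I$ is a $(M,N,p)_{\Delta_d}$-molecule for parameters $M, N$ we may take as large as needed. Since $D$ commutes with every function of $\Delta_d$ (a direct calculation from $\Delta_d = -DD^* = -D^*D$ gives $D\Delta_d = \Delta_d D$, and likewise for $D^*$), we have $D\Delta_d^{-1/2} a = \Delta_d^M \tilde{b}$ with $\tilde{b} := D\Delta_d^{-1/2} b$ whenever $a = \Delta_d^M b$. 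Showing that $\tilde{b}$ satisfies the smoothness/decay hypotheses of a $(M,N',p)_{\mathfrak{D}}$-molecule (with $N'$ arbitrarily large if $N$ is chosen large enough) then reduces the theorem to the reconstruction statements, i.e.\ Theorems~\ref{thm2- atom Besov} and~\ref{thm2- atom TL spaces} in the $\mathfrak{D}$-molecule form of Remark~\ref{remark-molecular decomposition}.

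The core technical estimate to establish is
\begin{equation*}
|\mathfrak{D}^k \tilde{b}(n)| \le C\,\ell(I)^{2M-k}\,|I|^{-1/p}\Big(1+\frac{d(n,I)}{\ell(I)}\Big)^{-1-N'}, \qquad k = 0, 1, \ldots, 4M.
\end{equation*}
Represent $\Delta_d^{-1/2}$ by the subordination formula $\Delta_d^{-1/2} = \pi^{-1/2}\int_0^{\vc} t^{-1/2} e^{-t\Delta_d}\,dt$, which is well-defined on $\mathcal{S}'_{\vc}(\ZZ)$ in view of the distributional framework of Section~\ref{sec: distributions}, so that $\mathfrak{D}^k \tilde{b}(n) = \pi^{-1/2}\int_0^{\vc} t^{-1/2}\,\mathfrak{D}^k D e^{-t\Delta_d}b(n)\,dt$. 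Split the $t$-integral at $t = \ell(I)^2$. For $t \le \ell(I)^2$, feed the kernel estimate of $\mathfrak{D}^{k+1} e^{-t\Delta_d}$ from Lemma~\ref{lem-htk and HIGHER difference derivatives} against the pointwise decay of $b$; the extra factors of $\ell(I)$ and the spatial decay $(1+d(n,I)/\ell(I))^{-1-N'}$ are inherited from the molecular bounds. For $t \ge \ell(I)^2$, iterate the identity $\Delta_d e^{-t\Delta_d} = -\partial_t e^{-t\Delta_d}$ via integration by parts in $t$: each pass trades one power of $t$ for a factor of $\Delta_d$ acting on $b$, and since $b$ satisfies the full molecular bounds on $\Delta_d^j b$ for $j = 0, 1, \ldots, 2M$, one may integrate by parts up to $2M$ times to produce additional powers of $\sqrt{t}$ and the corresponding additional spatial decay from $\Delta_d^j b$.

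The chief obstacle is the large-time regime: the subordination integral is only conditionally convergent at infinity, and the naive kernel bound on $\mathfrak{D}^k D e^{-t\Delta_d}$ by itself is too weak to produce the spatial decay $(1+d(n,I)/\ell(I))^{-1-N'}$ with $N'$ of the required size. The integration-by-parts/transfer scheme sketched above is the crux of the proof, and it must be combined with a secondary splitting of the integral at $t = (d(n,I)+\ell(I))^2$ in order to balance the competing bounds of Lemmas~\ref{lem-htk and difference derivatives}--\ref{lem-htk and HIGHER difference derivatives}. Once the $\mathfrak{D}$-molecular bound on $\tilde{b}$ is established, the reconstruction theorems furnish $\|D\Delta_d^{-1/2} f\|_X \lesssim \|f\|_X$ for $X \in \{\B^{\alpha}_{p,q}(\ZZ), \F^{\alpha}_{p,q}(\ZZ)\}$ in the full stated parameter ranges, and an identical argument (with $D$ replaced by $D^*$ throughout) handles the other Riesz transform.
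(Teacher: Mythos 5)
Your proposal takes a genuinely different route from the paper, but it contains a gap that I do not see how to close as stated.

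The paper does not attempt to show that $D\Dd^{-1/2}$ maps molecules to molecules. Instead it factors the Riesz transform: Theorem~\ref{thm1- proof of Riesz} shows that $D$ and $D^*$ map $\B^{\alpha}_{p,q}(\ZZ)\to\B^{\alpha-1}_{p,q}(\ZZ)$ and $\F^{\alpha}_{p,q}(\ZZ)\to\F^{\alpha-1}_{p,q}(\ZZ)$ (this part \emph{is} a molecular argument in the spirit of Theorem~\ref{thm2- atom Besov}, resting on Lemma~\ref{lem1-derivative of psi delta}), and then one checks that $\Dd^{-1/2}$ maps $\B^{\alpha}_{p,q}(\ZZ)\to\B^{\alpha+1}_{p,q}(\ZZ)$, $\F^{\alpha}_{p,q}(\ZZ)\to\F^{\alpha+1}_{p,q}(\ZZ)$. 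The second step is nearly formal: by the continuous characterization of Theorem~\ref{thm1-continuouscharacter}, writing $\psi(t\sqrt{\Dd})\Dd^{-1/2} = t\,\widetilde{\psi}(t\sqrt{\Dd})$ with $\widetilde{\psi}(\lambda)=\psi(\lambda)/\lambda$, one sees that the weight $t^{-\alpha-1}$ applied to $\psi(t\sqrt{\Dd})\Dd^{-1/2}f$ collapses to $t^{-\alpha}$ applied to $\widetilde{\psi}(t\sqrt{\Dd})f$, and $\widetilde{\psi}$ is again a Schwartz function supported in $[2,8]$. No representation of $\Dd^{-1/2}$ as a kernel operator (and no analysis of the subordination integral acting on molecules) is needed.

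The gap in your plan is the ``core technical estimate''
\[
|\mathfrak{D}^k \tilde{b}(n)| \le C\,\ell(I)^{2M-k}\,|I|^{-1/p}\Big(1+\frac{d(n,I)}{\ell(I)}\Big)^{-1-N'}, \qquad \tilde b := D\Dd^{-1/2}b,
\]
which you assert holds ``with $N'$ arbitrarily large if $N$ is chosen large enough.'' This is false for a generic molecule component $b$. The cancellation in the molecule $a=\Dd^M b$ lives entirely in the prefactor $\Dd^M$; the function $b$ itself satisfies only size bounds in Definition~\ref{defLmol} and need not have a single vanishing moment. But $D\Dd^{-1/2}$ has Fourier symbol $(e^{-i\theta}-1)/\sqrt{2(1-\cos\theta)}$, which tends to a nonzero constant times $\mathrm{sgn}\,\theta$ as $\theta\to0$; its convolution kernel on $\ZZ$ therefore decays only like $1/|n|$ at infinity and no faster, exactly as for the discrete Hilbert transform. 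Consequently, if $\sum_m b(m)\neq 0$ then for $d(n,I)\gg\ell(I)$ one has $\tilde b(n)\approx c\,\big(\sum_m b(m)\big)/d(n,I)$, i.e.\ the decay is $d(n,I)^{-1}$, corresponding to $N'=0$. The reconstruction theorems in the form of Remark~\ref{remark-molecular decomposition} require $N'>1+\f{1}{1\wedge p\wedge q}\ge 2$, so the molecular synthesis cannot be run on the $\tilde b$'s.

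Your proposed fix — the integration-by-parts ``transfer'' scheme in the subordination integral — does not address this. Integrating by parts in $t$ trades powers of $t$ for powers of $\Dd$ acting on $b$; this modifies the rate of convergence of the $t$-integral (scale regularity), but it does not create the low-frequency cancellation that governs the $1/|n|$ tail. The tail is dictated by the behaviour of the symbol of $D\Dd^{-1/2}$ near $\theta=0$ and by the zeroth moment of $b$, and neither is touched by the $t$-integration by parts. One could instead redistribute the $\Dd^M$ factor, writing $D\Dd^{-1/2}a = \Dd^{M-k}\big(\Dd^k D\Dd^{-1/2}b\big)$ for some $0\le k\le M$: the operator $\Dd^k D\Dd^{-1/2}$ has kernel decay $\sim|n|^{-2k-1}$, so one gains spatial decay at the cost of losing powers of $\Dd$ in the representation, and the loss must be balanced against the requirements of Theorems~\ref{thm2- atom Besov}/\ref{thm2- atom TL spaces}. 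That variant could perhaps be made to work, but it is a different argument from what you wrote, and it is still substantially more delicate than the paper's clean factorization, which never needs to understand the Riesz transform kernel at all.

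A secondary, smaller point: even if the estimate held, you would need to explain why the $\mathfrak{D}$-molecule form of the synthesis theorem (Remark~\ref{remark-molecular decomposition}) can be fed with the $(M,N,p)_{\Dd}$-molecules produced by Theorem~\ref{thm1- atom Besov}/Theorem~\ref{thm1- atom TL spaces}, since the size conditions in Definition~\ref{defLmol} and in the $\mathfrak{D}$-variant are phrased in terms of $\Dd^k b$ versus $\mathfrak{D}^k b$, and the exponent ranges ($k\le 2M$ vs.\ $k\le 4M$) and the powers of $\ell(I)$ differ. This mismatch is manageable but is glossed over in the proposal.
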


Before coming to the details, we will define the operator $\Delta_d^{-1/2}$. Note that the subordination formula
\[
\Delta_d^{-1/2}f = c\int_0^\vc \sqrt t e^{-t\Delta_d}f\f{dt}{t}.
\]
is not absolutely convergent in $\ell^2(\ZZ)$. However, fortunately we can show that the above formula is well-defined on $\mathcal S'(\ZZ)/\mathcal P(\ZZ)$. Indeed, by duality it suffices to show that if $f\in \mathcal S_\vc(\ZZ)$, then 
\[
\Delta_d^{-1/2}f: = c\int_0^\vc \sqrt t e^{-t\Delta_d}f\f{dt}{t}
\]
belongs to $\mathcal S_\vc(\ZZ)$.

To do this, we write
\[
\int_0^\vc \sqrt t e^{-t\Delta_d}f\f{dt}{t} = \int_0^1 \sqrt t e^{-t\Delta_d}f\f{dt}{t}+\int_1^\vc \sqrt t e^{-t\Delta_d}f\f{dt}{t}.
\]
For the first term we have, for $N\ge 0$,
\[
\begin{aligned}
	\sup_{n\in \ZZ}\sup_{0\le \ell \le N} &\int_0^1 \sqrt t (1+|n|)^N|\Delta_d^\ell e^{-t\Delta_d}f(n)|\f{dt}{t}\\
	&\le \sup_{n\in \ZZ}\sup_{0\le \ell \le N} \int_0^1 \sqrt t (1+|n|)^N\sum_{m\in \ZZ} h_t(n-m)|\Delta_d^\ell f(m)|\f{dt}{t}\\
	&\le \sup_{n\in \ZZ}\sup_{0\le \ell \le N} \int_0^1 \sqrt t (1+|n|)^N\sum_{m\in \ZZ: |m-n|\le N+1} h_t(n-m)|\Delta_d^\ell f(m)|\f{dt}{t}\\
	& \ \ \ \ +\sup_{n\in \ZZ}\sup_{0\le \ell \le N} \int_0^1 \sqrt t (1+|n|)^N\sum_{m\in \ZZ: |m-n|> N+1} h_t(n-m)|\Delta_d^\ell f(m)|\f{dt}{t}\\
	&=:E_1+E_2.
\end{aligned}
\]
For the first term $E_1$, since $(1+|n|)\le C_N (1+|m|)$ as long as $|m-n|\le N+1$, we have
\[
\begin{aligned}
	E_1&\les  \sup_{n\in \ZZ}\sup_{0\le \ell \le N} \int_0^1 \sqrt t \sum_{m\in \ZZ: |m-n|\le N+1} h_t(n-m)(1+|m|)^N|\Delta_d^\ell f(m)|\f{dt}{t}\\
	&\les  \|f\|_N \sup_{n\in \ZZ}\sup_{0\le \ell \le N} \int_0^1 \sqrt t \sum_{m\in \ZZ} h_t(n-m) \f{dt}{t}\\
	&\les  \|f\|_N  \int_0^1 \sqrt t\f{dt}{t} \sim \|f\|_N.
\end{aligned}
\]

We now take care of $E_2$. By Lemma \ref{lem1-ht},
\[
\begin{aligned}
	E_2&\les \sup_{n\in \ZZ}\sup_{0\le \ell \le N} \int_0^1 \sqrt t (1+|n|)^N\sum_{m\in \ZZ: |m-n|> N+1} \f{1}{\sqrt t}\f{t^{N/2+1}}{|m-n|^{N+2}}|\Delta_d^\ell f(m)|\f{dt}{t}\\
	&\les \sup_{n\in \ZZ}\sup_{0\le \ell \le N} \int_0^1 \sqrt t \sum_{m\in \ZZ: |m-n|> N+1}  \f{1}{|m-n|^{2}}(1+|m|)^N|\Delta_d^\ell f(m)|\f{dt}{t}\\
	&\les \|f\|_N.
\end{aligned}
\]
It follows that $\displaystyle \int_0^1 \sqrt t e^{-t\Delta_d}f\f{dt}{t} \in \mathcal S_\vc(\ZZ)$ whenever $f\in \mathcal S_\vc(\ZZ)$.

Since $f\in \mathcal S_\vc(\ZZ)$, we can write $f=\Delta_d^{N+1}g$ with $g\in \mathcal S(\ZZ)$.  Therefore,
\[
\begin{aligned}
	\sup_{n\in \ZZ}\sup_{0\le \ell \le N} &\int_1^\vc \sqrt t (1+|n|)^N|\Delta_d^\ell e^{-t\Delta_d}f(n)|\f{dt}{t}\\
	&\le \sup_{n\in \ZZ}\sup_{0\le \ell \le N} \int_1^\vc \sqrt t (1+|n|)^N|\Delta_d^{\ell+N+1} e^{-t\Delta_d}g(n)|\f{dt}{t}.
\end{aligned}
\]
At this stage, using the kernel estimate in Lemma \ref{lem2-htk}, we can come up with
\[
\sup_{n\in \ZZ}\sup_{0\le \ell \le N} \int_1^\vc \sqrt t (1+|n|)^N|\Delta_d^{\ell+N+1} e^{-t\Delta_d}f(n)|\f{dt}{t}\les \|g\|_N.
\]
This implies that $\displaystyle \int_1^\vc \sqrt t e^{-t\Delta_d}f\f{dt}{t} \in \mathcal S_\vc(\ZZ)$ whenever $f\in \mathcal S_\vc(\ZZ)$.

As a consequence, $\Delta^{-1/2}f \in \mathcal S_\vc(\ZZ)$ whenever $f\in \mathcal S_\vc(\ZZ)$. Hence, by duality, $\Delta^{-1/2}f \in \mathcal S'_\vc(\ZZ)$ whenever $f\in \mathcal S'_\vc(\ZZ)$.

\begin{theorem}\label{thm1- proof of Riesz}
	The operator $D$ and $D^*$ maps 
	\begin{enumerate}[(i)]
		\item from $\B^{\alpha}_{p,q}(\ZZ)$ to $\B^{\alpha-1}_{p,q}(\ZZ)$ continuously for $0<p,q\le \vc$ and $\alpha\in \mathbb R$;
		\item from $\F^{\alpha}_{p,q}(\ZZ)$ to $\F^{\alpha-1}_{p,q}(\ZZ)$ continuously for $0<p< \vc, 0<q\le  \vc$ and $\alpha\in \mathbb R$. 
	\end{enumerate}
\end{theorem}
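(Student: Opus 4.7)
My strategy is to reduce the claim to a single pointwise Peetre-type domination of $\psi(t\sqrt{\Delta_d})Df$ by $\tilde\psi^*_\lambda(t\sqrt{\Delta_d})f$ with a gain of $t^{-1}$, and then invoke the continuous characterizations in Theorem \ref{thm1-continuouscharacter}. The starting point is that $D$ commutes with the functional calculus of $\Delta_d$: since $\Delta_d = -D^*D = -DD^*$ forces $DD^* = D^*D$, one has $D\Delta_d = \Delta_d D$, so by spectral theory $D\,\varphi(\sqrt{\Delta_d}) = \varphi(\sqrt{\Delta_d})\,D$ for every bounded Borel $\varphi$ (and similarly for $D^*$).

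I then choose $\psi$ to be a partition of unity supported in $[3,7]$ together with an auxiliary $\tilde\psi\in \mathcal{S}(\mathbb R)$ supported in $[2,8]$ and satisfying $\tilde\psi\equiv 1$ on $[3,7]$, so that $\psi(t\sqrt{\Delta_d}) = \psi(t\sqrt{\Delta_d})\tilde\psi(t\sqrt{\Delta_d})$ at the operator level. Combined with the commutativity this gives
\[
\psi(t\sqrt{\Delta_d})Df = \psi(t\sqrt{\Delta_d})\,D\,\tilde\psi(t\sqrt{\Delta_d})f.
\]
By Lemma \ref{lem1-derivative of psi delta} with $k=1$, the kernel of $\psi(t\sqrt{\Delta_d})D$ is bounded by $C_N t^{-2}(1+|m-n|/t)^{-N}$ for any $N$. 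Combining this with the Peetre inequality $|\tilde\psi(t\sqrt{\Delta_d})f(m)|\le (1+|m-n|/t)^\lambda\, \tilde\psi^*_\lambda(t\sqrt{\Delta_d})f(n)$, choosing $N>\lambda+1$, and summing in $m$ via Lemma \ref{lem-elementary} yields the key pointwise bound
\[
|\psi(t\sqrt{\Delta_d})Df(n)| \lesssim t^{-1}\,\tilde\psi^*_\lambda(t\sqrt{\Delta_d})f(n), \qquad n\in\ZZ,\ t>0.
\]

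Feeding this bound into Theorem \ref{thm1-continuouscharacter} then closes the proof. For the Besov case with $\lambda>1/p$,
\begin{align*}
\|Df\|_{\B^{\alpha-1}_{p,q}(\ZZ)} &\sim \Big(\int_0^\vc\big[t^{-(\alpha-1)}\|\psi(t\sqrt{\Delta_d})Df\|_{\ell^p(\ZZ)}\big]^q\tfrac{dt}{t}\Big)^{1/q}\\
&\lesssim \Big(\int_0^\vc\big[t^{-\alpha}\|\tilde\psi^*_\lambda(t\sqrt{\Delta_d})f\|_{\ell^p(\ZZ)}\big]^q\tfrac{dt}{t}\Big)^{1/q}\sim \|f\|_{\B^{\alpha}_{p,q}(\ZZ)};
\end{align*}
the Triebel--Lizorkin case is analogous via Theorem \ref{thm1-continuouscharacter}(b) with $\lambda > \max\{1/p, 1/q\}$, and $D^*$ is treated identically. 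The main obstacle is the regime $p<1$ or $q<1$, where $\psi(t\sqrt{\Delta_d})D$ is not readily $\ell^p$-bounded from its kernel decay via Young's inequality; the pointwise Peetre majorization sidesteps this uniformly in $p,q$. A minor technical point is arranging the supports of $\psi$ and $\tilde\psi$ so that both the reproducing identity holds and the characterization in Theorem \ref{thm1-continuouscharacter} applies to $\tilde\psi$, which the choice $\supp\psi\subset[3,7]\subset[2,8]$ achieves cleanly.
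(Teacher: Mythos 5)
Your proof is correct, but it takes a genuinely different route from the paper's. The paper reduces the theorem to a molecular estimate: it claims it suffices to bound $\sqrt{t}\,\psi(t\sqrt{\Delta_d})Da_I$ for $(M,N,p)_{\Delta_d}$ molecules $a_I$, using the commutation $\Delta_d^M D = D\Delta_d^M$ to rewrite $Da_I = \Delta_d^M(Db_I)$ and then invoking Lemma \ref{lem1-derivative of psi delta} in place of Corollary \ref{lem1}, after which the machinery of Theorem \ref{thm2- atom Besov} closes the argument. You instead observe that $D$ (being a translation-invariant operator, hence a Fourier multiplier on $\ZZ$) commutes with the entire Borel functional calculus of $\Delta_d$, insert a reproducing factor $\tilde\psi$ with $\psi\tilde\psi = \psi$, apply the kernel bound of Lemma \ref{lem1-derivative of psi delta} to $\psi(t\sqrt{\Delta_d})D = D\psi(t\sqrt{\Delta_d})$, and sum against the Peetre majorant $|\tilde\psi(t\sqrt{\Delta_d})f(m)|\le(1+|m-n|/t)^\lambda\tilde\psi^*_\lambda(t\sqrt{\Delta_d})f(n)$ to obtain the uniform pointwise bound $|\psi(t\sqrt{\Delta_d})Df(n)|\lesssim t^{-1}\tilde\psi^*_\lambda(t\sqrt{\Delta_d})f(n)$; the continuous characterization of Theorem \ref{thm1-continuouscharacter} then gives the shift $\alpha\mapsto\alpha-1$ immediately. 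Your route avoids the molecular decomposition entirely and is arguably tighter: it isolates the one structural ingredient (the extra $t^{-1}$ gain in the kernel of $D\psi(t\sqrt{\Delta_d})$) and lets the Peetre maximal function absorb all the low-$p$, low-$q$ difficulties uniformly. It is also worth noting that your computation yields the natural full-unit gain $t^{-1}$ for $D$, whereas the paper's displayed molecular estimate \eqref{eq- psi atom} in the proof of the theorem carries an apparent factor $\sqrt{t}$ (rather than $t$); your derivation is the one whose scaling transparently matches the claimed shift from $\alpha$ to $\alpha-1$. Just make sure, as you already flag, that $\psi$ is chosen so its support $[3,7]$ is strictly inside $[2,8]$ and still admits a partition-of-unity structure (which is possible since $7/3>2$), so both the Calder\'on reproducing factor $\tilde\psi$ and Theorem \ref{thm1-continuouscharacter} apply simultaneously.
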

\begin{proof}
	By the similar argument in the proof of Theorem \ref{thm2- atom Besov}, it suffices to prove the following estimates: Let $\psi$ be a partition of unity and  let $a_I$ be a $(M,N,p)_{\Dd}$ molecule with some $I\in \mathcal{I}_\nu$ with $\nu\in \ZZ^-$. Then for any $t>0$ and  $N>0$ we have:
	\begin{equation}\label{eq- psi atom}
		|\sqrt t\psi(t\sqrt{\Delta_d}) D a_I(n)|+|\sqrt t\psi(t\sqrt{\Delta_d}) D^* a_I(n)|\les  \Big(\f{t}{2^{-\nu}}\wedge \f{2^{-\nu}}{t}\Big)^{2M-1}|I|^{-1/p} \Big(1+\f{d(n,I)}{2^{-\nu}\vee t}\Big)^{-N}.
	\end{equation}
	Since $\Delta^M_d D a_I = D\Delta^M_d  a_I$ and $\Delta^M_d D^* a_I = D^*\Delta^M_d  a_I$, we can proceed the proof similarly to that of \eqref{eq- psi atom}, in which we use the estimate in Lemma \ref{lem1-derivative of psi delta} instead of Corollary \ref{lem1}. We would like to leave the details to the interested reader.
	
	This completes our proof.
\end{proof}

We are ready to give the proof of Theorem \ref{mainthm-Riesz transforms}.

\begin{proof}[Proof of Theorem \ref{mainthm-Riesz transforms}:]
	Due to Theorem \ref{thm1- proof of Riesz}, it suffices to prove that the operator $\Delta_d^{-1/2}$ maps \begin{enumerate}[(i)]
		\item from $\B^{\alpha}_{p,q}(\ZZ)$ to $\B^{\alpha+1}_{p,q}(\ZZ)$ continuously for $0<p,q\le \vc$ and $\alpha\in \mathbb R$;
		\item from $\F^{\alpha}_{p,q}(\ZZ)$ to $\F^{\alpha+1}_{p,q}(\ZZ)$ continuously for $0<p< \vc, 0<q\le  \vc$ and $\alpha\in \mathbb R$. 
	\end{enumerate}

To do this, let $\psi$ be a partition of unity. Then by Theorem \ref{thm1-continuouscharacter},
\[
\begin{aligned}
	\|\Delta_d^{-1/2}f\|_{\B^{\alpha+1}_{p,q}(\ZZ)}&\sim \Big(\int_0^\vc \Big[t^{-\alpha-1}\|\psi(t\sqrt{\Delta_d})\Dd^{-1/2}f\|_{\ell^p(\ZZ)}\Big]^q\f{dt}{t}\Big)^{1/q}\\
	&=\Big(\int_0^\vc \Big[t^{-\alpha}\|\widetilde \psi(t\sqrt{\Delta_d})f\|_{\ell^p(\ZZ)}\Big]^q\f{dt}{t}\Big)^{1/q}\\
	&\sim \|f\|_{\B^{\alpha}_{p,q}(\ZZ)},
\end{aligned}
\]
where $\widetilde \psi(\lambda) = \lambda \psi(\lambda)$.

This implies (i). Similarly, we also obtain the statement (ii).
This completes our proof.
\end{proof}

\bigskip

\textbf{Acknowledgement:} The authors were supported by the Australian Research Council through the grant DP220100285. The authors would like to thank the referee for his/her useful comments and suggestions  which helped to improve the paper.

\end{document}